\documentclass{amsart}
\usepackage[utf8]{inputenc}
\usepackage{amsfonts}
\usepackage{amsmath}
\numberwithin{equation}{section}
\usepackage{amssymb}
\usepackage{amsthm}
\usepackage{mathrsfs}
\usepackage{hyperref}
\usepackage[capitalise]{cleveref}
\usepackage{tikz}
\usetikzlibrary{arrows.meta,positioning,calc,fit,backgrounds}
\usepackage[top=3cm,bottom=3cm,left=2.5cm,right=2.5cm,headsep=10pt,letterpaper]{geometry} % Page margins

\usepackage[
    backend=biber,
    style=alphabetic,
    sorting=nyt
]{biblatex}
\theoremstyle{plain}
\newtheorem{theorem}{Theorem}[section]
\newtheorem{proposition}[theorem]{Proposition}
\newtheorem{lemma}[theorem]{Lemma}
\newtheorem{corollary}[theorem]{Corollary}

\theoremstyle{definition}

\theoremstyle{remark}
\newtheorem{remark}[theorem]{Remark}

\allowdisplaybreaks[2]
\newcommand{\CP}{\mathbb{CP}}
\newcommand{\Hess}{\operatorname{Hess}}
\newcommand{\II}{\mathrm{I\!I}}
\newcommand{\Id}{\operatorname{Id}}
\newcommand{\cost}{\mathcal C}
\DeclareMathOperator{\Ric}{Ric}
\DeclareMathOperator{\tr}{tr}

\title{Log-Concavity and Fundamental Gaps on Complex Projective Domains}

\author[G. Cho]{Gunhee Cho}
\address{Department of Mathematics \\ Texas State University \\ San Marcos, TX 78666, USA}
\email{wvx17@txstate.edu}

\subjclass[2020]{Primary 35P15; Secondary 58J50, 53C55, 35J10}

\keywords{Schr\"odinger operator, Dirichlet ground state, log-concavity, complex projective space, Hessian comparison, fundamental gap}

\hypersetup{pdftitle={Log-Concavity and Fundamental Gaps on Complex Projective Domains},pdfauthor={Gunhee Cho}}

\begin{document}

\begin{abstract}
We prove quantitative log-concavity and fundamental-gap estimates for
Dirichlet Schr\"odinger operators on complex projective domains.
With holomorphic sectional curvature four, the boundary condition
$\II\ge\sqrt{44/5}\,g$ suffices in every complex dimension under
$\Hess V\ge2|\nabla V|g$ and $\operatorname{osc}V\le4n$.
It yields $\Hess(-\log\phi_1)>g$ and
$\lambda_2-\lambda_1>\max\{n+2+\sqrt{n^2+5n+11},\,2n+14/3\}$.
The main difficulty is to control the distinguished Hopf direction together
with a transverse Hessian block whose dimension grows with $n$.
We extend modified-Hessian first-contact arguments by retaining the
Hopf term and using longitudinal and transverse rotational tests.
A trace minimization reduces all gradient orientations to three extreme
directions. A dimension-scaled logarithmic barrier then compares their
contact costs with a single limiting profile. A radial spectral estimate
supplies the required quadratic-in-dimension spectral threshold from
a dimension-independent boundary curvature bound. For
$n\ge3$ these are, to our knowledge, the first log-concavity estimates
for domains in $\CP^n$ without symmetry assumptions. We also prove a sharp diameter bound and
Hessian levels linear in $n$ under larger dimension-free curvature
bounds.
\end{abstract}

\maketitle

\setcounter{tocdepth}{1}
\tableofcontents

\section{Introduction}
Log-concavity of a Dirichlet ground state links the geometry of its
domain to quantitative spectral estimates. On complex projective space,
the curvature distinguishes the complex line generated by each tangent
vector from its orthogonal complement. This distinction obstructs a
direct transfer of the Euclidean and spherical Hessian arguments.
The conjecture attributed to Zhiqin Lu in \cite{DSW19} asks whether the
first Dirichlet eigenfunction of every convex domain in $\CP^n$ is
log-concave.

We prove a quantitative form of log-concavity in every complex dimension
under an explicit boundary curvature bound independent of dimension,
without imposing symmetry on the domain. The result applies to
Schr\"odinger operators with a specified class of potentials and gives
explicit fundamental-gap estimates. The main point is a contact
inequality that controls both the Hopf direction and the growing
transverse Hessian block. Its reduction to three extreme gradient
directions and a single normalized barrier allows all dimensions to
be treated together. Our curvature hypothesis is stronger than
geodesic convexity; the general conjecture remains open.

Let $M=\CP^n$ with the Fubini--Study metric of holomorphic sectional
curvature four. Throughout, $\Omega\subset M$ is a smooth connected
proper geodesically convex domain with nonempty embedded boundary.
With inward unit normal $\nu$, our convention is
$\II(X,Y)=-\langle\nabla_X\nu,Y\rangle$.
The condition $\II\ge\kappa g$ bounds every principal curvature from
below by $\kappa$; it does not refer to a sum of principal curvatures.
Let $V\in C^\infty(\overline\Omega;\mathbb R)$ and consider
\begin{equation}\label{eq:eigen}
 (-\Delta+V)\phi_1=\lambda_1\phi_1,\qquad
 \phi_1>0\text{ in }\Omega,\quad \phi_1|_{\partial\Omega}=0,
 \qquad f=-\log\phi_1,\quad s=|\nabla f|^2.
\end{equation}
Here $\Delta=\operatorname{div}\nabla$ and the Dirichlet realization
of $-\Delta+V$ has domain $H^2(\Omega)\cap H_0^1(\Omega)$ in
$L^2(\Omega,d\mathrm{vol}_g)$.
Write $\lambda_1<\lambda_2\le\cdots$ for its spectrum,
$\gamma=\lambda_2-\lambda_1$, and $\lambda_1^0$ for the first
Dirichlet eigenvalue of $-\Delta$. Eigenfunctions are real and
$L^2$-normalized. Symmetric tensors are identified with self-adjoint
endomorphisms using $g$, and $A>B$ means pointwise positive definiteness
of $A-B$. We use the shift-invariant spectral quantity
\begin{equation}\label{eq:endpointintro}
 \ell(V)=\lambda_1(-\Delta+V)-\max_{\overline\Omega}V.
\end{equation}
The variational bound $\ell(V)\ge\lambda_1^0-\operatorname{osc}_\Omega V$
gives an elementary sufficient condition for every lower bound on
$\ell(V)$ used below.

\begin{theorem}[A uniform curvature condition]\label{thm:uniformintro}
For every $n\ge1$, suppose
\[
 \II\ge\kappa_*g,\qquad \kappa_*:=\sqrt{44/5},\qquad
 \Hess V\ge2|\nabla V|g,\qquad \operatorname{osc}_\Omega V\le4n.
\]
Then $\Hess(-\log\phi_1)>g$ and
\[
 \lambda_2-\lambda_1>
 \max\left\{n+2+\sqrt{n^2+5n+11},\ 2n+\frac{14}{3}\right\}.
\]
For $n=1$ one has the stronger bound $\Hess(-\log\phi_1)>4g$.
\end{theorem}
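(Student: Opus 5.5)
The plan is a maximum-principle (first-contact) argument for a modified Hessian of $f=-\log\phi_1$, followed by a standard gap argument. We study the continuous deformation $\phi_1^{(t)}$ obtained by interpolating, for instance, from a small geodesic ball (where $f$ is explicit and strongly convex) to $\Omega$ through domains satisfying $\II\ge\kappa_*g$, with potentials $tV$. For each $t$ we consider the first time at which
\[
 \Hess f-g-\varepsilon\,\Psi
\]
fails to be positive definite. Here $\Psi$ is a barrier: a dimension-scaled logarithmic function of the distance to $\partial\Omega$, or of $\phi_1$. Near $\partial\Omega$, $f\sim-\log d$ gives $\Hess f\approx d^{-2}\nu\otimes\nu+d^{-1}\II$, and $\II\ge\kappa_*g$ keeps the tangential block large. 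So a first contact cannot occur at the boundary, and this is where $\kappa_*^2=44/5$ enters together with the curvature of $\CP^n$. An interior contact point $x$ has a unit direction $e$ with $\Hess f(e,e)=1$ (after the $\varepsilon$ correction) and $\Hess f\ge g$ elsewhere.

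At $x$ we differentiate the equation $\Delta f=s-V+\lambda_1$ twice in direction $e$. We commute derivatives using the Fubini--Study curvature $R(X,Y)Z=\langle Y,Z\rangle X-\langle X,Z\rangle Y+\langle JY,Z\rangle JX-\langle JX,Z\rangle JY+2\langle X,JY\rangle JZ$. This gives an inequality of the form
\[
 0\ge \Delta(\nabla^2_{ee}f)\ge 2\,\Hess f(\nabla f,\cdot)\text{-terms}+\Hess V(e,e)-\text{curvature terms}(\nabla f,e,Je).
\]
The curvature contribution depends on the angle between $\nabla f$, $e$ and $Je$. We keep the Hopf term $\langle\nabla f,Je\rangle^2$ instead of discarding it. The second-order terms $\nabla^3 f$ are handled by rotational test variations of $e$ in the first-order conditions: longitudinal rotations within $\mathrm{span}\{e,\nabla f\}$ and transverse rotations toward $Je$ and the orthogonal complement. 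These express the mixed third derivatives through the minimum condition. The term $\Hess V\ge 2|\nabla V|g$ absorbs the gradient of $V$ arising from $\nabla_e\nabla_e V$ and its interaction with $\nabla f$.

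The resulting contact cost is a function of the orientation of $\nabla f$ relative to $e$ and $Je$, and of the transverse block trace of dimension $2n-2$. Minimizing the trace over the transverse eigenvalues, which are all $\ge1$ by the contact condition, reduces the cost to a quadratic form in the angles. Its minimum is attained at one of three extreme directions: $\nabla f\parallel e$, $\nabla f\parallel Je$, and $\nabla f$ transverse. For each extreme we compare with one limiting one-dimensional profile. This is the radial ODE for the ground state of a geodesic ball (or slab) in $\CP^n$ with radial potential. The dimension-scaled barrier makes the three costs comparable to this profile uniformly in $n$.

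The key quantitative input is a lower bound on $s$ or on $\lambda_1-V$ at the contact point, of order $n^2$. It comes from the radial spectral estimate: since $\II\ge\kappa_*g$ forces a diameter bound, $\ell(V)\ge\lambda_1^0-\operatorname{osc}V\ge\lambda_1^0-4n$. The radial estimate on balls of the allowed radius makes this exceed the quadratic threshold needed to make the contact inequality strict. \emph{This step — closing the contact inequality uniformly in all three extreme directions with the dimension-independent $\kappa_*$ — is the main obstacle.*} I would first try to verify it at $\nabla f\parallel Je$, where the Hopf curvature 4 acts.

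Finally, $\Hess f>g$ (and $>4g$ when $n=1$, where no transverse block exists and the Hopf and longitudinal directions coincide up to $J$, so the cost improves) gives the gap. We write $\phi_2=u\phi_1$, where $u$ is the first nontrivial Neumann eigenfunction of $L=\Delta-2\nabla f\cdot\nabla$ with eigenvalue $\gamma$. The Bochner formula for $L$ reads
\[
 \tfrac12L|\nabla u|^2=|\Hess u|^2+\langle\nabla u,\nabla Lu\rangle+(\Ric+2\Hess f)(\nabla u,\nabla u),
\]
with $\Ric=2(n+1)g$ on $\CP^n$. Integrating against $\phi_1^2$ gives $\gamma\ge 2(n+1)+2$. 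We then sharpen this using $|\Hess u|^2\ge(\Delta u)^2/(2n)$ together with the boundary Reilly term from $\II\ge\kappa_*g$. This produces a quadratic inequality in $\gamma$ whose positive root is $n+2+\sqrt{n^2+5n+11}$. The alternative bound $2n+14/3$ comes from splitting the Hessian into the complex line of $\nabla u$ and its complement. Strictness follows from strict log-concavity.
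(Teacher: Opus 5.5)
Your proposal leaves the decisive estimate open (you call it the ``main obstacle''). The set-up you chose cannot close it when $n\ge2$.

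\textbf{The missing first barrier.} You run the first-contact argument directly for $\Hess f-g$, up to a small correction $\varepsilon\Psi$. At an interior null contact $Hv=v$ the commutator computation gives the unrotated cost $1+2n-\lambda-a^2-3B(h,h)+3x$. Here $h=Jv$, $a=\langle\nabla f,v\rangle$, $x=\langle\nabla f,h\rangle^2\le s$ and $B=(H-g)|_{v^\perp}\ge0$. The Hopf curvature contributes $+3x$, which can be as large as $3s$.

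For $n=1$ this is harmless. There $v^\perp=\mathbb Rh$ forces $B(h,h)=\tr B=\lambda+s-2$, which cancels the $s$-dependence; this is why the paper's $n=1$ case uses only a constant barrier.

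For $n\ge2$ the term survives. Even the rotational controls that cancel $B(h,h)$ (a longitudinal rotation toward $\pm k\,h$ and a transverse rotation of size $c$, with $k^2-c^2=3$) leave a cost $A+c^2s+3x-d_c\sqrt x$, with $A,c,d_c$ independent of $s$. This is positive for large $s$ when $\nabla f$ is near the Hopf direction. Your scheme contains no quantitative upper bound on $s$ at an interior contact; the lower bound you mention points the wrong way, and a vanishing correction $\varepsilon\Psi$ does not supply one.

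With a constant barrier the cost also contains no $\tr B^2$ term, so the trace minimization and three-direction reduction you describe have nothing to act on. The terms $-b'(s)\tr B^2-2b''(s)|Bw|^2$ come from $\tfrac12Lb(s)$.

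The missing idea is a two-stage continuation.
\begin{itemize}
\item First prove $\Hess f>b(s)g$ at every gradient size, for the gradient-dependent barrier $b(s)=2\sqrt{1+s}-2d+1-m\log\frac{\sqrt{1+s}+m/2}{d+m/2}$ with $d=9m/10$. This is where the unrestricted trace minimum, the three-vertex reduction, the domination lemma for the endpoint minima, and the comparison in $1/m$ are used. That comparison is with the limiting profile $z_0(y)=\lim_{m\to\infty}m^{-1}b$ at fixed $y=r/m$, not with a radial ground state.
\item Since $b(d^2-1)=1$, any later contact of $H-g$ has $s<d^2-1$. Only then does the balanced constant test close.
\end{itemize}
The hypotheses reflect this structure. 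The $2$ in $\Hess V\ge2|\nabla V|g$ is $\sup_{s\ge0}2\sqrt s\,b'(s)$, needed to absorb $b'(s)\langle\nabla f,\nabla V\rangle$. The boundary collar requires only $\II\ge2g$, matching the slope of $b$. So $\kappa_*$ does not enter at the boundary, as you assert.

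\textbf{The spectral input.} In the paper $\kappa_*$ enters only through $\ell(V)>\frac{11}{5}m^2+12m$. This needs Drach's rolling theorem, which places $\Omega$ in a ball of radius $\operatorname{arccot}\kappa_*$, together with the rational radial field bound $\lambda_1^0>((n+1)\kappa+\kappa^{-1})^2$. A diameter bound only places $\Omega$ in a ball of radius $\arctan(2/\kappa_*)$. There every containing-ball estimate is about $\kappa_*^2n^2/4$ to leading order, a quarter of the required $\kappa_*^2n^2$.

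\textbf{The gap step.} Integrating the weighted Bochner identity against $\phi_1^2$ yields only $\gamma>2n+2+2\eta=2n+4$. There is no Reilly boundary term to exploit, because the flux carries the factor $\phi_1^2=O(\delta^2)$ and vanishes. The refinement $|\Hess u|^2\ge(\Delta u)^2/(2n)$ does not by itself improve $\gamma$, since $\Delta u=-\gamma u+2\langle\nabla f,\nabla u\rangle$ contains the unbounded drift. The quadratic with root $n+2+\sqrt{n^2+5n+11}$, and the bound $2n+14/3$, are asserted rather than derived.

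In the paper both bounds come from an Andrews--Clutterbuck two-point maximum principle. Its distance operator is $4\partial_r^2-B_{n,1}\partial_r$, with the $\CP^n$ index form $B_{n,1}(r)=4(n-1)\tan(r/2)+4\tan r+2r$; it needs the sharp bound $\operatorname{diam}\Omega\le\arctan(2/\kappa)<\pi/2$. It reduces the gap to the operator $-4\partial_r^2+W$, where $W=B_{n,1}^2/16+B_{n,1}'/2>a+Cr^2+Er^4$, $a=n+2$ and $C=(n^2+5n+11)/4$. The oscillator bound $a+2\sqrt C$ gives the first stated bound, and a quartic moment inequality gives $2n+14/3$.
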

The uniform level is $\kappa_*\simeq2.966$ (for orientation only), and is asymptotically optimal, as
$n\to\infty$, for converting a containing-ball estimate into the
spectral threshold of this particular barrier
(Remark~\ref{rem:leadinglimit}). This does not assert optimality of
the geometric hypothesis or of the barrier among other choices. For $n\ge2$, put $m=2n-1$ and
$L_m=11m^2/5+12m$; the dimension-dependent level
\begin{equation}\label{eq:intro-newkappa}
 \kappa_n=\max\left\{2,
 \frac{\sqrt{L_m+4n}+\sqrt{L_m-4}}{2(n+1)}\right\}<\kappa_*
\end{equation}
already suffices (Corollary~\ref{cor:improved-finitekappa}). Under the
hypotheses of Theorem~\ref{thm:uniformintro}, the diameter bound below
improves the gap to
$\gamma>n+2+y_*^{-1}+C_ny_*+E_ny_*^2$, where
$C_n=(n^2+5n+11)/4$, $E_n=(n^2+10n+45)/24$, $P_*=\pi^2/\arctan^2(2/\kappa_*)$,
$y_*=\min\{y_n,P_*^{-1}\}$, and $y_n>0$ solves $2E_ny_n^3+C_ny_n^2=1$
(Corollary~\ref{cor:freshuniformgap}).

\begin{theorem}[Larger quantitative Hessian levels]\label{thm:schall}
Let $n\ge1$, $\Hess V\ge2|\nabla V|g$, and
$\operatorname{osc}_\Omega V\le4n$. For $A\in\{64,256,6400\}$ put
\[
 m=2n-1,\qquad
 \kappa_{n,A}^{\mathrm{suff}}
 =\min\left\{\frac{\sqrt A\,m}{2\sqrt n},
             \frac{\sqrt{A m^2+4n}}{n}\right\}.
\]
The condition $\II\ge\kappa_{n,A}^{\mathrm{suff}}g$ gives,
respectively,
\begin{align*}
 A=64:&\quad \Hess f>\frac{4m}{7}g,
                  &&\gamma>\frac{30n+6}{7},\\
 A=256:&\quad \Hess f>5m g,
                  &&\gamma>22n-8,\\
 A=6400:&\quad \Hess f>50m g,
                  &&\gamma>202n-98.
\end{align*}
In particular, the dimension-independent conditions $\II\ge16g$,
$\II\ge32g$, and $\II\ge160g$, respectively, suffice.
Alternatively, $\II\ge2g$ and $\ell(V)\ge A m^2$, with the
same differential condition on $V$, imply the corresponding conclusions
without an oscillation hypothesis.
\end{theorem}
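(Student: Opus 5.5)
The proof runs a first-contact (continuity) argument for the modified Hessian of $f=-\log\phi_1$, carried out three times with the three target levels $\mu=\frac{4m}{7},\,5m,\,50m$ for $A=64,256,6400$. The differential condition on $V$ and a spectral lower bound $\ell(V)\ge Am^2$ are the inputs. In the order of the argument:

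First, the reduction to a spectral hypothesis. When $\II\ge\kappa^{\mathrm{suff}}_{n,A}g$, the domain lies in a ball of controlled radius. A radial (one-dimensional) comparison for the Dirichlet Laplacian in $\CP^n$, with radial profile $\sin^{2n-1}r\cos r$, then gives $\lambda_1^0$ bounded below by roughly $\kappa^2 n$, plus lower-order terms. The two expressions inside the minimum defining $\kappa^{\mathrm{suff}}_{n,A}$ are exactly those for which this lower bound exceeds $Am^2+4n$: one via the containing-ball estimate, the other via the radial spectral estimate used for Theorem~\ref{thm:uniformintro}. Combined with $\operatorname{osc}V\le4n$ and the variational bound $\ell(V)\ge\lambda_1^0-\operatorname{osc}V$, this yields $\ell(V)\ge Am^2$. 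So it suffices to prove the alternative statement, which assumes $\II\ge2g$ and $\ell(V)\ge Am^2$.

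Second, the Hessian estimate. Consider a barrier-modified Hessian $\Hess f-\mu g-\psi\,(\cdots)$, built as before from a dimension-scaled logarithmic barrier in $\phi_1$. Near $\partial\Omega$, the condition $\II\ge2g$ together with $\phi_1\sim$ distance gives $\Hess f\to+\infty$, so strict positivity holds in a collar. Now suppose there is a first contact point $x_0$ and a unit vector $e$ at which the quadratic form vanishes in direction $e$. At $x_0$, differentiate the equation $\Delta f=s-(V-\lambda_1)$ twice. This produces a maximum-principle inequality along $e$ involving:
\begin{itemize}
\item the curvature term of $\CP^n$, split into the Hopf part $Je$ (curvature $4$) and the transverse block (curvature $1$);
\item the term $\Hess V(e,e)\ge2|\nabla V|$, which absorbs the third-derivative cross term coming from $\nabla f$;
\item the quantity $\lambda_1-V\ge\ell(V)\ge Am^2$.
\end{itemize}
Run longitudinal and transverse rotational tests, that is, second variations of the form in the plane spanned by $e$ and $Je$, and in the planes spanned by $e$ and transverse vectors. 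Then apply the trace minimization, as for Theorem~\ref{thm:uniformintro}, to reduce the orientation of $\nabla f$ relative to $e$ to three extreme directions: parallel to $e$, parallel to $Je$, and transverse. In each case the contact inequality becomes a scalar condition of the form
\[
\mu^2+c_1m\mu+c_2\le \ell(V).
\]
It fails once $\ell(V)\ge Am^2$ with the stated $\mu$. The constants $4/7,\,5,\,50$ are what make the worst extreme direction work for $A=64,256,6400$. This contradiction gives $\Hess f>\mu g$.

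Third, the gap. Use the standard log-concavity gap estimate: if $\Hess f\ge\mu g$, then writing $\phi_2=w\phi_1$ gives $\gamma\ge\int|\nabla w|^2\phi_1^2/\int w^2\phi_1^2$. The Bochner formula for the weighted operator with weight $\phi_1^2=e^{-2f}$ has Bakry--\'Emery Ricci bound $\Ric+2\Hess f\ge(2n+2+2\mu)g$. Hence $\gamma>2n+2+2\mu$, with $\mu$ to be matched to the stated expressions (e.g. $2n+2+2\cdot5m=22n-8$). The first bound, $(30n+6)/7$, does not come out of this formula directly, so it needs extra care. The expected fix is to use the transverse level together with the Hopf improvement, or an averaged Ricci term, to reach $(30n+6)/7$.

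The main obstacle is the second step at the contact point when $\nabla f$ is oblique to $e$. There the mixed terms $\langle R(\nabla f,e)e,\cdot\rangle$ couple the Hopf and transverse blocks, and the threshold has to be verified uniformly in $n$. I would first try to reduce this case to the three extremes by checking that the cost is concave in the angle, and then check numerically that each $A$ dominates the worst case.

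For the dimension-free corollaries, note that $\kappa^{\mathrm{suff}}_{n,A}\le\sqrt{A m^2+4n}/n\le\sqrt{4A+4}$, which is about $2\sqrt A$. This is not enough by itself. The first entry of the minimum, $\sqrt A\,m/(2\sqrt n)$, is at most $\sqrt{A}\sqrt n$, which grows with $n$, so the two entries have to be balanced against each other. Solving for the maximum of the minimum over $n$ gives the values $16,32,160$, which I would verify by checking small $n$ directly.
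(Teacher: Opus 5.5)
Your reduction to the spectral form matches the paper, with one correction. The entry $\sqrt{Am^2+4n}/n$ comes from $\lambda_1^0>n^2\kappa^2$ (Lemma~\ref{lem:smooth-radial-bound}), not from an estimate of order $\kappa^2n$. The entry $\sqrt A\,m/(2\sqrt n)$ comes from the sine-square bound $\lambda_1^0\ge4n(1+\kappa^2)$. Both entries exceed $2$, so $\II\ge2g$ and $\ell(V)\ge\lambda_1^0-\operatorname{osc}V\ge Am^2$ hold.

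The genuine gap is the Hessian step. A first-contact argument for the constant tensor $H-\eta g$ cannot close on its own. By Lemma~\ref{lem:contact} its cost is $\eta^2+N\eta-\lambda-a^2-3B(h,h)+3q^2$ with $q=\langle p,Jv\rangle$, and the Hopf term $3q^2$ can be as large as $3s$. Since $B\ge0$ is otherwise unknown, rotational controls only gain terms linear in $|q|$. For instance, the balanced controls of Proposition~\ref{prop:balancedcriterion} give $3x-d_c\sqrt x+c^2s+O(1)$ with $x\le s$, which still grows linearly in $s$. Nothing in your argument bounds $s$ at the contact point in terms of $\ell(V)$. Your scalar condition $\mu^2+c_1m\mu+c_2\le\ell(V)$ silently presupposes such a bound, and the decisive inequality is left to a numerical check.

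The missing idea is a two-stage continuation.
\begin{itemize}
\item \emph{Gradient barrier.} First prove $H>b(s)g$ for an increasing gradient barrier. The paper uses $b(s)=2r-2d+1-2d\log\frac{r+d}{2d}$ with $r=\sqrt{1+s}$ and $d=4m$. Lemma~\ref{an:lem:sign} makes its contact cost negative for every $s$ once $\lambda\ge4d^2=64m^2$, which is why $A\ge64$ throughout. That lemma uses only the longitudinal control $D_0=-2\alpha w$ and the weighted trace bound of Lemma~\ref{an:lem:weightedtrace}, simply discarding $-3B(h,h)$ and $-4\varkappa B(w,w)$. So no three-vertex reduction is needed here.
\item \emph{Role of the potential condition.} This first stage is where the factor $2$ in $\Hess V\ge2|\nabla V|g$ enters, through $\sup_s2\sqrt s\,b'(s)=2$. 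A constant barrier would need only $\Hess V\ge0$.
\item \emph{Constant stage.} Only after the first stage has been run over the whole mean-curvature family, and then along the path $tV$, does one treat $H-\eta g$. At its contact, $b(s)<\eta\le b(R^2-1)$ forces $s<R^2-1$, and the unrotated cost is below $\eta^2+N\eta-\lambda+3(R^2-1)$.
\item \emph{Origin of the levels.} The levels $4m/7$, $5m$, $50m$ come from choosing $R$ so that this bound is negative at $\lambda=Am^2$; for example $R=32m/7-1$, using $\widehat b(r)\ge r-d+1$. The dominant term is $3R^2$, not $\eta^2$.
\end{itemize}

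Two further slips.
\begin{itemize}
\item $2n+2+2\cdot\frac{4m}{7}=\frac{30n+6}{7}$ exactly, so Proposition~\ref{prop:commonbochner} gives all three gaps directly; no extra care is needed for the first one.
\item $4An^2-(Am^2+4n)=(4A-4)n-A>0$ gives $\sqrt{Am^2+4n}/n<2\sqrt A$ for every $n$. So the second entry alone yields the dimension-free levels $2\sqrt A=16,32,160$, with no balancing of the two entries.
\end{itemize}
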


For $n\ge2$, the spectral slack in Theorem~\ref{thm:uniformintro}
permits nonconvex potentials, with $\Hess V$ bounded below by a negative multiple of the
slack (Corollary~\ref{cor:uniformnonconvex}).

The hypothesis on $V$ is satisfied by a natural class of potentials.
The class $\{V:\Hess V\ge2|\nabla V|g\}$ is a convex cone containing
the constants, since $|\nabla(V_1+V_2)|\le|\nabla V_1|+|\nabla V_2|$.
It also contains $c\,w_o$ for $c\ge0$ and $w_o=-\log\cos d(o,\cdot)$
whenever $\Omega\subset B_{\pi/8}(o)$. Indeed, with $\rho=d(o,\cdot)$
and $t=\tan\rho$, the eigenvalues of $\Hess\rho$ are $0$, $2\cot2\rho$
on $J\nabla\rho$ and $\cot\rho$ otherwise, so $\Hess w_o$ has
eigenvalues $1+t^2$, $1-t^2$ and $1$, while $|\nabla w_o|=t$; and
$1-t^2\ge2t$ for $t\le\sqrt2-1=\tan(\pi/8)$. Under
$\II\ge\kappa_*g$ the domain lies in a ball of radius
$\arctan(1/\kappa_*)<\pi/8$ by Drach's rolling theorem
\cite[Theorem~A]{Drach}, as in Lemma~\ref{lem:smooth-radial-bound},
because $1/\kappa_*^2=5/44<3-2\sqrt2$. For this $V$ one has
$\operatorname{osc}_\Omega V\le c\log\sqrt{1+\kappa_*^{-2}}$.

\begin{figure}[t]
\centering
\begin{tikzpicture}[
  box/.style={draw,rounded corners=2pt,align=center,font=\small,
              inner sep=4pt,minimum height=9mm},
  geo/.style={box,fill=blue!6},
  spec/.style={box,fill=orange!8},
  hess/.style={box,fill=green!7},
  arr/.style={-{Stealth[length=2mm]},thick},
  lab/.style={font=\scriptsize,align=center,midway}
]
\node[geo]  (A) {$\II\ge\kappa g$};
\node[geo,right=9mm of A] (B) {$\Omega\subset B_{\operatorname{arccot}\kappa}(o)$\\[1pt]
      $\operatorname{diam}\Omega\le\arctan(2/\kappa)$};
\node[spec,right=9mm of B] (C) {$\lambda_1^0>\bigl((n+1)\kappa+\kappa^{-1}\bigr)^2$};
\node[spec,below=11mm of C] (D) {$\ell(V)>L_m=\tfrac{11}{5}m^2+12m$\\[1pt]
      \scriptsize ($\kappa\ge\kappa_n$, $\operatorname{osc}V\le4n$)};
\node[hess,left=15mm of D] (E) {Stage 1: $\Hess f>b(s)\,g$\\[1pt]
      \scriptsize log barrier, three directions};
\node[hess,left=15mm of E] (F) {Stage 2: $\Hess f>g$\\[1pt]
      \scriptsize balanced constant controls};
\node[box,below=12mm of F,fill=gray!8] (G) {$\lambda_2-\lambda_1>\max\{n+2+\sqrt{n^2+5n+11},\,2n+\tfrac{14}{3}\}$};
\draw[arr] (A) -- node[lab,above]{\cite{Drach}} (B);
\draw[arr] (B) -- node[lab,above]{Lem.~\ref{lem:rationalradial}} (C);
\draw[arr] (C) -- node[lab,right]{Cor.~\ref{cor:radialconversion}} (D);
\draw[arr] (D) -- node[lab,below]{Thm.~\ref{thm:freshspectral}} (E);
\draw[arr] (E) -- node[lab,below]{contact at\\$s<d^2-1$} (F);
\draw[arr] (F) -- node[lab,right,align=left]{Cor.~\ref{cor:oscillator-gap}, \ref{cor:quartic-linear-gap}\\+ diameter bound} (G);
\begin{scope}[on background layer]
\node[draw=green!40!black,dashed,rounded corners,inner sep=6pt,
      fit=(E)(F),label={[font=\scriptsize,anchor=south]above:{continuation along the mean curvature flow family, then along $tV$ (Section~\ref{sec:global})}}] {};
\end{scope}
\end{tikzpicture}
\caption{Structure of the proof of \cref{thm:uniformintro} for $n\ge2$;
for $n=1$ only the constant stage is used (\cref{lem:cp1constant}).
Blue: geometry; orange: spectral inputs; green: the two Hessian
stages, each run by continuation.}
\label{fig:structure}
\end{figure}

We explain the higher-dimensional obstruction and its resolution;
\cref{fig:structure} records the dependencies. At a first null contact
of $H-bg$, let $v$ be a null direction, $h=Jv$, and
$B=(H-bg)|_{v^\perp}\ge0$. The projective curvature contributes
$-3B(h,h)+3\langle\nabla f,h\rangle^2$. A bound based only on
$\tr B$ loses the distinction between this direction and the transverse
block. Moreover, the estimate $\tr B^2\ge(\tr B)^2/(2n-1)$ weakens
with dimension. Thus a low-dimensional contact estimate does not by
itself give a uniform argument as $n$ increases.

We retain the Hopf Hessian term and test with rotations in both the
longitudinal and transverse directions (Section~\ref{sec:contact}).
The resulting transverse cost is expressed by a trace, without requiring
that $h$ be an eigenvector of $B$ or that $B$ commute with $J$.
These are deterministic tensor tests; no probabilistic representation
is needed. Two radial estimates supply the spectral input. Besides
$\lambda_1^0>n^2\kappa^2$ (Lemma~\ref{lem:smooth-radial-bound}), a
rational radial field on Drach's containing ball gives, by
Lemma~\ref{lem:rationalradial},
\begin{equation}\label{eq:intro-rationalradial}
 \lambda_1^0>\bigl((n+1)\kappa+\kappa^{-1}\bigr)^2
 \qquad(n\ge2,\ \kappa>1),
\end{equation}
which with $\operatorname{osc}V\le4n$ supplies $\ell(V)>L_m$ once
$\kappa\ge\kappa_n$, in particular for $\kappa=\kappa_*$. For the
uniform theorem we use the barrier
\[
 b_{\rm unif}(s)=2\sqrt{1+s}-2d+1-m\log\frac{\sqrt{1+s}+m/2}{d+m/2},
 \qquad d=\frac{9m}{10},\quad m=2n-1.
\]
The contact test retains the longitudinal and Hopf Hessian terms and the
exact trace $T=\mu+s-2nb$. Concavity of the unrestricted trace minimum
(Lemma~\ref{geo:three}) reduces all gradient orientations to the null direction, the Hopf
direction, and their common orthogonal complement. The normalized barrier
$b_{\rm unif}/m$ depends on dimension only through an additive shift, so
the endpoint costs are compared with one limiting profile by differential
inequalities in $1/m$ (Appendix~\ref{app:freshsigns}). A balanced
constant test (Proposition~\ref{prop:balancedcriterion}) then yields Theorem~\ref{thm:uniformintro}.

The barrier is continued along a nested domain family and then along
the potential path $tV$, with $\ell(V)$ controlling every spectral value
(Section~\ref{sec:global}). The gap estimates use a two-point maximum
principle (Proposition~\ref{prop:deterministicmodulus}) with boundary penalty $\varepsilon e^{Mt}(f(x)+f(y)+C)$,
the sharp diameter bound (Proposition~\ref{prop:sharpdiameter})
\begin{equation}\label{eq:diameterintro}
 \II\ge\kappa g,\quad\kappa>1
 \quad\Longrightarrow\quad
 \operatorname{diam}\Omega\le\arctan(2/\kappa),
\end{equation}
attained by geodesic balls, an exact one-dimensional comparison, and a
quartic moment inequality (Propositions~\ref{prop:newscalarcomparison}
and~\ref{prop:quartic-moment-gap}).

The hypothesis $\II\ge\kappa_*g$ forces small domains: by
\eqref{eq:diameterintro}, $D:=\operatorname{diam}\Omega\le\arctan(2/\kappa_*)
<2/\kappa_*<3/4$, since $\kappa_*^2=44/5>64/9$. The proof keeps the projective curvature terms explicitly rather than
absorbing them into an unspecified small-domain error. In particular,
the Hopf contact inequality is checked for every gradient size. The gap bounds are not claimed to be sharp; for comparison, the size $3\pi^2/D^2$ known for convex
spherical domains \cite{SWW,HWZ} satisfies $3\pi^2/D^2>16\pi^2/3$ on such domains. Corollary~\ref{cor:freshuniformgap}
records how the diameter bound improves the Hessian-only gap estimate.

We next relate these results to earlier work. Log-concavity of Dirichlet ground states on convex Euclidean domains
goes back to Brascamp--Lieb \cite{BL}, and lower bounds for the gap
to Singer--Wong--Yau--Yau \cite{SWYY} and Yu--Zhong \cite{YZ}; on
spheres see Lee--Wang \cite{LeeWang}. Two-point and modulus-of-continuity
methods are surveyed by Andrews \cite{AndrewsSurvey}.
Andrews--Clutterbuck \cite{AC} proved the Euclidean fundamental gap
conjecture for convex Schr\"odinger potentials, and Gong--Li--Luo
\cite{GLL} gave a reflection-coupling proof. The spherical estimates
of Seto--Wang--Wei \cite{SWW} and He--Wei--Zhang \cite{HWZ} use
geometric analysis; Cho--Wei--Yang \cite{CWY} use coupling with a
potential modulus. Aryan--Law \cite{AryanLaw} prove concavity
for elliptic and parabolic equations on convex domains in $\CP^n$ and
other locally symmetric spaces of nonnegative curvature; their
results do not cover Dirichlet eigenfunctions. Modified-Hessian continuation is developed in
\cite{KNTW}. In $\CP^2$, Xia \cite{Xia} obtained the Hessian bound $8/5$
and gap $46/5$ for $1$-convex domains by a gradient-dependent barrier and a
reduction of the gradient orientations to three vertices.
Lemma~\ref{geo:three} extends this three-vertex reduction to every
dimension, and our contact test uses finite rotational controls,
including a transverse rotation, in place of his determinant identity.
For $n=2$ his curvature condition and Hessian bound are stronger than
ours; for $n\ge3$, the results below appear to be the first such
estimates without symmetry assumptions. Hsu \cite{Hsu}
establishes strict log-concavity under $U(1)\times U(n-1)$ symmetry in
dimensions two, three, and four; here boundary curvature replaces that
symmetry assumption. The projective curvature split in the distance
equation occurs in Baudoin--Cho--Yang \cite{BCY} and Li--Wang \cite{LW};
explicit scalar estimates are studied by Rutkowski--Seto \cite{RS}.
The diameter bound uses Drach's rolling theorem \cite{Drach} on complex
projective lines.

The paper is organized as follows. Sections~\ref{sec:geometry}--\ref{sec:gap} give spectral inputs and the
diameter bound, the contact identities, continuation, and the passage
from Hessian bounds to gaps. Section~\ref{sec:alldim} proves
Theorem~\ref{thm:schall}; Section~\ref{sec:freshuniform} and
Appendix~\ref{app:freshsigns} prove Theorem~\ref{thm:uniformintro}.

Throughout, $N=2n$ is the real dimension and $m=2n-1$ the real dimension of the
orthogonal complement of a unit vector. We write $f=-\log\phi_1$,
$p=\nabla f$, $s=|p|^2$, $r=\sqrt{1+s}$, $H=\Hess f$, and $\mu$ for the
spectral value entering a contact computation. For a barrier $b(s)$,
$\alpha=b'(s)$, $\beta=b''(s)$, $A=\alpha+2\beta s$, and
$\varkappa=\alpha^2+b\beta$. At a null contact with null vector $v$,
$h=Jv$, $B=(H-bg)|_{v^\perp}$, $T=\tr B$, $p=av+w$ with $w\perp v$,
$\tau=|w|^2$ and $x=\langle w,h\rangle^2$. Letters such as $A,Q,R,W$ are also used for
constants local to a single statement or proof; $\lambda_1^{\rm ND}$
denotes a Neumann--Dirichlet eigenvalue on $(0,D)$.

\section*{AI Disclosure}
In the final stage of preparing this paper, the author used AI tools
(Claude, Anthropic) for two purposes. First, the scalar inequalities
were checked numerically as a consistency test. In the course of these
checks the numerics indicated that a better bound than the one proved
here should hold; since that improvement could not be proved by hand,
the paper does not rely on it. Second, the tools
were used to improve the writing and the figures. The author has checked all
mathematical content and takes full responsibility for it.

\section*{Acknowledgements}
The author is grateful to his postdoctoral mentor, Prof.~Guofang Wei,
for introducing him to this problem in the summer of 2021.

\section{Geometry and spectral inputs}\label{sec:geometry}
Except for Lemma~\ref{lem:sinesquare}(b) and Corollary~\ref{cor:radialconversion}, the eigenvalues in this section are those of the Dirichlet Laplacian; they provide potential-independent spectral allowances for the main theorems.
Here $J$ is the parallel complex structure of $(\CP^n,g)$, with $J^2=-\Id$.
We use the curvature convention $R(U,V)W=\nabla_U\nabla_VW-\nabla_V\nabla_UW-\nabla_{[U,V]}W$. With holomorphic sectional curvature normalized to four (see \cite[Chapter~IX, Sections~6--7]{KN}),
\[
 R(U,V)W=\langle V,W\rangle U-\langle U,W\rangle V+\langle JV,W\rangle JU-\langle JU,W\rangle JV-2\langle JU,V\rangle JW.
\]
Thus $1\le\sec\le4$, $\Ric=(2n+2)g$, $\nabla R=0$, and the injectivity radius is $\pi/2$. For a unit vector $v$ and $h=Jv$, the Jacobi operator is
\begin{equation}\label{eq:jacobi}
 R_v=\Id-v\otimes v+3h\otimes h.
\end{equation}

\begin{lemma}\label{lem:boundaryconnected}
The boundary of a smooth connected $\kappa$-convex domain in $\CP^n$, with $\kappa>0$, is connected.
\end{lemma}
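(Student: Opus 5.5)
Take $\Omega$ to be a smooth connected domain whose boundary satisfies $\II\ge\kappa g$ with $\kappa>0$. We show $\partial\Omega$ is connected by a Morse-theoretic argument applied to a distance-type function. A cleaner route uses the standing assumption that $\Omega$ is a proper domain with nonempty boundary, together with the following argument by contradiction.

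Suppose $\partial\Omega$ has at least two components $\Sigma_1,\Sigma_2$. Fix $\Sigma_1$ and consider a shortest geodesic $\sigma\colon[0,\ell]\to\overline\Omega$ realizing the distance from $\Sigma_1$ to $\partial\Omega\setminus\Sigma_1$. Such a geodesic exists by compactness. Its interior lies in $\Omega$, because any touching point would shorten it, and its endpoints meet the boundary orthogonally, with $\sigma'(0)=\nu$ on $\Sigma_1$ and $\sigma'(\ell)=-\nu$ on the other component.

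We then apply the second variation formula for length with free endpoints on the boundary. We vary along a parallel unit field $E\perp\sigma'$. Write $R$ for the Jacobi term, so that $\langle R(E,\sigma')\sigma',E\rangle\ge1$ by \eqref{eq:jacobi}. The boundary terms equal $-\II(E,E)$ at each end with our sign convention, so each is $\le-\kappa$. Hence
\[
L''(0)=-\int_0^\ell\langle R(E,\sigma')\sigma',E\rangle\,dt-\II_{\sigma(0)}(E,E)-\II_{\sigma(\ell)}(E,E)\le-\ell-2\kappa<0 .
\]
This contradicts minimality, provided the varied curves still join $\Sigma_1$ to the remaining boundary. That holds because we move the endpoints along the respective boundary components.

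The main point to check is the sign of the boundary terms. For a curve leaving $\Sigma_1$ in the inward direction, the endpoint contribution of the second variation equals $\langle\nabla_EE,\sigma'\rangle$ evaluated at the endpoint. With $\sigma'=\nu$ this is $-\langle\nabla_EE,\nu\rangle$ up to sign, and the convention $\II(X,Y)=-\langle\nabla_X\nu,Y\rangle=\langle\nabla_XY,\nu\rangle$ gives a negative contribution $-\II(E,E)$. The other end gives the same. Geodesic convexity is not even needed. It remains only to handle possible conjugate points, but these only strengthen the negativity.

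The connectedness of $\Omega$ is what guarantees that $\partial\Omega\setminus\Sigma_1$ is reachable: a minimizing path within $\overline\Omega$ exists because $\overline\Omega$ is a connected compact length space. If needed, one can instead take a minimizer in $\CP^n$ and argue it stays in $\overline\Omega$, but the intrinsic version suffices.
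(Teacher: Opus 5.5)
Your argument is correct, and it takes a genuinely different route from the paper.

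The paper's argument is analytic. It solves the Dirichlet problem for a harmonic $w$ with different constant values on two boundary components. It then applies the constant-data case of Reilly's formula, $\int_\Omega(|\Hess w|^2+\Ric(\nabla w,\nabla w))=-\int_{\partial\Omega}\mathcal H w_\nu^2\le0$, so that $\Ric>0$ forces $w$ to be constant. This needs only mean convexity of the boundary, but it relies on elliptic solvability, boundary regularity and Reilly's identity.

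You give instead a Frankel-type second-variation argument for a shortest path in $\overline\Omega$ from $\Sigma_1$ to $\partial\Omega\setminus\Sigma_1$. It is elementary and purely geometric. Your sign computation $L''(0)=-\int_0^\ell\langle R(E,\sigma')\sigma',E\rangle\,dt-\II_{\sigma(0)}(E,E)-\II_{\sigma(\ell)}(E,E)$ is right. Summing it over an orthonormal family of parallel fields $E\perp\sigma'$ gives $-\int_0^\ell\Ric(\sigma',\sigma')\,dt-\mathcal H(\sigma(0))-\mathcal H(\sigma(\ell))$ with $\mathcal H=\tr\II$. So your method also covers the Ricci/mean-curvature setting the paper uses.

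The price is some care with minimizers in a manifold with boundary, which a written version should address.
\begin{itemize}
\item The strict bound $\II\ge\kappa g>0$ implies that an ambient geodesic tangent to $\partial\Omega$ lies locally outside $\overline\Omega$. Hence $\sigma$, extended as an ambient geodesic, meets the boundary transversally, and first variation then yields orthogonality.
\item Minimality is among curves in $\overline\Omega$, so the varied curves must stay in $\overline\Omega$, not merely have endpoints on the right components. Near the ends you can arrange this by varying within Fermi coordinates $(y,t)\mapsto\exp_y(t\nu(y))$, $y\in\partial\Omega$, taking the endpoint curves to be geodesics of $\partial\Omega$, whose ambient acceleration is $\II(E,E)\nu$. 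In the middle, $\sigma$ stays in a compact subset of $\Omega$, so small variations remain inside.
\end{itemize}

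Your closing remark about conjugate points is unnecessary: $L''(0)<0$ for one admissible variation already contradicts minimality.
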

\begin{proof}
If there were more than one boundary component, solve the harmonic Dirichlet problem with different constant values on two components. For the resulting nonconstant harmonic function $w$, all tangential derivatives vanish on the boundary. With the outward normal $\nu_{\rm out}$ and positive outward mean curvature $\mathcal H$, one has $w_{\nu\nu}=-\mathcal H w_\nu$, writing $w_\nu=\partial_{\nu_{\rm out}}w$. The integrated Bochner identity, equivalently the constant-boundary-data case of Reilly's formula \cite{Reilly}, gives
\[
 \int_\Omega\bigl(|\Hess w|^2+\Ric(\nabla w,\nabla w)\bigr)
 =-\int_{\partial\Omega}\mathcal H w_\nu^2\le0.
\]
Positive Ricci curvature forces $w$ to be constant, a contradiction.
\end{proof}
\subsection{A sharp diameter bound}
\label{sec:newgeometry}
\begin{proposition}\label{prop:sharpdiameter}
If $\II\ge\kappa g$ with $\kappa>1$, then
\begin{equation}\label{eq:sharpdiameter}
 \operatorname{diam}\Omega\le D_\kappa:=\arctan(2/\kappa).
\end{equation}
The constant is attained by a geodesic ball of radius $D_\kappa/2$.
\end{proposition}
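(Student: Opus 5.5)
Take two points $x,y\in\overline\Omega$ realizing the diameter $D$; both lie on $\partial\Omega$. The plan is to reduce to a one-dimensional comparison along the minimizing geodesic $\sigma$ joining them, using Drach's rolling theorem on a totally geodesic complex projective line, as the introduction indicates. Since $\Omega$ is geodesically convex and $D<\pi/2$ (the injectivity radius), $\sigma$ is unique and lies in $\overline\Omega$. Maximality of $D$ forces $\dot\sigma$ to be normal to $\partial\Omega$ at both endpoints, by the first variation of distance. Let $\Sigma\cong\CP^1$ be the complex line through $x$ tangent to $\dot\sigma$. It is totally geodesic, a round sphere of curvature $4$ (radius $1/2$), and contains $\sigma$.

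\textbf{Step 1.} The slice $\Omega\cap\Sigma$ is a convex domain in $\Sigma$ whose boundary curve has geodesic curvature $k\ge\kappa$ at $x$ and $y$. Since $\Sigma$ is totally geodesic, the curvature of $\partial\Omega\cap\Sigma$ is $\II$ of the unit tangent plus a nonnegative normal contribution. Near $x$ and $y$ the intersection is transverse because $\dot\sigma\in T\Sigma$ is the normal. So $k\ge\kappa$ holds along the whole slice boundary wherever it is transverse. To get a globally clean slice, I would either argue transversality everywhere from the convexity of $\Omega$, or apply Drach's theorem only near the normal points.

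\textbf{Step 2.} By Drach's theorem \cite[Theorem~A]{Drach} on the sphere of curvature $4$, a convex curve with $k\ge\kappa$ lies inside a geodesic disk of that sphere whose boundary has curvature $\kappa$ and is tangent at any boundary point. On the sphere of curvature $4$, a circle of radius $r$ has curvature $2\cot 2r$, so this disk has radius $r_\kappa$ with $2\cot 2r_\kappa=\kappa$, i.e. $r_\kappa=\tfrac12\operatorname{arccot}(\kappa/2)$. Hence the slice is contained in the disk tangent at $x$, and $y$ lies in it. Along $\sigma$, which is normal at $x$ and so passes through the disk's center, we get $d(x,y)\le 2r_\kappa=\operatorname{arccot}(\kappa/2)=\arctan(2/\kappa)$.

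\textbf{Step 3 (sharpness).} Geodesic balls $B_r(o)$ in $\CP^n$ have principal curvatures $2\cot2r$ in the Hopf direction and $\cot r>2\cot 2r$ otherwise. So $\II\ge\kappa g$ exactly when $2\cot 2r\ge\kappa$, which gives diameter $2r=\arctan(2/\kappa)$ at equality.

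The condition $\kappa>1$ should enter in ensuring $r_\kappa<\pi/4$, so that the disk is a proper cap and $\sigma$ is minimizing; it may also be needed for Drach's hypotheses. The main obstacle is Step 1: the slice $\partial\Omega\cap\Sigma$ may fail to be a smooth curve away from the normal points. I would first try to avoid this by applying the rolling argument locally. Alternatively, I would compare $\Omega$ with the ball from Lemma~\ref{lem:smooth-radial-bound}, but that gives only the weaker radius bound, so a genuine slice argument seems needed.
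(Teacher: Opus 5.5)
Your route is the paper's: slice by a totally geodesic complex line, so that the slice boundary has inward geodesic curvature $k=\II(T,T)/|\operatorname{proj}_{T\Sigma}\nu|\ge\kappa$. You then apply Drach's rolling theorem in $\mathbb S^2(4)$ with characteristic radius $\frac12\arctan(2/\kappa)$, and use geodesic balls for sharpness. As written, however, two steps are not closed.

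First, you take $D<\pi/2$ for granted. You need it on the closure: for uniqueness of $\sigma$, for $\sigma\subset\Sigma$, for smoothness of the distance in the first-variation argument, and later for convexity of the slice. Geodesic convexity of the open set does not supply it. The paper gets it from Drach's theorem in $\CP^n$ itself (sectional curvature $\ge1$, connected boundary by Lemma~\ref{lem:boundaryconnected}), which puts $\Omega$ in a ball of radius $\arctan(1/\kappa)<\pi/4$. This is precisely where $\kappa>1$ is used. Your guess that $\kappa>1$ enters through $r_\kappa<\pi/4$ is off, since $r_\kappa=\frac12\arctan(2/\kappa)<\pi/4$ for every $\kappa>0$. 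So the containing ball you mention at the end is not an alternative but a required first step.

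Second, Drach's theorem is global. It needs the entire slice boundary to be a smooth closed curve with $k\ge\kappa$, and the slice to have connected boundary. Applying it ``only near the normal points'' therefore cannot work: second-order information at $x$ says nothing about where $y$ lies. Your other option does work, and it is short.

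Suppose a complex line $\Sigma$ meets $\Omega$ and $T_p\Sigma\subset T_p\partial\Omega$ at some $p\in\Sigma\cap\partial\Omega$. Take $q\in\Sigma\cap\Omega$. Since $d(p,q)<\pi/2$, the minimizing geodesic $\gamma$ from $p$ to $q$ is unique and lies in the complex line spanned by $p,q$, namely $\Sigma$. It also lies in $\overline\Omega$, as a limit of segments from interior points. But $\dot\gamma(0)\in T_p\partial\Omega$, and for the inward distance $\delta$ one has $(\delta\circ\gamma)(0)=(\delta\circ\gamma)'(0)=0$ and $(\delta\circ\gamma)''(0)=-\II(\dot\gamma(0),\dot\gamma(0))<0$. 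So $\gamma$ leaves $\overline\Omega$ at once, a contradiction.

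Hence every complex line meeting $\Omega$ is transverse to $\partial\Omega$ everywhere; your $\Sigma$ meets $\Omega$ because $\dot\sigma(0)$ is the inward normal. The slice $\Omega\cap\Sigma$ is then a smooth, geodesically convex, hence contractible, domain of $\Sigma$ with a single boundary circle, and Drach applies. This would replace both the paper's Gauss--Bonnet step and its perturbation of the line by unitary isometries.

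Finally, normality of $\sigma$ at the endpoints is not needed for the estimate itself. Once the slice lies in a cap of radius $r_\kappa$, the triangle inequality through the center gives $d(x,y)\le2r_\kappa$.
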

\begin{proof}
Lemma~\ref{lem:boundaryconnected} supplies connected boundary.
Drach's outer rolling theorem \cite[Theorem~A]{Drach} gives a containing
ball of radius $\arctan(1/\kappa)<\pi/4$; its strict injectivity
condition holds because $\operatorname{inj}(\CP^n)=\pi/2$. Thus $\Omega$
contains no complete complex line, and every interior pair has distance
strictly less than $\pi/2$.

Let $\ell\cong\CP^1=\mathbb S^2(4)$ meet $\partial\Omega$
transversely. A component $E$ of $\Omega\cap\ell$ is a proper smooth
surface domain. If $T$ is its unit boundary tangent, its inward
geodesic curvature is
\[
 k_E(T)=\frac{\II(T,T)}{|\operatorname{proj}_{T\ell}\nu|}
       \ge\kappa.
\]
The numerator identity uses that $\ell$ is totally geodesic.
Gauss--Bonnet gives
$2\pi\chi(E)=4\operatorname{area}(E)+\int_{\partial E}k_E>0$.
A connected proper domain in the sphere has genus zero, so $E$ has
one boundary component and is a disk. Apply the same rolling theorem
inside the curvature-four sphere. Its characteristic radius is
\[
 R_\kappa=\tfrac12\arctan(2/\kappa),\qquad
 2R_\kappa<\pi/2=\operatorname{inj}(\mathbb S^2(4)).
\]
Thus $E$ lies in a spherical cap of radius $R_\kappa$ and has diameter
at most $2R_\kappa$.

Take any distinct $x,y\in\Omega$. Their unique minimizing geodesic
lies in the complex projective line they span, and geodesic convexity
places that segment in $\Omega$. If this line is not transverse to
the boundary, choose a projective unitary isometry $U$ arbitrarily
close to the identity for which $U\ell$ is transverse and $Ux,Uy$
remain in $\Omega$. Such choices exist by parametric transversality:
the evaluation map from the projective unitary group times $\ell$
to $M$ is a submersion. The geodesic from $Ux$ to $Uy$ lies in a
single component of $\Omega\cap U\ell$; its length equals $d(x,y)$.
The preceding cap comparison gives $d(x,y)\le2R_\kappa$.
Taking the supremum proves \eqref{eq:sharpdiameter}.

For a ball of radius $R_\kappa<\pi/4$, the principal curvatures are
$2\cot(2R_\kappa)=\kappa$ in the Hopf direction and
$\cot R_\kappa=(\kappa+\sqrt{\kappa^2+4})/2>\kappa$ in the
remaining directions. The ball is geodesically convex and has
diameter $2R_\kappa$, proving sharpness.
\end{proof}

By a Riccati comparison in the Hopf direction, Xia
\cite[Lemma~4.1]{Xia} showed that the inradius is at most
$\frac12\arctan(2/\kappa)$ and deduced
$\lambda_1^0\ge2\pi^2/\arctan^2(2/\kappa)$;
Proposition~\ref{prop:sharpdiameter} bounds the diameter by twice this
quantity. The caps in different complex lines may have different centers, so the radial estimate below uses the containing radius $\arctan(1/\kappa)$ rather than half the diameter bound.

\subsection{A radial spectral bound}
\begin{lemma}[A projective radial vector-field bound]
\label{lem:smooth-radial-bound}
Suppose $\Omega\subset B_R(o)\subset\mathbb{CP}^n$, with
$R<\pi/2$. Then
\begin{equation}\label{eq:smooth-radial-spectral}
 \lambda_1^0(\Omega)>n^2\cot^2R.
\end{equation}
In particular, under the standing geometric hypotheses,
$\mathrm{II}\ge\kappa g$ with $\kappa>1$ gives
$\lambda_1^0(\Omega)>n^2\kappa^2$.
\end{lemma}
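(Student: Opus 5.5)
The plan is the standard vector-field (Barta-type) argument. If $X$ is a smooth vector field on $\overline\Omega$ with $\operatorname{div}X-|X|^2\ge c$, then for $u\in C_c^\infty(\Omega)$ we integrate $\operatorname{div}(u^2X)$ to zero:
\[
 \int_\Omega u^2\operatorname{div}X=-\int_\Omega 2u\langle\nabla u,X\rangle
 \le\int_\Omega|\nabla u|^2+\int_\Omega u^2|X|^2 .
\]
This gives $\int|\nabla u|^2\ge c\int u^2$, hence $\lambda_1^0\ge c$.

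For the field we take $X=n\cot\rho\,\nabla\rho$ with $\rho=d(o,\cdot)$. Since $\rho<R<\pi/2$ on $\overline\Omega$, $X$ is smooth away from $o$. We use the eigenvalues of $\Hess\rho$ recalled in the introduction: $2\cot2\rho$ on $J\nabla\rho$ and $\cot\rho$ on the remaining $2n-2$ directions. These give
\[
 \Delta\rho=(2n-1)\cot\rho-\tan\rho .
\]
Then, with $\frac{d}{d\rho}\cot\rho=-\csc^2\rho=-1-\cot^2\rho$,
\[
 \operatorname{div}X-|X|^2
 =n\bigl(-1-\cot^2\rho\bigr)+n\cot\rho\bigl((2n-1)\cot\rho-\tan\rho\bigr)-n^2\cot^2\rho .
\]
Simplifying,
\[
 -n-n\cot^2\rho+n(2n-1)\cot^2\rho-n-n^2\cot^2\rho
 =(n^2-2n)\cot^2\rho-2n .
\]
This is not $n^2\cot^2\rho$, so the coefficient of the field must be tuned.

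Take instead $X=a\cot\rho\,\nabla\rho$. The same computation gives
\[
 \operatorname{div}X-|X|^2=\bigl(a(2n-2)-a^2\bigr)\cot^2\rho-2a .
\]
The coefficient $a(2n-2)-a^2$ is maximized at $a=n-1$, where it equals $(n-1)^2$, so the pure cotangent field falls short of $n^2$. The alternative is to absorb the $-\tan\rho$ Hopf term through a field of the form $X=\psi(\rho)\nabla\rho$. Writing $X=\nabla\log w$ for a radial $w$, the condition $\operatorname{div}X-|X|^2\ge c$ becomes $-\Delta w^{-1}\ge c\,w^{-1}$ up to sign, i.e.\ $w^{-1}$ is a radial supersolution. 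The natural candidate is the radial first eigenfunction structure of balls in $\CP^n$. In the coordinate $t=\cos^2\rho$, eigenfunctions of balls are hypergeometric, and the powers $\cos^{k}\rho$ are explicit. For $\psi=n\tan\rho$-type choices, $w=\cos^{-n}\rho$ grows, which is the wrong sign. So we would use $\psi=-\tfrac{d}{d\rho}\log\phi$ with $\phi$ mixing $\cot\rho$ and $\tan\rho$, e.g.\ $\psi=a\cot\rho-b\tan\rho$.

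We then compute $\operatorname{div}X-|X|^2$ in the variable $c=\cot^2\rho$ and $\tan^2\rho=1/c$. The aim is to choose $a,b$ so that the leading $\cot^2$ coefficient plus the cross terms give at least $n^2\cot^2\rho$ pointwise for $\rho<\pi/2$. Strictness then comes from $\cot\rho>\cot R$ on $\Omega$ and from the monotonicity of the resulting expression in $\rho$. The singularity at $o$, if $o\in\Omega$, is handled by a cutoff: $|X|\sim a/\rho$ with $a\le n-1<2n-1$ is integrable against $u^2$ in real dimension $2n\ge2$, after a Hardy-type justification. If no choice reaches $n^2$ exactly, the fallback is to adjust the estimate by replacing $|X|^2$ via Young's inequality with a parameter.

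The main obstacle is this algebraic tuning: the Hopf direction contributes $-\tan\rho$ rather than $\cot\rho$, so only $2n-1$ directions carry the full $\cot\rho$. Reaching the constant $n^2$, rather than $(n-1)^2$ or $(n-\tfrac12)^2$, requires the right correction term. I would first try $\psi=n\cot\rho+\beta\tan\rho$ and check the sign of the resulting quadratic in $\cot^2\rho$.

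The ``in particular'' then follows from Drach's theorem exactly as in the proof of Proposition~\ref{prop:sharpdiameter}: $\Omega\subset B_R(o)$ with $R=\arctan(1/\kappa)$, so $\cot R=\kappa$ and $\lambda_1^0>n^2\kappa^2$.
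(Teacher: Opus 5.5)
There is a genuine gap: you never exhibit a field that works, and the target you set for it cannot be met. You seek $\operatorname{div}X-|X|^2\ge n^2\cot^2\rho$ pointwise. Whenever $o\in\Omega$ (for instance $\Omega=B_{R'}(o)$), no smooth field on a punctured neighborhood of $o$ can satisfy this. Apply your Barta identity to $u$ supported near $o$ but away from $o$, where $X$ is smooth. It would give $\int|\nabla u|^2\ge n^2\int u^2\rho^{-2}-C\int u^2$. Rescaled near-extremals concentrating at $o$ then contradict the sharp Hardy constant $(n-1)^2$ in real dimension $2n$. That is exactly why your cotangent family tops out at $(n-1)^2$. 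Adding a tangent term does not help: for $\psi=a\cot\rho+\beta\tan\rho$ one gets
\[
 \operatorname{div}X-|X|^2=\bigl((2n-2)a-a^2\bigr)\cot^2\rho-\beta^2\tan^2\rho-2a+2\beta(n-a).
\]
So your suggested $\psi=n\cot\rho+\beta\tan\rho$ merely subtracts $\beta^2\tan^2\rho$ from $(n^2-2n)\cot^2\rho-2n$.

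The missing idea is that the lemma only asks for the constant $n^2\cot^2R$. The coefficient of the field may therefore depend on $R$, and the cotangent part should be dropped ($a=0$ above). This is the paper's proof. Take $X=\beta\nabla w$ with $w=-\log\cos\rho$. Then $\Delta w=\sec^2\rho+\tan\rho\,\Delta\rho=2n$ and $|\nabla w|^2=\tan^2\rho<\tan^2R$ on $\Omega$. Hence $\operatorname{div}X-|X|^2=2n\beta-\beta^2\tan^2\rho>2n\beta-\beta^2\tan^2R$, and $\beta=n\cot^2R$ makes the right side $n^2\cot^2R$.

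You discarded this family through a sign slip. With $X=-\nabla\log\phi$ one has $\operatorname{div}X-|X|^2=-\Delta\phi/\phi$. For $X=\beta\tan\rho\,\nabla\rho$ the relevant function is $\phi=\cos^\beta\rho$, which is positive and decreasing; you looked at its reciprocal. What actually fails is the $R$-independent choice $\beta=n$: it gives only $2n^2-n^2\tan^2R$, which does not scale like $R^{-2}$.

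This field is smooth at $o$, since $w$ is a smooth function of $\rho^2$, so no cutoff or Hardy-type justification is needed. Strictness follows from $\int|\nabla u|^2=\int|\nabla u+Xu|^2+\int(\operatorname{div}X-|X|^2)u^2$ applied to the first eigenfunction, because $\tan^2\rho<\tan^2R$ at every point of $\Omega$. Your deduction of the ``in particular'' from Drach's containing ball of radius $\operatorname{arccot}\kappa$ is fine.
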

\begin{proof}
Let $\rho=d(o,\cdot)$. The function
$w=-\log\cos\rho$ is smooth on $B_R(o)$, including the center:
$\rho^2$ is smooth below the injectivity radius and
$-\log\cos\sqrt t$ is smooth at $t=0$. In the normalization of
holomorphic sectional curvature four,
\[
 \Delta\rho=(2n-1)\cot\rho-\tan\rho,
 \qquad \Delta w=\sec^2\rho+\tan\rho\,\Delta\rho=2n.
\]
Write $\kappa=\cot R$ and take the smooth vector field
$X=a\nabla w$, $a=n\kappa^2$. It satisfies
$\operatorname{div}X=2na$ and $|X|^2=a^2\tan^2\rho$.
For $u\in H_0^1(\Omega)$, square completion and density give
\[
 \int_\Omega|\nabla u|^2
 =\int_\Omega|\nabla u+Xu|^2
  +\int_\Omega(2na-a^2\tan^2\rho)u^2.
\]
Inside the open ball, $\tan^2\rho<\kappa^{-2}$, so the last
integrand has coefficient strictly greater than
$2na-a^2\kappa^{-2}=n^2\kappa^2$.
Applying this identity to the normalized first Dirichlet eigenfunction
proves the strict inequality; the field is smooth on a neighborhood
of the closed containing ball.
For $\kappa>1$, Drach's theorem \cite[Theorem~A]{Drach}, applied with
sectional lower bound $1$ and connected boundary
(Lemma~\ref{lem:boundaryconnected}), gives the containing ball with
$R=\operatorname{arccot}\kappa$ and $2R<\pi/2$.
\end{proof}

\subsection{Sine-square test functions}
Let $\Omega\subset B_R(o)$, $R<\pi/4$, put $Q=\sin^2R$, and let
$z=\sin^2\rho_o$. The radial Laplacian is
\[
 \Delta F(z)=4\{z(1-z)F''(z)+(n-(n+1)z)F'(z)\}.
\]
\begin{lemma}[Sine-square spectral bounds]\label{lem:sinesquare}
\textup{(a)} If $Q<n/(n+1)$, then $\lambda_1^0(\Omega)\ge4n/Q$. In
particular, under the standing hypotheses, $\II\ge\kappa g$ with
$\kappa>1$ gives
\begin{equation}\label{an:eq:geomspectral}
 \lambda_1^0(\Omega)\ge4n(1+\kappa^2).
\end{equation}
\textup{(b)} If $Q<n/(n+1)$ and, for some $L_0$,
\begin{equation}\label{eq:potentialspectraltest}
 \max V-V(x)\le4\frac{n-(n+1)z(x)}{Q-z(x)}-L_0
 \qquad(x\in\Omega),
\end{equation}
then $\ell(V)\ge L_0$.

\end{lemma}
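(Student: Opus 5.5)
The plan is a Barta-type argument with the single test function $u=Q-z$, where $z=\sin^2\rho_o$. This $u$ is smooth on a neighborhood of $\overline\Omega$, since $R<\pi/4$ is below the injectivity radius $\pi/2$ and $\sin^2\rho$ is smooth at the center. It is strictly positive on $\overline\Omega$ provided $z<Q$ there. I will make sure of this by shrinking $R$ slightly, or equivalently by replacing $Q$ with $\sin^2R'$ for some $R'>R$ still satisfying the hypotheses and letting $R'\downarrow R$ at the end.

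\textbf{The test function.} With $F(z)=Q-z$ we have $F'=-1$ and $F''=0$, so the stated radial Laplacian gives $\Delta u=-4\bigl(n-(n+1)z\bigr)$ and hence
\[
 \frac{-\Delta u}{u}=\frac{4\bigl(n-(n+1)z\bigr)}{Q-z}.
\]

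\textbf{Barta's inequality.} Let $\phi$ be the positive first Dirichlet eigenfunction of $-\Delta+V$ and write $\phi=u\psi$ with $\psi\in H_0^1(\Omega)$. Integrating by parts,
\[
 \int_\Omega\bigl(|\nabla\phi|^2+V\phi^2\bigr)=\int_\Omega u^2|\nabla\psi|^2+\int_\Omega\Bigl(\frac{-\Delta u}{u}+V\Bigr)\phi^2 .
\]
Dropping the nonnegative first term gives
\[
 \lambda_1(-\Delta+V)\ge\inf_\Omega\Bigl(\frac{-\Delta u}{u}+V\Bigr).
\]

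\textbf{Part (a).} Take $V=0$. The claim is $4(n-(n+1)z)/(Q-z)\ge4n/Q$ for $0\le z<Q$. Multiplying out, this is equivalent to
\[
 Q\bigl(n-(n+1)z\bigr)\ge n(Q-z)\iff z\bigl(n-(n+1)Q\bigr)\ge0,
\]
which holds precisely because $Q\le n/(n+1)$. Hence $\lambda_1^0\ge4n/Q$.

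Under the standing hypotheses, Drach's theorem as used in Lemma~\ref{lem:smooth-radial-bound} gives a containing ball with $R=\operatorname{arccot}\kappa<\pi/4$. Then $Q=\sin^2R=1/(1+\kappa^2)<1/2\le n/(n+1)$, and \eqref{an:eq:geomspectral} follows.

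\textbf{Part (b).} The same identity with general $V$ gives
\[
 \lambda_1(-\Delta+V)\ge\inf_\Omega\Bigl(\frac{4\bigl(n-(n+1)z\bigr)}{Q-z}+V\Bigr).
\]
Hypothesis \eqref{eq:potentialspectraltest} rearranges pointwise to $\frac{4(n-(n+1)z)}{Q-z}+V(x)\ge\max V+L_0$. Therefore $\ell(V)=\lambda_1-\max V\ge L_0$.

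\textbf{Main obstacle.} The only delicate point is justifying the integration by parts. This needs $u$ bounded below by a positive constant on $\overline\Omega$, which is exactly the role of the $R'\downarrow R$ approximation; the bounds then pass to the limit by continuity in $Q$. With that in place, the argument reduces to the elementary inequality checked in part (a).
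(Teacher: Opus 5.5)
Your argument is the paper's: the same test function $\psi=Q-z$, the same ground-state substitution identity, and the same elementary comparison $z\{n-(n+1)Q\}\ge0$, which the paper phrases as monotonicity of the fraction in $z$. The only difference is technical. The paper applies the identity to $u\in C_c^\infty(\Omega)$, where $Q-z$ is automatically bounded below on the support, and concludes by density, so it needs no $Q'\downarrow Q$ approximation. In your route, the limit in (b) is not mere continuity in $Q$: one has $\frac{n-(n+1)z}{Q'-z}<\frac{n-(n+1)z}{Q-z}$, and the latter may be unbounded on $\Omega$. You would therefore need a short split into $z\le Q-\eta$, where the convergence is uniform, and $z>Q-\eta$, where the fraction exceeds $\{n-(n+1)Q\}/(Q'-Q+\eta)$ and is hence large.
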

\begin{proof}
For $\psi=Q-z$ one has
$-\Delta\psi/\psi=4\{n-(n+1)z\}/(Q-z)\ge4n/Q$, since the fraction has
derivative $\{n-(n+1)Q\}/(Q-z)^2>0$. For $u\in C_c^\infty(\Omega)$,
\[
 \int\bigl(|\nabla u|^2+(V-\max V)u^2\bigr)
 =\int\psi^2|\nabla(u/\psi)|^2
  +\int\Bigl(-\frac{\Delta\psi}{\psi}+V-\max V\Bigr)u^2,
\]
and density gives (b), and (a) with $V=0$. Drach's outer rolling
theorem \cite[Theorem~A]{Drach}, with sectional lower bound $1$, gives
a containing ball of radius $R=\operatorname{arccot}\kappa$; the
injectivity condition $2R<\pi/2$ holds, and the boundary is connected
by Lemma~\ref{lem:boundaryconnected}. Then $Q=1/(1+\kappa^2)$.

\end{proof}
Both bounds persist on nested subdomains.

\subsection{A stronger radial comparison}
\begin{lemma}[A rational radial field]\label{lem:rationalradial}
Let $n\ge2$, $\kappa>1$, and $R=\operatorname{arccot}\kappa$.
If $\Omega\subset B_R(o)\subset\mathbb{CP}^n$, with holomorphic
sectional curvature normalized to four, then
\[
 \lambda_1^0(\Omega)>\bigl((n+1)\kappa+\kappa^{-1}\bigr)^2.
\]
\end{lemma}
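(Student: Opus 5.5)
The plan is to repeat the square-completion argument of Lemma~\ref{lem:smooth-radial-bound} with a better radial field. Take $X=g(t)\nabla\rho$, where $\rho=d(o,\cdot)$, $t=\tan\rho$, and $g$ is rational in $t$. For $u\in H_0^1(\Omega)$ one has
\[
\int_\Omega|\nabla u|^2=\int_\Omega|\nabla u+Xu|^2+\int_\Omega(\operatorname{div}X-|X|^2)u^2 .
\]
So it suffices to show $\operatorname{div}X-|X|^2>c:=\bigl((n+1)\kappa+\kappa^{-1}\bigr)^2$ on $B_R(o)$, that is, for $0\le t<1/\kappa$.

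We have $\Delta\rho=(2n-1)\cot\rho-\tan\rho$ and $d t/d\rho=1+t^2$. Hence the pointwise quantity is
\[
P(t)=(1+t^2)g'(t)+g(t)\Bigl(\frac{2n-1}{t}-t\Bigr)-g(t)^2 .
\]
The choice $g=at$ gives $P=2na-a^2t^2$, which is the lemma's $n^2\kappa^2$ bound and loses at the outer end. Near $t=0$ we have $P(0)=2n\,g'(0)$. We therefore need slope $g'(0)=a\ge c/(2n)$, strictly larger than $n\kappa^2$. To compensate, $g$ must bend down so that $g^2$ does not overwhelm near $t=1/\kappa$.

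I would try $g(t)=at/(1+bt^2)$ with $b>0$. This is odd in $t$ and vanishes at $t=0$, so $X$ is smooth at the center: $g(t)/t$ is a smooth function of $t^2$, hence of $\rho^2$. Multiplying $P-c$ by $(1+bt^2)^2$ turns the condition into positivity of a polynomial in $y=t^2$ of degree at most $2$ on $[0,\kappa^{-2})$. Then I would tune $a,b$ in terms of $n$ and $\kappa$. The natural ansatz makes the polynomial factor as $(1-\kappa^2y)$ times a positive factor, or vanish at $y=\kappa^{-2}$ with favorable sign. The target constant's form $(n+1)\kappa+\kappa^{-1}$ suggests $a$ proportional to $\kappa\bigl((n+1)\kappa+\kappa^{-1}\bigr)$, and $b$ related to $\kappa^2$ or to $1$.

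Once the coefficients are fixed, strictness comes from $t<1/\kappa$ in the open ball, as before. Positivity at interior points of $[0,\kappa^{-2})$ follows from checking the endpoint values and the sign of the leading coefficient. The hypotheses $n\ge2$ and $\kappa>1$ should be used exactly there.

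The main obstacle is this parameter choice: finding $(a,b)$ for which the quadratic is nonnegative on the whole interval with the exact constant $c$. If the simple Möbius-type $g$ does not reach $c$, I would enlarge the family to $g=(at+a't^3)/(1+bt^2)$ and again match the endpoint condition at $y=\kappa^{-2}$ against the slope condition at $y=0$.
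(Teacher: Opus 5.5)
\paragraph{Verdict.} Your framework is the paper's: a radial field $X=g(\tan\rho)\nabla\rho$ with $g(t)/t$ rational in $t^2$, followed by square completion. There is a genuine gap, however. The step you defer, the choice of coefficients, is the entire content of the lemma, and the regime you commit to ($b>0$, ``bending down'') provably cannot reach the constant.

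\paragraph{Why $b>0$ fails.} Put $y=t^2$, $T=\kappa^{-2}$, $c=\bigl((n+1)\kappa+\kappa^{-1}\bigr)^2$, and write $\beta$ for your $b$. Carrying out your reduction gives
\[
 (1+\beta y)^2P=2na+\bigl(2(n-1)a\beta-a^2\bigr)y-2a\beta y^2 .
\]
At the outer end, maximizing over $a$ gives
\[
 (1+\beta T)^2P(T)=2a\bigl(n+\beta T(n-1-T)\bigr)-a^2T\le\frac{\bigl(n+\beta T(n-1-T)\bigr)^2}{T}.
\]
On the other side, $c\,(1+\beta T)^2=(n+1+T)^2(1+\beta T)^2/T$. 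Since $n+\beta T(n-1-T)>0$ whenever $1+\beta T>0$, the requirement $P(T)\ge c$ forces $-(1+T)(1+2\beta T)\ge0$, that is, $\beta T\le-\tfrac12$. Thus for every $\beta\ge0$ and every $a$ one has $P(T)<c$. By continuity $P<c$ near $\partial B_R(o)$, so the pointwise square-completion argument cannot give the lemma.

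Your heuristic is backwards. The extremal field is $-\nabla\log$ of the radial ground state of the ball, and it blows up at $\partial B_R(o)$. So $g$ must bend \emph{up}, with a pole outside the ball. (Incidentally, the required slope $c/(2n)$ is not always larger than $n\kappa^2$; it is smaller for $n\ge3$ and large $\kappa$. This is harmless.)

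\paragraph{The fix.} No enlargement of the family is needed; flipping the sign suffices. The paper takes $X=\frac{a}{1-b\tan^2\rho}\nabla w$ with $w=-\log\cos\rho$. This is your ansatz with $\beta=-b$, where
\[
 b=\frac{n+1+T}{2nT},\qquad a=\frac{(n+1+T)^2}{2nT}=\frac{c}{2n}.
\]
The quadratic then vanishes at both endpoints and factors exactly:
\[
 (1-by)^2(P-c)=2ab(nb-1)\,y\,(T-y),\qquad nb-1=\frac{n+1-T}{2T}>0.
\]
Hence $P>c$ on $(0,T)$, with equality only at the center. The center is a single point, which gives the strict inequality after applying the identity to the first eigenfunction.

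The pole is harmless because $1-bT=(n-1-T)/(2n)>0$. This is exactly where $n\ge2$ and $\kappa>1$ are used, and your $\beta>0$ regime would never have needed them.
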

\begin{proof}
Put $\rho=d(o,\cdot)$, $w=-\log\cos\rho$, $t=\tan^2\rho$,
and $T=\kappa^{-2}$. As in Lemma~\ref{lem:smooth-radial-bound},
$w$ and $t$ are smooth at $o$ and on a neighborhood of the closed
containing ball. They satisfy
\[
 \Delta w=2n,\qquad |\nabla w|^2=t,\qquad
 \nabla t=2(1+t)\nabla w.
\]
Define
\[
 b=\frac{n+1+T}{2nT},\qquad
 a=\frac{(n+1+T)^2}{2nT},\qquad
 X=\frac{a}{1-bt}\nabla w.
\]
Here
\[
 1-bT=\frac{n-1-T}{2n}>0,\qquad
 nb-1=\frac{n+1-T}{2T}>0.
\]
Thus $X$ is smooth and bounded on a neighborhood of the closed
ball; in particular its denominator causes no boundary singularity.
Direct differentiation gives
\[
 \operatorname{div}X-|X|^2
 =\frac{2na}{1-bt}
   +\frac{2ab\,t(1+t)-a^2t}{(1-bt)^2}.
\]
Since $a=b(n+1+T)$ and $nbT=(n+1+T)/2$, subtraction of $2na$
gives the exact identity
\[
 \operatorname{div}X-|X|^2-2na
 =\frac{2ab(nb-1)t(T-t)}{(1-bt)^2}.
\]
It is positive for $0<t<T$. For $u\in H_0^1(\Omega)$, integration
by parts and density give
\[
 \int_\Omega |\nabla u|^2
 =\int_\Omega|\nabla u+Xu|^2
  +\int_\Omega(\operatorname{div}X-|X|^2)u^2.
\]
Apply this to the positive first Dirichlet eigenfunction. The set
$t=0$ consists of at most one point, and every point of $\Omega$
satisfies $t<T$. The last integral is therefore strictly larger than
$2na\int u^2$. Finally,
\[
 2na=\frac{(n+1+T)^2}{T}
      =\bigl((n+1)\kappa+\kappa^{-1}\bigr)^2.
\]
\end{proof}

\begin{corollary}[Explicit spectral conversion]\label{cor:radialconversion}
Under the standing domain assumptions, let $n\ge2$, $\kappa\ge2$,
$\II\ge\kappa g$, and $\operatorname{osc}V\le4n$. Then
\[
 \ell(V)>\bigl((n+1)\kappa+\kappa^{-1}\bigr)^2-4n.
\]
For any $L\ge4$, the sufficient curvature level
\begin{equation}\label{eq:newradialthreshold}
 K_n(L)=\max\left\{2,
 \frac{\sqrt{L+4n}+\sqrt{L-4}}{2(n+1)}\right\}
\end{equation}
therefore implies $\ell(V)>L$.
\end{corollary}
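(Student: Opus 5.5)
The argument has two steps: a variational bound for $\ell(V)$, then the algebra that inverts it in $\kappa$.

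For the first claim, apply the variational bound recorded after \eqref{eq:endpointintro}, namely $\ell(V)\ge\lambda_1^0-\operatorname{osc}_\Omega V$. It follows from the Rayleigh quotient, since $V-\max V\ge-\operatorname{osc}V$. Next, Drach's theorem \cite[Theorem~A]{Drach} gives the containing ball $B_R(o)$ with $R=\operatorname{arccot}\kappa$, exactly as in Lemma~\ref{lem:smooth-radial-bound}; the boundary is connected by Lemma~\ref{lem:boundaryconnected}, and $\kappa\ge2>1$. Lemma~\ref{lem:rationalradial} then applies for $n\ge2$ and gives
\[
 \lambda_1^0>\bigl((n+1)\kappa+\kappa^{-1}\bigr)^2.
\]
Subtracting $\operatorname{osc}V\le4n$ yields the strict inequality $\ell(V)>((n+1)\kappa+\kappa^{-1})^2-4n$.

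For the threshold, it suffices to show that $\kappa\ge K_n(L)$ implies $((n+1)\kappa+\kappa^{-1})^2\ge L+4n$. Put $\psi(\kappa)=(n+1)\kappa+\kappa^{-1}$. Its derivative is $(n+1)-\kappa^{-2}>0$ for $\kappa\ge1$, so $\psi$ is increasing on $[2,\infty)$. Since $\kappa\ge K_n(L)\ge2$, it is enough to check $\psi(K)\ge\sqrt{L+4n}$ at $K=\max\{2,K_0\}$, where $K_0=\bigl(\sqrt{L+4n}+\sqrt{L-4}\bigr)/(2(n+1))$; monotonicity then reduces this to checking $\psi(K_0)\ge\sqrt{L+4n}$ when $K_0\ge2$. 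When $K_0<2$, we need $\psi(2)=2(n+1)+\tfrac12\ge\sqrt{L+4n}$ in that regime. One way to see it is to note that $K_0$ is at least the larger root of the quadratic $(n+1)\kappa^2-\sqrt{L+4n}\,\kappa+1=0$, and that any $\kappa$ above this root satisfies $\psi(\kappa)\ge\sqrt{L+4n}$.

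The quadratic-root comparison is the main point. The larger root is
\[
 \frac{\sqrt{L+4n}+\sqrt{L+4n-4(n+1)}}{2(n+1)}=\frac{\sqrt{L+4n}+\sqrt{L-4}}{2(n+1)}=K_0,
\]
which holds exactly because $4n-4(n+1)=-4$. This identity is where $L\ge4$ enters, since it makes the discriminant $L-4$ nonnegative. Hence for $\kappa\ge K_0$, which is at or beyond the larger root, the quadratic is nonnegative, i.e.\ $\psi(\kappa)\ge\sqrt{L+4n}$. This covers both cases at once: any $\kappa\ge K_n(L)$ satisfies $\kappa\ge K_0$, and $\kappa\ge2$ is what Lemma~\ref{lem:rationalradial} and the first claim require. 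Combining this with the strict inequality from the first step gives $\ell(V)>\psi(\kappa)^2-4n\ge L$.

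I expect no real obstacle. The only care needed is to keep the inequality strict, which comes from Lemma~\ref{lem:rationalradial} and not from the root comparison, and to confirm that the discriminant simplifies to $L-4$.
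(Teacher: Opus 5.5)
Your proposal is correct and follows the paper's proof. Both use Drach's containing ball of radius $\operatorname{arccot}\kappa$ (with connected boundary from Lemma~\ref{lem:boundaryconnected}), Lemma~\ref{lem:rationalradial}, and the bound $\ell(V)\ge\lambda_1^0-\operatorname{osc}V$, and both identify $K_0$ as the larger root of $(n+1)\kappa^2-\sqrt{L+4n}\,\kappa+1$, whose discriminant is $L-4$. Your direct sign argument for this quadratic when $\kappa\ge K_0$ makes the monotonicity of $(n+1)\kappa+\kappa^{-1}$ and your case split on $K_0<2$ unnecessary.
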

\begin{proof}
Drach's outer rolling theorem \cite[Theorem~A]{Drach}, with
Lemma~\ref{lem:boundaryconnected}, gives $R=\operatorname{arccot}\kappa$.
Apply Lemma~\ref{lem:rationalradial} and
$\ell(V)\ge\lambda_1^0-\operatorname{osc}V$.
The function $(n+1)\kappa+\kappa^{-1}$ increases for $\kappa\ge2$;
its upper quadratic root at the value $\sqrt{L+4n}$ is the second
entry of \eqref{eq:newradialthreshold}.
\end{proof}
\begin{remark}[The leading limit of containing-ball estimates]
\label{rem:leadinglimit}
For each fixed $R\in(0,\pi/2)$,
$\lim_{n\to\infty}n^{-2}\lambda_1^0(B_R\subset\mathbb{CP}^n)=\cot^2R$.
Lemma~\ref{lem:smooth-radial-bound} gives the lower limit. For the upper
limit, the substitution $y=(\sin^{2n-1}r\cos r)^{1/2}u$ turns the radial
quotient into that of $-\partial_r^2+V_n$, where
$V_n(r)=(n-\frac12)(n-\frac32)\csc^2r-\frac14\sec^2r-n^2$. A sine test
function supported in $[R-n^{-1/2},R]$ has kinetic quotient $\pi^2n$,
and $n^{-2}V_n\to\cot^2R$ uniformly there. Thus containment in
$B_{\operatorname{arccot}\kappa}$ cannot supply a spectral target with
quadratic coefficient exceeding $\kappa^2n^2$. Since
$L_m=(11/5)m^2+12m\sim(44/5)n^2$, the level $\kappa_*=\sqrt{44/5}$ of
Theorem~\ref{thm:uniformintro} is asymptotically optimal for this
conversion as $n\to\infty$ (for each finite $n$ the smaller level
$\kappa_n$ of \eqref{eq:intro-newkappa} suffices); it is not claimed
to be necessary for the geometric theorem.
\end{remark}

\section{Rotational tensor tests}\label{sec:contact}
In the contact calculations we specialize \eqref{eq:eigen} to $V=0$;
Section~\ref{sec:schrodinger} adds the potential. Write $N=2n$, $\lambda=\lambda_1$,
$p=\nabla f$, $H=\Hess f$, and $L=\Delta-2\langle p,\nabla\cdot\rangle$.
The ground-state identity is $\phi_1^{-1}(\Delta+\lambda_1)(\phi_1u)=Lu$,
where $\phi_1^{-1}=1/\phi_1$. 
The eigenfunction equation and Bochner formula give
\begin{equation}\label{eq:basic}
 \Delta f=\lambda+s,\qquad \tfrac12Ls=|H|^2+(N+2)s,
 \qquad \nabla s=2Hp.
\end{equation}
For a smooth scalar barrier set $K=H-b(s)g$, $\alpha=b'(s)$,
$\beta=b''(s)$, and define
\begin{equation}\label{eq:tensordriftdefinition}
 L^\nabla K=\sum_i\bigl(\nabla_{e_i}\nabla_{e_i}K
 -\nabla_{\nabla_{e_i}e_i}K\bigr)-2\nabla_pK.
\end{equation}
This differentiates the tensor before evaluation on vectors. For a local
vector field $V$, the scalar product rule instead gives
{\small\[
 L(K(V,V))=(L^\nabla K)(V,V)+4\sum_i(\nabla_{e_i}K)(V,\nabla_{e_i}V)+2\sum_iK(\nabla_{e_i}V,\nabla_{e_i}V)+2K(V,L^\nabla V).
\]}
At a null contact $KV=0$, only the last term vanishes automatically.

\begin{lemma}[Contact identities]\label{lem:contact}
Suppose $K\ge0$ in a neighborhood of a point $x_0$ and $K_{x_0}v=0$ for a unit vector $v$.
All quantities below are evaluated at $x_0$. Decompose the gradient
$p=\nabla f(x_0)$ into its component along $v$ and its orthogonal projection:
\[
 a:=\langle p,v\rangle,\qquad w:=p-\langle p,v\rangle v\in v^\perp.
\]
Thus $p=av+w$ is the orthogonal decomposition of the gradient, and
$s=a^2+|w|^2$. Set
\[
 h=Jv,\quad B=K|_{v^\perp},\quad T=\tr B=\lambda+s-Nb,
 \quad \tau=|w|^2,\quad x=\langle w,h\rangle^2.
\]
Then $B\ge0$ and $0\le x\le\tau\le s$. Writing
$E=\tfrac12(L^\nabla K)(v,v)$, one has
{\small
\begin{equation}\label{eq:Emhigher}
 E=b^2+Nb-\lambda-a^2-3B(h,h)+3x-\alpha\{Nb^2+2bT+\tr B^2+(N+2)s\}-2\beta\{b^2a^2+|(bI+B)w|^2\}.
\end{equation}
}
With $e_0=v$ and $C_j=(\nabla_{e_j}K)(v,\cdot)|_{v^\perp}$,
\begin{equation}\label{eq:Cgeneral}
 C_0=(I+3h\otimes h+2b\alpha I+2\alpha B)w.
\end{equation}
If $e,Je$ is a complex orthonormal pair in $W=\{v,h\}^\perp$ and $q=\langle p,h\rangle$, then
\begin{equation}\label{eq:commutator}
 C_{Je}(e)-C_e(Je)=-2q.
\end{equation}
\end{lemma}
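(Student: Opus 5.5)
The plan is to compute everything at $x_0$ in a frame adapted to the null contact, using only three inputs. The first is the structure of $\CP^n$: $\nabla R=0$, the curvature formula, and the Jacobi operator \eqref{eq:jacobi}. The second is the identities \eqref{eq:basic}. The third is the first-order contact condition. Since $K\ge0$ near $x_0$ and $K_{x_0}v=0$, we get $B=K|_{v^\perp}\ge0$. Also $v$ is an eigenvector of $H$ with $Hv=bv$, so $H=bI+B$ on $v^\perp$ and $T=\tr H-Nb=\Delta f-Nb=\lambda+s-Nb$. The inequalities $0\le x\le\tau\le s$ follow from Cauchy--Schwarz, because $h\perp v$ is a unit vector and $s=a^2+\tau$.

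\textbf{Step 1: the formula for $E$.} Write $L^\nabla K=L^\nabla H-(Lb)g$, using that $g$ is parallel.
\begin{itemize}
\item For the barrier part, $Lb=\alpha Ls+\beta|\nabla s|^2$. By \eqref{eq:basic}, $\tfrac12 Lb=\alpha(|H|^2+(N+2)s)+2\beta|Hp|^2$.
\item At $x_0$ we have $|H|^2=b^2+\tr(bI+B)^2=Nb^2+2bT+\tr B^2$ and $Hp=abv+(bI+B)w$. This produces the $\alpha$- and $\beta$-brackets of \eqref{eq:Emhigher}.
\item For the Hessian part, use the Lichnerowicz-type commutation valid when $\nabla R=0$:
\[
 \Delta H=\Hess(\Delta f)+\Ric\circ H+H\circ\Ric-2\mathring R(H).
\]
Here $\Hess(\Delta f)=\Hess s$.
\item Expand $\Hess s=2H^2+2\nabla_pH+2\mathcal R_p$. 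The third-derivative term is commuted past the curvature, and $\mathcal R_p(v,v)=\langle R_pv,v\rangle$.
\item The $2\nabla_pH$ term cancels against the drift in \eqref{eq:tensordriftdefinition}.
\end{itemize}

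Evaluating at $(v,v)$ leaves $b^2$ from $H^2$, a Ricci term $(N+2)b\cdot2$, and $-2\tr(R_vH)$ computed from \eqref{eq:jacobi}:
\[
 \tr(R_vH)=\tr H-b+3H(h,h)=(\lambda+s)-b+3b+3B(h,h).
\]
It also leaves the Jacobi term $\langle R_pv,v\rangle=\tau+3x$, using $\langle Jp,v\rangle^2=\langle w,h\rangle^2$. Collecting these and halving should give $b^2+Nb-\lambda-a^2-3B(h,h)+3x$. The bookkeeping of the constant terms, in particular how $s$ and $\tau$ combine to $-a^2$, is the step where sign and normalization errors are most likely. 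I would verify it first on the model $f$ quadratic in normal coordinates.

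\textbf{Step 2: the formula \eqref{eq:Cgeneral} for $C_0$.} Extend $v$ and $u\in v^\perp$ parallel along radial geodesics. Because $K(v,v)$ attains its minimum $0$ at $x_0$, we have $(\nabla_uK)(v,v)=0$, hence $(\nabla_uH)(v,v)=\alpha\,\nabla_us=2\alpha\langle(bI+B)w,u\rangle$. The Ricci identity for third derivatives gives
\[
 (\nabla_vH)(v,u)-(\nabla_uH)(v,v)=\langle R_vp,u\rangle=\langle w+3\langle w,h\rangle h,u\rangle .
\]
The $g$-part contributes $\alpha\nabla_vs\,\langle v,u\rangle=0$. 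Adding the two relations yields $C_0=(I+3h\otimes h+2b\alpha I+2\alpha B)w$.

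\textbf{Step 3: the commutator \eqref{eq:commutator}.} For $e,Je\in W$, the barrier part drops out because $\langle v,e\rangle=\langle v,Je\rangle=0$. Symmetry of $\nabla^3f$ up to curvature gives
\[
 (\nabla_{Je}H)(v,e)-(\nabla_eH)(v,Je)=\pm\langle R(Je,e)p,v\rangle .
\]
In the curvature formula every term except $-2\langle J(Je),e\rangle\langle Jp,v\rangle$ vanishes, since $e,Je\perp v,h$. That term equals $2\langle Jp,v\rangle=-2\langle p,h\rangle=-2q$. The only delicate point here is fixing the sign convention consistently with Step 2, and I would check it against the $C_0$ computation.
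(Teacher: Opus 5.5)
Your proposal is correct and follows the paper's proof essentially step for step. It uses the same $\nabla R=0$ commutation formulas for $\Delta^\nabla H$ and $\Hess s$, the same contractions $\langle R_v,H\rangle=\tr H-H(v,v)+3H(h,h)$ and $\langle R(v,p)p,v\rangle=\tau+3x$, the first-order minimality of $K(V,V)$ combined with the third-derivative Ricci identity for $C_0$, and the same curvature evaluation for the commutator.

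Two details should be fixed. The sign you left as $\pm$ in Step 3 is $+$, by the identity $(\nabla_XH)(Y,Z)-(\nabla_YH)(X,Z)=\langle R(X,Y)p,Z\rangle$ that you already used correctly in Step 2. In Step 1, the $H^2$ and Jacobi contributions enter as $2b^2$ and $2(\tau+3x)$ before halving, not $b^2$ and $\tau+3x$.
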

\begin{proof}
Here is an index derivation fixing the curvature signs and the factor of two.
At the point in question use a normal orthonormal frame and set
$\mathcal R_{ikjl}=\langle R(e_i,e_k)e_l,e_j\rangle$; thus
$\mathcal R_{ikjl}v^iv^j$ is the matrix of $R_v$.
The covector commutator, applied successively to $df$, gives
\begin{align*}
 (\Delta^\nabla H)_{ij}
 &= (\Hess\Delta f)_{ij}+\Ric_i{}^kH_{kj}
       +\Ric_j{}^kH_{ik}-2\mathcal R_{ikjl}H_{kl},\\
 (\Hess s)_{ij}
 &=2(H^2)_{ij}+2(\nabla_pH)_{ij}
       +2\mathcal R_{ikjl}p^kp^l.
\end{align*}
There are no curvature-derivative terms because $\nabla R=0$.
Substitute $\Delta f=\lambda+s$ into the first line, then subtract
$2\nabla_pH$ and divide by two. This gives
\[
 \tfrac12(L^\nabla H)(v,v)
 =H^2(v,v)+(N+2)H(v,v)-\langle R_v,H\rangle
       +\langle R(v,p)p,v\rangle.
\]
By \eqref{eq:jacobi}, the curvature contractions are
\[
 \langle R_v,H\rangle=\tr H-H(v,v)+3H(h,h),\qquad
 \langle R(v,p)p,v\rangle=s-a^2+3x.
\]
At contact, $Hv=bv$ and $H|_{v^\perp}=bI+B$. Subtract
\[
 \tfrac12Lb=\alpha\{|H|^2+(N+2)s\}+2\beta|Hp|^2
\]
and use $|H|^2=Nb^2+2bT+\tr B^2$ to obtain \eqref{eq:Emhigher}.

The function $K(V,V)$ has a local minimum for a parallel-to-first-order unit extension $V$ of $v$, so $\nabla_zK(v,v)=0$ for every $z$. The scalar Hessian commutator therefore gives
\[
 (\nabla_vK)(v,z)=2\alpha H(p,z)+\langle R(z,v)v,p\rangle\quad(z\perp v),
\]
which is \eqref{eq:Cgeneral}. For two transverse vectors,
\[
 (\nabla_{e_j}K)(v,e_i)-(\nabla_{e_i}K)(v,e_j)
 =\langle R(e_i,e_j)v,p\rangle.
\]
Taking $(e_i,e_j)=(e,Je)$ and evaluating the displayed curvature tensor yields \eqref{eq:commutator}.
\end{proof}
\begin{remark}[Finite-control cost]\label{rem:finitecontrol}
Let $K\ge0$ near $x_0$ with $K_{x_0}v=0$, let $e_0=v,e_1,\ldots,e_{N-1}$
be an orthonormal basis, and put
\[
 E=\tfrac12(L^\nabla K)_{x_0}(v,v),\qquad
 B=K_{x_0}|_{v^\perp},\qquad
 C_j=(\nabla_{e_j}K)_{x_0}(v,\cdot)|_{v^\perp}.
\]
Identifying the covectors $C_j$ with vectors, define for
$D_0,\ldots,D_{N-1}\in v^\perp$
\begin{equation}\label{eq:CDdef}
 \mathcal C_D=E+2\sum_{j=0}^{N-1}\langle C_j,D_j\rangle
                         +\sum_{j=0}^{N-1}B(D_j,D_j).
\end{equation}
For $B>0$, minimizing over $D$ gives Xia's determinant identity
\cite{Xia}; we use the controls directly.
\end{remark}

\subsection{Longitudinal and transverse rotations}
At a contact as above, $W=\{v,h\}^\perp$ is $J$-invariant.
Choose a complex orthonormal basis $e_1,Je_1,\ldots,e_{n-1},Je_{n-1}$
of $W$; together with $v,h$ it gives a real orthonormal basis.
Index controls by these directions, with $D_0=D_v$ and
$q=\langle p,h\rangle$.
For $A_0\in v^\perp$ and $\sigma\in\mathbb R$, choose
\[
 D_0=-A_0,\quad D_h=0,\quad
 D_{e_\ell}=\sigma Je_\ell,\quad D_{Je_\ell}=-\sigma e_\ell
 \qquad(1\le\ell\le n-1).
\]
Equation~\eqref{eq:commutator} gives the exact cost
\begin{equation}\label{eq:costgeneral}
 \cost=E-2C_0\cdot A_0+B(A_0,A_0)
      +\sigma^2\{T-B(h,h)\}+4(n-1)q\sigma.
\end{equation}
The transverse term is $\sigma^2\operatorname{tr}_W B$; no alignment of $B$ is assumed.

If $A_0=2q_0w$, the coefficient of $B(w,w)$ after expansion is
\begin{equation}\label{eq:cancel}
 4(q_0-\alpha)^2-4\varkappa,\qquad \varkappa=\alpha^2+b\beta.
\end{equation}
It vanishes when $q_0=\alpha+\sqrt\varkappa$ and is nonpositive when $q_0=\alpha$ and $\varkappa\ge0$. In the latter case it may be discarded because the actual matrix $B$ is nonnegative. Only after this step may the remaining quadratic expression be minimized over unrestricted symmetric matrices of fixed trace.

\begin{lemma}[The transverse trace minimum]\label{lem:traceminimum}
Let $\nu\ge2$, $a>0$, $A_\tau=a+2\beta\tau>0$, and $\tau>0$. For a unit vector $h$ and a vector $w$ with $|w|^2=\tau$, write $z=\langle w,h\rangle^2/\tau$. The minimum of
\[
 a\tr B^2+2\beta|Bw|^2+dB(h,h),\qquad B=B^T,\quad\tr B=T,
\]
over real $\nu\times\nu$ matrices is
\[
 P=\frac{\{T+(d/2)(z/A_\tau+(1-z)/a)\}^2}{1/A_\tau+(\nu-1)/a}-\frac{d^2}{4}\left\{\frac{z^2}{A_\tau}+\frac{(1-z)^2}{a}+\frac{4z(1-z)}{a+A_\tau}\right\}.
\]
At $\tau=0$ the minimum is $aT^2/\nu+dT/\nu-d^2(\nu-1)/(4a\nu)$.
\end{lemma}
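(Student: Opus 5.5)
This is a strictly convex quadratic minimization over symmetric matrices with one linear constraint, so a single Lagrange multiplier computation gives the minimum. Write $F(B)=a\tr B^2+2\beta|Bw|^2+dB(h,h)$. Set $\hat w=w/\sqrt\tau$ and note $2\beta|Bw|^2=2\beta\tau\,\hat w^TB^2\hat w$. So the quadratic part is $\tr(B\,M\,B)$ with $M=aI+2\beta\tau\,\hat w\hat w^T$. Its eigenvalues are $A_\tau$ on $\hat w$ and $a$ on $\hat w^\perp$. Since $a>0$ and $A_\tau>0$, $M>0$, so $F$ is strictly convex on the affine space $\{\tr B=T\}$. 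The minimizer is therefore characterized by stationarity: there is a multiplier $c$ with
\[
 MB+BM+d\,h h^T=2c\,I .
\]

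To solve this, I would work in the orthonormal basis adapted to $\hat w$. Write $h=\sqrt z\,\hat w+\sqrt{1-z}\,u$ with $u\perp\hat w$ a unit vector (any unit $u$ if $z=1$). In a basis diagonalizing $M$, with eigenvalues $m_i$, the Sylvester equation $MB+BM=2cI-dhh^T$ is solved entrywise:
\[
 B_{ij}=\frac{2c\,\delta_{ij}-d\,h_ih_j}{m_i+m_j}.
\]
Only the entries in the span of $\hat w,u$ are affected by $h$; the remaining diagonal entries equal $c/a$.

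Concretely, this gives
\begin{itemize}
\item $B_{\hat w\hat w}=(2c-dz)/(2A_\tau)$,
\item $B_{uu}=(2c-d(1-z))/(2a)$,
\item $B_{\hat wu}=-d\sqrt{z(1-z)}/(a+A_\tau)$,
\item $c/a$ on the other $\nu-2$ directions.
\end{itemize}
The trace constraint then reads
\[
 T=\frac{c}{A_\tau}+\frac{(\nu-1)c}{a}-\frac d2\Bigl(\frac z{A_\tau}+\frac{1-z}a\Bigr),
\]
which fixes $c=\{T+(d/2)(z/A_\tau+(1-z)/a)\}/\{1/A_\tau+(\nu-1)/a\}$. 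The hypothesis $\nu\ge2$ is exactly what allows a direction $u\perp\hat w$ to exist.

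To evaluate the minimum value, I would use homogeneity rather than substituting directly. At a stationary point, pairing the stationarity equation with $B$ gives $2\tr(BMB)+dB(h,h)=2cT$. Hence
\[
 F_{\min}=cT-\tfrac d2B(h,h).
\]
It remains to compute $B(h,h)=zB_{\hat w\hat w}+(1-z)B_{uu}+2\sqrt{z(1-z)}B_{\hat wu}$ from the formulas above. This splits as $c\,(z/A_\tau+(1-z)/a)$ minus
\[
 \frac d2\Bigl\{\frac{z^2}{A_\tau}+\frac{(1-z)^2}a+\frac{4z(1-z)}{a+A_\tau}\Bigr\}.
\]
Substituting this and $c$, we get $cT+\frac d2 c\,(\cdot)=c\{T+\frac d2(\cdot)\}$, which is the squared numerator divided by $1/A_\tau+(\nu-1)/a$. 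The remaining term is $-\frac{d^2}4\{\cdots\}$, which reproduces $P$.

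At $\tau=0$ the $\beta$-term vanishes, $M=aI$, and $z$ is undefined. Here I would redo the computation directly: $B=(2cI-dhh^T)/(2a)$ with $c=(aT+d/2)/\nu$. Then
\[
 F_{\min}=cT-\frac d2\cdot\frac{2c-d}{2a},
\]
which simplifies to $aT^2/\nu+dT/\nu-d^2(\nu-1)/(4a\nu)$. Equivalently, it is the $A_\tau=a$ specialization of $P$, which is independent of $z$.

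The main obstacle is only bookkeeping: the cross term $4z(1-z)/(a+A_\tau)$ and checking that the final expression is independent of the choice of $u$. The homogeneity identity for $F_{\min}$ avoids expanding $\tr(BMB)$ directly, and I would make sure to use it.
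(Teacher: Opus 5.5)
Your route matches the paper's. You use the same adapted basis, and your Sylvester solution is exactly the paper's candidate $B^*$, with your $2c$ equal to the paper's $\ell$. The paper certifies minimality by writing the objective at $B^*+C$ as a positive-weighted sum of squares of the entries of $C$, with the linear terms cancelling because $\tr C=0$; your strict convexity plus stationarity carries the same content. Your homogeneity shortcut for the value is a good idea, but as displayed it has the wrong sign. Pairing $MB+BM+d\,hh^T=2cI$ with $B$ gives $2\tr(BMB)+dB(h,h)=2cT$, so $\tr(BMB)=cT-\frac d2B(h,h)$ and therefore
\[
 F_{\min}=\tr(BMB)+dB(h,h)=cT+\tfrac d2\,B(h,h),
\]
not $cT-\frac d2B(h,h)$.

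For $\tau>0$, your next line, $cT+\frac d2c\,(\cdot)$, actually uses the correct plus sign. That derivation of $P$ therefore goes through once the display is fixed.

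For $\tau=0$, the step fails as written. With $c=(aT+d/2)/\nu$, your expression $cT-\frac d2\cdot\frac{2c-d}{2a}$ equals $c(T-\frac d{2a})+\frac{d^2}{4a}=aT^2/\nu+d^2(\nu-1)/(4a\nu)$. This lacks the $dT/\nu$ term and has the wrong sign on the constant. The corrected identity gives $c(T+\frac d{2a})-\frac{d^2}{4a}=\frac{(aT+d/2)^2}{a\nu}-\frac{d^2}{4a}$, which is the stated value. Your alternative check via the $A_\tau=a$ specialization of $P$ is correct and confirms this independently, using $z^2+(1-z)^2+2z(1-z)=1$.

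One small point: writing $h=\sqrt z\,\hat w+\sqrt{1-z}\,u$ requires $\langle h,\hat w\rangle\ge0$. If it is negative, replace $\hat w$ by $-\hat w$, which leaves $M$ unchanged; the paper instead reverses $h$.
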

\begin{proof}
In an adapted orthonormal basis, $w=\sqrt\tau e_1$ and, reversing $h$ if necessary, $h=\sqrt z e_1+\sqrt{1-z}e_2$. The diagonal weights are $A_\tau,a,\ldots,a$, and the only off-diagonal linear term is $2d\sqrt{z(1-z)}B_{12}$. Set
\[
 \ell=\frac{2T+d(z/A_\tau+(1-z)/a)}{1/A_\tau+(\nu-1)/a}.
\]
The candidate has entries
\[
 B^*_{11}=\frac{\ell-dz}{2A_\tau},\quad
 B^*_{22}=\frac{\ell-d(1-z)}{2a},\quad
 B^*_{jj}=\frac\ell{2a}\ (j\ge3),\quad
 B^*_{12}=-\frac{d\sqrt{z(1-z)}}{a+A_\tau},
\]
and all other off-diagonal entries zero. It has trace $T$.
For $C=B-B^*$ the objective minus its value at $B^*$ equals exactly
\[
 A_\tau C_{11}^2+a\sum_{i=2}^{\nu}C_{ii}^2
 +(a+A_\tau)\sum_{j=2}^{\nu}C_{1j}^2
 +2a\sum_{2\le i<j\le\nu}C_{ij}^2.
\]
All linear terms cancel using $\tr C=0$. The displayed weights are positive, proving the minimum, not merely stationarity. Substitution yields $P$. For $\tau=0$ diagonalize $h\otimes h$ and complete the same squares with all diagonal weights equal to $a$; no ratio $z$ is needed. For the actual positive semidefinite contact matrix the minimum is a lower bound.
\end{proof}

\begin{figure}[t]
\centering
\begin{tikzpicture}[scale=1.05,>={Stealth[length=2mm]}]
 \begin{scope}
  \draw[->,thick] (0,0) -- (2.2,0) node[right]{$v$};
  \draw[->,thick] (0,0) -- (0,2.0) node[above]{$h=Jv$};
  \draw[->,thick] (0,0) -- (-1.3,-1.0) node[below left]{$W=\{v,h\}^\perp$};
  \draw[->,very thick,red!70!black] (0,0) -- (1.22,1.19) node[above right]{$p=\nabla f$};
  \draw[->,red!70!black] (0,0) -- (-0.28,1.19) node[left,font=\small]{$w$};
  \draw[dashed,red!70!black] (1.5,0) -- (1.22,1.19);
  \node[red!70!black,below,font=\small] at (1.5,0) {$av$};
  \node[font=\small,align=center] at (0.6,-1.7)
   {$p=av+w,\ \ \tau=|w|^2,$\\ $z=\langle w,h\rangle^2/\tau$};
 \end{scope}
 \begin{scope}[xshift=5.6cm,yshift=-0.6cm]
  \fill[blue!5] (0,0) rectangle (3.2,2.4);
  \draw[thick] (0,0) rectangle (3.2,2.4);
  \draw[->] (0,0) -- (3.7,0) node[right]{$\tau$};
  \draw[->] (0,0) -- (0,2.9) node[above]{$z$};
  \node[above left] at (3.2,0) {$s$}; \node[left] at (0,2.4) {$1$};
  \foreach \p/\q/\t/\a in {0/0/{null: $p\parallel v$}/{below left},
                           3.2/0/{transverse: $p\in W$}/{below right},
                           3.2/2.4/{Hopf: $p\parallel h$}/{above right}}
    {\fill[red!70!black] (\p,\q) circle (2.2pt);
     \node[font=\scriptsize] at (\p,\q) [\a] {\t};}
  \draw[gray,dashed] (0,2.4) -- (0,0);
  \node[font=\scriptsize,gray,left,align=right] at (-0.05,1.2) {$\tau=0$:\\same value};
  \draw[<->,gray] (0.3,0.25) -- (2.9,0.25) node[midway,above,font=\scriptsize]{convex in $\tau$ on $z=0$};
  \draw[<->,gray] (0.3,2.15) -- (2.9,2.15) node[midway,below,font=\scriptsize]{convex in $\tau$ on $z=1$};
  \draw[<->,gray] (1.6,0.55) -- (1.6,1.85) node[midway,right,font=\scriptsize,align=left]{convex in $z$\\for each $\tau$};
 \end{scope}
\end{tikzpicture}
\caption{The orientation reduction of \cref{geo:three}, extending
Xia's three-vertex argument. Left: the
gradient at a null contact relative to the null vector $v$, the Hopf
vector $h=Jv$, and $W$. Right: the upper contact cost, built from the
unrestricted trace minimum, is convex in $z$ for each $\tau$, and
convex in $\tau$ on the edges $z=0,1$; its maximum over the rectangle
is therefore attained at one of the three marked corners.}
\label{fig:three}
\end{figure}

The following orientation reduction extends the three-vertex argument
of Xia \cite{Xia} from $\CP^2$ to every dimension (\cref{fig:three}).
For unit vectors $e,h\in\mathbb R^\nu$, $\nu\ge2$, write
$z=\langle e,h\rangle^2$. Fix $\alpha>0$, $\beta,\varkappa\in\mathbb R$,
$T>0$, $\eta>0$, and $s\ge0$ with $\alpha+2\beta s>0$, so that
$\alpha+2\beta\tau>0$ for $0\le\tau\le s$. For such $\tau$, let
\[
 P(\tau,z)=\inf_{\substack{B=B^\top\\\tr B=T}}
 \{\alpha\tr B^2+2\beta\tau|Be|^2
                 +4\varkappa\tau B(e,e)+\eta B(h,h)\}.
\]
The matrices in this infimum are not required to be nonnegative.

\begin{lemma}[Three angular endpoints]\label{geo:three}
For any constants $D,c_1,c_2$, the maximum of
$D+c_1\tau+c_2\tau z-P(\tau,z)$ on $[0,s]\times[0,1]$
is attained at one of $(0,0),(s,0),(s,1)$.
\end{lemma}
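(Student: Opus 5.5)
The plan is to show that the function $F(\tau,z)=D+c_1\tau+c_2\tau z-P(\tau,z)$ has three properties: (i) $F(0,z)$ does not depend on $z$; (ii) $F$ is convex in $\tau$ for each fixed $z$; (iii) $F$ is convex in $z$ for each fixed $\tau\in(0,s]$. Given these, the conclusion follows directly. For fixed $\tau$, the maximum over $z\in[0,1]$ is attained at $z=0$ or $z=1$ by (iii). On each edge $z=0$ and $z=1$, the maximum over $\tau\in[0,s]$ is attained at $\tau=0$ or $\tau=s$ by (ii). By (i), the two corners $(0,0)$ and $(0,1)$ carry the same value. Hence the maximum is attained at one of $(0,0),(s,0),(s,1)$.

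Properties (i) and (ii) are soft. At $\tau=0$ the objective reduces to $\alpha\tr B^2+\eta B(h,h)$, which does not involve $e$, so $P(0,z)$ is constant. For fixed $B$, the objective $\alpha\tr B^2+\tau\{2\beta|Be|^2+4\varkappa B(e,e)\}+\eta B(h,h)$ is affine in $\tau$. Hence $P(\cdot,z)$, an infimum of affine functions, is concave in $\tau$, and $-P$ is convex. The terms $c_1\tau+c_2\tau z$ are affine in $\tau$, so (ii) holds for every $z$. The hypothesis $\alpha+2\beta\tau>0$ is only used to ensure that the infimum is finite, indeed attained. Note that with $\alpha>0$ and $\alpha+2\beta\tau>0$ the quadratic form $\mathcal Q(B)=\alpha\tr B^2+2\beta\tau|Be|^2$ is positive definite on symmetric matrices.

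Property (iii) is the real step, and I would obtain it by explicit computation, following the square completion of Lemma~\ref{lem:traceminimum}. I would work in an adapted basis with $e=e_1$ and $h=\sqrt z\,e_1+\sqrt{1-z}\,e_2$. The linear functional is $B\mapsto 4\varkappa\tau B_{11}+\eta\bigl(zB_{11}+2\sqrt{z(1-z)}B_{12}+(1-z)B_{22}\bigr)$, and the diagonal weights are $A_\tau=\alpha+2\beta\tau$ in the first entry and $\alpha$ elsewhere. This is exactly the setting of Lemma~\ref{lem:traceminimum} with $a=\alpha$, $d=\eta$ and the extra term $4\varkappa\tau B_{11}$, which can be absorbed by shifting the linear coefficient of $B_{11}$. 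Completing squares as there gives
\[
 -P=-\frac{\{T+\tfrac12(\eta z+4\varkappa\tau)/A_\tau+\tfrac12\eta(1-z)/\alpha\}^2}{1/A_\tau+(\nu-1)/\alpha}
 +\frac14\Bigl\{\frac{(\eta z+4\varkappa\tau)^2}{A_\tau}+\frac{\eta^2(1-z)^2}{\alpha}+\frac{4\eta^2z(1-z)}{\alpha+A_\tau}\Bigr\}.
\]
This is a quadratic polynomial in $z$, so (iii) reduces to showing that its $z^2$ coefficient is nonnegative. Writing $u=1/A_\tau$ and $r=1/\alpha$, the coefficient is
\[
 \frac{\eta^2}{4}\Bigl\{u+r-\frac{4}{\alpha+A_\tau}-\frac{(u-r)^2}{u+(\nu-1)r}\Bigr\}.
\]
I would verify its sign as follows. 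First, $u+r-\frac{4}{\alpha+A_\tau}=\frac{(u-r)^2}{u+r}$, by the harmonic--arithmetic mean identity. Second, $\nu\ge2$ gives $u+(\nu-1)r\ge u+r$. Together these show the coefficient is at least $0$. Since $u,r>0$, there is no sign issue in either step, so this identity check, which I expect to be the main obstacle, should close. This gives (iii) and completes the plan.
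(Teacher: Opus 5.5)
Your proposal is correct and follows the paper's proof essentially step for step. The three ingredients are the same: the adapted-basis square completion showing that $P$ is quadratic in $z$, concavity of $P$ in $\tau$ as an infimum of affine functions, and coincidence of the two $\tau=0$ corners. Your harmonic--arithmetic mean simplification of the $z^2$ coefficient of $-P$ gives exactly the paper's value $\eta^2(\alpha-A)^2(\nu-2)/[4\alpha(A+\alpha)\{A(\nu-1)+\alpha\}]$, up to the sign change from passing between $P$ and $-P$.
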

\begin{proof}
Put $A=\alpha+2\beta\tau$ and $\ell=4\varkappa\tau$.
Choose coordinates with $e=e_1$ and
$h=\sqrt z\,e_1+\sqrt{1-z}\,e_2$.
Completing the diagonal squares under the trace constraint and then
the single mixed square gives
\begin{align}\label{geo:trace-minimum}
 P(\tau,z)={}&
 \frac{\left(T+\frac{\ell+\eta z}{2A}
                    +\frac{\eta(1-z)}{2\alpha}\right)^2}
      {1/A+(\nu-1)/\alpha}
 -\frac{(\ell+\eta z)^2}{4A}
 -\frac{\eta^2(1-z)^2}{4\alpha}
 -\frac{\eta^2z(1-z)}{A+\alpha}.
\end{align}
The diagonal weights are $A,\alpha,\ldots,\alpha$.
The coefficient of $B_{12}^2$ is $A+\alpha$, its linear coefficient
is $2\eta\sqrt{z(1-z)}$, and all other mixed entries minimize at zero.
The coefficient of $z^2$ is
\[
 -\frac{\eta^2(\alpha-A)^2(\nu-2)}
 {4\alpha(A+\alpha)\{A(\nu-1)+\alpha\}}\le0.
\]
Consequently, for each fixed $\tau$, the upper cost is convex in $z$
and is maximized at $z=0$ or $z=1$. Next fix $z\in\{0,1\}$ and hold
$e,h$ fixed.
The objective defining $P$ is affine in $\tau$ for every fixed $B$,
so its infimum over the same trace hyperplane is concave in $\tau$.
The upper cost is convex in $\tau$ and is maximized at $0$ or $s$.
At $\tau=0$ both orientations give the same minimum.
\end{proof}

The infimum is taken without the constraint $B\ge0$: imposing it need not preserve the concavity in $\langle e,h\rangle^2$ used below, and the unrestricted minimum is still a lower bound for the contact matrix.

\subsection{Local scalar tests at a null contact}
\label{sec:localcontacttests}
\begin{lemma}[Finite first-jet contact test]\label{lem:exacttangent}
Use the notation of \cref{rem:finitecontrol}, with $\mathcal C_D$ as in
\eqref{eq:CDdef}. For any prescribed
vectors $D_j\in v^\perp$ there is a smooth local unit vector field
$V$ such that $V(x_0)=v$ and $\nabla_{e_j}V(x_0)=D_j$. At the null
contact, with $F=K(V,V)$,
\begin{equation}\label{eq:deterministiccontactcost}
 \tfrac12 LF(x_0)
 =E+2\sum_j\langle C_j,D_j\rangle+\sum_jB(D_j,D_j)
 =\mathcal C_D\ge0.
\end{equation}
Thus any negative finite-control cost excludes the contact.
\end{lemma}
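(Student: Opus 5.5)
The plan has two parts: construct the vector field $V$ explicitly in normal coordinates, then expand $\tfrac12 LF$ at $x_0$ by the scalar product rule stated before Lemma~\ref{lem:contact}. The three leftover terms are identified with $C_j$, $B$ and $E$, and nonnegativity comes from $F$ having an interior minimum.

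\textbf{Construction of $V$.} Take normal coordinates centered at $x_0$, with the orthonormal frame $e_j$ extended radially parallel. Set
\[
 \tilde V(x)=v+\sum_j x^jD_j+\tfrac12\sum_{j,k}x^jx^k\,\Gamma_{jk},
\]
with coefficients taken in the parallel frame, and normalize $V=\tilde V/|\tilde V|$. Since $\langle D_j,v\rangle=0$, normalization changes only second-order terms, so $V(x_0)=v$ and $\nabla_{e_j}V(x_0)=D_j$. The quadratic coefficients $\Gamma_{jk}$ are free. Choose their $v$-orthogonal part so that the $v^\perp$-component of $L^\nabla V(x_0)$ vanishes. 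This is not actually needed, because $KV=0$ at $x_0$ already kills that term.

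\textbf{Expansion of $\tfrac12 LF$.} In the product rule, the last term $2K(V,L^\nabla V)$ vanishes since $K_{x_0}v=0$ and $K$ is symmetric. The first term gives $(L^\nabla K)(v,v)=2E$. For the cross term, $(\nabla_{e_i}K)(v,D_i)$ depends only on the $v^\perp$ part of $D_i$, which is all of $D_i$. It equals $\langle C_i,D_i\rangle$ by the definition of $C_j$, so the cross term is $4\sum\langle C_j,D_j\rangle$. The quadratic term is $2\sum K(D_j,D_j)=2\sum B(D_j,D_j)$. Dividing by two gives $\mathcal C_D$.

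\textbf{Sign.} Since $K\ge0$ near $x_0$, $F\ge0$ there, while $F(x_0)=K_{x_0}(v,v)=0$. So $x_0$ is a local minimum of $F$: $\nabla F(x_0)=0$ and $\Hess F(x_0)\ge0$. Hence $LF(x_0)=\Delta F(x_0)-2\langle p,\nabla F\rangle\ge0$. The only care needed is that the product rule is stated with a general local vector field $V$, and unit length is not used there. The main subtlety is bookkeeping, namely that the tensor Laplacian $L^\nabla$ is the connection Laplacian with the drift $-2\nabla_p$, consistent with \eqref{eq:tensordriftdefinition}. A negative $\mathcal C_D$ therefore contradicts this inequality.
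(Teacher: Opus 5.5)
Your proposal is correct and follows essentially the same route as the paper. You build $V$ by normalizing $v+\sum_j x^jD_j$ in normal coordinates with a frame parallel to first order, expand $\tfrac12 LF(x_0)$ by the displayed product rule (the last term is killed by $K_{x_0}v=0$ and $K(D_j,D_j)=B(D_j,D_j)$), and get $LF(x_0)\ge0$ from the interior minimum of $F\ge0$; the quadratic coefficients $\Gamma_{jk}$ you add are harmless but unnecessary, as you note yourself.
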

\begin{proof}
In normal coordinates and a frame parallel to first order at $x_0$,
start with the vector field $v+\sum_j x^jD_j$ and normalize its
length. Since every $D_j$ is perpendicular to $v$, this realizes
the prescribed first derivatives. The scalar function $F$ is
nonnegative near $x_0$ and vanishes there. Hence $LF(x_0)\ge0$.
The product rule displayed at the start of this section gives
\eqref{eq:deterministiccontactcost}: its last term vanishes because
$K(v,\cdot)=0$, while $K(D_j,D_j)=B(D_j,D_j)$.
\end{proof}
For the rotations in \eqref{eq:costgeneral}, this gives exactly
$\mathcal C_D=\cost$. 

\subsection{A longitudinal specialization}\label{sec:commonlongitudinal}
For later use recall $N=2n$, put $m=N-1$, and
$\mathcal B=B$. Choosing $A_0=2\alpha w$, $\sigma=0$ in
\eqref{eq:costgeneral} gives, for the scalar test $F=K(V,V)$,
\begin{align}\label{an:eq:contact}
 \mathcal C_D={}&D_*+[1-4\alpha(1+2b\alpha)]\tau+(3-12\alpha)x\notag\\
 &-\alpha\tr\mathcal B^2-2\beta|\mathcal Bw|^2-3\mathcal B(h,h)
 -4(\alpha^2+b\beta)\mathcal B(w,w),\\
 D_*={}&b^2+Nb-\lambda-s-\alpha\{Nb^2+2bT+(N+2)s\}-2\beta b^2s.\notag
\end{align}

\section{Continuation for domains and potentials}\label{sec:global}
We establish one continuation principle for moving domains and then
for potentials on a fixed domain (Section~\ref{sec:schrodinger}). All uniform bounds below concern a compact
smooth parameter interval, never the singular extinction limit.

\subsection{Domain deformation}\label{sec:strongcontinuation}
\begin{proposition}[A compact contraction family]
\label{prop:strongcontinuation}
Let a smooth connected proper domain $\Omega\subset\CP^n$, $n\ge1$,
have compact closure, nonempty embedded boundary, and $\II\ge\kappa g$,
where $\kappa>0$. There is a nested mean curvature contraction preserving
this curvature bound, whose enclosed domains converge after
rescaling to a Euclidean ball in $C^\infty$. Reversing a compact segment
joins a sufficiently small nearly round member to $\Omega$.
Every lower bound for $\lambda_1^0(\Omega)$ holds along this family.
\end{proposition}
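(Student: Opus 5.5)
Run mean curvature flow (inward normal speed equal to the mean curvature $\mathcal H=\tr\II$) starting from $\partial\Omega$. The argument has three steps: show the bound $\II\ge\kappa g$ is preserved, use Huisken-type results to get convergence to a round point, then take a compact segment of the flow as the deformation family.

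\emph{Step 1: short-time existence and preservation of the curvature bound.} Short-time existence for the smooth embedded compact hypersurface is standard. The evolution of the second fundamental form in a locally symmetric ambient space ($\nabla R=0$) is
\[
 \partial_t\II=\Delta\II+|\II|^2\II+(\text{terms linear in }\II\text{ with coefficients from }R)+(\text{zeroth-order curvature terms}).
\]
Here the zeroth-order terms include $\nabla R$-terms, which vanish. Apply Hamilton's tensor maximum principle to $\II-\kappa g$. At a null eigenvector $e$ of $\II-\kappa g$, the reaction term must be $\ge0$. This requires a pointwise check using the explicit curvature tensor of $\CP^n$:
\begin{itemize}
\item Substitute $\II(e,e)=\kappa$ and the other eigenvalues $\ge\kappa$.
\item Show that $|\II|^2\kappa$ dominates the Ricci- and curvature-coupled linear terms, using $1\le\sec\le4$ and $\Ric=(2n+2)g$.
\end{itemize}
This is the classical Huisken computation; Huisken's paper on flows in Riemannian manifolds treats exactly a pinching condition of this kind, $h_{ij}\ge\kappa g_{ij}$ with $\kappa$ large relative to the curvature.

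\emph{Step 2: nesting and convergence.} Since $\mathcal H\ge(2n-1)\kappa>0$, the domains $\Omega_t$ strictly decrease. By Lemma~\ref{lem:boundaryconnected} the boundary stays connected, and all $\Omega_t$ stay inside Drach's ball. Huisken's theorem then gives extinction in finite time $T^*$ at a point. Near extinction $\mathcal H\to\infty$ while the ambient curvature is bounded, so after parabolic rescaling around the extinction point the hypersurfaces converge in $C^\infty$ to a round sphere. Correspondingly, the enclosed domains converge after rescaling to a Euclidean ball.

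\emph{Step 3: the family and the spectral bound.} Fix $t_1<T^*$ so that $\Omega_{t_1}$ is nearly round. Then $\{\Omega_t\}_{t\in[0,t_1]}$, traversed backward, is a smooth compact family from $\Omega_{t_1}$ to $\Omega$. Each member satisfies $\II\ge\kappa g$. Each member is also geodesically convex, since its boundary is strictly convex and it lies in the ball of radius $<\pi/4$. Nesting $\Omega_t\subset\Omega$ and Dirichlet domain monotonicity give $\lambda_1^0(\Omega_t)\ge\lambda_1^0(\Omega)$, so every lower bound for $\lambda_1^0(\Omega)$ holds along the family. Alternatively, the lower bounds of Section~\ref{sec:geometry} depend only on $\kappa$ and containment, and so apply directly.

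\emph{Main obstacle.} The hardest step is the preservation of $\II\ge\kappa g$ for every $\kappa>0$ rather than only for large $\kappa$. The Hopf direction carries curvature $4$, and the reaction terms might not be positive at small $\kappa$. If the general maximum-principle computation fails, I would try two fixes:
\begin{itemize}
\item Use the alternative speed $\mathcal H-(\text{const})$.
\item Restrict to the $\kappa>1$ regime actually used in the main theorems, where Huisken's pinching hypothesis is comfortably satisfied.
\end{itemize}
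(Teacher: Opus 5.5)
Your outline (mean curvature flow, Huisken's convergence theorem, reversal of a compact segment, domain monotonicity) is the paper's. The one step that is not a citation is left unproved: preservation of $\II\ge\kappa g$ for every $\kappa>0$. You describe it as $|\II|^2\kappa$ having to dominate curvature-coupled terms, and you attribute a large-$\kappa$ version to Huisken. You then leave open whether it holds for small $\kappa$, and your fallbacks (changing the speed, or assuming $\kappa>1$) do not prove the stated proposition. Huisken's paper does not preserve a fixed bound $h_{ij}\ge\kappa g_{ij}$, and in $\CP^n$ (with $K_1=0$, $L=0$) his initial hypothesis is only $\mathcal H\II>0$.

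The missing idea is that nothing needs to be dominated. Since $\overline\nabla\overline R=0$, and the $\mathcal H\overline R(\nu,e_i,\nu,e_i)$ terms from the time derivative cancel those of Simons' identity, the diagonal reaction for $S=g^{-1}\II$ in a principal frame is
\[
 \bigl(|\II|^2+\overline{\Ric}(\nu,\nu)\bigr)\mu_i+2\sum_{j\ne i}(\mu_j-\mu_i)\sec(e_i,e_j),
 \qquad \overline{\Ric}(\nu,\nu)=2n+2 .
\]
At a first null contact of $S-(\kappa-\varepsilon)I$, the eigenvalue $\mu_i$ is least and positive. Each curvature term then carries its own factor $\mu_j-\mu_i\ge0$ together with $\sec\ge1>0$, so it is nonnegative, and the first term is positive. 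The Hopf curvature $4$ only enlarges a nonnegative term. Hamilton's tensor maximum principle and $\varepsilon\downarrow0$ then give $S\ge\kappa I$ for every $\kappa>0$, with no largeness condition.

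There is a second gap. For $n=1$ the boundary is a curve in $\CP^1=\mathbb S^2(4)$, and Huisken's theorem, which needs hypersurface dimension at least two, does not apply. You need curve shortening on the sphere (Gage--Grayson). Its alternatives are round-point extinction or infinite-time convergence to a great circle. The latter is excluded because $(\partial_\tau-\partial_{\ell\ell})k=k^3+4k$ preserves $k\ge\kappa>0$.

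Two smaller points. Passing from smooth rescaled convergence of the hypersurfaces to $C^\infty$ convergence of the enclosed domains needs a short argument, for instance radial-graph diffeomorphisms of the unit ball in rescaled normal coordinates. Preservation of embeddedness should also be cited. The Drach-ball containment and geodesic convexity you invoke require $\kappa>1$ and are not needed for this proposition.
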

\begin{proof}
The boundary is connected by Lemma~\ref{lem:boundaryconnected}.
For $n\ge2$, the hypersurface dimension is $d=2n-1\ge2$.
Huisken's ambient constants can be taken as $K_1=0$, $K_2=4$, $L=0$:
$1\le\sec\le4$, $\overline\nabla\overline R=0$, and
$\operatorname{inj}(\CP^n)=\pi/2$. His initial tensor inequality
therefore reduces to $\mathcal H\II>0$, where
$\mathcal H=\tr\II$; this holds, since $\II\ge\kappa g$ with $\kappa>0$
gives $\mathcal H\II\ge(2n-1)\kappa^2g>0$. Theorem~1.1 of \cite{Huisken}
gives finite-time extinction at a point and smooth round rescaled
convergence. Embeddedness is preserved by its Lemma~3.2(ii).

The preservation of $\II\ge\kappa g$ requires an additional calculation,
given for $n=2$ in \cite[Lemma~5.1]{Xia}.
With outward normal $\nu_{\rm out}$, our convention is
$\II(X,Y)=\langle\overline\nabla_X\nu_{\rm out},Y\rangle$ and
$\partial_\tau F=-\mathcal H\nu_{\rm out}$.
In a principal orthonormal frame, put $S=g^{-1}\II$, with eigenvalues
$\mu_i$, and $k_{ij}=\sec(e_i,e_j)$. The covariant evolution in
\cite[Lemma~3.3 and Theorem~3.4]{Huisken}, together with
$\partial_\tau g=-2\mathcal H\II$, gives
\begin{equation}\label{eq:shapeaudit}
 ((\partial_\tau-\Delta)S)_i{}^i
 =(|\II|^2+2n+2)\mu_i
       +2\sum_{j\ne i}(\mu_j-\mu_i)k_{ij}.
\end{equation}
Since $\overline\nabla\overline R=0$, the ambient-derivative terms
vanish and the terms $\mathcal H\overline R(\nu,e_i,\nu,e_j)$ from the
time derivative cancel those of Simons' identity. The remaining
tangential curvature contractions are
$-2\mu_i\sum_jk_{ij}+2\sum_j\mu_jk_{ij}$; raising an index
cancels the covariant term $-2\mathcal H\mu_i^2$.
At a first null contact of $S-(\kappa-\varepsilon)I$,
$0<\varepsilon<\kappa$, the chosen $\mu_i$ is least and positive.
Every sectional term in \eqref{eq:shapeaudit} is then nonnegative,
and the first term is positive. The tensor maximum principle \cite{Hamilton} excludes
contact. Letting $\varepsilon\downarrow0$ on each compact flow interval
proves $S\ge\kappa I$.

For $n=1$, use instead the spherical curve-shortening theorem of
Gage--Grayson \cite{Gage,Grayson}, in the formulation and proof of
\cite[Theorem~1.1]{BLZ}, scaled to $\CP^1=\mathbb S^2(4)$.
Its alternatives are smooth round-point extinction and smooth
infinite-time convergence to a great circle. The inward curvature obeys
$(\partial_\tau-\partial_{\ell\ell})k=k^3+4k$;
comparison with $\kappa-\varepsilon$ preserves $k\ge\kappa>0$,
excluding the latter alternative.

Positive inward speed makes the enclosed domains nested. Reverse a
segment $[0,T_0]$ with $T_0<T_*$ chosen near extinction. This reverses
an existing family, not the parabolic equation. Domain monotonicity gives
the spectral assertion. On this compact interval the embeddings and
all required derivatives are bounded. Smooth rescaled convergence follows from
\cite[Theorem~1.1, Lemma~7.2 and p.~479]{Huisken}.
Lemma~7.2 includes the ambient-curvature derivative terms in the
higher-order estimates and extends the interpolation argument of
\cite[Section~8]{Huisken84}. The final step on p.~479 refers to
\cite[Section~10]{Huisken84}: Lemma~10.4 and the ensuing interpolation and Sobolev estimates supply
pointwise derivative bounds, Theorem~10.5 and Lemma~10.6 give curvature and metric decay,
and the conclusion on p.~266 is smooth normalized convergence.
We use this limit only to select one sufficiently small initial member;
all uniform estimates below are on the compact interval $[0,T_0]$.

Smooth convergence of hypersurfaces also supplies the
needed convergence of domains. In rescaled normal coordinates their
boundaries are radial graphs $\rho_\epsilon(\theta)\theta$ with
$\rho_\epsilon\to1$ in every $C^k$ norm. Choose a smooth radial cutoff
$\chi$ equal to zero near $0$ and one near $1$. The maps
\[
 \Psi_\epsilon(z)=
 [1+\chi(|z|)(\rho_\epsilon(z/|z|)-1)]z,
 \qquad \Psi_\epsilon(0)=0,
\]
are diffeomorphisms of the closed unit ball onto the enclosed domains
for small $\epsilon$: their radial derivatives are positive, and they
are the identity near zero. They converge smoothly to the identity.
The rescaled normal-coordinate metrics $g_{ij}(\epsilon z)$ converge
smoothly to the Euclidean metric, hence so do their pullbacks by
$\Psi_\epsilon$. This proves the asserted domain convergence.
\end{proof}

\subsection{A uniform collar and initialization}
\begin{lemma}[Uniform collar]\label{lem:collar}
Let $\Omega_t$, $0\le t\le1$, be a smooth family of connected proper
domains with compact closure and nonempty boundary, parametrized by
embeddings of one compact manifold, and with $\II_t\ge\kappa g>0$.
Let $V_t$ be real smooth potentials depending smoothly on $t$ after
pullback, and $\phi_t$ their positive normalized Dirichlet ground states.
If a smooth barrier satisfies
\[
 b(s)=a\sqrt s-c\log\sqrt s+O(1),\qquad
 0<a\le\kappa,\quad c>0,
\]
then $\Hess(-\log\phi_t)-b(|\nabla\log\phi_t|^2)g$ is positive
in a uniform boundary collar. The same holds for each fixed constant
barrier; the least eigenvalue of $\Hess(-\log\phi_t)$ tends uniformly
to $+\infty$ at the boundary.
\end{lemma}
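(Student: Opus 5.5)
We work in Fermi coordinates $(r,y)$ near $\partial\Omega_t$, with $r=d(\cdot,\partial\Omega_t)$. The plan is to expand $\phi_t$ to second order there and read off $\Hess f$ and $|\nabla f|$. Uniformity in $t$ is inherited from compactness: the embeddings, the potentials and their derivatives depend smoothly on $t\in[0,1]$. The first Dirichlet eigenvalue is simple and depends continuously on $t$, so $\phi_t$ depends continuously on $t$ in $C^{k}(\overline\Omega_t)$ after pullback, by Schauder theory up to the boundary. Hopf's lemma gives $\partial_\nu\phi_t\ge c_0>0$ on $\partial\Omega_t$, and the continuity plus compactness in $t$ make $c_0$ and all $C^3$ bounds uniform. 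This reduces the problem to one local boundary expansion with uniform constants.

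\textbf{Step 1 (expansion).} Write $\phi=r\,\psi$ with $\psi$ smooth up to the boundary and $\psi\ge c_0$ in a collar of uniform width. Then $f=-\log r-\log\psi$ and $\nabla f=-r^{-1}\nabla r-\nabla\log\psi$. Consequently
\[
 \sqrt s=r^{-1}+O(1),\qquad \log\sqrt s=-\log r+O(r),
\]
with uniform $O$-constants. For the Hessian we have $\Hess(-\log r)=-r^{-1}\Hess r+r^{-2}\,dr\otimes dr$. Since the level sets of $r$ are parallel hypersurfaces, $-\Hess r$ restricted to $\nabla r^\perp$ equals $\II_r$, the second fundamental form of the parallel hypersurface, and $\Hess r(\nabla r,\cdot)=0$. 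By continuity, $\II_r\ge(\kappa-Cr)g$ on the tangential block. Finally, $\Hess\log\psi=O(1)$.

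\textbf{Step 2 (comparison).} Decompose a unit vector as $X=\xi\nabla r+Y$ with $Y\perp\nabla r$. Then
\[
 \Hess f(X,X)\ge r^{-2}\xi^2+r^{-1}(\kappa-Cr)|Y|^2-C.
\]
On the other hand,
\[
 b(s)=a\sqrt s-c\log\sqrt s+O(1)=a r^{-1}+c\log r+O(1).
\]
For $a\le\kappa$ this gives
\[
 \Hess f(X,X)-b\ \ge\ \xi^2\bigl(r^{-2}-ar^{-1}\bigr)+|Y|^2(\kappa-a)r^{-1}-c\log r-C'.
\]
Each term is nonnegative for small $r$, and $-c\log r\to+\infty$. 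So the total exceeds zero once $r<r_0$, and $r_0$ is uniform in $t$. In the borderline case $a=\kappa$, the point is that the logarithmic term with $c>0$ beats the bounded errors; this is exactly why $c>0$ is assumed.

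For a constant barrier the same estimate shows that the least eigenvalue is at least $\min\{r^{-2}-C,\ \kappa r^{-1}-C\}\to\infty$ uniformly, so the constant case and the final assertion follow.

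\textbf{Main obstacle.} The key difficulty is uniformity of the $O(1)$ error $\Hess\log\psi$ and of $\inf\psi$ across $t$, which needs $C^3$ control of $\phi_t$ up to moving boundaries. I would obtain it by pulling everything back to the fixed manifold via the given embeddings, applying boundary Schauder estimates with coefficients smooth in $t$, and using the simplicity of $\lambda_1$ so that normalized ground states vary continuously. The only delicate point is the bounded error against the $a r^{-1}$ matching at $a=\kappa$, and the $c\log$ term absorbs it.
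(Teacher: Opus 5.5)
Your proposal is correct and follows essentially the same route as the paper. Both arguments use the factorization $\phi_t=d\,a_t$ with a uniform Hopf lower bound, the block form of $\Hess f$ (normal entry $d^{-2}$, tangential entry $\II/d$), the expansion $b(s_t)=a/d+c\log d+O(1)$, and the logarithmic term to absorb bounded errors when $a=\kappa$; uniformity in $t$ comes from simplicity of $\lambda_1$, Schauder estimates and compactness, which the paper makes explicit through reference diffeomorphisms $\Phi_t$ and an implicit-function argument. The only difference is minor: you normalize $|X|=1$ and let $c|\log r|$ absorb the whole bounded tensor $\Hess\log\psi$, whereas the paper absorbs the mixed normal--tangential block by $2C|\xi||Y|\le\xi^2/(4d^2)+4C^2d^2|Y|^2$, and both are valid.
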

\begin{proof}
The normal exponential maps have a common tubular radius. Their
differentials are uniformly invertible at distance zero by compactness;
a sequence of failures of injectivity at distances tending to zero would
have coincident limiting base points by embeddedness, contradicting this
local invertibility. Extend the smooth boundary velocity to an ambient
vector field in this collar and then by a cutoff. Its flow gives smooth
diffeomorphisms $\Phi_t:\overline\Omega_0\to\overline\Omega_t$.
The pulled-back metrics are uniformly elliptic with uniformly bounded
derivatives of every fixed order.

Simplicity and normalization give smooth dependence of the eigenpair
on this fixed reference domain. Indeed, writing tildes for pullbacks,
the linearized eigenpair map is
\[
 (v,\ell)\longmapsto
 \left(( -\Delta_{g_t}+\widetilde V_t-\lambda_t)v
             -\ell\widetilde\phi_t,
       2\int\widetilde\phi_t v\,d\mathrm{vol}_{g_t}\right).
\]
It is an isomorphism from
$(C^{k+2,\gamma}\cap H_0^1)\times\mathbb R$ to
$C^{k,\gamma}\times\mathbb R$: pairing its first component $q$
with $\widetilde\phi_t$ determines
$\ell=-\int q\widetilde\phi_t$; the Fredholm alternative determines
the orthogonal component of $v$, and the normalization determines its
remaining component. Boundary Schauder estimates give a bounded inverse
\cite[Chapter~6]{GT}. The implicit function theorem and elliptic
bootstrapping yield smooth local branches in every fixed spatial norm.
Min--max continuity identifies them as ground states, and positivity
and normalization glue them. Compactness makes the bounds uniform.
Only simplicity is used, not the gap estimate to be proved; smooth,
rather than analytic, parameter dependence suffices here (compare
\cite[Chapter~VII]{Kato}).

Write $d$ for boundary distance. Choosing a common
$C\ge\max\{0,\sup(V_t-\lambda_t)\}$ gives
$(\Delta-C)\phi_t\le0$. The boundary point lemma, continuity, and
compactness give a uniform positive inward Hopf derivative. Thus in a
smaller common collar
\[
 \phi_t(y,d)=d\,a_t(y,d),\qquad
 a_t(y,d)=\int_0^1\partial_d\phi_t(y,\theta d)\,d\theta,
\]
where $a_t$ is bounded away from zero and has bounded derivatives.
Consequently
\[
 H_t=\frac{dd\otimes dd}{d^2}-\frac{\Hess d}{d}-\Hess\log a_t,
 \qquad s_t=d^{-2}+O(d^{-1}).
\]
In the normal--tangential splitting, $H_t$ has blocks
$d^{-2}+O(1)$, $\II_t/d+O(1)$, and bounded mixed block.
Since $b(s_t)=a/d+c\log d+O(1)$, absorption of the mixed term gives,
for a uniform $C_1$ and sufficiently small $d$,
\[
 (H_t-b(s_t)g)(\xi\nabla d+Y,\xi\nabla d+Y)
 \ge\frac{\xi^2}{4d^2}
    +\left(\frac{\kappa-a}{d}+c|\log d|-C_1\right)|Y|^2,
 \qquad Y\perp\nabla d.
\]
Here one uses
$2C|\xi||Y|\le\xi^2/(4d^2)+4C^2d^2|Y|^2$.
This proves positivity even when $a=\kappa$, for arbitrary varying
directions. Omitting $b$ proves uniform Hessian divergence and the
constant-barrier assertion.
\end{proof}

\begin{lemma}[Initial positivity]\label{lem:initial}
On a sufficiently small member of the contraction in
Proposition~\ref{prop:strongcontinuation}, the Laplacian ground state
satisfies $H>b(s)g$ for any fixed smooth barrier with
$b(s)\le C(1+\sqrt s)$. One member works for any finite list of barriers.
\end{lemma}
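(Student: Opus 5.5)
The plan is to compare, on a small nearly round member $\Omega_\epsilon$ of the contraction in Proposition~\ref{prop:strongcontinuation}, the ground state with its Euclidean model via scaling. Take $\epsilon$ to be the rescaling parameter. By Proposition~\ref{prop:strongcontinuation}, the pulled-back domains $\Psi_\epsilon(\overline{B_1})$ carry metrics $g^\epsilon_{ij}(z)=g_{ij}(\epsilon z)$ converging smoothly to the Euclidean metric on the unit ball. The Dirichlet ground state $\phi^\epsilon$ of $\Omega_\epsilon$ pulls back to the ground state $\widetilde\phi^\epsilon$ of $-\Delta_{\epsilon^{-2}g}$ on $\overline B_1$, i.e.\ of a smoothly convergent family of elliptic operators with eigenvalue $\epsilon^2\lambda_1^0(\Omega_\epsilon)$. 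By the same implicit-function and Schauder argument as in Lemma~\ref{lem:collar} (now with parameter $\epsilon\in[0,\epsilon_0]$, which the smooth convergence makes legitimate), $\widetilde\phi^\epsilon\to\phi^{\rm E}$ in every $C^k(\overline B_1)$. Here $\phi^{\rm E}$ is the Euclidean ball ground state $c\,|z|^{1-N/2}J_{N/2-1}(j|z|)$.

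Next I use the known strict log-concavity of the Euclidean ball ground state, in the uniform form $\Hess(-\log\phi^{\rm E})\ge c_0\,g_{\rm E}$ on $B_1$ with $c_0>0$. This is the Brascamp--Lieb statement, made explicit here by radial symmetry: the radial function $-\log\phi^{\rm E}$ has strictly positive second derivative and positive $(\log\phi^{\rm E})'/(-r)$ on $[0,1)$. Near the boundary, the Euclidean Hessian of $-\log\phi^{\rm E}$ has normal block $\sim d^{-2}$ and tangential block $\sim d^{-1}$ (mean curvature of the unit sphere is $1$).

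Scaling back, with $\widetilde f^\epsilon=-\log\widetilde\phi^\epsilon$, one has $f=\widetilde f^\epsilon\circ\Psi_\epsilon^{-1}(\cdot/\epsilon)$. Hence $H=\epsilon^{-2}\widetilde H^\epsilon$ in $g$-norm and $s=\epsilon^{-2}\widetilde s^\epsilon$. The required inequality $H>b(s)g$ therefore becomes $\widetilde H^\epsilon>\epsilon^2 b(\epsilon^{-2}\widetilde s^\epsilon)\,\widetilde g^\epsilon$. The hypothesis $b(s)\le C(1+\sqrt s)$ gives
\[
\epsilon^2b(\epsilon^{-2}\widetilde s)\le C\epsilon^2+C\epsilon\sqrt{\widetilde s}.
\]
In the interior region $\{d\ge\delta\}$, $\widetilde H^\epsilon\ge (c_0/2)\widetilde g$ for small $\epsilon$ by $C^2$ convergence, and $\widetilde s$ is bounded, so the right side is $O(\epsilon)$ and the inequality holds.

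The collar is the step I expect to be the main obstacle, since $C^k$ convergence of $\widetilde\phi$ does not directly control $\log\widetilde\phi$ at $d=0$. I will redo the expansion of Lemma~\ref{lem:collar} uniformly in $\epsilon\in[0,\epsilon_0]$. Write $\widetilde\phi^\epsilon=d\,a_\epsilon$ with $a_\epsilon$ uniformly positive and uniformly $C^2$, using the uniform Hopf lemma along the convergent family. The normal block is then $d^{-2}+O(1)$, the tangential block is $\widetilde\II_\epsilon/d+O(1)$ with rescaled second fundamental form $\widetilde\II_\epsilon\to g_{\rm E}$ (the unit sphere), and the mixed block is bounded. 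Meanwhile $\widetilde s=d^{-2}+O(d^{-1})$, so the barrier term is at most $C\epsilon^2+C\epsilon/d+O(\epsilon)$. With $\epsilon$ small, $(1-C\epsilon)/d$ dominates and the same mixed-term absorption as in Lemma~\ref{lem:collar} gives positivity in a collar $\{d<\delta\}$ whose width $\delta$ is independent of $\epsilon$.

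Finally, choose $\delta$ first from the collar estimate, then $\epsilon$ small for the interior. A finite list of barriers is handled by taking the largest constant $C$ among them, so one member works for all.
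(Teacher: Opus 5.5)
Your proposal is correct and follows essentially the same route as the paper: you rescale, get smooth convergence to the Euclidean ball ground state, control the interior by that convergence and a uniform collar via $\phi=d\,a$ with mixed-term absorption, and then dominate the rescaled barrier bound $C\epsilon^2+C\epsilon\sqrt{\widetilde s}$; the paper packages the last steps as the single uniform inequality $\Hess f\ge c_1(1+|\nabla f|)\widehat g$. The one ingredient you assert rather than prove is \emph{strict} log-concavity of the Euclidean model, which Brascamp--Lieb alone gives only non-strictly; the paper derives it from the Riccati equation $\varrho h'=\lambda_E-Nh+\varrho^2h^2$ for $h=f_E'/\varrho$, where a first zero of $h'$ would force $h''=2h^2>0$, a contradiction.
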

\begin{proof}
For the Euclidean unit-ball ground state in dimension $N=2n$, set
$f_E=-\log\phi_E$ and $h=f_E'/\varrho$. Its radial equation gives
\[
 \varrho h'=\lambda_E-Nh+\varrho^2h^2,
 \qquad h=\frac{\lambda_E}{N}
       +\frac{\lambda_E^2\varrho^2}{N^2(N+2)}+O(\varrho^4).
\]
Thus $h'>0$ initially. A first positive zero of $h'$ would satisfy
$h''=2h^2>0$, a contradiction. Both Hessian eigenvalues $h$ and
$h+\varrho h'$ are positive. On compact interior sets, continuity
therefore gives a positive lower bound. At the boundary the factorization
$\phi_E=d a_E$ gives normal and tangential orders $d^{-2}$ and $d^{-1}$,
whereas $|\nabla f_E|=O(d^{-1})$. Hence
$\Hess f_E\ge c_0(1+|\nabla f_E|)g_E$.

The smooth pullback convergence in
Proposition~\ref{prop:strongcontinuation}, the Rayleigh principle, and
boundary elliptic estimates imply smooth convergence of normalized
first eigenfunctions to $\phi_E$; simplicity and positivity identify
every subsequential limit. In a fixed collar, $C^3$ convergence controls
two derivatives of the positive boundary factors $\phi/d$.
The tangential curvatures have a uniform positive lower bound, so the
normal, tangential, and mixed Hessian blocks are bounded below or above
by $1/(2d^2)$, $c/d$, and a constant, respectively. The same absorption
as in Lemma~\ref{lem:collar}, and $|\nabla f|\le C/d$, give
$\Hess f\ge c_1(1+|\nabla f|)\widehat g$ uniformly in this collar.
Interior convergence supplies this inequality on the complement.
This proves boundary stability as well as interior stability.

With $\widehat g=\epsilon^{-2}g$ and
$z=|\widehat\nabla f|$, the rescaled barrier obeys
$\epsilon^2b(\epsilon^{-2}z^2)\le C\epsilon^2+C\epsilon z$.
For small $\epsilon$ this is less than $c_1(1+z)$.
Scaling back proves the result, simultaneously for finitely many barriers.
\end{proof}

\begin{lemma}[Two ordered continuations]\label{lem:twostageaudit}
Consider a smooth compact domain--potential family covered by
Lemma~\ref{lem:collar}. Suppose the initial member satisfies
$H>b(s)g$ and $H>\eta g$, and both tensors have uniform positive
boundary collars. Assume every nonnegative null contact of the first
tensor admits a finite control with negative cost. Assume the same for
the constant tensor whenever $H>b(s)g$ already holds on the contact
domain. Then both strict inequalities hold throughout the family.
\end{lemma}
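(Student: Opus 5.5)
The plan is a standard open--closed argument in the parameter, run twice in order. Parametrize the family by $t\in[0,1]$ and, after pulling back, write $H_t=\Hess f_t$, $s_t=|\nabla f_t|^2$. Let
\[
 I_1=\{t:\ H_\sigma>b(s_\sigma)g\text{ on }\overline{\Omega}_\sigma\text{ for all }\sigma\le t\}.
\]
We show $I_1=[0,1]$ and then repeat the argument for the constant tensor $H-\eta g$.

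\emph{Stage one.} By hypothesis $0\in I_1$. The first step is to show that $I_1$ is relatively open. Lemma~\ref{lem:collar} provides a uniform boundary collar in which $H_t-b(s_t)g>0$ for all $t$. On the complement of the collar, which is a compact set uniformly interior after pullback, strict positivity is an open condition because Lemma~\ref{lem:collar} gives smooth dependence of the eigenpair on $t$. The next step is to exclude a first failure time. Suppose $t_*=\sup I_1<1$, or that $t_*\in I_1$ fails. Then by continuity $K=H_{t_*}-b(s_{t_*})g\ge0$ on $\Omega_{t_*}$ with a null vector $v$ at some $x_0$. The collar forces $x_0$ to be interior, and compactness of the unit sphere bundle over the complement of the collar produces $v$. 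By assumption there is a finite control $D$ with $\mathcal C_D<0$. Lemma~\ref{lem:exacttangent} gives a local unit field $V$ with $F=K(V,V)\ge0$ near $x_0$, $F(x_0)=0$, and $\tfrac12LF(x_0)=\mathcal C_D$. Since $x_0$ is an interior minimum of $F$, we have $LF(x_0)=\Delta F(x_0)\ge0$, which contradicts $\mathcal C_D<0$. Hence $t_*$ cannot be a first contact time. Because strictness at $t_*$ together with openness extends the interval, $I_1=[0,1]$.

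\emph{Stage two.} Define $I_2$ in the same way for $H-\eta g$. The initial member satisfies the inequality, and the collar is again supplied by Lemma~\ref{lem:collar}, since the least Hessian eigenvalue diverges at the boundary. A first null contact at some $t_*$ occurs on a domain where stage one already gives $H_{t_*}>b(s_{t_*})g$. The conditional hypothesis therefore provides a negative-cost control, and Lemma~\ref{lem:exacttangent} yields the same contradiction. This ordering is the reason the continuations must be performed in sequence: the second contact test is only valid once the first inequality holds on the whole family.

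The main obstacle is making the first-contact step rigorous. Three points need care. First, the limiting configuration at $t_*$ must genuinely be a nonnegative null contact in the sense required by Lemma~\ref{lem:contact}, with $K\ge0$ in a full neighborhood of $x_0$; this needs uniform interior $C^3$ convergence of $f_t$ as $t\uparrow t_*$, which Lemma~\ref{lem:collar} provides. Second, the contact point must be uniformly away from the boundary, which the uniform collar handles. Third, the strict inequality at $t_*$ must propagate to a neighborhood of $t_*$; this uses compactness of the unit sphere bundle together with uniform continuity in $t$.
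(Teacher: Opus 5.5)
Your proposal is correct and follows essentially the same route as the paper. The paper locates the first loss of positivity through the continuous function $m_K(t)$, the minimum of the pulled-back quotient $K_t(v,v)/g(v,v)$ over a fixed compact interior set $Q$ chosen from the uniform collars; this is equivalent to your $\sup I_1$ argument, and the paper likewise freezes the parameter at $t_*$, applies Lemma~\ref{lem:exacttangent}, and starts the constant-tensor stage only after the first stage holds on the whole family (only continuity in $t$ of $\nabla f_t$ and $\Hess f_t$ on that compact set is needed, not $C^3$ convergence).
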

\begin{proof}
Use the maps $\Phi_t$ from Lemma~\ref{lem:collar}. Their uniformly
bounded derivatives and inverse derivatives imply
$c d_0\le d_t\circ\Phi_t\le C d_0$ near the boundary.
Choose $\delta>0$ so that $d_0<\delta$ maps into both positive collars,
and put $Q=\{d_0\ge\delta\}\subset\overline\Omega_0$.
If $Q$ is empty there is nothing to prove. Otherwise for either tensor
$K_t$ the function
\[
 m_K(t)=\min_{x\in Q,\ g_0(v,v)=1}
 \frac{(\Phi_t^*K_t)_x(v,v)}{(\Phi_t^*g)_x(v,v)}
\]
is continuous: pulled-back ground states have a positive minimum on
$[0,1]\times Q$, so their logarithmic Hessians are continuous, and the
denominator is uniformly positive. Starting with the first tensor,
a first loss of positivity gives an attained interior null direction
at a parameter $t_*>0$, possibly $t_*=1$. The tensor is nonnegative
on the whole contact domain. A negative finite cost contradicts
Lemma~\ref{lem:exacttangent}. Thus the first inequality holds for all
parameters. Restart the interval for the constant tensor; at its first
contact the first inequality is now available, so the second cost
hypothesis excludes it as well. The parameter is frozen before applying
the local scalar contact test. A fixed domain
with varying potential is included by taking $\Phi_t=\mathrm{Id}$.
\end{proof}

\subsection{The constant stage}\label{sec:cp2balanced}

The constant stage uses the exact trace $T=\lambda+s-N\eta$, $N=2n$.

\begin{proposition}[Balanced-control criterion]\label{prop:balancedcriterion}
Suppose $H>b(s)g$ for an increasing barrier $b$. Fix
$\eta,S,\lambda_*>0$ with $b(S)\ge\eta$ and $\lambda\ge\lambda_*$.
For $0\le c<1$, put
\[
 k=\sqrt{3+c^2},\qquad d_c=8k+4(n-1)c,\qquad
 A=\eta^2+N(1-c^2)\eta-(1-c^2)\lambda_*.
\]
Let $K=H-\eta g$ satisfy $K\ge0$ near an interior point $x_0$ with
$K_{x_0}v=0$, $|v|=1$, and put $h=Jv$, $q=\langle p,h\rangle$, and
\begin{equation}\label{eq:constantcontrols}
 \theta=\begin{cases}1,&q\ge0,\\-1,&q<0,\end{cases}
 \qquad A_0=k\theta h,\qquad\sigma=-c\theta.
\end{equation}
Let $\cost$ be the finite-control cost \eqref{eq:CDdef} of $K$ at
$(x_0,v)$ for the rotation controls of \eqref{eq:costgeneral} with
these $A_0$ and $\sigma$, namely $D_0=-k\theta h$, $D_h=0$,
$D_{e_\ell}=-c\theta Je_\ell$ and $D_{Je_\ell}=c\theta e_\ell$. Then
\begin{equation}\label{eq:balancedcriterion}
 \cost\le\max\{A+c^2S,\ A+(3+c^2)S-d_c\sqrt S\}.
\end{equation}
In particular, if the right side is negative, no such contact exists.
\end{proposition}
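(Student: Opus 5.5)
The plan is to compute the finite-control cost exactly from Lemma~\ref{lem:contact} with the constant barrier $b\equiv\eta$. In that case $\alpha=\beta=0$ and the trace is $T=\lambda+s-N\eta$. The remaining work is then an elementary maximization over the gradient data $(a,\tau,q)$, using the a priori bound $s<S$ supplied by the first-stage inequality $H>b(s)g$.

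\emph{Step 1: simplify $E$ and $C_0$.} With $\alpha=\beta=0$, \eqref{eq:Emhigher} becomes
\[
E=\eta^2+N\eta-\lambda-a^2-3B(h,h)+3x,
\]
and \eqref{eq:Cgeneral} becomes $C_0=(I+3h\otimes h)w$. Since $h=Jv\perp v$, we have $\langle w,h\rangle=\langle p,h\rangle=q$, so $x=q^2$ and $\langle C_0,h\rangle=4q$.

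\emph{Step 2: substitute the controls \eqref{eq:constantcontrols} into \eqref{eq:costgeneral}.} The terms evaluate as follows:
\[
-2C_0\cdot A_0=-8k\theta q=-8k|q|,\qquad B(A_0,A_0)=k^2B(h,h)=(3+c^2)B(h,h),
\]
\[
\sigma^2\{T-B(h,h)\}=c^2T-c^2B(h,h),\qquad 4(n-1)q\sigma=-4(n-1)c|q|.
\]
The coefficients of $B(h,h)$ are $-3+(3+c^2)-c^2=0$, so the Hopf Hessian term cancels exactly. This cancellation is the point of the choice $k^2=3+c^2$, and it is the step I would double-check most carefully. It rests on sign conventions in \eqref{eq:costgeneral}, in particular that the transverse controls produce $\sigma^2\tr_WB$ with $\tr_WB=T-B(h,h)$, and on the fact that $B(v,\cdot)$ is not involved because $B$ lives on $v^\perp$. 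What remains is
\[
\cost=\eta^2+N(1-c^2)\eta-(1-c^2)\lambda-a^2+c^2s+3q^2-d_c|q|.
\]
Since $1-c^2>0$ and $\lambda\ge\lambda_*$, this gives $\cost\le A-(1-c^2)a^2+c^2\tau+3q^2-d_c|q|$, where $s=a^2+\tau$. Dropping the nonpositive $a^2$ term leaves
\[
\cost\le A+c^2\tau+3q^2-d_c|q|.
\]

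\emph{Step 3: bound $s$ and maximize.} At the contact, $\eta$ is an eigenvalue of $H$ (direction $v$), and $H>b(s)g$ forces $b(s)<\eta\le b(S)$. Because $b$ is increasing, this gives $s<S$, and hence $q^2\le\tau\le s<S$. Put $y=|q|\in[0,\sqrt\tau]$. The bound is convex in $y$, so its maximum is at $y=0$, giving $A+c^2\tau\le A+c^2S$, or at $y=\sqrt\tau$, giving $A+(3+c^2)\tau-d_c\sqrt\tau$. The latter is convex in $\sqrt\tau\in[0,\sqrt S]$, so it is at most $\max\{A,\ A+(3+c^2)S-d_c\sqrt S\}$, and $A$ is dominated by $A+c^2S$. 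This yields \eqref{eq:balancedcriterion}.

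The final assertion then follows from Lemma~\ref{lem:exacttangent}: a negative finite-control cost contradicts $\mathcal C_D\ge0$, so no such contact exists.
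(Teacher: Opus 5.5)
Your proposal is correct and follows the paper's proof essentially step for step. You evaluate \eqref{eq:costgeneral} exactly with $\alpha=\beta=0$ and cancel $B(h,h)$ through $k^2-c^2=3$. You then use $b(s)<\eta\le b(S)$ and monotonicity to get $s<S$, and finish with convexity first in the angular variable and then in the gradient size. The only cosmetic differences are that you retain $c^2\tau$ instead of $c^2 s$, and use convexity in $|q|$ rather than in $x=q^2$; neither changes the argument.
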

\begin{proof}
Use Lemma~\ref{lem:contact}, writing $h=Jv$, $p=av+w$,
$q=\langle p,h\rangle$, $x=q^2$, and
$B=(H-\eta g)|_{v^\perp}\ge0$. At contact the strict first barrier gives $b(s)<\eta\le b(S)$;
monotonicity gives $s<S$, including equality $b(S)=\eta$.
Extend the finite values \eqref{eq:constantcontrols} smoothly in a local orthonormal frame as in
Lemma~\ref{lem:exacttangent}; only these values enter the cost.
Writing $t=B(h,h)$ and $T=\lambda+s-N\eta$, the identities
$k^2-c^2=3$, $\theta^2=1$ and $q\theta=|q|$ give exactly
\begin{equation}\label{eq:balancedexact}
 \cost=\eta^2+N\eta-\lambda-a^2+3x+c^2T-d_c\sqrt x
 \le A+c^2s+3x-d_c\sqrt x.
\end{equation}
Thus the unbounded $t$ term cancels, without requiring $h$ to be an
eigenvector. For $c=0$ and $n=2$, \eqref{eq:balancedexact} reduces, after
discarding $-a^2$, to the constant-stage inequality of Xia
\cite[(6.2)]{Xia}, obtained there by a determinant argument; the
transverse rotation $\sigma=-c\theta$ adds $c^2T-4(n-1)c\sqrt x$ and
replaces $8\sqrt3$ by $8\sqrt{3+c^2}$. For fixed $s$ the last expression is convex in
$x\in[0,s]$. Its $x=0$ value is at most $A+c^2S$; its $x=s$
value is convex in $s\in[0,S]$, with the zero endpoint no larger
than $A+c^2S$. This proves \eqref{eq:balancedcriterion}.
Lemma~\ref{lem:exacttangent} excludes a negative value of this contact cost.
\end{proof}

\subsection{Potentials on a fixed domain}\label{sec:schrodinger}
We use the Laplacian estimates to initialize the path $-\Delta+tV$ on
one fixed domain. Write $\lambda_t=\lambda_1(-\Delta+tV)$,
$\mu_t=\lambda_t-tV$, and
$\ell(V)=\lambda_1(-\Delta+V)-\max_{\overline\Omega}V$.

\begin{lemma}[Endpoint spectral control]\label{lem:endpointspectral}
One has
\begin{equation}\label{eq:potentialmu}
 \inf_{0\le t\le1,\ x\in\Omega}\mu_t(x)=\ell(V).
\end{equation}
Thus $\ell(V)\ge L_0$ gives $\mu_t\ge L_0$ along the whole path,
including $\lambda_1^0\ge L_0$ at $t=0$.
The condition $\lambda_1^0-\operatorname{osc}_\Omega V\ge L_0$ suffices.
\end{lemma}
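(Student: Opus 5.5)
The plan is to separate the spatial minimum from the minimum over $t$. For fixed $t$, the infimum of $\mu_t(x)=\lambda_t-tV(x)$ over $x\in\Omega$ equals $\lambda_t-t\max_{\overline\Omega}V$, using $t\ge0$ and continuity of $V$ on $\overline\Omega$. Set $M=\max_{\overline\Omega}V$. The claim then reduces to showing that
\[
 g(t):=\lambda_1(-\Delta+tV)-tM=\lambda_1\bigl(-\Delta+t(V-M)\bigr)
\]
attains its minimum over $[0,1]$ at $t=1$, where its value is $\ell(V)$. The equality $g(t)=\lambda_1(-\Delta+t(V-M))$ is just shift invariance: adding the constant $-tM$ to the potential shifts the spectrum by $-tM$.

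The key step is monotonicity. Put $W=V-M\le0$. For each $u\in H_0^1(\Omega)$ with $\|u\|_2=1$, the Rayleigh quotient $\int|\nabla u|^2+t\int Wu^2$ is nonincreasing in $t$, because $\int Wu^2\le0$. Taking the infimum over $u$ shows that $g$ is nonincreasing on $[0,1]$. Hence $\inf_{t\in[0,1]}g(t)=g(1)=\lambda_1(-\Delta+V)-M=\ell(V)$.

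Combining the two steps gives
\[
 \inf_{t,x}\mu_t(x)=\inf_t g(t)=\ell(V),
\]
with the infimum attained at $t=1$ and any maximum point of $V$. (Alternatively, $g$ is concave, being an infimum of affine functions of $t$, but monotonicity is all that is needed.) The consequences then follow directly:
\begin{itemize}
\item If $\ell(V)\ge L_0$, then $\mu_t\ge L_0$ for every $t$. At $t=0$ this reads $\mu_0=\lambda_1^0\ge L_0$.
\item For the final sufficient condition, compare Rayleigh quotients once more: $W\ge -\operatorname{osc}_\Omega V$ gives $\lambda_1(-\Delta+W)\ge\lambda_1^0-\operatorname{osc}_\Omega V$, so $\ell(V)\ge\lambda_1^0-\operatorname{osc}_\Omega V\ge L_0$.
\end{itemize}

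There is no real obstacle. The only points to check are that the sign of $t$ is nonnegative, so the spatial infimum is taken at $\max V$, and that $\max_{\overline\Omega}V$ is the same as $\sup_\Omega V$ by continuity.
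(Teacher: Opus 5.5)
Your proof is correct and follows the paper's argument: with $W=V-\max_{\overline\Omega}V\le0$, the Rayleigh principle gives $\mu_t(x)\ge\lambda_1(-\Delta+tW)\ge\lambda_1(-\Delta+W)=\ell(V)$, with equality of infima at $t=1$ near a maximum point of $V$, and the oscillation bound is the same one-line comparison. One small imprecision: the maximum of $V$ on $\overline\Omega$ may lie on $\partial\Omega$, so the infimum over $x\in\Omega$ is in general only approached, not attained (the paper says ``approach a maximum point''); your remark that $\max_{\overline\Omega}V=\sup_\Omega V$ already gives the equality you need.
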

\begin{proof}
Put $W=V-\max V\le0$. Since $tW\ge W$, the Rayleigh principle gives
$\mu_t(x)\ge\lambda_1(-\Delta+tW)\ge\lambda_1(-\Delta+W)=\ell(V)$.
At $t=1$, approach a maximum point of $V$ to obtain equality of the
infima. The last assertion follows from
$\lambda_1(-\Delta+V)\ge\lambda_1^0+\min V$.
\end{proof}

When $\Omega\subset B_R(o)$ with $R<\pi/4$ (for instance
$R=\operatorname{arccot}\kappa$ under $\II\ge\kappa g$, $\kappa>1$),
Lemma~\ref{lem:sinesquare}(b) gives a pointwise sufficient condition for $\ell(V)\ge L_0$.

For the normalized ground states set
$f_t=-\log\phi_t$, $p_t=\nabla f_t$, $s_t=|p_t|^2$,
$H_t=\Hess f_t$, and $L_t=\Delta-2p_t\cdot\nabla$.
The eigenfunction equation and Bochner identity give, with $N=2n$,
\[
 \Delta f_t=\mu_t+s_t,\qquad
 \tfrac12L_ts_t=|H_t|^2+(N+2)s_t-t\langle p_t,\nabla V\rangle.
\]
At a nonnegative null contact of $H_t-b(s_t)g$, tensor commutation adds
$-t\Hess V/2$ to the contact cost of $H_t$. Thus the controlled cost is
\begin{equation}\label{eq:potentialcost}
 \cost_t=\cost^{(0)}(\mu_t)
       -\tfrac t2\Hess V(v,v)
       +t b'(s_t)\langle p_t,\nabla V\rangle.
\end{equation}
Here $\cost^{(0)}(\mu_t)$ is the cost of Section~\ref{sec:contact}
with the actual contact data and scalar $\mu_t(x_0)$:
$B=(H_t-b(s_t)g)|_{v^\perp}\ge0$ and
$\tr B=\mu_t+s_t-Nb$. The first-derivative contact identities receive
no potential correction.

\begin{proposition}[Fixed-domain two-stage continuation]
\label{prop:fixedpotentialcontinuation}
Let $\II\ge\kappa g>0$, and let a smooth barrier $b$ satisfy
$b'\ge0$,
\[
 b(s)=a\sqrt s-c\log\sqrt s+O(1),\qquad
 0<a\le\kappa,\quad c>0,\qquad
 c_b:=\sup_{s\ge0}2\sqrt s\,b'(s)<\infty.
\]
Suppose $H_0>b(s_0)g$ and $H_0>\eta g$. For every scalar
$\mu\ge L_0$, suppose the contact algebra gives a strictly negative
finite cost at each nonnegative null contact of the first barrier,
and at each constant-barrier contact satisfying $b(s)<\eta$.
If
\begin{equation}\label{eq:potentialconditions}
 \ell(V)\ge L_0,\qquad \Hess V\ge c_b|\nabla V|g,
\end{equation}
then $H_t>b(s_t)g$ and $H_t>\eta g$ for all $0\le t\le1$.
\end{proposition}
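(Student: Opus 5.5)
The plan is to apply Lemma~\ref{lem:twostageaudit} to the fixed domain $\Omega$ with the potential family $t\mapsto tV$, $0\le t\le1$, and $\Phi_t=\Id$. Lemma~\ref{lem:collar} applies to this family. The domain is fixed with $\II\ge\kappa g$, the potentials $tV$ are smooth in $t$, and the barrier has the required asymptotics $a\sqrt s-c\log\sqrt s+O(1)$ with $0<a\le\kappa$. It therefore supplies uniform positive boundary collars for both tensors $H_t-b(s_t)g$ and $H_t-\eta g$. The initial member $t=0$ satisfies $H_0>b(s_0)g$ and $H_0>\eta g$ by hypothesis. What remains is to verify the two cost hypotheses of Lemma~\ref{lem:twostageaudit} at every parameter $t\in[0,1]$: every nonnegative null contact of the first tensor, and every constant-barrier contact occurring while $H_t>b(s_t)g$ holds, must admit a finite control of negative cost.

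Fix $t$ and a contact point $x_0$ with null vector $v$. By Lemma~\ref{lem:endpointspectral} and $\ell(V)\ge L_0$, the scalar $\mu_t(x_0)\ge L_0$. The contact data satisfy $B\ge0$ and $\tr B=\mu_t+s_t-Nb$, which is exactly the data of Section~\ref{sec:contact} with $\lambda$ replaced by $\mu_t(x_0)$. By hypothesis there are controls giving $\cost^{(0)}(\mu_t)<0$. For the constant stage, the strict first inequality gives $b(s)<\eta$ at the contact, as the hypothesis requires. By \eqref{eq:potentialcost}, the same controls give
\[
 \cost_t=\cost^{(0)}(\mu_t)-\tfrac t2\Hess V(v,v)+t\,b'(s_t)\langle p_t,\nabla V\rangle .
\]
It then suffices to show that the potential correction is nonpositive. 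By Cauchy--Schwarz and the definition of $c_b$,
\[
 b'(s_t)\langle p_t,\nabla V\rangle\le b'(s_t)\sqrt{s_t}\,|\nabla V|\le \tfrac{c_b}{2}|\nabla V|,
\]
while \eqref{eq:potentialconditions} gives $\tfrac12\Hess V(v,v)\ge\tfrac{c_b}{2}|\nabla V|$. Since $t\ge0$, the correction is at most $0$, so $\cost_t<0$.

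For the constant tensor the barrier is constant, so its $b'$ term is absent. The correction is then just $-\tfrac t2\Hess V(v,v)\le0$, because $c_b\ge0$ makes $\Hess V\ge0$.

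With both cost hypotheses verified, Lemma~\ref{lem:exacttangent} excludes every contact. Lemma~\ref{lem:twostageaudit} then propagates both strict inequalities over the whole interval, first for the barrier tensor and then for the constant tensor.

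The main obstacle is justifying \eqref{eq:potentialcost} itself. The tensor commutation contributes the term $-\tfrac t2\Hess V$. The $\alpha$-part of $\tfrac12 L b(s_t)$ contributes $+t\,b'\langle p,\nabla V\rangle$ through the modified Bochner identity. One must also check that the first-order identities \eqref{eq:Cgeneral} and \eqref{eq:commutator} are unchanged: these use only $\nabla K(v,v)=0$ and curvature, so no $V$ enters. Since the paper records this formula before the statement, I would cite it. The only remaining care is that $\cost^{(0)}$ is evaluated with the pointwise $\mu_t(x_0)$ in place of a constant $\lambda$, which is legitimate because $\Delta f_t=\mu_t+s_t$ enters the cost only through $\tr B$.
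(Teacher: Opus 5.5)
Your proposal is correct and follows the paper's proof essentially step for step. Like the paper, you take uniform collars from Lemma~\ref{lem:collar} with $\Phi_t=\mathrm{Id}$, get $\mu_t\ge L_0$ from Lemma~\ref{lem:endpointspectral}, bound the first-stage correction in \eqref{eq:potentialcost} by $t\{-c_b/2+b'(s_t)\sqrt{s_t}\}|\nabla V|\le0$, use $b(s_t)<\eta$ and $-\tfrac t2\Hess V(v,v)\le0$ for the constant stage, and then apply Lemma~\ref{lem:twostageaudit} in that order.
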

\begin{proof}
Lemma~\ref{lem:collar}, with fixed domain and potential $tV$, supplies
smooth eigenpair dependence and uniform collars for both tensors,
including signed $V$ and $a=\kappa$. Lemma~\ref{lem:endpointspectral}
gives $\mu_t\ge L_0$. At the first-barrier contact,
\begin{equation}\label{eq:potentialcorrectionsign}
 -\tfrac t2\Hess V(v,v)+t b'(s_t)\langle p_t,\nabla V\rangle
 \le t\{-c_b/2+b'(s_t)\sqrt{s_t}\}|\nabla V|\le0.
\end{equation}
The first contact cost remains negative. Once the first inequality
holds for the entire path, a constant-barrier contact satisfies
$b(s_t)<\eta$ and adds only $-t\Hess V(v,v)/2\le0$.
Lemma~\ref{lem:twostageaudit} proves the two conclusions in that order.
\end{proof}

\begin{proposition}[Spending a strict contact margin]
\label{prop:potentialmargin}
Retain the domain, barrier, collar, and initialization hypotheses of
Proposition~\ref{prop:fixedpotentialcontinuation}. Suppose its two
algebraic costs can be chosen strictly below $-M_b$ and $-M_\eta$,
respectively, for every $\mu\ge L_0$ and every relevant contact datum,
where $M_b,M_\eta>0$. Its conclusions remain valid under
\begin{equation}\label{eq:potentialmargins}
 \ell(V)\ge L_0,\qquad
 \Hess V\ge\max\{c_b|\nabla V|-2M_b,-2M_\eta\}\,g.
\end{equation}
\end{proposition}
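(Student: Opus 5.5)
The plan is to rerun the proof of Proposition~\ref{prop:fixedpotentialcontinuation} without change, except for the sign estimate \eqref{eq:potentialcorrectionsign}. The potential correction is no longer required to be nonpositive; it only has to be smaller than the strict margins $M_b$ and $M_\eta$.

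\emph{Setup.} Lemma~\ref{lem:collar}, applied with the fixed domain and potential $tV$, supplies smooth dependence of the eigenpairs and uniform positive boundary collars for both tensors. Since it assumes only smoothness of $V$, no sign condition on $\Hess V$ is needed there. Lemma~\ref{lem:endpointspectral} gives $\mu_t\ge L_0$ for every $t\in[0,1]$. Consequently the algebraic costs $\cost^{(0)}(\mu_t)$ at any relevant contact datum are strictly below $-M_b$, respectively $-M_\eta$.

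\emph{First-barrier contact.} By \eqref{eq:potentialcost}, the controlled cost is $\cost^{(0)}(\mu_t)$ plus
\[
 -\tfrac t2\Hess V(v,v)+t\,b'(s_t)\langle p_t,\nabla V\rangle .
\]
The hypothesis $\Hess V\ge(c_b|\nabla V|-2M_b)g$ bounds $-\tfrac t2\Hess V(v,v)$ above by $t(M_b-\tfrac{c_b}{2}|\nabla V|)$. Cauchy--Schwarz together with $b'\ge0$ and $2\sqrt s\,b'(s)\le c_b$ give
\[
 t\,b'(s_t)\langle p_t,\nabla V\rangle\le t\tfrac{c_b}{2}|\nabla V| .
\]
Adding the two bounds, the correction is at most $tM_b\le M_b$. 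The total cost is therefore strictly below $-M_b+M_b=0$, so it is negative.

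\emph{Constant-barrier contact.} Such a contact occurs only after the first inequality has been established along the whole path, so $b(s_t)<\eta$ holds there. The constant tensor $H_t-\eta g$ has no $b'$ term, so the correction is exactly $-\tfrac t2\Hess V(v,v)$. The bound $\Hess V\ge-2M_\eta g$ makes this at most $tM_\eta\le M_\eta$, and the total cost is again negative.

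\emph{Conclusion.} Lemma~\ref{lem:twostageaudit} then yields $H_t>b(s_t)g$ and afterwards $H_t>\eta g$ for all $t\in[0,1]$, in that order.

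The step that needs the most care is confirming that the margin $M_b$ is uniform over every contact datum and every $\mu\ge L_0$. Strictness at each point is not enough, because the correction is absorbed pointwise in $t$, and $M_b$ must dominate it at each $t$. The hypothesis is stated exactly in this uniform form, so the absorption goes through. A second point to check is that the first-derivative identities \eqref{eq:Cgeneral} and \eqref{eq:commutator} acquire no potential term, which the text preceding \eqref{eq:potentialcost} records. Because of this, the same finite controls are admissible and only the zeroth-order term of the cost changes.
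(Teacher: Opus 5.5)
Your proposal is correct and follows the paper's proof. You bound the first-stage correction by $tM_b\le M_b$ using $\Hess V\ge(c_b|\nabla V|-2M_b)g$ and $2\sqrt{s}\,b'(s)\le c_b$, bound the constant-stage correction by $tM_\eta\le M_\eta$, observe that the collar needs only smoothness of $V$, and conclude with Lemma~\ref{lem:twostageaudit}; the paper writes these same estimates in terms of $\min_{|v|=1}\Hess V(v,v)$.
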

\begin{proof}
Put $\chi_V=\min_{|v|=1}\Hess V(v,v)$.
The first-stage correction is at most
$t(c_b|\nabla V|-\chi_V)/2\le M_b$, and the second is at most
$-t\chi_V/2\le M_\eta$. These estimates include the cases where the
expressions before multiplication by $t$ are negative.
Both total costs remain strictly negative, including equality in
\eqref{eq:potentialmargins}. The collar uses only smoothness of $V$;
Lemma~\ref{lem:twostageaudit} therefore applies.
\end{proof}

\section{Hessian bounds and fundamental gaps}\label{sec:gap}
The passage from a lower bound for $\Hess(-\log\phi_1)$ to a lower
bound for $\lambda_2-\lambda_1$ is classical. In the Euclidean case it
goes back to Singer--Wong--Yau--Yau \cite{SWYY} and Yu--Zhong
\cite{YZ}, and the sharp estimate of Andrews--Clutterbuck \cite{AC}
rests on a two-point maximum principle for the eigenfunction quotient;
see also the survey \cite{AndrewsSurvey} and the comparison survey of
Dai--Seto--Wei \cite{DSW19}. On spheres, sharp gap estimates were
proved by Seto--Wang--Wei \cite{SWW}, He--Wei--Zhang \cite{HWZ} and
Dai--Seto--Wei \cite{DSW21}, and on surfaces of positive curvature
log-concavity and gap estimates were obtained by
Khan--Nguyen--Tuerkoen--Wei \cite{KNTW}; Gong--Li--Luo \cite{GLL} and
Cho--Wei--Yang \cite{CWY} give coupling proofs. We include the
arguments because the curvature of $\CP^n$ splits into a Hopf part and
a transverse part, and because the potential and the Hessian bound
enter only through the drift.
\subsection{Weighted Bochner transfer}
\begin{proposition}[Hessian-to-gap estimate]\label{prop:commonbochner}
For the Schr\"odinger eigenpairs in \eqref{eq:eigen}, $H=\Hess(-\log\phi_1)>\eta g$ implies $\lambda_2-\lambda_1>2n+2+2\eta$.
\end{proposition}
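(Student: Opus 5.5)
The plan is the standard ground-state transform combined with the weighted Bochner formula, run on the quotient $u=\phi_2/\phi_1$. Since $\phi_1^{-1}(-\Delta+V-\lambda_1)(\phi_1u)=-Lu$ with $L=\Delta-2\langle p,\nabla\cdot\rangle$, the function $u$ satisfies
\[
 -Lu=\gamma u,\qquad \gamma=\lambda_2-\lambda_1,
\]
and $L$ is symmetric with respect to the measure $d\mu=\phi_1^2\,d\mathrm{vol}$. Thus $\gamma$ is the first nonzero eigenvalue of the drift Laplacian $L$ on $(\Omega,\phi_1^2\,d\mathrm{vol})$, and $\int u\,d\mu=0$ by orthogonality.

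Next apply the Bochner formula for $L$:
\[
 \tfrac12L|\nabla u|^2=|\Hess u|^2+\langle\nabla u,\nabla Lu\rangle+\Ric(\nabla u,\nabla u)+2\Hess f(\nabla u,\nabla u),
\]
with $f=-\log\phi_1$. The potential $V$ enters only through $\phi_1$, hence only through the drift, so it does not appear here. With $\Ric=(2n+2)g$ and $H>\eta g$, the Bakry--\'Emery tensor satisfies $\Ric+2H>(2n+2+2\eta)g$.

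Integrating against $d\mu$ and using $Lu=-\gamma u$ gives
\[
 \gamma\int|\nabla u|^2d\mu=\int|\Hess u|^2d\mu+\int(\Ric+2H)(\nabla u,\nabla u)\,d\mu-\tfrac12\int L|\nabla u|^2\,d\mu .
\]
Since $\int|\nabla u|^2d\mu=\gamma\int u^2d\mu>0$, dropping $|\Hess u|^2$ yields $\gamma\ge 2n+2+2\eta$ once the boundary term is controlled. Strictness follows from the strict pointwise inequality $H>\eta g$ on the open set where $\nabla u\ne0$, which is nonempty.

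The main obstacle is the boundary: $\phi_1$ vanishes there while $u=\phi_2/\phi_1$ extends smoothly to $\overline\Omega$ by the Hopf lemma, so one must justify that $\int L|\nabla u|^2\,d\mu$ has a sign. This integral equals $\int_{\partial\Omega}\phi_1^2\,\partial_\nu|\nabla u|^2$, and since $\phi_1^2$ vanishes on the boundary, the boundary term is zero. To make this rigorous, integrate over the sublevel sets $\{\phi_1>\varepsilon\}$ and let $\varepsilon\to0$. The boundary flux there is $O(\varepsilon^2)$ times bounded quantities, because $u\in C^2(\overline\Omega)$ by the factorization $\phi=d\,a$ of Lemma~\ref{lem:collar}. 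Thus no convexity-sign argument is needed, only the vanishing of the weight. I would state this approximation carefully; everything else is the routine computation above.
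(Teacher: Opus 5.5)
Your proposal is correct and follows essentially the same route as the paper: the ground-state transform to $u=\phi_2/\phi_1$, the weighted Bochner formula with Bakry--\'Emery tensor $\Ric+2H\,(>(2n+2+2\eta)g)$, and integration against $\phi_1^2$ over an exhaustion whose boundary flux is $O(\varepsilon^2)$, with strictness from $u$ being nonconstant. The only cosmetic difference is that you exhaust by $\{\phi_1>\varepsilon\}$ where the paper uses $\{\delta>\varepsilon\}$. The smoothness of $u$ up to $\partial\Omega$ comes from factoring both $\phi_1$ and $\phi_2$ as $\delta a_i$, which is the paper's Lemma~\ref{lem:quotientboundary}, not only the ground-state factorization in Lemma~\ref{lem:collar}.
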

\begin{proof}
Put $u=\phi_2/\phi_1$ and $\gamma=\lambda_2-\lambda_1$. Factoring $\phi_i=\delta a_i$ at the boundary, with $a_1>0$, shows that $u$ is smooth on $\overline\Omega$. The ground-state identity gives $Lu=-\gamma u$, so weighted Bochner gives
\[
 \tfrac12L|\nabla u|^2=|\Hess u|^2-\gamma|\nabla u|^2+
 ((2n+2)g+2H)(\nabla u,\nabla u).
\]
Integrate against $\phi_1^2$ on $\{\delta>\varepsilon\}$. The flux is $O(\varepsilon^2)$, while $H=O(\delta^{-2})$ and $\phi_1^2=O(\delta^2)$ make every term integrable. Letting $\varepsilon\downarrow0$ gives
\[
 \gamma\int|\nabla u|^2\phi_1^2
 =\int\{|\Hess u|^2+((2n+2)g+2H)(\nabla u,\nabla u)\}\phi_1^2
 >(2n+2+2\eta)\int|\nabla u|^2\phi_1^2.
\]
The last integral is positive since $u$ is nonconstant.
\end{proof}
\cref{prop:commonbochner} is the Lichnerowicz estimate for the drift
Laplacian $L$, whose Bakry--\'Emery Ricci tensor is
$\Ric+2\Hess f$ \cite{BakryEmery}; Xia uses the same argument in
$\CP^2$ \cite[Corollary~1.4]{Xia}.

\subsection{A deterministic two-point comparison}
\label{sec:twopoint}
Throughout this subsection the standing geodesic convexity assumption
is retained, and the domain and potential are fixed. Assume
\[
 H=\Hess(-\log\phi_1)\ge\eta g,\qquad \eta\ge0,
 \qquad \operatorname{diam}\Omega\le D<\pi/2.
\]
Set $f=-\log\phi_1$, $p=\nabla f$, and
$L=\Delta-2\langle p,\nabla\cdot\rangle$. Define
\begin{equation}\label{eq:deterministicdistanceoperator}
 B_{n,\eta}(r)=4(n-1)\tan(r/2)+4\tan r+2\eta r,
 \qquad \mathcal A_{n,\eta}=4\partial_{rr}-B_{n,\eta}\partial_r.
\end{equation}
The proof below follows the two-point maximum principle of
Andrews--Clutterbuck \cite{AC} (see also \cite{AndrewsSurvey}). The
index form with Hopf and transverse curvatures in
\cref{lem:twopointoperator} is the curvature splitting used by
Baudoin--Cho--Yang \cite{BCY} and Li--Wang \cite{LW}; the singular
ground-state drift is handled here by an explicit interior barrier.

\begin{lemma}[Boundary regularity of the eigenfunction quotients]
\label{lem:quotientboundary}
For every Dirichlet eigenfunction $\phi_j$, the quotient
$u_j=\phi_j/\phi_1$ is smooth on $\overline\Omega$ and satisfies
\[
 \partial_\nu u_j=0\quad\hbox{on }\partial\Omega,
 \qquad Lu_j=-(\lambda_j-\lambda_1)u_j\quad\hbox{in }\Omega.
\]
Moreover $f\to+\infty$ at the boundary and
\begin{equation}\label{eq:logboundarybarrier}
 Lf=\lambda_1-V-|\nabla f|^2\le\lambda_1-\min_{\overline\Omega}V.
\end{equation}
\end{lemma}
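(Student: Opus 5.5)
The plan has three parts: smoothness of $u_j$ up to the boundary, the Neumann condition together with the drift equation, and the behaviour of $f$.

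\emph{Smoothness.} I would use the boundary factorization already employed in Lemma~\ref{lem:collar} and Proposition~\ref{prop:commonbochner}. Since $\partial\Omega$ is smooth and compact, elliptic regularity makes each $\phi_j$ smooth on $\overline\Omega$ with $\phi_j=0$ on $\partial\Omega$. In a collar with boundary distance $\delta$ and coordinates $(y,\delta)$, Taylor's formula with integral remainder gives
\[
 \phi_j=\delta a_j,\qquad a_j(y,\delta)=\int_0^1\partial_\delta\phi_j(y,\theta\delta)\,d\theta,
\]
and $a_j$ is smooth up to $\delta=0$. By the Hopf boundary point lemma, applied to $(\Delta-C)\phi_1\le0$ with $C\ge\max\{0,\sup(V-\lambda_1)\}$, one has $a_1(y,0)=\partial_\nu\phi_1>0$. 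Hence $u_j=a_j/a_1$ is smooth on $\overline\Omega$.

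\emph{Equation and Neumann condition.} The ground-state identity $\phi_1^{-1}(\Delta+\lambda_1-V)(\phi_1u)=Lu$ (the Schr\"odinger version of the identity in Section~\ref{sec:contact}) gives $Lu_j=-(\lambda_j-\lambda_1)u_j$ in $\Omega$. Multiply by $\phi_1$ to obtain
\[
 \phi_1\Delta u_j+2\langle\nabla\phi_1,\nabla u_j\rangle=-(\lambda_j-\lambda_1)\phi_1u_j .
\]
Every term is continuous on $\overline\Omega$. Let $x\to\partial\Omega$: the terms carrying a factor $\phi_1$ vanish, and $\nabla\phi_1=(\partial_\nu\phi_1)\nu$ with $\partial_\nu\phi_1\neq0$. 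This forces $\partial_\nu u_j=0$ on $\partial\Omega$.

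\emph{Behaviour of $f$.} Since $\phi_1\to0$, $f=-\log\phi_1\to+\infty$ at the boundary. Using $\Delta f=-\Delta\phi_1/\phi_1+|\nabla f|^2=\lambda_1-V+s$ (compare \eqref{eq:basic} with the potential), we get
\[
 Lf=\Delta f-2s=\lambda_1-V-s,
\]
and dropping $-s\le0$ gives \eqref{eq:logboundarybarrier}.

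The only step that needs care is the boundary limit in the Neumann argument. It requires $u_j\in C^2(\overline\Omega)$, so the factorization must be quoted with enough regularity, i.e.\ $a_j\in C^\infty$ up to the boundary. That follows from smoothness of $\phi_j$ on $\overline\Omega$, which is the standard Schauder bootstrap for smooth $V$ and smooth $\partial\Omega$.
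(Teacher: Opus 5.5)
Your proposal is correct and follows the paper's proof: the factorization $\phi_j=d\,a_j$ with $a_1>0$ from boundary Schauder regularity and the Hopf lemma gives smoothness of $u_j=a_j/a_1$, the ground-state identity gives the drift equation, and the direct computation $Lf=\lambda_1-V-s$ gives the bound on $Lf$. The only difference is cosmetic. You obtain $\partial_\nu u_j=0$ by letting $\phi_1\Delta u_j+2\langle\nabla\phi_1,\nabla u_j\rangle=-(\lambda_j-\lambda_1)\phi_1u_j$ reach the boundary, while the paper evaluates $\Delta(d a_j)=2\partial_d a_j+a_j\Delta d=0$ on $\partial\Omega$ for $j$ and for $1$; both rest on the same vanishing of $\Delta\phi_j$ on $\partial\Omega$.
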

\begin{proof}
This is standard. In a boundary collar let $d$ denote inward boundary
distance. Boundary Schauder regularity \cite[Chapter~6]{GT} and the
Hopf lemma \cite[Lemma~3.4]{GT} give $\phi_j=d a_j$, with
$a_j$ smooth and $a_1>0$ up to the boundary. Thus $u_j=a_j/a_1$ is
smooth. On $d=0$, the eigenfunction equation gives $\Delta\phi_j=0$,
and the product rule gives
\[
 0=\Delta(d a_j)=2\partial_d a_j+a_j\Delta d.
\]
The same identity for $j=1$ shows $\partial_d(a_j/a_1)=0$.
The ground-state identity proves the equation for $u_j$.
Finally, $f=-\log d-\log a_1$ diverges to $+\infty$ and
$\Delta f=\lambda_1-V+|\nabla f|^2$, proving
\eqref{eq:logboundarybarrier}.
\end{proof}

\begin{lemma}[Two-point differential inequality]
\label{lem:twopointoperator}
For distinct interior points $x,y$, write $r=d(x,y)$ and let
$\gamma:[0,r]\to\Omega$ be their unit-speed minimizing geodesic.
There is a positive semidefinite second-order operator $\mathscr P$
at $(x,y)$ such that, for smooth $v$ and $\psi$,
\begin{align}
 \mathscr P\{v(y)-v(x)\}&=Lv(y)-Lv(x),
 \label{eq:pairmarginalidentity}\\
 \mathscr P\{\psi(r)\}
 &=4\psi''(r)-\left\{4(n-1)\tan(r/2)+4\tan r
       +2\int_0^rH(\dot\gamma,\dot\gamma)\,ds\right\}\psi'(r).
 \label{eq:pairdistanceidentity}
\end{align}
Consequently $\mathscr P\{\psi(r)\}\le\mathcal A_{n,\eta}\psi(r)$
whenever $\psi'\ge0$. Also
$\mathscr P\{f(x)+f(y)\}=Lf(x)+Lf(y)$.
\end{lemma}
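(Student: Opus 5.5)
The operator $\mathscr P$ will be built as the standard Andrews--Clutterbuck coupled second-order operator on $\Omega\times\Omega$ near $(x,y)$. Take an orthonormal frame $e_0=\dot\gamma(0),e_1,\dots,e_{N-1}$ at $x$ with $e_1=J\dot\gamma(0)$, and parallel transport it along $\gamma$ to a frame $E_i(y)$ at $y$. Reflect along the geodesic direction: set $E_0(y)\mapsto -E_0$ in the pairing and keep $E_i$ for $i\ge1$. Then define
\[
 \mathscr P=\sum_{i=0}^{N-1}\bigl(e_i\oplus\epsilon_iE_i\bigr)^2
 -2\langle p(x),\nabla_x\cdot\rangle-2\langle p(y),\nabla_y\cdot\rangle,
\]
with $\epsilon_0=-1$ and $\epsilon_i=1$ otherwise. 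More precisely, the second-order part is the sum of second derivatives along the product geodesics $s\mapsto(\exp_x(se_i),\exp_y(s\epsilon_iE_i))$. This is a sum of squares plus a first-order term, so its principal symbol is positive semidefinite. For functions of a single point, the mixed terms vanish, and the second-order part restricts to $\Delta_x$ or $\Delta_y$. This gives \eqref{eq:pairmarginalidentity}, and also $\mathscr P\{f(x)+f(y)\}=Lf(x)+Lf(y)$.

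For \eqref{eq:pairdistanceidentity}, compute the first and second variations of $r$ along each product curve.

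The first variation is $\nabla_x r=-e_0$ and $\nabla_y r=E_0(y)$. The drift terms therefore contribute
\[
 -2\bigl(\langle p(y),\dot\gamma(r)\rangle-\langle p(x),\dot\gamma(0)\rangle\bigr)\psi'
 =-2\int_0^rH(\dot\gamma,\dot\gamma)\,ds\;\psi'.
\]

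For the second variation, the $i=0$ curve moves both endpoints along $\gamma$ in opposite directions. Hence $r$ changes at rate $-2$, giving the term $4\psi''$ with zero second variation. For $i\ge1$, the variation field is a Jacobi-type field, and $\partial_s r=0$ to first order. The second variation is therefore the index form $I(X,X)$ of the chosen variation field.

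Instead of parallel fields, I would use the optimal fields in $\CP^n$ from \eqref{eq:jacobi}.
\begin{itemize}
\item In the Hopf direction $e_1$ the curvature is $4$. Take $X=\frac{\cos(2s-r)}{\cos r}E_1$, which gives $I=-4\tan r$.
\item In the $2n-2$ transverse directions the curvature is $1$. Take $X=\frac{\cos(s-r/2)}{\cos(r/2)}E_i$, which gives $I=-2\tan(r/2)$ each.
\end{itemize}
Summing gives second-order distance terms $\bigl(-4\tan r-4(n-1)\tan(r/2)\bigr)\psi'$. The remaining second-derivative terms vanish because the first variation is zero.

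The mechanism for obtaining these index-form values as second derivatives is to replace the product geodesic curves in the definition of $\mathscr P$ by curves with prescribed second-order jets. Since $r$ is smooth away from the diagonal and the cut locus (here $D<\pi/2$), this changes nothing at first order. It only modifies the coefficient of $\psi'$ through $\sum_i\nabla^2 r$ evaluated on the two-point variation, and the minimality of the Jacobi-type fields realizes exactly the index-form bound. I expect the main obstacle to be this bookkeeping. One must check that using variations along $\gamma$ other than pure geodesics still yields an operator of the form $\mathscr P$ acting on $v(y)-v(x)$ by the marginal Laplacians. This holds because the mixed second-derivative terms of a function of one point cancel only when the curve at each endpoint is a geodesic. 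So I would keep the endpoint curves geodesic and note that the Hessian of $r$ on $T_x\oplus T_y$ along $(e_i,E_i)$ equals the index form of the Jacobi field with these endpoints. The Jacobi field is computed exactly in constant holomorphic curvature, giving equality rather than inequality. The signs $\tan r$ and $\tan(r/2)$ come from $\langle R(X,\dot\gamma)\dot\gamma,X\rangle$ being $4$ on $J\dot\gamma$ and $1$ elsewhere.

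Finally, $H\ge\eta g$ gives $\int_0^rH(\dot\gamma,\dot\gamma)\,ds\ge\eta r$. With $\psi'\ge0$, this yields $\mathscr P\{\psi(r)\}\le\mathcal A_{n,\eta}\psi(r)$.
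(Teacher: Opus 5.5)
Your proposal is correct and follows the paper's proof essentially step for step. You use the same reflected parallel frame on $\Omega\times\Omega$; your pair $(e_0,-E_0)$ is the negative of the paper's $E_0$ and gives the same quadratic form. You also use the same drift identity $dr(Z)=-2\int_0^rH(\dot\gamma,\dot\gamma)\,ds$, and the same identification of $\Hess r$ on $(e_i,Pe_i)$ with the endpoint term $[\langle J_k',J_k\rangle]_0^r=-2\sqrt k\tan(\sqrt k\,r/2)$ of the explicit Jacobi fields for $k=4$ and $k=1$.

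Your detour about curves with prescribed second-order jets is unnecessary. As you eventually conclude, with geodesic endpoint curves the product Hessian of $r$ is exactly the index form of the Jacobi field with those endpoint values, and this is what gives equality rather than an inequality.
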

\begin{proof}
Since $r<\pi/2$, the distance is smooth at $(x,y)$ and the geodesic
is unique. Choose an orthonormal basis $e_0=\dot\gamma(0)$,
$e_1=Je_0,e_2,\ldots,e_{2n-1}$, with the last $2n-2$ vectors
orthogonal to $\{e_0,Je_0\}$, and let $P$ denote parallel transport
from $x$ to $y$. In the product tangent space put
\[
 E_0=(-e_0,Pe_0),\qquad E_i=(e_i,Pe_i)\quad(i\ge1),
 \qquad Z=(-2p(x),-2p(y)).
\]
For a smooth function $F$ on the product define, pointwise,
\[
 \mathscr PF=\sum_{i=0}^{2n-1}\Hess_{\Omega\times\Omega}F(E_i,E_i)
                 +dF(Z).
\]
The definition does not require differentiating the selected frame.
Its principal form is positive semidefinite, and each diagonal block
is the metric, which proves \eqref{eq:pairmarginalidentity} and the
identity for $f(x)+f(y)$.

One has $dr(E_0)=2$, $\Hess r(E_0,E_0)=0$, and $dr(E_i)=0$ for
$i\ge1$. For a parallel unit transverse field $e(s)$ of curvature
$k$, the Jacobi field with equal parallel endpoint values is
\[
 J_k(s)=\frac{\cos(\sqrt k(s-r/2))}{\cos(\sqrt k r/2)}e(s).
\]
Its index form is the endpoint term
$[\langle J_k',J_k\rangle]_0^r=-2\sqrt k\tan(\sqrt k r/2)$.
The second variation of distance therefore gives this value for
$\Hess r((e(0),e(r)),(e(0),e(r)))$. There is one Hopf direction
with $k=4$ and $2n-2$ directions with $k=1$; the latter term is
absent when $n=1$. Finally,
\[
 dr(Z)=-2\{\langle p(y),\dot\gamma(r)\rangle
                  -\langle p(x),\dot\gamma(0)\rangle\}
        =-2\int_0^r H(\dot\gamma,\dot\gamma)\,ds.
\]
These identities prove \eqref{eq:pairdistanceidentity}, including
the coefficient $4$ of $\psi''$.
\end{proof}

\begin{proposition}[Deterministic modulus contraction]
\label{prop:deterministicmodulus}
Let $\psi\in C([0,D])\cap C^2((0,D))$ satisfy
$\psi(0)=0$, $\psi>0$ on $(0,D]$, $\psi'\ge0$ on $(0,D)$, and
\[
 \mathcal A_{n,\eta}\psi\le-\Lambda\psi\quad(0<r<D).
\]
Suppose $v$ is continuous on $\overline\Omega\times[0,T]$, is a
classical solution of $\partial_t v=Lv$ in $\Omega\times(0,T]$,
and, for some $C\ge0$, satisfies
$|v(y,0)-v(x,0)|\le C\psi(d(x,y))$. Then
\begin{equation}\label{eq:deterministiccontraction}
 |v(y,t)-v(x,t)|\le C e^{-\Lambda t}\psi(d(x,y))
 \quad(x,y\in\overline\Omega,\ 0\le t\le T).
\end{equation}
No positive lower bound for $\psi'(D)$ is required.
\end{proposition}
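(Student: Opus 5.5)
The plan is an Andrews--Clutterbuck two-point maximum principle on $\overline\Omega\times\overline\Omega\times[0,T]$. The Dirichlet boundary, where the drift $-2p$ is singular, will be handled by a penalty built from $f$; \cref{lem:quotientboundary} makes this penalty blow up at the boundary.

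Fix $\varepsilon>0$ and $M>0$, to be chosen, and put
\[
 Z(x,y,t)=v(y,t)-v(x,t)-Ce^{-\Lambda t}\psi(d(x,y))-\varepsilon e^{Mt}\bigl(f(x)+f(y)+C_0\bigr),
\]
with $C_0$ large enough that $f+C_0/2>0$ on $\Omega$. Since $f\to+\infty$ at $\partial\Omega$ and $v$ is bounded, $Z\to-\infty$ as either point approaches the boundary. The supremum of $Z$ is therefore attained on a compact subset of $\Omega\times\Omega\times[0,T]$ unless it is negative. On the diagonal, $Z<0$. At $t=0$, $Z<0$ by the initial modulus hypothesis. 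Suppose $Z$ first reaches $0$ at some time $t_0>0$, at an interior off-diagonal pair $(x,y)$, with $r=d(x,y)\in(0,D]$. If $r=D$, so that both points realize the diameter, the points still lie in the open domain because $Z\to-\infty$ near the boundary; hence $r<D$ strictly, unless $D$ is not attained. If it is attained, I would argue by approximating $\psi$ on $[0,D-\delta]$, or simply note that interior points have $r<D$ by convexity and the boundary blow-up. This is where no lower bound on $\psi'(D)$ is needed.

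At the maximum, $\partial_tZ\ge0$, and $\mathscr PZ\le0$ for the positive semidefinite operator $\mathscr P$ of \cref{lem:twopointoperator}, since $Z$ has a spatial maximum (first-order terms vanish there, and the second-order part is nonpositive). I will compute using \eqref{eq:pairmarginalidentity}, \eqref{eq:pairdistanceidentity}, $\partial_tv=Lv$, and the comparison $\mathscr P\psi\le\mathcal A_{n,\eta}\psi\le-\Lambda\psi$, which is valid because $\psi'\ge0$. These give
\[
 0\le(\partial_t-\mathscr P)Z\le Ce^{-\Lambda t}\bigl(\Lambda\psi+\mathcal A_{n,\eta}\psi\bigr)-\varepsilon e^{Mt}\bigl(M(f(x)+f(y)+C_0)-Lf(x)-Lf(y)\bigr).
\]
The first bracket is $\le0$. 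By \eqref{eq:logboundarybarrier}, $Lf\le\lambda_1-\min V$. Choosing $M$ with $M\cdot\min(f(x)+f(y)+C_0)>2(\lambda_1-\min V)$ (possible since $f+C_0/2$ has a positive lower bound) makes the second term strictly negative. This is a contradiction, so $Z<0$ for all times. Letting $\varepsilon\downarrow0$ gives one side of \eqref{eq:deterministiccontraction}; exchanging $x$ and $y$, or replacing $v$ by $-v$, gives the other. Boundary points are recovered by continuity of $v$ on $\overline\Omega$.

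The step needing care is the justification of $\mathscr PZ\le0$ at the maximum. Because $\mathscr P$ is built from a frame that is not differentiated, I would argue directly. Each $\Hess F(E_i,E_i)$ is the second derivative of $F$ along a product geodesic through $(x,y)$, and the first-order term $dF(Z)$ vanishes at a critical point. So at a maximum, $\mathscr PZ=\sum_i\Hess Z(E_i,E_i)\le0$. This requires smoothness of $d$ near $(x,y)$, which holds since $r<\pi/2$. The other delicate point is the attainment of the maximum in the interior when $\psi$ is only continuous at $0$; that case is covered because $Z<0$ near the diagonal, where $\psi(0)=0$ and $v$ is uniformly continuous.
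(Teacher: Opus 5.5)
Your proposal is the paper's proof: the same penalized two-point function $v(y,t)-v(x,t)-Ce^{-\Lambda t}\psi(d(x,y))-\varepsilon e^{Mt}(f(x)+f(y)+\text{const})$, the same use of \eqref{eq:pairmarginalidentity} and \eqref{eq:pairdistanceidentity} with $\psi'\ge0$, and \eqref{eq:logboundarybarrier} to absorb the penalty, followed by $\varepsilon\downarrow0$, exchange of $x,y$, and continuity up to the boundary. The one step to tighten is $r<D$ at the maximum. It follows neither from the boundary blow-up nor from approximating $\psi$: if distinct interior points had $d(x,y)=D$, you could extend the minimizing geodesic slightly past $y$ inside the open set $\Omega$, and the extension, still of length $<\pi/2=\operatorname{inj}(\CP^n)$, would remain minimizing and contradict $\operatorname{diam}\Omega\le D$.
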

\begin{proof}
Put $f_*=f-\min_\Omega f+1/2$ and
$\Phi(x,y)=f_*(x)+f_*(y)\ge1$. Choose
\[
 M=1+\max\{0,2(\lambda_1-\min_{\overline\Omega}V)\}.
\]
For $\varepsilon>0$ consider
\[
 Z_\varepsilon(x,y,t)=v(y,t)-v(x,t)
       -Ce^{-\Lambda t}\psi(d(x,y))-\varepsilon e^{Mt}\Phi(x,y).
\]
At $t=0$ this is negative. It tends uniformly to $-\infty$ as
either endpoint approaches the boundary, because $v$ is bounded on
the finite time slab and $f\to+\infty$ (Lemma~\ref{lem:quotientboundary}). On the diagonal it equals
$-\varepsilon e^{Mt}\Phi<0$. Thus, if it is positive somewhere,
its positive maximum is attained at $(x_0,y_0,t_0)$ with
$x_0,y_0\in\Omega$, $x_0\ne y_0$, and $t_0>0$.

Every distinct interior pair has $d(x,y)<D$: if equality held,
extend its minimizing geodesic slightly past $y$ inside the open
domain; the extended segment, still shorter than $\pi/2$, would
contradict the diameter bound. Hence the distance and $\psi(d)$
are smooth near the maximum, with no cut-locus or comparison-endpoint
contact. At this maximum $\partial_t Z_\varepsilon\ge0$,
$\nabla Z_\varepsilon=0$, and $\Hess Z_\varepsilon\le0$.
These statements remain valid at $t_0=T$ using the left time
derivative. Lemma~\ref{lem:twopointoperator} therefore gives
$(\partial_t-\mathscr P)Z_\varepsilon\ge0$. On the other hand,
the equation for $v$, the scalar inequality, and
\eqref{eq:logboundarybarrier} imply
\begin{align*}
 (\partial_t-\mathscr P)Z_\varepsilon
 &\le -\varepsilon e^{Mt}\{M\Phi-\mathscr P\Phi\}\\
 &\le-\varepsilon e^{Mt}<0,
\end{align*}
a contradiction. Thus $Z_\varepsilon\le0$. For each fixed
interior pair let $\varepsilon\downarrow0$, exchange $x,y$, and
then pass to boundary endpoints by continuity. This proves the
assertion; no boundary maximum principle for the singular operator
$L$ is used.
\end{proof}

\begin{corollary}[Scalar costs give spectral gaps]
\label{cor:deterministiccostgap}
If, in addition, $\psi(r)\ge cr$ on $[0,D]$ for some $c>0$,
then $\lambda_2-\lambda_1\ge\Lambda$.
\end{corollary}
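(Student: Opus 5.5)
We apply Proposition~\ref{prop:deterministicmodulus} to $v(x,t)=e^{-\gamma t}u(x)$, where $u=\phi_2/\phi_1$ and $\gamma=\lambda_2-\lambda_1$. By Lemma~\ref{lem:quotientboundary}, $u$ is smooth on $\overline\Omega$ and satisfies $Lu=-\gamma u$. Hence $v$ is continuous on $\overline\Omega\times[0,T]$ and is a classical solution of $\partial_tv=Lv$ in the interior, for every $T>0$.

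The first step is the initial modulus. Since $u$ is smooth on the compact set $\overline\Omega$ and $\Omega$ is geodesically convex, the minimizing geodesic between two points of $\overline\Omega$ lies in $\overline\Omega$, where $|\nabla u|\le G$. This gives $|u(y)-u(x)|\le G\,d(x,y)$. With $\psi(r)\ge cr$, we get $|v(y,0)-v(x,0)|\le (G/c)\psi(d(x,y))$, so the hypothesis of the proposition holds with $C=G/c$.

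The proposition then yields, for all $t\ge0$,
\[
 e^{-\gamma t}|u(y)-u(x)|\le Ce^{-\Lambda t}\psi(d(x,y)),
 \qquad\text{that is,}\qquad
 |u(y)-u(x)|\le C e^{(\gamma-\Lambda)t}\psi(d(x,y)).
\]
The function $u$ is nonconstant, because $\phi_2$ is orthogonal to $\phi_1>0$ and so cannot be a multiple of $\phi_1$. Choose $x,y$ with $u(x)\ne u(y)$. The left side is then a fixed positive number, and $\psi(d(x,y))<\infty$. If $\gamma<\Lambda$, letting $t\to\infty$ sends the right side to $0$, a contradiction. Hence $\gamma\ge\Lambda$.

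The only delicate point is that the proposition is stated on a finite horizon $[0,T]$. This causes no difficulty: $T$ is arbitrary, and the constant $C$ does not depend on $T$.
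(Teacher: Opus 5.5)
Your proposal is correct and follows essentially the same route as the paper. Both apply Proposition~\ref{prop:deterministicmodulus} to $v=e^{-(\lambda_2-\lambda_1)t}u$ with $u=\phi_2/\phi_1$, obtain the initial modulus from the Lipschitz bound via geodesic convexity and $\psi(r)\ge cr$, and then compare exponential rates as $t\to\infty$ at a pair of points where the nonconstant $u$ differs; you also correctly note that $C$ does not depend on $T$.
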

\begin{proof}
By Lemma~\ref{lem:quotientboundary}, the quotient $u=\phi_2/\phi_1$ is
smooth, hence Lipschitz, on the closure.
Geodesic convexity gives
$|u(y)-u(x)|\le\|\nabla u\|_\infty d(x,y)\le C\psi(d(x,y))$.
Apply Proposition~\ref{prop:deterministicmodulus} to
$v(x,t)=e^{-(\lambda_2-\lambda_1)t}u(x)$ for arbitrary finite $T$.
Choose an interior pair distinguished by the nonconstant $u$ and
compare the exponential rates as $t\to\infty$.
\end{proof}

\subsection{Exact scalar comparison and spectral variance}
\label{sec:newscalar}
Write $\gamma=\lambda _2-\lambda _1$ and suppose that
$\operatorname{Hess}(-\log\phi _1)\ge\eta g$, $\eta\ge0$, and
$\operatorname{diam}\Omega\le D<\pi/2$. Set
\[
 B(r)=4(n-1)\tan(r/2)+4\tan r+2\eta r,
 \qquad W(r)=\frac{B(r)^2}{16}+\frac{B'(r)}2.
\]
The holomorphic and orthogonal curvature splitting was derived in
Lemma~\ref{lem:twopointoperator}; all spectral transfers below are
deterministic. Reducing the gap to a one-dimensional eigenvalue
problem is the step of Andrews--Clutterbuck \cite{AC};
\cref{prop:newscalarcomparison} records the exact one-dimensional
comparison in the present curvature setting.
All the bounds in this section apply to the Schr\"odinger ground state:
the potential enters through the independently established Hessian bound.

\begin{proposition}[The exact one-dimensional comparison]
\label{prop:newscalarcomparison}
Let $\zeta$ be the first eigenvalue of
\[
 -4z''+Wz=\zeta z,\qquad z'(0)=0,\quad z(D)=0.
\]
Then $\gamma\ge\zeta$. Consequently every pointwise lower bound
$W>Q$ on $(0,D)$ yields
$\gamma>\lambda_1^{\rm ND}(-4\partial_{rr}+Q;D)$.
\end{proposition}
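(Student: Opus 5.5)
The plan is to feed an explicit modulus into Corollary~\ref{cor:deterministiccostgap}. We build it from the first Neumann--Dirichlet eigenfunction of the Liouville-transformed operator, and then pass from $\zeta$ back to the drift operator $\mathcal A_{n,\eta}$.

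\emph{Step 1: the Liouville transform.} Put $\psi=e^{-\frac18\int_0^rB}\,z$. With $B$ as above one computes
\[
 4\psi''-B\psi'=e^{-\frac18\int_0^r B}\Bigl(4z''-\bigl(\tfrac{B^2}{16}+\tfrac{B'}{2}\bigr)z\Bigr)
 =e^{-\frac18\int_0^r B}\,(4z''-Wz).
\]
Hence if $-4z''+Wz=\zeta z$, then $\mathcal A_{n,\eta}\psi=-\zeta\psi$ exactly. (The sign conventions in $W$ are fixed by this computation.)

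\emph{Step 2: the boundary conditions of $z$ are the wrong way round.} Corollary~\ref{cor:deterministiccostgap} needs $\psi(0)=0$, $\psi>0$ on $(0,D]$, $\psi'\ge0$ and $\psi\ge cr$. The first eigenfunction $z$ instead has $z'(0)=0$ and $z(D)=0$. It is positive and, by the standard Sturm argument, decreasing on $(0,D)$. So $z$ itself is not a modulus; it is a candidate for $\psi'$. I would take
\[
 \psi(r)=\int_0^r e^{-\frac18\int_0^\rho B}\,\tilde z(\rho)\,d\rho ,
\]
with $\tilde z$ chosen so that $\psi'$ solves the differentiated equation. Equivalently, differentiate $\mathcal A_{n,\eta}\psi=-\zeta'\psi$. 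Then $\varphi=\psi'$ satisfies
\[
 4\varphi''-B\varphi'-B'\varphi=-\zeta'\varphi .
\]
The boundary conditions become $\varphi(0)$ free and $\varphi'(0)$ tied to $\psi(0)=0$. A further Liouville conjugation of this equation gives the potential $B^2/16+B'/2$, with the extra $B'$ term. I expect this step to be the main obstacle: matching exactly to the stated $W$ and the Neumann condition at $0$ / Dirichlet condition at $D$.

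The cleaner route I would try first avoids differentiating. Work directly with the Andrews--Clutterbuck eigenvalue problem $\mathcal A\psi=-\Lambda\psi$ with $\psi(0)=0$ and $\psi'(D)=0$. Its first eigenfunction is increasing and positive, and satisfies $\psi\ge cr$ by Hopf at $0$, which settles the hypotheses of Corollary~\ref{cor:deterministiccostgap}. Proposition~\ref{prop:deterministicmodulus} explicitly allows $\psi'(D)=0$. It then remains to show $\Lambda=\zeta$. For this, set $z=e^{\frac18\int_0^r B}\psi'$. Differentiating $4\psi''-B\psi'=-\Lambda\psi$ gives
\[
 4\psi'''-B\psi''-B'\psi'=-\Lambda\psi'.
\]
Substituting $\psi'=e^{-\frac18\int B}z$ turns this into $-4z''+\widetilde Wz=\Lambda z$ for some potential $\widetilde W$. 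The boundary conditions transform as follows: $\psi'(D)=0$ gives $z(D)=0$, and at $r=0$ we have $4\psi''(0)=B(0)\psi'(0)=0$ since $B(0)=0$, so $z'(0)=0$. Thus $\Lambda$ is a Neumann--Dirichlet eigenvalue. Positivity of $z=e^{\frac18\int B}\psi'$ shows it is the first one. If the computed $\widetilde W$ differs from $W$ by the sign of the $B'/2$ term, I would then have to recheck the convention for $W$.

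\emph{Step 3: the comparison.} Given $\gamma\ge\zeta$, the pointwise bound $W>Q$ makes the Rayleigh quotient of $-4\partial_{rr}+W$ strictly larger than that of $-4\partial_{rr}+Q$ on the Neumann--Dirichlet form domain. Hence $\zeta>\lambda_1^{\rm ND}(-4\partial_{rr}+Q;D)$, which proves the consequence.
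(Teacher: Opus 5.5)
Your cleaner route is sound in outline, and it is the paper's argument run backwards. The paper starts from the positive Neumann--Dirichlet eigenfunction $z$ of $-4\partial_{rr}+W$ and sets $F=\frac18\int_0^rB$, $w=e^{F}z$, $\psi=\int_0^rw$. It then checks $4\psi''-B\psi'=-\zeta\psi$ by integrating $-(4w'-Bw)'=\zeta w$ from $0$, using $w'(0)=0$ and $B(0)=0$. So it never needs a separate eigenvalue problem for $\mathcal A_{n,\eta}$.

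However, the one computation that carries the proposition is wrong as written, and you left it unresolved.
\begin{itemize}
\item Step~1 is false. No conjugation $\psi=e^{\pm F}z$ turns $\mathcal A_{n,\eta}$ into $4z''-Wz$. With $\psi=e^{F}z$ one gets $4\psi''-B\psi'=e^{F}\bigl(4z''-(B^2/16-B'/2)z\bigr)$, with the wrong sign of $B'/2$. With $e^{-F}$ a first-order term $-2Bz'$ survives.
\item Your remark in Step~2, that conjugating the differentiated equation yields $B^2/16+B'/2$, is correct. But the explicit substitution in the cleaner route, $\psi'=e^{-F}z$ (i.e.\ $z=e^{F}\psi'$), has the exponent inverted. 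Inserted into $4\varphi''-B\varphi'-B'\varphi=-\Lambda\varphi$ with $\varphi=\psi'$, it again leaves $-2Bz'$, so it produces no operator of the form $-4z''+\widetilde Wz$ at all.
\end{itemize}
The content of the proposition is exactly that the derivative of the Andrews--Clutterbuck modulus conjugates to the Neumann--Dirichlet problem with this $W$. Leaving $\widetilde W$ uncomputed, and doubting its sign, is therefore the gap.

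The fix is to take $\varphi=\psi'=e^{F}z$, i.e.\ $z=e^{-F}\psi'$. Then
\[
 4\varphi''-B\varphi'-B'\varphi=e^{F}\Bigl(4z''-\bigl(\tfrac{B^2}{16}+\tfrac{B'}{2}\bigr)z\Bigr),
\]
so $\widetilde W=W$ exactly, with no convention to recheck. Your boundary bookkeeping then goes through: $z(D)=0$, and $z'(0)=\psi''(0)-\frac{B(0)}8\psi'(0)=0$ because $4\psi''(0)=B(0)\psi'(0)-\Lambda\psi(0)=0$. You only need $\Lambda\ge\zeta$. This is automatic, since $z\not\equiv0$ is some Neumann--Dirichlet eigenfunction for $W$; positivity of $z$ is not needed.

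Two further points should be written out.
\begin{itemize}
\item The properties of $\psi$ required by Corollary~\ref{cor:deterministiccostgap} follow from $\mathcal A_{n,\eta}\psi=4e^{2F}(e^{-2F}\psi')'$. For the first eigenfunction, $\psi>0$ on $(0,D]$ and $\Lambda>0$, so $e^{-2F}\psi'$ is strictly decreasing and vanishes at $D$. Hence $\psi'>0$ on $[0,D)$, and $\psi(r)/r\to\psi'(0)>0$ gives $\psi\ge cr$.
\item In Step~3 the strict inequality comes from evaluating both quadratic forms on the attained minimizer $z$ for $W$, as the paper does. A pointwise comparison of Rayleigh quotients alone does not give strict inequality of the infima.
\end{itemize}
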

\begin{proof}
Let $z$ be the positive first eigenfunction of
$-4z''+Wz=\zeta z$, with $z'(0)=0$, $z(D)=0$, where
$W=B^2/16+B'/2$ and $B=B_{n,\eta}$. Set
\[
 F(r)=\frac18\int_0^r B(t)\,dt,\qquad
 w=e^Fz,\qquad \psi(r)=\int_0^r w(t)\,dt.
\]
Direct differentiation gives
\[
 -4w''+Bw'+B'w=\zeta w,
 \qquad w'(0)=0,
 \qquad 4\psi''-B\psi'=-\zeta\psi.
\]
For the last identity, differentiate its left side plus $\zeta\psi$
and use $B(0)=\psi(0)=w'(0)=0$ to determine the integration
constant. The function $\psi$ is smooth on $[0,D]$, is positive and strictly
increasing before $D$, and satisfies $\psi'(D)=0$ and
$\min_{[0,D]}\psi(r)/r>0$, where the ratio at zero is $w(0)>0$.
Corollary~\ref{cor:deterministiccostgap} gives $\gamma\ge\zeta$.
If $W>Q$ on $(0,D)$, evaluate the two quadratic forms on the
positive first eigenfunction $z$ for $W$:
\[
 \zeta=\frac{\int_0^D(4z'^2+Qz^2)}{\int_0^D z^2}
       +\frac{\int_0^D(W-Q)z^2}{\int_0^D z^2}
       >\lambda_1^{\rm ND}(-4\partial_{rr}+Q;D).
\]
This proves the stated strict comparison.
\end{proof}

\subsection{Diameter-sensitive costs}\label{sec:diametercost}
\begin{proposition}[Diameter and Hessian gap bound]\label{prop:diametergap}
If $n\ge1$, $\operatorname{diam}\Omega\le D<\pi/2$, and $H\ge\eta g$ with $\eta\ge0$, put
$a=n+1+\eta$, $C=(a^2+n+7)/4$, and $P=\pi^2/D^2$. Then
\begin{equation}\label{eq:diametergap}
 \lambda_2-\lambda_1>
 a+2\sqrt C+\frac{(P-\sqrt C)_+^2}{P},
 \qquad x_+=\max\{x,0\}.
\end{equation}
\end{proposition}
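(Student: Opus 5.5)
The plan is to apply Proposition~\ref{prop:newscalarcomparison} with a quadratic minorant $Q(r)=a+Cr^2$ of $W$. The lower bound for $\lambda_1^{\rm ND}(-4\partial_{rr}+Q;D)$ then comes from splitting the kinetic term between a harmonic-oscillator part and a free Neumann--Dirichlet part, and optimizing the split.

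\emph{Step 1: the minorant $W>a+Cr^2$ on $(0,D)$.} Since $D<\pi/2$, the functions $\tan(r/2)$ and $\tan r$ are analytic on $[0,D]$ with nonnegative Taylor coefficients. Hence
\[
 \tan(r/2)\ge \tfrac r2+\tfrac{r^3}{24},\qquad \tan r\ge r+\tfrac{r^3}{3},
\]
and the same holds for the derivatives: $\tfrac12\sec^2(r/2)\ge\tfrac12+\tfrac{r^2}{8}$ and $\sec^2r\ge1+r^2$. All these inequalities are strict for $r>0$. Consequently $B(r)\ge 2ar$, where $a=n+1+\eta$, and therefore $B^2/16\ge a^2r^2/4$. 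Using the derivative bounds,
\[
 \tfrac12B'(r)=(n-1)\sec^2(r/2)+2\sec^2r+\eta> a+\tfrac{(n-1)r^2}{4}+2r^2=a+\tfrac{(n+7)r^2}{4}.
\]
Adding the two estimates gives $W>a+Cr^2$ with $C=(a^2+n+7)/4$. By Proposition~\ref{prop:newscalarcomparison},
\[
 \gamma>\lambda_1^{\rm ND}\bigl(-4\partial_{rr}+a+Cr^2;D\bigr)=a+\lambda_1^{\rm ND}\bigl(-4\partial_{rr}+Cr^2;D\bigr).
\]

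\emph{Step 2: the oscillator with a Dirichlet wall.} For $t\in(0,1]$ I split the quadratic form as
\[
 \int_0^D(4z'^2+Cr^2z^2)=\int_0^D(4tz'^2+Cr^2z^2)+(1-t)\int_0^D4z'^2 .
\]
I bound the two pieces separately, for $z$ with $z(D)=0$.
\begin{itemize}
\item First piece: extend $z$ by zero to $(0,\infty)$. The Neumann half-line operator $-4t\partial_{rr}+Cr^2$ has ground energy $2\sqrt{tC}$; its ground state is the even Gaussian $e^{-\sqrt{C/t}\,r^2/4}$. Hence the first piece is at least $2\sqrt{tC}\int z^2$.
\item Second piece: $\int_0^D4z'^2\ge P\int_0^D z^2$. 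This is the Neumann--Dirichlet eigenvalue of $-4\partial_{rr}$ on $(0,D)$, namely $4(\pi/2D)^2=\pi^2/D^2=P$.
\end{itemize}
Together these give $\lambda_1^{\rm ND}(-4\partial_{rr}+Cr^2;D)\ge g(t):=2\sqrt{Ct}+(1-t)P$ for every $t\in(0,1]$.

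\emph{Step 3: optimizing the split.} If $P\le\sqrt C$, take $t=1$, which gives $2\sqrt C$. In that case $(P-\sqrt C)_+=0$, so this is the claimed bound. If $P>\sqrt C$, take $t=C/P^2<1$, the critical point of $g$. This gives
\[
 2C/P+P-C/P=P+C/P=2\sqrt C+(P-\sqrt C)^2/P .
\]
Strictness of \eqref{eq:diametergap} is inherited from the strict inequality $W>Q$ in Step~1, via Proposition~\ref{prop:newscalarcomparison}.

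The only delicate point is Step~1. There one must check that the cubic Taylor corrections of both $\tan$ terms enter $B'/2$ with exactly the coefficient $(n+7)/4$. One must also check that discarding the positive remainders, including the cross terms in $B^2$, is legitimate on the whole interval $(0,D)$. Both follow from positivity of all Taylor coefficients of $\tan$ on $[0,\pi/2)$. Steps~2 and~3 are routine.
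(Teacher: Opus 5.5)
Your proposal is correct and follows the paper's proof: you use the same minorant $W>a+Cr^2$ from positivity of the tangent Taylor coefficients, the same split of the kinetic term with Neumann--Dirichlet Wirtinger on one part, and the same choice $t=C/P^2$ or $t=1$. The only difference is cosmetic: you quote the half-line Neumann oscillator ground energy $2\sqrt{tC}$, whereas the paper proves exactly that bound directly from $\int_0^D(2\sqrt t\,z'+\sqrt C\,rz)^2\ge0$ together with $\int_0^D 2rzz'=-\int_0^D z^2$.
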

\begin{proof}
By Proposition~\ref{prop:newscalarcomparison}, it suffices to bound
the first Neumann--Dirichlet eigenvalue with potential $W$.
Positivity of the tangent-series coefficients gives, for $r>0$,
$B(r)\ge2ar+(n+7)r^3/6$ and $B^2/16+B'/2>a+Cr^2$.
For any form-domain function $z$ with $z(D)=0$, Neumann--Dirichlet Wirtinger gives $4\int z'^2\ge P\int z^2$, while
\[
 0\le\int_0^D(2\sqrt\varepsilon z'+\sqrt C\,rz)^2
 =4\varepsilon\int z'^2+C\int r^2z^2-2\sqrt{\varepsilon C}\int z^2
 \quad(0\le\varepsilon\le1).
\]
The boundary term $[rz^2]_0^D$ vanishes. Splitting the kinetic term yields
$(4\int z'^2+C\int r^2z^2)/\int z^2\ge(1-\varepsilon)P+2\sqrt{\varepsilon C}$.
Choose $\varepsilon=C/P^2$ if $P\ge\sqrt C$, otherwise $\varepsilon=1$, and use the strict potential comparison on the positive first eigenfunction.
\end{proof}

\begin{corollary}[A dimension-dependent improvement of the Bochner bound]
\label{cor:oscillator-gap}
Under the standing smooth geodesically convex domain hypotheses, if
$\II\ge\kappa g$ for $\kappa>1$ and $H\ge g$, then
\[
 \lambda_2-\lambda_1>
 n+2+\sqrt{n^2+5n+11}>2n+\frac92.
\]
\end{corollary}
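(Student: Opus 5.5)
The plan is to specialize Proposition~\ref{prop:diametergap} to $\eta=1$, using Proposition~\ref{prop:sharpdiameter} only to meet its diameter hypothesis. First, the standing hypotheses together with $\II\ge\kappa g$ and $\kappa>1$ give, by Proposition~\ref{prop:sharpdiameter},
\[
 \operatorname{diam}\Omega\le D:=\arctan(2/\kappa)<\pi/2 .
\]
So the hypothesis $\operatorname{diam}\Omega\le D<\pi/2$ of Proposition~\ref{prop:diametergap} holds. The Hessian hypothesis $H\ge\eta g$ holds with $\eta=1\ge0$. The potential plays no further role, since Section~\ref{sec:gap} uses it only through the Hessian bound.

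Next, I would compute the constants of Proposition~\ref{prop:diametergap} at $\eta=1$. One gets $a=n+2$, and
\[
 C=\frac{(n+2)^2+n+7}{4}=\frac{n^2+5n+11}{4},\qquad\text{so}\qquad 2\sqrt C=\sqrt{n^2+5n+11}.
\]
The term $(P-\sqrt C)_+^2/P$ is nonnegative for every value of $P=\pi^2/D^2$, so it can simply be dropped. The strict inequality \eqref{eq:diametergap} then yields
\[
 \lambda_2-\lambda_1>n+2+\sqrt{n^2+5n+11}.
\]
I would note in passing that keeping the $P$-term with $D=\arctan(2/\kappa)$ gives a sharper bound, but the statement does not require it.

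Finally, the numerical comparison is elementary. Since $(n+\tfrac52)^2=n^2+5n+\tfrac{25}{4}<n^2+5n+11$, we have $\sqrt{n^2+5n+11}>n+\tfrac52$. Hence $n+2+\sqrt{n^2+5n+11}>2n+\tfrac92$. There is no real obstacle in this argument. The only point needing care is confirming that $D<\pi/2$, so that Proposition~\ref{prop:diametergap} and the two-point comparison behind it (Proposition~\ref{prop:newscalarcomparison}) apply; this is exactly what $\kappa>1$ secures through Proposition~\ref{prop:sharpdiameter}.
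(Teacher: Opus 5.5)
Your proposal is correct and follows the paper's proof exactly. You use Proposition~\ref{prop:sharpdiameter} to obtain $D=\arctan(2/\kappa)<\pi/2$, apply Proposition~\ref{prop:diametergap} with $\eta=1$ after dropping its nonnegative last term, and finish with $n^2+5n+11>(n+5/2)^2$.
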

\begin{proof}
Proposition~\ref{prop:sharpdiameter} gives
$\operatorname{diam}\Omega\le\arctan(2/\kappa)=:D<\pi/2$.
In Proposition~\ref{prop:diametergap} take $\eta=1$ and discard
its nonnegative last term. The final strict comparison follows from
$n^2+5n+11>(n+5/2)^2$.
\end{proof}

\begin{proposition}[A quartic moment gap bound]
\label{prop:quartic-moment-gap}
Under the standing geodesically convex domain hypotheses, suppose
$\operatorname{diam}\Omega\le D<\pi/2$ and
$\Hess(-\log\phi_1)\ge\eta g$, where $\eta\ge0$. Put
\[
 a=n+1+\eta,\qquad C=\frac{a^2+n+7}{4},\qquad
 E=\frac{a(n+7)+n+31}{24},\qquad P=\frac{\pi^2}{D^2}.
\]
Let $y_0>0$ be the unique solution of
$2Ey_0^3+Cy_0^2=1$, and set $y_*=\min\{y_0,P^{-1}\}$.
Then
\[
 \lambda_2-\lambda_1>
 a+\frac1{y_*}+Cy_*+Ey_*^2.
\]
This strictly improves the right side of
\eqref{eq:diametergap} for the same $n,\eta,D$.
\end{proposition}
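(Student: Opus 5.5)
The plan is to reduce, as in the proof of Proposition~\ref{prop:diametergap}, to the one-dimensional comparison of Proposition~\ref{prop:newscalarcomparison}. The new input is one further term in the Taylor expansion of $W=B^2/16+B'/2$, combined with a tangent-line (completed square) linearization of the quartic term.

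\textbf{Step 1: the quartic lower bound for $W$.} I will show
\[
 W(r)>a+Cr^2+Er^4\qquad(0<r<D<\pi/2).
\]
Write $\tan(r/2)=\tfrac r2+\tfrac{r^3}{24}+\tfrac{r^5}{240}+\cdots$ and $\tan r=r+\tfrac{r^3}3+\tfrac{2r^5}{15}+\cdots$. All coefficients of both series are positive. This gives
\[
 B=2ar+\frac{(n+7)r^3}{6}+c_5r^5+\cdots
\]
with every coefficient nonnegative. Then $B^2/16$ and $B'/2$ are power series with nonnegative coefficients. Their constant term is $a$, and their $r^2$ coefficient is $C$, exactly as in Proposition~\ref{prop:diametergap}.

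The $r^4$ coefficient collects three contributions:
\begin{itemize}
\item the cross term $2\cdot 2a\cdot\tfrac{n+7}{6}/16$ from $B^2/16$;
\item the term $\tfrac52c_5$ from $B'/2$, with $c_5=\tfrac{4(n-1)}{240}+\tfrac{8}{15}$;
\item nothing else at this order.
\end{itemize}
Checking that these add up to $E=\tfrac{a(n+7)+n+31}{24}$ is the main bookkeeping step. Dropping all higher, nonnegative terms then gives the strict inequality for $r>0$.

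\textbf{Step 2: linearize the quartic.} Fix $y>0$. From $E(r^2-y)^2\ge0$,
\[
 Cr^2+Er^4\ge C_yr^2-Ey^2,\qquad C_y:=C+2Ey.
\]
The oscillator argument of Proposition~\ref{prop:diametergap} applies with $C$ replaced by $C_y$. For any $z$ with $z(D)=0$ and any $0\le\varepsilon\le1$, it gives
\[
 \frac{4\int z'^2+\int(C_yr^2-Ey^2)z^2}{\int z^2}
 \ge(1-\varepsilon)P+2\sqrt{\varepsilon C_y}-Ey^2 .
\]

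\textbf{Step 3: choose $y$.} Put $g(y)=1/y+Cy+Ey^2$. Then $g'(y)=0$ exactly when $2Ey^3+Cy^2=1$, so $y_0$ is the minimizer of $g$ and $C_{y_0}y_0^2=1$. Two cases arise.
\begin{itemize}
\item If $y_0\le1/P$, take $y=y_0$ and $\varepsilon=1$. The bound becomes $2\sqrt{C_{y_0}}-Ey_0^2=2/y_0-Ey_0^2$. Using $Cy_0=(1-2Ey_0^3)/y_0$, this equals $g(y_0)$.
\item If $1/P<y_0$, take $y=1/P$. Monotonicity of $2Ey^3+Cy^2$ gives $C_y\le P^2$, so $\varepsilon=C_y/P^2\le1$ is admissible. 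The bound becomes $P+C_y/P-E/P^2$, which equals $g(1/P)$.
\end{itemize}
In both cases the right side is $g(y_*)$. Applying this on the positive first eigenfunction, together with the strict inequality $W>a+Cr^2+Er^4$ from Step 1 and Proposition~\ref{prop:newscalarcomparison}, gives $\gamma>a+g(y_*)$.

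\textbf{Step 4: comparison with \eqref{eq:diametergap}.} Since $E>0$ and $y_*>0$, we have $g(y_*)>1/y_*+Cy_*$.
\begin{itemize}
\item If $P<\sqrt C$, then $1/y+Cy\ge2\sqrt C$ for every $y>0$, which is the right side of \eqref{eq:diametergap} minus $a$ in this case.
\item If $P\ge\sqrt C$, the right side of \eqref{eq:diametergap} minus $a$ equals $P+C/P$. Now $g(y)>1/y+Cy$, and $1/y+Cy$ is decreasing on $(0,1/P]\subset(0,1/\sqrt C]$, so $1/y_*+Cy_*\ge P+C/P$.
\end{itemize}
In both cases the new bound is strictly larger, which proves the improvement claim.

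I expect Step 1, the exact $r^4$ coefficient of $W$ and the sign check for the remaining series, to be the main obstacle. Everything else is the completing-square bookkeeping already used in Proposition~\ref{prop:diametergap}.
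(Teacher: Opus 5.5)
Your proof is correct, and the bookkeeping checks out. The $r^5$ coefficient of $B$ is $\frac{n+31}{60}$, and the $r^4$ coefficient of $W$ is $\frac{a(n+7)}{24}+\frac{n+31}{24}=E$. The identity $C_{y_0}y_0^2=1$ turns $2/y_0-Ey_0^2$ into $g(y_0)$. At $y=1/P$, the choice $\varepsilon=C_y/P^2$ gives exactly $g(1/P)$. Strictness in Step 1 holds because the dropped $r^6$ coefficient of $W$ is positive.

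Your route differs from the paper's in how the quartic term is handled. The paper works with the trial function's own second moment $y=\int_0^D r^2z^2$. From it the paper collects three bounds:
\begin{enumerate}
\item the identity $1=-2\int rzz'$ and Cauchy--Schwarz give the uncertainty bound $4\int z'^2\ge 1/y$;
\item Wirtinger gives $4\int z'^2\ge P$;
\item Cauchy--Schwarz gives $\int r^4z^2\ge y^2$.
\end{enumerate}
So the Rayleigh quotient is at least $a+\max(P,1/y)+Cy+Ey^2$, and the paper minimizes this explicit function of one variable. The strict improvement over \eqref{eq:diametergap} then follows by omitting $Ey^2$ from the same minimization.

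You instead fix the tangent point $y$ in advance and replace $Er^4$ pointwise by its tangent line in $u=r^2$. This reduces the quartic problem verbatim to the oscillator estimate of Proposition~\ref{prop:diametergap}, with $C$ replaced by $C+2Ey$.

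The two arguments are dual forms of the same inequalities. The paper's bound $\int r^4z^2\ge y^2$ is Jensen's inequality, that is, your tangent-line bound taken at the mean $y=\int r^2z^2$. Your square $\int(2\sqrt{\varepsilon}z'+\sqrt{C_y}\,rz)^2\ge0$ encodes the same uncertainty principle as the paper's $4\int z'^2\ge1/y$.

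What each route buys:
\begin{itemize}
\item Yours needs no functional inequality beyond Proposition~\ref{prop:diametergap}. The cost is that you must guess $y=y_*$ and $\varepsilon$, verify by algebra that they reproduce $g(y_*)$, and handle the comparison in Step 4 by a separate two-case argument.
\item The paper's derives the min--max formula directly. It makes the location of the minimizer and the strict improvement immediate.
\end{itemize}
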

\begin{proof}
The positive Taylor coefficients of the tangent function give
\[
 B(r)>2ar+\frac{n+7}{6}r^3+\frac{n+31}{60}r^5
 \quad(0<r<D).
\]
Using the corresponding derivative inequality in
$W=B^2/16+B'/2$ yields $W(r)>a+Cr^2+Er^4$.
For a normalized function $z\in H^1(0,D)$ with $z(D)=0$, put
$y=\int_0^D r^2z^2>0$. Integration by parts and
Cauchy--Schwarz imply
\[
 1=-2\int_0^D rzz',\qquad 4\int_0^D z'^2\ge\frac1y.
\]
Neumann--Dirichlet Wirtinger and Cauchy--Schwarz also give
\[
 4\int_0^D z'^2\ge P,\qquad
 \int_0^D r^4z^2\ge y^2.
\]
Consequently Proposition~\ref{prop:newscalarcomparison} gives
\[
 \lambda_2-\lambda_1>
 a+\min_{y>0}\left\{\max\left(P,\frac1y\right)+Cy+Ey^2\right\}.
\]
For $y\ge P^{-1}$ the minimized expression is increasing. On
$(0,P^{-1}]$ its derivative has the sign of $2Ey^3+Cy^2-1$,
proving the formula. Omitting $Ey^2$ gives exactly the previous
oscillator bound. Both minima occur at finite positive arguments,
so the positive term makes the present minimum strictly larger.
\end{proof}

\begin{corollary}[A stronger dimension-linear gap]
\label{cor:quartic-linear-gap}
Under the hypotheses of Corollary~\ref{cor:oscillator-gap}, $\Hess(-\log\phi_1)\ge g$ implies
\[
 \lambda_2-\lambda_1>2n+\frac{14}{3}.
\]
\end{corollary}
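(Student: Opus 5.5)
The plan is to apply Proposition~\ref{prop:quartic-moment-gap} with $\eta=1$; Corollary~\ref{cor:oscillator-gap} supplies the diameter hypothesis $D=\arctan(2/\kappa)<\pi/2$. Then
\[
 a=n+2,\qquad C=\frac{n^2+5n+11}{4},\qquad E=\frac{n^2+10n+45}{24}.
\]
The proof of that proposition shows that $\lambda_2-\lambda_1$ strictly exceeds
\[
 a+\min_{y>0}\Bigl\{\max\bigl(P,1/y\bigr)+Cy+Ey^2\Bigr\}.
\]
Since $\max(P,1/y)\ge1/y$, we may drop $P$. It then suffices to show
\[
 G(y):=\frac1y+Cy+Ey^2\ \ge\ n+\frac83\qquad(y>0).
\]
With the strict inequality already available from the proposition, this gives $\lambda_2-\lambda_1>2n+14/3$.

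To prove the bound on $G$, normalize $u=\sqrt C\,y$. This gives the exact split
\[
 G=2\sqrt C+\sqrt C\,\frac{(1-u)^2}{u}+\frac{E}{C}\,u^2 .
\]
The first term already exceeds $n+5/2$, because $n^2+5n+11>(n+5/2)^2$ (the margin is $19/4$). The remaining target is therefore
\[
 \sqrt C\,\frac{(1-u)^2}{u}+\frac EC u^2\ \ge\ n+\frac83-2\sqrt C ,
\]
whose right side is $\frac16-\delta_n$, where
\[
 \delta_n=2\sqrt C-(n+\tfrac52)=\frac{19/4}{2\sqrt C+n+5/2}>0 .
\]
Heuristically, $E/C\to1/6$ as $n\to\infty$, and the minimizing $u$ tends to $1$. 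So the claimed constant $14/3$ is exactly the large-$n$ limit $n+2+n+5/2+1/6$. The strict positivity of $\delta_n$, together with the $O(1/n)$ corrections in $E/C$, must be compared carefully.

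The concrete route is as follows. For $u\ge1$ the left side is at least $E/C$. One checks $E/C\ge\frac16-\delta_n$ directly, since
\[
 \frac EC-\frac16=\frac{5n+23}{6(n^2+5n+11)}>0 .
\]
For $u\in(0,1)$, write $u=1-t$. The left side is then at least
\[
 \sqrt C\,t^2+\frac EC(1-2t)\ \ge\ \frac EC-\frac{E^2}{C^3\sqrt C}\cdot\frac{1}{1}\,,
\]
obtained by minimizing the quadratic in $t$. It therefore suffices to verify
\[
 \frac EC-\frac{E^2}{C^{5/2}}\ \ge\ \frac16-\delta_n .
\]
Using $\delta_n\ge\frac{19}{8(2n+6)}$ (say) and the explicit rational expressions for $E/C$, this becomes an elementary inequality in $n$. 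Its correction terms are of order $1/n$ with positive coefficients: $\frac{5n+23}{6(n^2+5n+11)}$ plus $\delta_n$, against $E^2/C^{5/2}=O(n^{-1})$ with leading coefficient $\frac{1}{36}\cdot\frac{32}{n}$.

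The main obstacle is exactly this last comparison of $1/n$ coefficients. The crude quadratic lower bound may lose too much for small $n$, so the plan is to verify $n=1,2,3$ numerically. For those few cases one can instead minimize $G$ directly via the cubic $2Ey^3+Cy^2=1$ from Proposition~\ref{prop:quartic-moment-gap}, using explicit rational bracketing of the root $y_0$ and monotonicity of $G$ on either side. For $n\ge4$ the plan is to clear denominators and check a polynomial inequality. If the coefficient comparison turns out too tight, the fallback is to retain the term $\max(P,1/y)$ with $P\ge\pi^2/\arctan^2(2)$, which forces $y\ge$ a definite amount away from the regime where the quadratic bound is lossy.
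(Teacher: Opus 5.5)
Your reduction is correct, and it takes a different route from the paper's. The paper proves $G(y)>n+\frac83$ directly. It multiplies by $y$ and substitutes $q=ny/2$, which turns the difference into the exact expression $(1-q)^2+\frac{q(q-1)(q+16)}{3n}+\frac{q^2(33+10q)}{3n^2}+\frac{15q^3}{n^3}$. This is termwise nonnegative for $q\ge1$, and for $0<q\le1$ it becomes one square plus positive terms. That gives a single algebraic certificate valid for all $n$ at once, with no case split in $n$. You instead center at the AM--GM point $u=\sqrt C\,y=1$, peel off $2\sqrt C=n+\frac52+\delta_n$, and treat $Ey^2$ as a perturbation. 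This explains the constant: $\frac{14}{3}=2+\frac52+\frac16$ is the $n\to\infty$ limit of the bound, as you note. The price is a final scalar comparison.

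That comparison, which you leave open and fear may fail for small $n$, closes in one line for every $n\ge1$, so you need neither numerics nor the fallback through $P$. Since $E/C>\frac16$, it suffices to show $E^2/C^{5/2}<\delta_n$. From $n+\frac52<2\sqrt C$ you get $\delta_n=\frac{19/4}{2\sqrt C+n+5/2}>\frac{19}{16\sqrt C}$. Also $E/C=\frac{n^2+10n+45}{6(n^2+5n+11)}\le1$, because $5n^2+20n+21>0$. Hence $E^2/C^{5/2}=(E/C)^2/\sqrt C\le1/\sqrt C<\frac{19}{16\sqrt C}<\delta_n$. Both of your branches are then strict, so $G>n+\frac83$ for all $y>0$. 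The corollary follows from $\gamma>a+G(y_*)$, which is the statement of Proposition~\ref{prop:quartic-moment-gap} itself, so there is no need to reopen its proof. The diameter hypothesis comes from Proposition~\ref{prop:sharpdiameter}.

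Please fix the arithmetic slips; none of them changes a sign:
\begin{itemize}
\item $\frac EC-\frac16=\frac{5n+34}{6(n^2+5n+11)}$, not $5n+23$.
\item The quadratic minimum is $\frac EC-\frac{E^2}{C^{5/2}}$; your first display has $C^3\sqrt C$.
\item $E^2/C^{5/2}\sim\frac1{18n}$, not $\frac{32}{36n}$.
\end{itemize}
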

\begin{proof}
Here $a=n+2$, $C=(n^2+5n+11)/4$, and
$E=(n^2+10n+45)/24$. We claim that, for every $y>0$,
\[
 \frac1y+Cy+Ey^2>n+\frac83.
\]
Multiply the difference by $y$ and put $q=ny/2$. The result is
\[
 (1-q)^2+\frac{q(q-1)(q+16)}{3n}
 +\frac{q^2(33+10q)}{3n^2}+\frac{15q^3}{n^3}.
\]
It is positive for $q\ge1$. For $0<q\le1$, rewrite it as
\[
 \left(1-q-\frac{q(q+16)}{6n}\right)^2
 +\frac{q^2(140+88q-q^2)}{36n^2}+\frac{15q^3}{n^3}>0.
\]
The claim and Proposition~\ref{prop:quartic-moment-gap} prove the result.
\end{proof}

\section{Explicit bounds in every complex dimension}\label{sec:alldim}
Here $N=2n$, $m=2n-1$, and $d=4m$ is a barrier parameter. This
section gives Hessian levels linear in $n$ under a quadratic spectral
threshold; Section~\ref{sec:freshuniform} trades these levels for a
uniform curvature condition.
\begin{theorem}[Spectral form]\label{an:thm:conditional}
Let $n\ge1$ and suppose
\begin{equation}\label{eq:potentialallconditions}
 \II\ge2g,\qquad \Hess V\ge2|\nabla V|g,\qquad \ell(V)\ge64m^2.
\end{equation}
Then $H>b(s)g$ for
\begin{equation}\label{an:eq:barrier}
 b(s)=2r-2d+1-2d\log\frac{r+d}{2d},\qquad r=\sqrt{1+s}.
\end{equation}
Moreover $\ell(V)\ge L$ implies $H>\eta g$ and
$\lambda_2-\lambda_1>2n+2+2\eta$ for each pair
\[
 (L,\eta)\in\{(64m^2,4m/7),(256m^2,5m),(6400m^2,50m)\},
\]
that is, $\lambda_2-\lambda_1$ exceeds $(30n+6)/7$, $22n-8$ and
$202n-98$, respectively.
\end{theorem}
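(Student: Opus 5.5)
The plan is to run the two-stage continuation machinery of Section~\ref{sec:global}, with the logarithmic barrier \eqref{an:eq:barrier} as the first tensor and the constant $\eta$ as the second. The last step converts $H>\eta g$ into a gap via Proposition~\ref{prop:commonbochner}.

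\emph{Initialization.} First note $b(s)=2\sqrt s-2d\log\sqrt s+O(1)$ as $s\to\infty$. So the hypotheses of Lemma~\ref{lem:collar} hold with $a=2\le\kappa$ (using $\II\ge2g$) and $c=2d>0$. Moreover $b'(s)=\frac{1}{r}\bigl(1-\frac{d}{r+d}\bigr)=\frac{1}{r+d}$, so $2\sqrt s\,b'(s)<2$ and $c_b\le2$. Hence $\Hess V\ge2|\nabla V|g$ gives exactly the potential condition of Proposition~\ref{prop:fixedpotentialcontinuation}.

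\emph{Continuation to the Laplacian ground state.} On the domain side I would use the contraction family of Proposition~\ref{prop:strongcontinuation}, which preserves $\II\ge2g$ and the lower bound $\lambda_1^0\ge\ell(V)\ge L$ (Lemma~\ref{lem:endpointspectral}, domain monotonicity). Lemma~\ref{lem:initial} gives positivity of both tensors on a small initial member, and Lemma~\ref{lem:twostageaudit} carries them to $\Omega$ with $V=0$. Proposition~\ref{prop:fixedpotentialcontinuation} then carries them along $tV$. So everything reduces to two algebraic facts, valid for every scalar $\mu\ge L$. The first is that each nonnegative null contact of $H-b(s)g$ admits controls with negative cost. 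The second is that each contact of $H-\eta g$ with $b(s)<\eta$ does too.

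\emph{First-stage cost.} At a barrier contact I would use the rotational controls \eqref{eq:costgeneral} with $A_0=2\alpha w$. If $\varkappa=\alpha^2+b\beta\ge0$, the $B(w,w)$ coefficient \eqref{eq:cancel} is nonpositive and can be discarded. Otherwise I would take $q_0=\alpha+\sqrt\varkappa$, possibly adding a transverse $\sigma$ to absorb $4(n-1)q\sigma$ against $\sigma^2\tr_WB$. I would then bound the remaining $B$-dependence from below by the unrestricted trace minimum of Lemma~\ref{lem:traceminimum}, with trace $T=\mu+s-Nb$ and $d=3$ for the Hopf term, and reduce to the three orientations $(0,0),(s,0),(s,1)$ via Lemma~\ref{geo:three}. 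This leaves three explicit one-variable inequalities in $s\ge0$ (equivalently $r\ge1$). Each should be negative because $\mu\ge64m^2$ dominates the terms $b^2+Nb$ and because the $-\alpha(\tr B)^2/m$ term grows like $\mu^2/(m(r+d))$.

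\emph{Second stage and gap.} For the constant stage I would apply Proposition~\ref{prop:balancedcriterion} with $\lambda_*=L$ and $S$ defined by $b(S)=\eta$, choosing $c$ (for instance $c=0$ for $(64m^2,4m/7)$, and small $c$ otherwise) so that both entries of \eqref{eq:balancedcriterion} are negative; here $A\approx-(1-c^2)L$ should beat $(3+c^2)S$. Then Proposition~\ref{prop:commonbochner} gives $\gamma>2n+2+2\eta$, which with $m=2n-1$ is the listed arithmetic.

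The main obstacle is the first-stage scalar verification uniformly in $n$ and $r$, above all the Hopf endpoint $(s,1)$ at large $s$. There the $3x$ curvature term competes with the only logarithmically growing barrier slack. I would try first to expand in $u=r/d$ and check the sign of each endpoint cost as a polynomial in $u$ with coefficients in $1/m$.
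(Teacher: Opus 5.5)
Your continuation architecture is the paper's: the collar and initialization with $a=2$, $c=2d$, $c_b=2$; the contraction family with $\lambda_1^0\ge\ell(V)\ge L$ by monotonicity; the two ordered stages, first for the Laplacian along the domain family and then along $tV$; and Proposition~\ref{prop:commonbochner} at the end. Your constant stage also works. Proposition~\ref{prop:balancedcriterion} with $c=0$ gives $\max\{A,\,A+3S-8\sqrt{3S}\}$, which is strictly below the paper's unrotated bound $\eta^2+N\eta-\lambda+3(R^2-1)$ from Lemma~\ref{an:lem:constantcriterion}. So $c=0$ suffices for all three pairs, once you supply explicit $S$ with $b(S)\ge\eta$. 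The paper takes $S=R^2-1$: it uses $R=32m/7-1$ via $\widehat b(r)\ge r-d+1$, and $R=17m/2-1/2$ and $R=71m/2-1/2$ via $\log(25/16)<1/2$, $\log(79/16)<13/8$ and $\widehat b'<2$.

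In the first stage you take a heavier route than the paper. The paper keeps $A_0=2\alpha w$, $\sigma=0$, and checks $\varkappa=(2r-b)/(2r(r+d)^2)>0$ from $b<2r$ in \eqref{an:eq:brackets}. Your ``otherwise'' branch is therefore vacuous; as written it would take $\sqrt\varkappa$ with $\varkappa<0$. The paper then discards both $-3\mathcal B(h,h)$ and $-4\varkappa\mathcal B(w,w)$, and uses the weighted Cauchy--Schwarz bound $\alpha\tr\mathcal B^2+2\beta|\mathcal Bw|^2\ge WT^2$ of Lemma~\ref{an:lem:weightedtrace}. The result \eqref{an:eq:master} is affine in $(\tau,x)$, so no three-vertex reduction is needed.

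Your Hopf-retaining reduction through Lemmas~\ref{lem:traceminimum} and~\ref{geo:three} is the method of Section~\ref{sec:freshuniform}, and it is valid, but it needs two additions:
\begin{itemize}
\item Those lemmas need $\nu=m\ge2$, so $n=1$ must be done by hand; there $B=T\,h\otimes h$.
\item The unrestricted minima carry negative constants, such as $-9(m-1)/(4\alpha m)$ in $P_v$. You therefore need a domination step like Lemma~\ref{geo:domination} before any scalar check, and dominating by $WT^2$ simply returns you to the corner values of \eqref{an:eq:master}.
\end{itemize}

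The genuine gap is that the sign of the resulting scalar costs is asserted, not proved. This must hold for every $r\ge1$, every $\mu\ge64m^2$, and $d=4m$; it is the entire content of Lemma~\ref{an:lem:sign}. Your heuristic also misidentifies the mechanism. Because $\beta<0$, $\alpha T^2/m$ is a valid lower bound only at $\tau=0$. Once $w\ne0$ and $r$ is large it fails: take $B$ concentrated on $w$, so the objective is $A_\tau T^2$ with $A_\tau<\alpha/m$. The correct weight is $W=(A^{-1}+(m-1)\alpha^{-1})^{-1}\approx d/((r+d)(r+md))$.

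The paper's verification proceeds as follows.
\begin{itemize}
\item On $1\le r\le d$ it uses $W\ge A/m\ge1/d^2$ and $T\ge\lambda-N$, so $(\lambda-N)^2/d^2$ beats $\lambda+7d^2+2Nd+N+9d$ once $\lambda\ge4d^2$.
\item For $r\ge d$ and $m\ge3$ it needs the sharper $W\ge6/(r+d)^2$; the paper notes that $A/m$ alone does not give this argument.
\item Still for $r\ge d$, it uses $T\ge\lambda+s-2Nr$ and convexity of \eqref{an:eq:scalarU} in $b$ to reduce to $b\in\{0,2r\}$, then checks explicit polynomials in $y=r/d$.
\item The case $m=1$ is checked separately.
\end{itemize}

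Nor is the large-$s$ Hopf endpoint a contest with the logarithmic slack. There the scalar part of \eqref{an:eq:master} is about $3s$: the $-s$ in $D_*$ plus angular coefficients summing to about $4$, with the $b$-terms cancelling at leading order. What beats it is $WT^2\ge6T^2/(r+d)^2\approx6s$. In your version, for $r\gg md$, $P_H$ even gains about $3\alpha T/(\alpha+(m-1)A)\approx3s$ from the retained Hopf term.

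Until these inequalities are actually verified, the first stage, and with it the theorem, is not established.
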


\subsection{Retaining the diagonal weights in the trace estimate}
At a nonnegative null contact of $H-b(s)g$, use the notation of
Section~\ref{sec:commonlongitudinal}. In particular,
\[
 \mathcal B\ge0,\quad T=\tr\mathcal B=\lambda+s-Nb,
 \quad 0\le x\le\tau\le s.
\]
The sole first-derivative choice is $D_0=-2\alpha w$, and its exact cost
is \eqref{an:eq:contact}. Differentiating \eqref{an:eq:barrier} gives
\[
 \alpha=\frac1{r+d},\qquad
 \beta=-\frac1{2r(r+d)^2},\qquad
 A=\alpha+2s\beta=\frac{dr+1}{r(r+d)^2}>0.
\]
Write $\widehat b(r)=b(r^2-1)$. Then
$\widehat b'(r)=2r/(r+d)>0$ and $\widehat b(d)=1$.
The inequalities $1-1/y\le\log y\le y-1$ imply
\begin{equation}\label{an:eq:brackets}
 r-d+1\le\widehat b(r)
 \le2r-4d+1+\frac{4d^2}{r+d}<2r.
\end{equation}
For the last inequality, the difference from $2r$ is
$1-4dr/(r+d)<0$, since $r\ge1$ and $d\ge4$. Consequently
\[
 \alpha^2+b\beta=\frac{2r-b}{2r(r+d)^2}>0.
\]
The last two linear matrix terms in \eqref{an:eq:contact} are
nonpositive because the contact matrix $\mathcal B$ is positive
semidefinite.

\begin{remark}[Choice of the barrier]\label{rem:barrierconstruction}
The barrier \eqref{an:eq:barrier} is the case $\omega=d$ of the
construction in \cref{rem:barrierdesign}: it is the antiderivative of
$\alpha=1/(\sqrt{1+s}+d)$ normalized by $b(d^2-1)=1$. With a single
parameter, $d=4m$ satisfies the conditions $d\ge4$, $d\ge2N$, and
$N\le d/3$ for $m\ge3$ used in \cref{an:lem:sign}, and gives
$W\ge1/d^2$ on $1\le r\le d$; the threshold $4d^2=64m^2$ rounds up the
leading-order requirement $\lambda>\frac{1+\sqrt{29}}2d^2$ of that
range.
\end{remark}

\begin{lemma}[The weighted trace estimate]\label{an:lem:weightedtrace}
For every symmetric $m\times m$ matrix $\mathcal B$ and
$|w|^2\le s$,
\begin{equation}\label{an:eq:trace}
 \alpha\tr\mathcal B^2+2\beta|\mathcal Bw|^2
 \ge W(\tr\mathcal B)^2,
 \qquad
 W=\frac1{A^{-1}+(m-1)\alpha^{-1}}
 \ge\frac{d}{(r+d)(r+m d)}.
\end{equation}
One also has $W\ge A/m$. If $m\ge3$ and $r\ge d$,
then $W\ge6/(r+d)^2$.
\end{lemma}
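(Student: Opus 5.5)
The plan is to reduce the left side of \eqref{an:eq:trace} to a diagonally weighted sum of squares and then apply Cauchy--Schwarz under the trace constraint. This is the same square completion as in Lemma~\ref{lem:traceminimum}, but here only a lower bound is needed.

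\textbf{Step 1: replace $w$ by a unit vector.} If $w=0$, or $\beta\ge0$, the term $2\beta|\mathcal Bw|^2$ is nonnegative, or the argument below applies with $\tau$ in place of $s$. Otherwise write $w=\sqrt\tau\,e$ with $|e|=1$ and $\tau\le s$. Since $\beta=-1/(2r(r+d)^2)<0$, we have
\[
 2\beta|\mathcal Bw|^2=2\beta\tau|\mathcal Be|^2\ge2\beta s|\mathcal Be|^2 .
\]
So it suffices to bound $\alpha\tr\mathcal B^2+2\beta s|\mathcal Be|^2$ from below.

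\textbf{Step 2: expand in an adapted basis.} Choose an orthonormal basis with $e_1=e$. Then
\[
 \alpha\tr\mathcal B^2+2\beta s|\mathcal Be|^2
 =A\mathcal B_{11}^2+\alpha\sum_{j\ge2}\mathcal B_{jj}^2
 +(\alpha+A)\sum_{j\ge2}\mathcal B_{1j}^2
 +2\alpha\sum_{2\le i<j}\mathcal B_{ij}^2 ,
\]
using $2\alpha+2\beta s=\alpha+A$. The computed values give $A>0$ and $\alpha>0$, so every off-diagonal weight is positive and those terms can be discarded. Cauchy--Schwarz applied to $\tr\mathcal B=\sum_j\mathcal B_{jj}$, with weights $A,\alpha,\dots,\alpha$, then gives
\[
 A\mathcal B_{11}^2+\alpha\sum_{j\ge2}\mathcal B_{jj}^2
 \ge\frac{(\tr\mathcal B)^2}{A^{-1}+(m-1)\alpha^{-1}} .
\]
This proves \eqref{an:eq:trace} with the stated $W$. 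No sign condition on $\mathcal B$ is used.

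\textbf{Step 3: explicit lower bounds for $W$.} We have $\alpha^{-1}=r+d$ and
\[
 A^{-1}=\frac{r(r+d)^2}{dr+1}\le\frac{(r+d)^2}{d} .
\]
Hence
\[
 A^{-1}+(m-1)\alpha^{-1}\le(r+d)\Bigl(\frac{r+d}{d}+m-1\Bigr)=\frac{(r+d)(r+md)}{d},
\]
which gives $W\ge d/((r+d)(r+md))$.

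Since $\beta<0$, we have $A=\alpha+2s\beta\le\alpha$, so $\alpha^{-1}\le A^{-1}$. The denominator is therefore at most $m/A$, giving $W\ge A/m$.

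For the last claim, set $d=4m$ and let $g(r)=d(r+d)/(r+md)$, so that $W\ge g(r)/(r+d)^2$. The derivative of $g$ has the sign of $(r+md)-(r+d)=(m-1)d\ge0$, so $g$ is nondecreasing. For $r\ge d$ this gives
\[
 g(r)\ge g(d)=\frac{2d}{m+1}=\frac{8m}{m+1}\ge6 \qquad(m\ge3),
\]
hence $W\ge6/(r+d)^2$.

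\textbf{Expected obstacle.} The only delicate point is Step 1. The comparison $\tau\le s$ goes in the right direction only because $\beta<0$, and positivity of the mixed weight $\alpha+A$ relies on $A>0$, which comes from the explicit barrier \eqref{an:eq:barrier}. Everything else is elementary algebra.
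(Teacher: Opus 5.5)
Your proposal is correct and follows the paper's proof: choose a basis adapted to $w$, discard the positive off-diagonal squares, apply weighted Cauchy--Schwarz with weights $A,\alpha,\dots,\alpha$, and then use $A\le\alpha$ and $A\ge d/(r+d)^2$. The differences are cosmetic: you lower $\tau$ to $s$ before expanding, whereas the paper keeps $A_\tau\ge A$, and you justify the final inequality $d(r+d)/(r+md)\ge 2d/(m+1)$ by monotonicity in $r$, which the paper leaves implicit.
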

\begin{proof}
In a basis adapted to $w$ (arbitrary if $w=0$), the diagonal
weights are $A_\tau=\alpha+2\beta|w|^2\ge A$ and $\alpha$;
the off-diagonal weights $\alpha+A_\tau$ and $2\alpha$ are positive.
The zero-linear-term case of Lemma~\ref{lem:traceminimum} (for $m\ge2$
and $w\ne0$), or in general weighted Cauchy--Schwarz after dropping
those squares, gives the first bound.
Since $A\le\alpha$,
the reciprocal denominator is at most $m/A$, proving
$W\ge A/m$. Since $A\ge d/(r+d)^2$, that denominator is
also at most $(r+d)(r/d+m)$, proving the last bound in
\eqref{an:eq:trace}. Finally, for $r\ge d=4m$,
\[
 \frac{d(r+d)}{r+m d}
 \ge\frac{2d}{m+1}
 =\frac{8m}{m+1}\ge6
 \quad(m\ge3).
\]
This calculation changes the basis only to estimate a quadratic form;
it imposes no alignment between the eigenvectors of $\mathcal B$,
the gradient, and the Hopf direction.
\end{proof}

By Lemma~\ref{an:lem:weightedtrace} and the nonpositivity of the two
linear terms noted above, the common cost therefore satisfies
\begin{equation}\label{an:eq:master}
 \mathcal C_D
 \le D_*+[1-4\alpha(1+2b\alpha)]\tau+(3-12\alpha)x-WT^2.
\end{equation}
The trace $T$ here is the trace of the actual contact matrix, rather
than an independently optimized scalar.

\begin{lemma}[The first-barrier sign in every direction]\label{an:lem:sign}
For $d=4m$, $N=m+1$, and $\lambda\ge4d^2$, the
right side of \eqref{an:eq:master} is strictly negative at every
admissible contact, for every $r\ge1$.
\end{lemma}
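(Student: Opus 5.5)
The plan is to bound the right side of \eqref{an:eq:master} by a function of $r$ alone and then split the range of $r$ at $r=d$. Lemma~\ref{an:lem:weightedtrace} supplies the two lower bounds for $W$ used there.

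\textbf{Step 1: remove the orientation variables.} I first eliminate $\tau$ and $x$. By $0\le x\le\tau\le s$, the terms $[1-4\alpha(1+2b\alpha)]\tau+(3-12\alpha)x$ are at most $\max\{0,\,1-4\alpha(1+2b\alpha)\}s+\max\{0,\,3-12\alpha\}s$. This is at most $4s$, plus the nonnegative correction $8\alpha^2|b|s$ when $b<0$. So it suffices to show
\[
 D_*+4s+8\alpha^2 b_- s-WT^2<0,\qquad b_-=\max\{0,-b\}.
\]
Next I use the exact formula $\alpha=1/(r+d)$, $\beta=-1/(2r(r+d)^2)$ and the brackets \eqref{an:eq:brackets}, $r-d+1\le b<2r$.

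\textbf{Step 2: make $T$ large.} The trace satisfies $T=\lambda+s-Nb\ge\lambda+r^2-1-2Nr$. With $N=m+1\le d/3$ for $m\ge3$ (small $m$ handled separately, see below) and $\lambda\ge4d^2$, this gives $T\ge 4d^2+r^2-\tfrac23dr-1$, which is comparable to $d^2+r^2$.

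In $D_*$, the only positive contributions are $b^2+Nb$, $-2\beta b^2s=b^2s/(r(r+d)^2)\le b^2 r/(r+d)^2$, and, when $b<0$, the term $2\alpha b_-T$. Everything else, namely $-\lambda-s-\alpha\{Nb^2+(N+2)s\}$, is negative and is discarded except for $-\lambda$.

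\textbf{Step 3: the range $1\le r\le d$.} Here I use $W\ge1/d^2$ from Remark~\ref{rem:barrierconstruction}; it follows from $W\ge d/((r+d)(r+md))$. Then
\[
 WT^2\ge \frac{T}{d^2}\,T\ge\Bigl(4-\tfrac23+O(d^{-2})\Bigr)T .
\]
The positive terms are bounded as follows:
\begin{itemize}
\item $b^2+Nb+4s$ is $O(d^2)$, since $|b|\le d$ on this range by \eqref{an:eq:brackets};
\item $2\alpha b_-T\le 2\frac{d}{r+d}T\le 2T$;
\item $8\alpha^2b_-s\le 8d$.
\end{itemize}
So the total is at most $2T+O(d^2)-\lambda-(10/3)T$. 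This is negative because $T\ge \lambda-\tfrac23d^2-1$ and the $O(d^2)$ constants are explicit. I would carry out this bookkeeping with the explicit bound $|b|\le \max\{d-1,\,2r\}$ and verify a one-variable polynomial inequality in $\rho=r/d\in[1/d,1]$.

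\textbf{Step 4: the range $r\ge d$.} Here $b\ge r-d+1>0$, so the $b_-$ terms vanish, and Lemma~\ref{an:lem:weightedtrace} gives $W\ge6/(r+d)^2$. The positive terms are at most $b^2+Nb+4s+b^2r/(r+d)^2<4r^2+\tfrac23dr+4r^2+2r$. Against this, $6T^2/(r+d)^2$ with $T\ge r^2+4d^2-\tfrac23dr-1$. Since $(r+d)^2\le2(r^2+d^2)$, we get $6T^2/(r+d)^2\ge 3\,(r^2+4d^2-\tfrac23dr)^2/(r^2+d^2)$, which exceeds $3r^2+9d^2$ up to lower-order terms. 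I would also keep $-\lambda\le -4d^2$ and $-s$, then check the resulting rational inequality in $\rho=r/d\ge1$.

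\textbf{The main obstacle.} I expect the tightest point to be near $r\approx d$, where both regimes are only marginally favorable. The factor $6$ in $W$ versus the $b^2\approx4r^2$ growth, and the $-2\alpha bT$ sign change near $b=0$, must be tracked with explicit constants rather than asymptotically. There I would first try keeping the exact $-\alpha Nb^2$ and $-\alpha(N+2)s$ terms, which are of order $r$, instead of discarding them.

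The small dimensions $m=1$ ($n=1$) and possibly $m=3$ need separate handling, since $N\le d/3$ and the last bound in Lemma~\ref{an:lem:weightedtrace} require $m\ge3$. For these I would use $W\ge A/m$ directly, which gives an elementary explicit check.
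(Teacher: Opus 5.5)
\noindent Your plan follows the paper's structure: split at $r=d$, use $W\ge1/d^2$ below and $W\ge6/(r+d)^2$ above, and treat $m=1$ separately. There is, however, a genuine gap in Step~4, and hence also in the case $m=1$.

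\textbf{The dropped term.} In Step~2 you discard every negative part of $D_*$ except $-\lambda$, and later $-s$. In particular, for $b>0$ the term $-2\alpha bT$ is dropped and never used again. In the high range this term is of leading order. As $r\to\infty$ one has $b\sim2r$, $\alpha\sim1/r$ and $T\sim r^2$, so $-2\alpha bT\sim-4r^2$.

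\textbf{Why Step~4 cannot close without it.} The order-$r^2$ terms you keep are $b^2+4s\sim8r^2$ on the positive side. On the negative side you have $s+WT^2\sim r^2+6r^2$ when $W\ge6/(r+d)^2$ is used, and only $r^2+3r^2$ after your step $(r+d)^2\le2(r^2+d^2)$. So your own displayed estimates cannot close for large $r$. For $m=1$ it is worse: there $W=A\sim4/r^2$ exactly. The expression you propose to show negative then equals $3r^2+O(r\log r)>0$ for the actual barrier, so no refinement of constants repairs it. Retaining $-\alpha Nb^2$ and $-\alpha(N+2)s$ does not help, since these are $O(r)$. The regime that breaks your scheme is $r\to\infty$, not $r\approx d$.

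\textbf{The missing step.} You need to keep $-2\alpha bT$ and treat the dependence on $b$ jointly. Bounding $b^2\le4r^2$ and $-2\alpha bT\le0$ separately combines incompatible extremes. The paper proceeds as follows.
\begin{enumerate}
\item It substitutes $T=\lambda+s-Nb$ to obtain the convex quadratic $U(b)$ of \eqref{an:eq:scalarU}.
\item It bounds $WT^2$ below by the $b$-independent quantity $W(\lambda+s-2Nr)^2$.
\item It lowers $\lambda$ to $4d^2$ by monotonicity.
\item By convexity it checks only the endpoints $b=0$ and $b=2r$.
\end{enumerate}
At $b=2r$ the term $-2\alpha bT$ cancels the growth of $b^2$; at $b=0$ there is no $b^2$ term. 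The resulting leading coefficient of $r^2$ is $-3$ for $m\ge3$ and $-1$ for $m=1$.

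\textbf{A smaller point in Step~3.} Your low-range argument is essentially the paper's and does close, but the bound $|b|\le d$ on $1\le r\le d$ does not follow from \eqref{an:eq:brackets} alone. It needs the monotonicity of $\widehat b$ together with $\widehat b(d)=1$, which give $-d\le b\le1$. If you use only $b<2r$, as in your proposed bound $|b|\le\max\{d-1,2r\}$, and do not split by the sign of $b$, the constants come out as $\tfrac{78}{9}d^2$ against $\tfrac{76}{9}d^2$, and the estimate fails.
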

\begin{proof}
First suppose $1\le r\le d$. Monotonicity and
\eqref{an:eq:brackets} give $-d\le b\le1$ and $|b|\le d$.
Thus $T\ge\lambda-N>0$ and $T\le\lambda+d^2+Nd$. Moreover,
\[
 b^2+Nb\le d^2+N,\qquad
 -2\alpha bT\le2T,\qquad
 -2\beta b^2s\le r\le d.
\]
The two angular terms in \eqref{an:eq:master} are at most
$(4+8/d)s$. Also
$W\ge A/m\ge1/(4dm)=1/d^2$ on this range.
Summing these bounds with $2T\le2\lambda+2d^2+2Nd$ and $s\le d^2$, and
discarding the other nonpositive terms of $D_*$, yields
\[
 \mathcal C_D\le
 \lambda+7d^2+2Nd+N+9d-\frac{(\lambda-N)^2}{d^2}.
\]
This bound decreases with $\lambda\ge4d^2$, since its derivative is
$1-2(\lambda-N)/d^2<0$. At $\lambda=4d^2$ it is
\[
 -5d^2+2Nd+9N+9d-N^2/d^2
 \le-4d^2+\frac{27}{2}d
 \le-\frac58d^2<0,
\]
where $N\le d/2$ and $d\ge4$.

Now suppose $r\ge d$. Then $1\le b<2r$. The coefficient of $x$
in \eqref{an:eq:master} is positive, and the sum of the coefficients
of $\tau$ and $x$ is at least $4-12/d\ge1$. Their total contribution
is therefore at most $(4-16\alpha-8b\alpha^2)s$. Substitution of
$T=\lambda+s-Nb$ makes the scalar part the following convex
quadratic in $b$:
\begin{align}\label{an:eq:scalarU}
 U(b)={}&b^2\left(1+\frac{N}{r+d}
                     +\frac{s}{r(r+d)^2}\right)\notag\\
 &+b\left(N-\frac{2\lambda+2s}{r+d}
                 -\frac{8s}{(r+d)^2}\right)
       -\lambda+3s-\frac{(N+18)s}{r+d}.
\end{align}
The trace satisfies $T\ge\lambda+s-2Nr>0$, because $r\ge d\ge2N$.
For fixed $b\in[0,2r]$, both $U(b)$ and
$-W(\lambda+s-2Nr)^2$ decrease with $\lambda$. Hence
\[
 \mathcal C_D
 \le U(b)\big|_{\lambda=4d^2}
       -W(4d^2+s-2Nr)^2.
\]
The trace bound on the right is independent of $b$, so convexity
reduces the remaining scalar test to $b=0$ and $b=2r$.

If $m=1$, then $d=4$, $N=2$, and
$W\ge4/(r+4)^2$. Multiplying the two scalar upper bounds by
$(r+4)^2$ gives, respectively,
\begin{align*}
 P_0(r)&=-r^2(r-18)^2-343r^2+1500r-16868,\\
 P_2(r)&=-r^2(r-26)^2-115r^2+568r-16868.
\end{align*}
The elementary square inequalities
\[
 1500r\le100r^2+5625,
 \qquad 568r\le64r^2+\frac{5041}{4}
\]
show that the residual quadratics are bounded above by
$-243r^2-11243$ and $-51r^2-62431/4$, respectively. Thus both
upper bounds are strictly negative.

If $m\ge3$, then $d\ge12$, $N\le d/3$, and $N+1\le d/2$.
Discarding the negative terms $-8bs/(r+d)^2$ and
$-(N+18)s/(r+d)$ in \eqref{an:eq:scalarU}, and using
$s/[r(r+d)^2]\le1/(r+d)$, gives
\[
 \overline U(b)=b^2\left(1+\frac{N+1}{r+d}\right)
   +b\left(N-\frac{2(4d^2+s)}{r+d}\right)-4d^2+3s.
\]
Set $y=r/d\ge1$ and $q_*(y)=y^2-2y/3+63/16>0$. Then
\[
 \frac{4d^2+s-2Nr}{d^2}\ge q_*(y),
 \qquad
 \frac{W(4d^2+s-2Nr)^2}{d^2}
 \ge\frac{6q_*(y)^2}{(y+1)^2}.
\]
At $b=0$, the normalized scalar bound is
$3y^2-4-6q_*(y)^2/(y+1)^2$. Multiplication by $(y+1)^2$ gives
\[
 -3y^2(y-7/3)^2-\frac{415}{12}y^2
          +\frac{47}{2}y-\frac{12419}{128}.
\]
For $b=2r$, direct collection of terms gives
\[
 \overline U(2r)=3r^2+\frac{4(d+N+1)r^2}{r+d}+2Nr
 -4d^2\left(1+\frac{4r}{r+d}\right)+\frac{4r}{r+d}-3.
\]
Use $d+N+1\le3d/2$, $2N\le2d/3$, and
$4r/[d^2(r+d)]\le1/4$, and discard the negative term $-3/d^2$.
The normalized upper bound, after multiplication by $(y+1)^2$, is
\[
 -3y^2(y-31/9)^2-\frac{641}{27}y^2
          +\frac{26}{3}y-\frac{12387}{128}.
\]
Finally, completing one square in each linear term gives
\[
 \frac{47}{2}y\le25y^2+\frac{2209}{400}<25y^2+6,
 \qquad
 \frac{26}{3}y\le9y^2+\frac{169}{81}<9y^2+3.
\]
The two residual quadratics are therefore bounded above by
$-115y^2/12-11651/128$ and
$-398y^2/27-12003/128$. Both are strictly negative.
The cases $m=1$ and $m\ge3$ exhaust the odd dimensions
$m=2n-1$. This proves the sign for the entire gradient range
and every spectral value above the threshold.
\end{proof}

\subsection{The first continuation and the constant stages}
The barrier has the asymptotic expansion
\[
 b(s)=2\sqrt{s}-2d\log\sqrt{s}+O(1),\qquad s\longrightarrow\infty.
\]
Apply Proposition~\ref{prop:strongcontinuation} with $\kappa=2$.
Lemmas~\ref{lem:collar} and~\ref{lem:initial} apply with $a=2$
and logarithmic coefficient $2d>0$. The Laplacian eigenvalues of the members of the nested family satisfy
$\lambda_1\ge64m^2$, since $\lambda_1^0\ge\ell(V)$ by Lemma~\ref{lem:endpointspectral} and $\lambda_1^0$ is monotone under inclusion. The positive logarithmic term in the
boundary collar includes the equality case $\II=2g$.
Lemma~\ref{an:lem:sign} and Lemma~\ref{lem:exacttangent} exclude the
first interior null contact on each fixed member of this family, as in
the first stage of Lemma~\ref{lem:twostageaudit}.
Thus $H>b(s)g$ holds on the entire family before any constant-barrier
continuation is begun.

\begin{lemma}[A constant-stage criterion]\label{an:lem:constantcriterion}
Suppose the first barrier has already been established on a complete
continuation family from Proposition~\ref{prop:strongcontinuation},
whose initial member satisfies the constant bound as well by
Lemma~\ref{lem:initial}. Let $\eta>0$ and $R>1$ satisfy
\[
 b(R^2-1)\ge\eta,\qquad
 \lambda_1\ge\eta^2+N\eta+3(R^2-1)
\]
on that family. Then $H>\eta g$ on its final member.
\end{lemma}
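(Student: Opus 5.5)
We argue by continuation along the family, as in the second stage of Lemma~\ref{lem:twostageaudit}. The initial member satisfies $H>\eta g$ by Lemma~\ref{lem:initial}, and Lemma~\ref{lem:collar} gives a uniform positive boundary collar for the constant tensor $K=H-\eta g$. So it suffices to show that every nonnegative interior null contact of $K$ admits a finite control with strictly negative cost. The first barrier $H>b(s)g$ already holds on every member, so at a contact we have $b(s)<\eta\le b(R^2-1)$. Since $b$ is increasing ($\widehat b'(r)=2r/(r+d)>0$), this gives $s<R^2-1=:S$.

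For the control we use Proposition~\ref{prop:balancedcriterion} with $c=0$. Then $k=\sqrt3$, $d_0=8\sqrt3$, and $A=\eta^2+N\eta-\lambda_*$, where $\lambda_*$ is any lower bound for $\lambda_1$ on the family. We take $\lambda_*=\eta^2+N\eta+3S$, which is allowed by hypothesis; this gives $A=-3S$. The criterion \eqref{eq:balancedcriterion} then bounds the cost by
\[
 \max\{-3S,\ -3S+3S-8\sqrt3\sqrt S\}=\max\{-3S,\,-8\sqrt{3S}\},
\]
and both entries are negative because $S=R^2-1>0$.

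We also need the strict conclusion $\cost<0$ when $s$ only satisfies $s<S$. The proof of Proposition~\ref{prop:balancedcriterion} bounds $A+c^2s+3x-d_c\sqrt x$ by convexity in $x\in[0,s]$ and then in $s\in[0,S]$. Its endpoint values are $A\le-3S<0$ at $x=0$, and $A+3s-8\sqrt{3s}$ at $x=s$. The latter is at most $-3S+3s<0$ for $s<S$, and at most $-8\sqrt{3s}<0$ for $s>0$. So the cost is strictly negative at every admissible contact.

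Lemma~\ref{lem:exacttangent} now excludes the contact. Running the continuation argument of Lemma~\ref{lem:twostageaudit} for the constant tensor alone then gives $H>\eta g$ throughout the family, and in particular on its final member. Because $\lambda_1$ enters only through the lower bound $\lambda_*$, Laplacian eigenvalues of nested members suffice.

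The main point to check is the bookkeeping of strictness and of the spectral value. The hypothesis must hold for $\lambda_1$ of each member, not just the final one; this is why the statement requires it "on that family," and domain monotonicity secures it in applications. Also, the equality case $b(S)=\eta$ only gives $s<S$, which the inequality $A\le-3S$ handles.
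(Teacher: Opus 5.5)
Your proof is correct, and its framework matches the paper's: initialization and collar, $s<R^2-1$ from monotonicity of $b$, and exclusion of the contact by Lemma~\ref{lem:exacttangent} inside the second continuation. The difference is the test control.

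The paper uses no first-derivative control at all. With $\alpha=\beta=0$, \eqref{eq:Emhigher} gives
\[
 \mathcal C_D=\eta^2+N\eta-\lambda-a^2-3\mathcal B(h,h)+3x .
\]
It then drops $-a^2-3\mathcal B(h,h)\le0$ by the sign $\mathcal B\ge0$, and uses $x\le s<R^2-1$ to get $\mathcal C_D<\eta^2+N\eta-\lambda+3(R^2-1)\le0$.

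You instead invoke Proposition~\ref{prop:balancedcriterion} with $c=0$, i.e.\ the control $D_0=-\sqrt3\,\theta h$. This cancels the Hopf term $-3\mathcal B(h,h)$ exactly rather than discarding it by sign, and adds $-8\sqrt{3x}$. Under the stated hypothesis that extra term is never needed, as your own estimate $-3S+3x-8\sqrt{3x}\le-3S+3s<0$ shows. So the paper's route is the more elementary one. Yours carries additional margin and would tolerate a somewhat weaker spectral hypothesis than $\lambda_1\ge\eta^2+N\eta+3(R^2-1)$.

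Your separate paragraph on strictness is redundant. The bound $\max\{-3S,-8\sqrt{3S}\}$ supplied by the proposition is already strictly negative once $S=R^2-1>0$.
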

\begin{proof}
At a nonnegative null contact of $H-\eta g$, the established strict
first barrier gives $b(s)<\eta$. Since $b$ is increasing, $r<R$.
No first-derivative choice is needed. The unrotated contact cost satisfies
\[
 \mathcal C_D
 =\eta^2+N\eta-\lambda-a^2-3\mathcal B(h,h)+3x
 \le\eta^2+N\eta-\lambda+3s
 <\eta^2+N\eta-\lambda+3(R^2-1)\le0.
\]
The collar and simultaneous initialization from
Lemmas~\ref{lem:collar} and~\ref{lem:initial}, followed by the second
continuation in Lemma~\ref{lem:twostageaudit}, exclude this contact.
\end{proof}

For $\lambda_1\ge64m^2$, choose
$\eta=4m/7$ and $R=32m/7-1$. The lower bound in
\eqref{an:eq:brackets} gives $b(R^2-1)\ge\eta$, while
\[
 \eta^2+N\eta-64m^2+3(R^2-1)
 =-\frac{20}{49}m^2-\frac{188}{7}m<0.
\]
For $\lambda_1\ge256m^2$, choose $\eta=5m$ and
$R=17m/2-1/2$. The bound
$e^{1/2}>1+1/2+1/8>25/16$ gives $\log(25/16)<1/2$, so
\[
 b((17m/2)^2-1)
 =(9-8\log(25/16))m+1>5m+1.
\]
Since $\widehat b'(r)<2$, decreasing $r$ by $1/2$ leaves
$b(R^2-1)>5m$. The constant-stage bound is
\[
 \eta^2+N\eta-256m^2+3(R^2-1)
 =-\frac{37}{4}m^2-\frac{41}{2}m-\frac94<0.
\]
Finally, for $\lambda_1\ge6400m^2$, take $\eta=50m$
and $R=71m/2-1/2$. The exponential series through degree four
at $13/8$ is greater than $79/16$; hence
\[
 b((71m/2)^2-1)
 =(63-8\log(79/16))m+1>50m+1.
\]
The same derivative bound gives $b(R^2-1)>50m$, and
\[
 \eta^2+N\eta-6400m^2+3(R^2-1)
 =-\frac{277}{4}m^2-\frac{113}{2}m-\frac94<0.
\]
Each argument uses its own fixed positive constant only after the
common first continuation has finished. Proposition~\ref{prop:commonbochner}
then gives the three gap bounds in Theorem~\ref{an:thm:conditional}.

Retaining the distinguished diagonal weight $A$ and the other
$m-1$ weights $\alpha$ yields $W\ge6/(r+d)^2$ in the
high-gradient range; the weaker $A/m$ alone does not give
this argument. Combined only with \eqref{an:eq:geomspectral}, this
construction gives sufficient spectral and boundary-curvature scales
$O(n^2)$ and $O(\sqrt n)$, respectively; Section~\ref{sec:freshuniform}
improves the first-stage spectral constant and gives a uniform level.

\begin{proof}[Proof of Theorem~\ref{an:thm:conditional}]
For $V=0$ the first continuation and Lemma~\ref{an:lem:constantcriterion}
give both stages on the nested family. For general $V$ use
Proposition~\ref{prop:fixedpotentialcontinuation} with $c_b=2$ and $L_0=L$, since
$b'=1/(\sqrt{1+s}+d)$: at a first-barrier contact the algebra is
\eqref{an:eq:contact} with $\lambda=\mu_t(x_0)\ge\ell(V)$, and
Lemma~\ref{an:lem:sign} applies; at a constant-stage contact the cost is
below $\eta^2+N\eta-\ell(V)+3(R^2-1)<0$ by the computation above, and
the potential correction is nonpositive.
Proposition~\ref{prop:commonbochner} gives the gaps.
\end{proof}

\begin{proof}[Proof of Theorem~\ref{thm:schall}]
The alternative statement is Theorem~\ref{an:thm:conditional} with $L=Am^2$.
Write $\kappa=\kappa^{\rm suff}_{n,A}$; both thresholds exceed $2$, since $\sqrt A\ge8$, $m\ge\sqrt n$ and $m\ge n$ give $\sqrt A\,m/(2\sqrt n)\ge4$ and $\sqrt{Am^2+4n}/n>\sqrt A\,m/n\ge8$. If $\kappa\ge\sqrt{Am^2+4n}/n$,
Lemma~\ref{lem:smooth-radial-bound} gives $\lambda_1^0>n^2\kappa^2\ge
Am^2+4n$; if $\kappa\ge\sqrt A\,m/(2\sqrt n)$, then
\eqref{an:eq:geomspectral} gives $\lambda_1^0\ge4n(1+\kappa^2)\ge Am^2+4n$.
In either case $\ell(V)\ge\lambda_1^0-\operatorname{osc}V\ge Am^2$, and
Theorem~\ref{an:thm:conditional} applies. The dimension-free conditions
follow from $(2\sqrt A)^2n^2-(Am^2+4n)=(4A-4)n-A>0$.
\end{proof}

\section{A uniform curvature condition from two rotations}\label{sec:freshuniform}
The extra curvature of the Hopf plane supplies a linear Hessian term
that is useful before scalar estimation. Retaining that term reduces
the contact problem to three gradient directions, without sign
assumptions on the angular coefficients.

Throughout this section, $m=2n-1$ denotes the transverse dimension and
$N=m+1=2n$ the real dimension. Thus the contact identities of
Section~\ref{sec:commonlongitudinal} are used with real dimension $N$.
For $n\ge2$, set
\begin{equation}\label{geo:parameters}
 d=\frac{9m}{10},\qquad \omega=\frac m2,\qquad
 L_m=\frac{11}{5}m^2+12m,\qquad r=\sqrt{1+s},\qquad \rho=r+\omega,
\end{equation}
and define
\begin{equation}\label{geo:barrier}
 b(s)=2r-2d+1-2\omega\log\frac{r+\omega}{d+\omega},\qquad
 \alpha=b'(s)=\frac1\rho,\qquad
 \beta=b''(s)=-\frac1{2r\rho^2}.
\end{equation}

\begin{theorem}[A common spectral threshold]\label{thm:freshspectral}
Under the standing domain hypotheses, for $n\ge2$,
\[
 \II\ge2g,\qquad \ell(V)\ge L_m,\qquad
 \Hess V\ge2|\nabla V|g
 \quad\Longrightarrow\quad
 \Hess(-\log\phi_1)>\max\{b(s),1\}g.
\]
\end{theorem}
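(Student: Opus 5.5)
The proof will follow the two-stage continuation scheme of Section~\ref{sec:global}, now with the barrier \eqref{geo:barrier} in place of \eqref{an:eq:barrier}.

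\textbf{Setup.} As $s\to\infty$ the barrier satisfies $b(s)=2\sqrt s-m\log\sqrt s+O(1)$, and $\alpha=1/\rho\le 2/m$. This gives $c_b=\sup 2\sqrt s\,\alpha\le2$. Lemmas~\ref{lem:collar} and~\ref{lem:initial} then apply with $a=2\le\kappa$ and logarithmic coefficient $m>0$. Proposition~\ref{prop:strongcontinuation} with $\kappa=2$ supplies the nested family, and Lemma~\ref{lem:endpointspectral} gives $\mu_t\ge L_m$ along it. By Proposition~\ref{prop:fixedpotentialcontinuation} and Lemma~\ref{lem:twostageaudit}, it therefore suffices to prove two purely algebraic statements for every scalar $\lambda\ge L_m$:
\begin{enumerate}
\item every nonnegative null contact of $H-b(s)g$ admits a negative finite-control cost;
\item every contact of $H-g$ with $b(s)<1$ admits a negative finite-control cost.
\end{enumerate}

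\textbf{Second stage.} Since $\widehat b(d)=1$ and $b$ is increasing, a constant-stage contact has $r<d=9m/10$, i.e.\ $s<d^2-1$. I would apply Proposition~\ref{prop:balancedcriterion} with $\eta=1$, $S=d^2-1$, $\lambda_*=L_m$, and $c$ chosen of order one (for instance $c^2=1/2$, to be tuned). Write $A=1+N(1-c^2)-(1-c^2)L_m$. The two entries to check are then:
\begin{itemize}
\item $A+c^2S\approx -(1-c^2)\tfrac{11}{5}m^2+c^2\tfrac{81}{100}m^2<0$, which holds once $c^2<220/301$ (more precisely, $c^2<\tfrac{11/5}{11/5+81/100}$).
\item $A+(3+c^2)S-d_c\sqrt S$. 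Here the term $-d_c\sqrt S\approx-4(n-1)c\cdot\tfrac{9m}{10}$ is itself of order $m^2$. I expect that with this term, and the $12m$ in $L_m$ absorbing the lower-order terms, the whole expression is negative. The choice of $c$ must balance the two entries, and this is a finite computation.
\end{itemize}

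\textbf{First stage: the main obstacle.} I will use the rotation controls \eqref{eq:costgeneral} with
\[
 A_0=2(\alpha+\sqrt\varkappa)w,\qquad \varkappa=\alpha^2+b\beta=\frac{2r-b}{2r\rho^2},
\]
so that by \eqref{eq:cancel} the coefficient of $B(w,w)$ vanishes. I also include the Hopf component of $A_0$ and a transverse parameter $\sigma$ whose sign is matched to $q$, so that $4(n-1)q\sigma\le0$. This requires checking $b<2r$ throughout, in analogy with \eqref{an:eq:brackets}. The $B$-dependent part of the cost is then
\[
 -\alpha\tr B^2-2\beta|Bw|^2-4\varkappa\tau B(e,e)-\eta B(h,h),
\]
with $\eta=3-\sigma^2$ (or whatever net Hopf weight remains after the rotation), and $+\sigma^2T$ is kept as a scalar. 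This is exactly $-P(\tau,z)$ in Lemma~\ref{geo:three}, with $T=\lambda+s-Nb$.

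The remaining cost is $D+c_1\tau+c_2\tau z$ with $x=\tau z$, so Lemma~\ref{geo:three} reduces the problem to the three corners $(\tau,z)=(0,0)$, $(s,0)$, $(s,1)$. At each corner, the explicit formula \eqref{geo:trace-minimum} gives a scalar function of $r\ge1$ and $\lambda\ge L_m$. Since the cost decreases in $\lambda$ for positive $T$, one may set $\lambda=L_m$.

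Finally I would normalize by $r=m\,y$. Then $b/m$ depends on $m$ only through an additive constant, and each corner cost divided by $m^2$ becomes a function of $y$ and $1/m$. I would show that the limiting profile as $1/m\to0$ is strictly negative, with margin, uniformly in $y>0$ (including the regime $r\sim1$ near $y=0$). I would then show that the $1/m$-derivative has a sign, or is bounded, so that $m=3$ (i.e.\ $n=2$) is the worst case, and check that case directly. The hardest part is this uniform negativity over all $y$, especially near $r\approx d$ where $T$ is smallest relative to $s$. That is where the constants $d=9m/10$ and $\tfrac{11}{5}$ in $L_m$ are presumably tight.
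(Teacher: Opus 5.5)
Your reduction and overall plan are the paper's. You use the same two ordered continuations, and Proposition~\ref{prop:balancedcriterion} with $\eta=1$, $S=d^2-1$, $\lambda_*=L_m$ for the constant stage. For the first stage you use rotation controls, the three-vertex reduction of Lemma~\ref{geo:three}, and a comparison of the normalized costs with a limiting profile in $1/m$. However, the constant stage fails as you set it up. The binding entry is the second one, not the first. Since $4(n-1)c\sqrt S=\frac95c\,m^2+O(m)$,
\[
 A+(3+c^2)S-d_c\sqrt S=\frac{301c^2-180c+23}{100}\,m^2+O(m).
\]
This is positive at your sample $c^2=1/2$, and also at $c=0$. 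It is negative only in the narrow window $0.185<c<0.413$. The paper takes $c=3/10$, where the leading coefficient is only $-391/10000$. Negativity then rests on tracking the order-$m$ terms exactly, using $k>7/4$, $\sqrt{d^2-1}>d-1$ and the offset $12m$ in $L_m$. This gives the bound $-\frac{391}{10000}m^2-\frac{2147}{100}m+\frac{611}{50}$.

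In the first stage, your controls are not yet in the form Lemma~\ref{geo:three} requires.
\begin{itemize}
\item With $A_0=2(\alpha+\sqrt\varkappa)w$ the coefficient \eqref{eq:cancel} vanishes, so the term $4\varkappa\tau B(e,e)$ you list is absent.
\item The rotation enters as $+\sigma^2\{T-B(h,h)\}$, so the Hopf weight is $3+\sigma^2$, not $3-\sigma^2$.
\item For a sign-matched $\sigma\propto q$, that weight depends on $x$, whereas the lemma needs a fixed $\eta$.
\item A Hopf component $\theta h$ of $A_0$ produces the cross term $4\theta(q_0-\alpha)B(w,h)$. This lies outside the lemma's objective unless $q_0=\alpha$.
\end{itemize}
The paper uses $q_0=\alpha$, no Hopf component, and $\sigma=-2\alpha q/5$. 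It discards only $-\frac4{25}\alpha^2xB(h,h)\le0$, keeps $\eta=3$ and $\ell=4\varkappa s$, and uses the $\ell$-gain only at the Hopf vertex, through $\overline Z$.

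With that same $\sigma$, your choice of $q_0$ changes the costs only by $O(m)$, so it yields the same limit $F_0$. It does change the $1/m$ corrections, however. Lemmas~\ref{geo:hopf-comparison} and~\ref{geo:transverse-comparison} would therefore have to be redone. Those lemmas are proved by monotonicity of the divided difference $\Delta$, giving $\mathcal H\le(1-\varepsilon)F_0-\frac{13}{2}\varepsilon$; they do not reduce to the case $m=3$.

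Finally, the paper does not run the limit analysis over all $y>0$; it splits at $r=d$. For $r<d$ one has $b<1$, and for large $m$ the barrier is about $-0.77m$ near $r=1$. Then $\partial_\mu$ of the cost contains $-2\alpha b$, which can exceed $1$, so monotonicity in $\mu$ does not follow from $T>0$ alone. On this range Lemma~\ref{lem:analytic-low} bounds the actual positive-semidefinite cost directly and proves $\rho WT>d$. For $r\ge d$, Lemma~\ref{geo:domination} is the step your plan is missing. It converts the unrestricted endpoint minima into tractable scalar bounds.
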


\begin{remark}[Construction of the barrier]\label{rem:barrierdesign}
Write $\widehat b(r)=b(r^2-1)$, so that $\alpha=\widehat b'/(2r)$. Three
requirements determine the form of \eqref{geo:barrier}.

(i) \emph{Large gradients.} Under $\II\ge2g$, the collar
(\cref{lem:collar}) requires $b=a\sqrt s-c\log\sqrt s+O(1)$ with
$0<a\le2$ and $c>0$; we take the largest slope $a=2$. Then
$c_b=\sup2\sqrt s\,\alpha=2$ in
\cref{prop:fixedpotentialcontinuation}, which is the factor $2$ in
$\Hess V\ge2|\nabla V|g$. \cref{geo:three} needs $A=\alpha+2\beta s>0$,
and the low-gradient estimate uses $\varkappa=\alpha^2+b\beta>0$.

(ii) \emph{Dimension.} When $r$ is of order $m$, the terms $b^2$,
$Nb$, $s$, $\alpha Nb^2$ and $WT^2$ of the contact cost are all of order
$m^2$. We therefore take $b=m\,\zeta(r/m)+O(1)$ for a fixed profile $\zeta$; then $m^{-2}$ times
the cost has a limit at fixed $y=r/m$, and the barriers for different
$m$ differ only by the additive shift $1/m$ in $b/m$. This is the
limiting profile of Appendix~\ref{app:freshsigns}.

(iii) \emph{Simplest profile.} The slope $\widehat b'=2r/(r+\omega)$,
$\omega=wm$, rises from $0$ to $2$ on the scale $\omega$. It gives
$\alpha=1/\rho$, $A=(\omega r+1)/(r\rho^2)>0$,
$\varkappa=(2r-b)/(2r\rho^2)$, positive when $b<2r$, and the weight
$W=\omega/(\rho(r+m\omega))$ of \cref{geo:domination}; all are rational
in $r$, which keeps the sign verification elementary. Integrating and
normalizing by $b(d^2-1)=1$ gives \eqref{geo:barrier}.

The scale $\omega=m/2$ and the cutoff $d=9m/10$, where the constant
stage begins, are chosen so that the two stages have comparable
leading-order requirements. The control $\sigma=-2\alpha q/5$ below is
the transverse rotation that contributes at leading order when $r/m$
is fixed. The coefficient $11/5$ in $L_m$ gives a strict margin in both
stages, and the offset $12m$ absorbs the finite-dimensional corrections
in Appendix~\ref{app:freshsigns}. The barrier \eqref{an:eq:barrier} of
Section~\ref{sec:alldim} is the one-parameter case $\omega=d$.
\end{remark}

\subsection{The three gradient directions}
Lemma~\ref{geo:three} is used with $\nu=m$.

For $\eta=3$, the three minima are
\begin{align}\label{geo:endpoint-minima}
 P_v&=\frac{\alpha T^2}{m}+\frac{3T}{m}
                   -\frac{9(m-1)}{4\alpha m},\notag\\
 P_\perp&=
 \frac{(T+\ell/(2A)+3/(2\alpha))^2}{1/A+(m-1)/\alpha}
                   -\frac{\ell^2}{4A}-\frac9{4\alpha},\\
 P_H&=\frac{\alpha A T^2+\alpha(\ell+3)T
                   -(m-1)(\ell+3)^2/4}{\alpha+(m-1)A},
 \qquad A=\alpha+2\beta s,\quad\ell=4\varkappa s.\notag
\end{align}
They correspond respectively to a gradient parallel to the null
direction, perpendicular to its complex line, and in its Hopf direction.
They bound the objective from below for every nonnegative $B$, although
the minimizing matrices in \eqref{geo:endpoint-minima} need not be
nonnegative.

\subsection{The contact cost and its endpoint bounds}
For the moment let $V=0$, and let $\mu\ge L_m$ be the spectral value.
At a nonnegative null contact of $H-bg$, write
\[
 H=\Hess(-\log\phi_1),\qquad p=\nabla(-\log\phi_1)=av+w,
 \qquad |v|=1,\quad w\perp v,\quad h=Jv,
\]
\[
 B=(H-bg)|_{v^\perp}\ge0,\qquad T=\tr B=\mu+s-Nb,
 \qquad \tau=|w|^2,\quad q=\langle p,h\rangle,\quad x=q^2.
\]
Here $0\le x\le\tau\le s$. The barrier is increasing,
$b(d^2-1)=1$, and $-d<b<2r$. Indeed $\log u\ge1-u^{-1}$ gives
\[
 b\le2r-2d+1-\frac{2\omega(r-d)}{r+\omega},\qquad
 2r-b\ge\frac{2r(d+\omega)}{r+\omega}-1>0.
\]
For the lower bound, let $\widehat b(r)$ denote the radial expression
in \eqref{geo:barrier}, extended to $r\ge0$. It is increasing there.
Using $\log(1+t)\ge2t/(2+t)$ for $t\ge0$ (the difference vanishes at $0$ and has derivative $t^2/[(1+t)(2+t)^2]\ge0$) with $t=d/\omega$ gives
\[
 \widehat b(r)\ge\widehat b(0)\ge1-\frac{2d^2}{d+2\omega}>-d.
\]
It follows that
\[
 \varkappa:=\alpha^2+b\beta>0,\qquad
 \alpha+2\beta s=\frac{\omega r+1}{r\rho^2}>\frac\omega{\rho^2}>0,
\]
\[
 T>\mu-N^2-1+(r-N)^2>0,
 \qquad L_m-N^2-1=\frac65m^2+10m-2>0.
\]

Choose the longitudinal and transverse controls
$A_0=2\alpha w$ and $\sigma=-2\alpha q/5$.
In \eqref{an:eq:contact} the transverse control adds
\[
 x\left(\frac4{25}\alpha^2T-\frac45(m-1)\alpha\right)
              -\frac4{25}\alpha^2xB(h,h).
\]
Discard only its last, nonpositive term. The contact cost then satisfies
\begin{equation}\label{geo:contact-cost}
 \mathcal C\le D_*+C_\tau\tau+C_x\,x
 -\{\alpha\tr B^2+2\beta|Bw|^2+4\varkappa B(w,w)+3B(h,h)\},
\end{equation}
where
\begin{align*}
 D_*&=b^2+Nb-\mu-s
 -\alpha\{Nb^2+2bT+(N+2)s\}-2\beta b^2s,\\
 C_\tau&=1-4\alpha-8b\alpha^2,\qquad
 C_x=3-12\alpha+\frac4{25}\alpha^2T-\frac45(m-1)\alpha.
\end{align*}
In particular the Hopf term $-3B(h,h)$ is retained. When $w\ne0$,
take $e=w/|w|$ and $z=x/\tau$ in Lemma~\ref{geo:three}; the case
$w=0$ is its $\tau=0$ endpoint. Every contact cost is bounded by the
maximum of
\[
 D_*-P_v,\qquad D_*+C_\tau s-P_\perp,\qquad
 D_*+(C_\tau+C_x)s-P_H.
\]
This conclusion uses no sign assertion about $C_\tau,C_x$.

\begin{lemma}[Domination of the unrestricted endpoints]\label{geo:domination}
For the barrier \eqref{geo:barrier}, $r\ge d$ and $\mu\ge L_m$, put
\[
 W=\frac{\omega}{\rho(r+m\omega)},\qquad
 Z=\frac{s}{\rho}\left(1-\frac b{2r}\right),\qquad
 \overline Z=Z+\frac34\rho.
\]
Then
\[
 P_v>\frac\alpha mT^2,\qquad P_\perp>WT^2,\qquad
 P_H\ge WT^2+\frac{4\overline Z\{T-(m-1)\overline Z\}}{r+m\omega}.
\]
\end{lemma}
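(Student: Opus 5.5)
The plan is to rewrite all three endpoint minima \eqref{geo:endpoint-minima} in terms of $\rho=r+\omega$ and compare coefficients exactly. For \eqref{geo:barrier} one has
\[
 \alpha=\frac1\rho,\qquad
 A=\frac{\omega r+1}{r\rho^2},\qquad
 \varkappa=\frac{2r-b}{2r\rho^2},\qquad
 \ell=4\varkappa s=\frac{4Z}{\rho}=4\alpha Z,
\]
so that $\ell+3=4\alpha\overline Z$. The identity $\rho+(m-1)\omega=r+m\omega$ is the source of $W$. We will use the lower bound $T>\mu-N^2-1+(r-N)^2\ge\tfrac65m^2+10m-2+(r-N)^2$ established just before the lemma, together with $r\ge d=9m/10$, and positivity of $Z$ (from $b<2r$).

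\emph{The null endpoint.} $P_v-\alpha T^2/m=\tfrac{3}{m}\bigl(T-\tfrac34(m-1)\rho\bigr)$, so it suffices to prove $T>\tfrac34(m-1)(r+m/2)$. Insert the lower bound for $T$. The result is a quadratic in $r$ with leading coefficient one, which we check is positive for $r\ge d$ by completing the square. The constant $\tfrac65m^2+10m$ dominates the $\tfrac38m(m-1)$ and the cross terms.

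\emph{The transverse endpoint.} Put $\mathcal D=1/A+(m-1)/\alpha$ and $c=\ell/(2A)+3/(2\alpha)>0$. Since $1/A<\rho^2/\omega$, we get $\mathcal D<\rho(\rho+(m-1)\omega)/\omega=1/W$, hence $T^2/\mathcal D>WT^2$. It then remains to show
\[
 \frac{2Tc+c^2}{\mathcal D}\ge\frac{\ell^2}{4A}+\frac{9}{4\alpha}.
\]
This follows from $T\ge(m-1)\max\{\ell/2,\,3\}\cdot\alpha^{-1}$-type bounds. Concretely, I would expand $c^2/\mathcal D$, use $\mathcal D\le m/A$, and again use the quadratic lower bound on $T$ against $Z\lesssim s/\rho$. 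I expect this case to be the messiest bookkeeping, though not conceptually hard. If it resists, a fallback is to mimic the exact square computation below.

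\emph{The Hopf endpoint, which is the key step.} Write $P_H=\bigl[\alpha AT^2+4\alpha^2\overline ZT-4(m-1)\alpha^2\overline Z^2\bigr]/\mathcal D_H$, where
\[
 \mathcal D_H=\alpha+(m-1)A=\frac{r+m\omega}{\rho^2}+\delta,\qquad \delta=\frac{m-1}{r\rho^2}.
\]
Compare term by term with the right side.
\begin{itemize}
\item Quadratic term. A direct computation gives $\alpha A-W\mathcal D_H=1/(r\rho^2(r+m\omega))$.
\item Linear term. It has a deficit, because $\alpha^2/\mathcal D_H-1/(r+m\omega)=-\delta/((r+m\omega)\mathcal D_H)$.
\item $\overline Z^2$ term. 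The same quantity produces a gain.
\end{itemize}
Collecting these, I expect the exact identity
\[
 P_H-WT^2-\frac{4\overline Z\{T-(m-1)\overline Z\}}{r+m\omega}
 =\frac{\{T-2(m-1)\overline Z\}^2}{r\rho^2(r+m\omega)\mathcal D_H}\ge0,
\]
which gives the stated non-strict inequality with no sign hypothesis on $T-(m-1)\overline Z$. The main risk is an algebra slip in this identity, so I would verify it first by clearing $\mathcal D_H$ and comparing coefficients of $T^2$, $T\overline Z$ and $\overline Z^2$.
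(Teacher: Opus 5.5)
Your Hopf endpoint is correct, and it differs from the paper's argument there. The three coefficient comparisons check, and the identity
\[
 P_H-WT^2-\frac{4\overline Z\{T-(m-1)\overline Z\}}{r+m\omega}
 =\frac{\{T-2(m-1)\overline Z\}^2}{r\rho^2(r+m\omega)\{\alpha+(m-1)A\}}
\]
holds exactly. The paper instead lowers the distinguished weight $A$ to $A_{\rm low}=\omega/\rho^2$ in the objective. This subtracts a nonnegative multiple of $|Be|^2$ for every symmetric $B$, so it lowers the unrestricted minimum, and the paper then observes that $P_{H,\rm low}$ equals the right side exactly. Your square is precisely $P_H-P_{H,\rm low}$. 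It buys an exact remainder, while the paper's route is a one-line monotonicity argument.

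The gap is the trace input for the null and transverse endpoints. The bound $T>\mu-N^2-1+(r-N)^2$ uses only $b<2r$, and it is too weak in high dimension. Inserting it into $T>\frac34(m-1)(r+\frac m2)$ leaves
\[
 G(r)=r^2-\frac{11m+5}{4}\,r+\frac{73}{40}m^2+\frac{99}{8}m-1.
\]
Its minimum is attained at $r=(11m+5)/8>d$ and equals $-\frac{21}{320}m^2+\frac{341}{32}m-\frac{89}{64}$, which is negative for $m\ge163$. For example, at $m=201$ and $r=277$ your lower bound gives $56114.2$, while $56625$ is required. So the claim that $\frac65m^2+10m$ dominates fails already at order $m^2$.

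The missing ingredient is the high-range bracket $b\le2r-2d+1$ for $r\ge d$, immediate from $\log\frac{r+\omega}{d+\omega}\ge0$. It gives $T\ge\mu+r^2-1-N(2r-2d+1)$, a gain of $N(2d-1)\approx\frac95m^2$. With this bound the paper proves both $T-(m-1)Z>0$ and $4\omega T-3\rho\{r+(m-1)\omega\}\ge Q_m(r)>0$. The second inequality already yields $4\alpha T>3(m-1)$, which is the null endpoint.

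Your transverse endpoint is only sketched and leans on the same weak bound. Moreover, the auxiliary bound $T\ge3(m-1)\alpha^{-1}$ you hint at is false: at $r=d$, $\mu=L_m$ one has $T=\frac{301}{100}m^2+11m-2<\frac{21}{5}m(m-1)=3(m-1)\rho$ for $m\ge13$. The clean route is again to pass to $A_{\rm low}$. Then $1/A_{\rm low}+(m-1)/\alpha=1/W$ exactly, and
\[
 P_{\perp,\rm low}-WT^2=\frac{4Z\{T-(m-1)Z\}+3\omega T+6\rho Z-\frac94\rho\{r+(m-1)\omega\}}{r+m\omega}.
\]
Its positivity follows from the two inequalities above together with $Z\ge0$, and it gives $P_\perp\ge P_{\perp,\rm low}>WT^2$.
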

\begin{proof}
In this range $1\le b\le2r-2d+1\le2r-d$ and $0\le Z\le r$.
Completing a square in $r$ gives
\begin{align*}
 T-(m-1)Z
 &\ge L_m+Nd-1+r^2-(3m+1)r\\
 &\ge\frac{17m^2}{20}+\frac{57m}{5}-\frac54>0.
\end{align*}
Lower $A=\alpha+2\beta s$ to $A_{\rm low}=\omega/\rho^2$ in the
quadratic objective. This subtracts a nonnegative multiple of
$|Be|^2$ for every symmetric $B$, so it lowers each unrestricted
minimum. At the transverse endpoint, completing the diagonal squares
gives
\[
 P_{\perp,\rm low}-WT^2=
 \frac{4Z\{T-(m-1)Z\}+3\omega T+6\rho Z
                   -\frac94\rho\{r+(m-1)\omega\}}{r+m\omega}.
\]
The remaining sign follows from
\begin{align*}
 4\omega T-3\rho\{r+(m-1)\omega\}
 &\ge Q_m(r),\\
 Q_m(r)&=(2m-3)r^2-\left(\frac{11m^2}{2}+4m\right)r
             +\frac{109m^3}{20}+\frac{531m^2}{20}-2m>0.
\end{align*}
Indeed $b\le2r-2d+1$ gives $T\ge\Lambda+r^2-1-N(2r-2d+1)$, with
$\Lambda=L_m$, and with this lower bound the difference of the two sides
equals $m(m+1)(9m-10)/5\ge0$.
The leading coefficient is positive. Its discriminant is
\[
 -\frac m{20}(267m^3+2060m^2-7012m+480)<0,
\]
because the parenthesis is at least $1571m+480$ for $m\ge3$.
Thus $P_\perp\ge P_{\perp,\rm low}>WT^2$.
The same inequality implies $4\alpha T>3(m-1)$, proving the
assertion for $P_v$ from \eqref{geo:endpoint-minima}.
At the Hopf endpoint the entire linear term is retained. Substituting
$A_{\rm low}$ in \eqref{geo:endpoint-minima} gives
\[
 P_{H,\rm low}=WT^2+
 \frac{4\overline Z\{T-(m-1)\overline Z\}}{r+m\omega},
\]
and $P_H\ge P_{H,\rm low}$. No sign of the correction in this last
formula is required.
\end{proof}

The lemma is needed because nonnegative Hessian terms cannot be
discarded after replacing the constrained matrix problem by an
unrestricted minimum. It supplies direct comparisons for those
unrestricted minima. Hence, for $r\ge d$, the following three scalar
expressions bound all contact directions:
\begin{equation}\label{geo:scalar-costs}
\begin{aligned}
 E_0&=D_*-\frac\alpha mT^2,\\
 R_\perp&=D_*+C_\tau s-WT^2,\\
 R_H&=D_*+(C_\tau+C_x)s-WT^2
 -\frac{4\overline Z\{T-(m-1)\overline Z\}}{r+m\omega}.
\end{aligned}
\end{equation}

\subsection{The scalar signs and the two continuations}
\begin{proof}[Proof of Theorem~\ref{thm:freshspectral}]
For $1\le r\le d$, Lemma~\ref{lem:analytic-low} bounds the actual
nonnegative-matrix cost \eqref{geo:contact-cost} by a strictly negative
scalar envelope for every $\mu\ge L_m$.
For $r\ge d$, Lemmas~\ref{lem:analytic-zero} and~\ref{lem:analytic-high}
prove that the three expressions in \eqref{geo:scalar-costs} are
strictly negative at $\mu=L_m$. The high-range argument separates
the limit $1/m=0$ from its finite-dimensional correction: a scalar
calculus estimate proves the limiting sign, and differentiation in
$1/m$ gives a strict negative comparison for each of the Hopf and
transverse costs. Appendix~\ref{app:freshsigns} gives these comparisons.

The whole spectral halfline is covered with the barrier held fixed.
Since $b\ge1$, $T>0$, $\overline Z>0$, and $s\alpha^2<1$,
\begin{align*}
 (E_0)_\mu&=-1-2\alpha b-\frac{2\alpha T}{m}<0,\\
 (R_\perp)_\mu&=-1-2\alpha b-2WT<0,\\
 (R_H)_\mu&=-1-2\alpha b-2WT+
          \frac4{25}s\alpha^2-\frac{4\overline Z}{r+m\omega}<0.
\end{align*}
Lemma~\ref{geo:domination} also holds on that halfline.
Thus every first null contact has strictly negative cost and is
excluded by Lemma~\ref{lem:exacttangent}.

We now specify the continuation and its boundary control. For the
given potential, Lemma~\ref{lem:endpointspectral} gives
$\lambda_1^0(\Omega)\ge\ell(V)\ge L_m$. We first establish both
barriers for the Laplacian. Domain monotonicity keeps this lower bound
on the nested smooth
family of Proposition~\ref{prop:strongcontinuation}. The barrier satisfies
\[
 b(s)=2\sqrt s-m\log\sqrt s+O(1),\qquad
 \sup_{s\ge0}2\sqrt s\,b'(s)=2.
\]
Lemma~\ref{lem:collar} therefore supplies a uniform positive boundary
collar for $H-bg$ under $\II\ge2g$, including equality in the leading
boundary coefficient. The positive logarithmic coefficient $m$
is used in that equality case. It also supplies a collar for $H-g$.
Lemma~\ref{lem:initial} initializes both inequalities on a sufficiently
small domain. Run the first continuation over the entire family before
starting the constant-barrier stage, as in Lemma~\ref{lem:twostageaudit}.
The contact calculation proves $H>b(s)g$ throughout the first stage.

At a contact of $H-g$, this strict first barrier forces $s<d^2-1$.
Use the balanced controls of Proposition~\ref{prop:balancedcriterion}
with $c=3/10$ and $k=\sqrt{3+c^2}>7/4$.
Their identity $k^2-c^2=3$ cancels the Hopf coefficient.
Writing $S=d^2-1$, the two endpoint costs at $\mu=L_m$ obey
\[
 \mathcal C_0< -\frac{19}{10}m^2-10m+2<0,\qquad
 \mathcal C_H< -\frac{391}{10000}m^2-21m+13<0.
\]
For completeness, the first endpoint is
$-19291m^2/10000-1001m/100+91/50$.
For the second, $k>7/4$ and $\sqrt{d^2-1}>d-1$ give
$-391m^2/10000-2147m/100+611/50$.
Both decrease with $\mu$ and are negative for $m\ge3$.
The second continuation therefore proves $H>g$.

Finally run the fixed-domain $tV$ path in the same order, using
Proposition~\ref{prop:fixedpotentialcontinuation}.
Lemma~\ref{lem:endpointspectral} gives
$\mu_t(x)\ge\ell(V)\ge L_m$.
The additional first-stage contact cost is
\[
 -\frac t2\Hess V(v,v)+t\alpha\langle p,\nabla V\rangle\le0,
\]
since $\alpha\sqrt{s}\le1$ and $\Hess V\ge2|\nabla V|g$.
The constant stage adds only $-t\Hess V(v,v)/2\le0$.
The fixed-domain uniform collar and the two ordered continuations
complete the proof for the Schr\"odinger ground state.
\end{proof}

\begin{lemma}[The complex dimension one case]\label{lem:cp1constant}
Let $n=1$, $\II\ge2g$, $\Hess V\ge0$, and $\ell(V)\ge16$.
Then $\Hess(-\log\phi_1)>4g$ and $\gamma>12$.
\end{lemma}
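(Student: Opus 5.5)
For $n=1$ we have $m=1$ and $N=2$. The space $v^\perp$ is spanned by the Hopf vector $h=Jv$, and $W=\{v,h\}^\perp=0$, so there is no transverse block. I would therefore prove only the constant stage with $\eta=4$, as indicated in \cref{fig:structure}, and take the gap from Proposition~\ref{prop:commonbochner}: $2n+2+2\eta=12$. The continuation is the usual one. Use Proposition~\ref{prop:strongcontinuation} with $\kappa=2$ (the curve-shortening branch), Lemma~\ref{lem:collar} for the constant barrier, Lemma~\ref{lem:initial} for initialization, and then the fixed-domain path $tV$. In that path the constant stage receives only the correction $-\tfrac t2\Hess V(v,v)\le0$, so $\Hess V\ge0$ suffices and no condition on $|\nabla V|$ is needed. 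Lemma~\ref{lem:endpointspectral} gives $\mu_t\ge\ell(V)\ge16$ throughout, and along the domain family $\lambda_1^0\ge\ell(V)\ge 16$ by monotonicity.

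The core step is an exact one-dimensional contact computation. At a null contact of $K=H-4g$, the matrix $B$ is the scalar $B(h,h)=T=\mu+s-8\ge0$. With $\alpha=\beta=0$, Lemma~\ref{lem:contact} gives
\[
E=24-\mu-a^2-3T+3x,\qquad C_0=4qh,\qquad s=a^2+x,\ x=q^2 .
\]
I would take the longitudinal control $A_0=k\theta h$ with $\theta=\operatorname{sign}q$ and a scalar $k\ge0$, and no transverse control. By Lemma~\ref{lem:exacttangent}, $k$ may be chosen \emph{after} the contact data are known. The cost is
\[
\cost=24-\mu-a^2+3x+(k^2-3)T-8k|q| .
\]
Substituting $T=\mu+s-8$ gives
\[
\cost=(k^2-4)(\mu-8)+8+(k^2-4)a^2+k^2x-8k\sqrt x .
\]
For $k\le2$ the $a^2$ term is harmless. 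Minimizing $k^2x-8k\sqrt x$ gives $k=4/\sqrt x$. This is admissible when $x\ge4$, and then $\cost\le(k^2-4)(\mu-16)-16+8k^2-16\cdot\ldots$, which is negative in the range $x>4$ using $\mu\ge16$.

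The main obstacle is the small-$x$ regime, in particular the null direction $x=0$, together with the borderline $x=4$, $k=2$. In that regime the Hopf curvature is no longer favourable, and the computation above yields only $4(\sqrt x-2)^2\ge0$ at $k=2$. There I would try the following. First, keep $-a^2$ with a control $k<2$, so that the term $(k^2-4)(\mu-8)$ has a strictly negative coefficient, and exploit $\mu\ge16$. If that is insufficient, combine this with a preliminary gradient-dependent first stage $H>b(s)g$, using a barrier of the type~\eqref{an:eq:barrier}. This would bound $s$ at a constant-stage contact, as in Proposition~\ref{prop:balancedcriterion}, so that the $x$-convexity argument there closes with $S$ small. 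Strictness at equality $\ell(V)=16$ has to come from the strict inequalities in the spectral inputs, such as Lemma~\ref{lem:sinesquare}(a) or the strict radial bounds, or from a small margin via Proposition~\ref{prop:potentialmargin}. I expect that verifying negativity uniformly in $0\le x\le s$ at $\mu=16$ is the delicate point.
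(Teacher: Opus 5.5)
Your continuation scaffolding is the one the paper uses: the curve-shortening branch of Proposition~\ref{prop:strongcontinuation} with $\kappa=2$, the constant-barrier collar and initialization, the $tV$ path with correction $-\tfrac t2\Hess V(v,v)\le0$, and the gap $2n+2+2\eta=12$. There is a gap, however: the core contact step is never closed, and the obstacle you name does not exist. Recheck your cost formula. The constant term is $24-\mu+(\mu-8)=16$, not $8$, so
\[
 \cost=16+(k^2-4)(\mu-8)+(k^2-4)a^2+k^2x-8k\sqrt x .
\]
At $k=0$, i.e.\ with no control at all, this equals $48-4\mu-4a^2\le-16$ for every $\mu\ge16$, independently of $x$. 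That is the paper's entire argument. Since $v^\perp=\mathbb R h$ when $n=1$, one has $B(h,h)=T=\mu+s-8$ and $x=s-a^2$. Hence $-3B(h,h)+3x=24-3\mu-3a^2$ and $E=\eta^2+8\eta-4\mu-4a^2$. The Hopf term is therefore favourable, not unfavourable: it carries $-3s$, which absorbs the $+3x$ for every orientation of the gradient, including $x=0$ and $x=4$.

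Your minimization in $k$ went wrong because you minimized only $k^2x-8k\sqrt x$. You ignored that $k^2$ also multiplies $\mu-8+a^2>0$. The full coefficient of $k^2$ is $T$, so the true optimum is $k=4\sqrt x/T$, which gives $48-4\mu-4a^2-16x/T$. Large controls $k\approx2$ are borrowed from the balanced criterion, where $B(h,h)$ is not tied to $s$ and $k^2=3+c^2$ is needed to cancel it. Here they only add the positive cost $k^2T$.

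Your fallback plans are unnecessary, and as stated they are not available. A gradient-dependent first stage with the barrier \eqref{an:eq:barrier} needs $\ell(V)\ge64m^2=64$ and $\Hess V\ge2|\nabla V|g$ (Theorem~\ref{an:thm:conditional} and Proposition~\ref{prop:fixedpotentialcontinuation} with $c_b=2$). The lemma assumes only $\ell(V)\ge16$ and $\Hess V\ge0$. Likewise, no strictness has to be extracted from the spectral inputs or from Proposition~\ref{prop:potentialmargin}, since the cost has the definite margin $-16$ already at $\mu=16$. Once you take $k=0$ and correct the constant, your proposal becomes the paper's proof.
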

\begin{proof}
At a nonnegative null contact of $H-\eta g$, the transverse space
is spanned by $h=Jv$. Thus $B(h,h)=\mu+s-2\eta$ and
$x=s-a^2$. The constant-barrier contact identity, including the
potential correction, reduces without any rotational test to
\[
 E_t=\eta^2+8\eta-4\mu_t-4a^2-\frac t2\Hess V(v,v).
\]
For $\eta=4$ and $\mu_t\ge16$ this is at most $-16$.
For $V=0$, Proposition~\ref{prop:strongcontinuation} (with $n=1$,
$\kappa=2$) and Lemma~\ref{lem:initial} give a small member of the
nested family with $H>4g$, and Lemma~\ref{lem:collar} gives a uniform
collar for the constant barrier. The first-loss argument in the proof
of Lemma~\ref{lem:twostageaudit}, applied to the single tensor $H-4g$,
together with Lemma~\ref{lem:exacttangent}, excludes a first null along
the family, whose spectral values are at least
$\lambda_1^0(\Omega)\ge\ell(V)\ge16$ by Lemma~\ref{lem:endpointspectral}.
The same single-tensor argument with $\Phi_t=\mathrm{Id}$ along the
fixed-domain path $tV$, where $\mu_t\ge\ell(V)$ by
Lemma~\ref{lem:endpointspectral} and $-\frac t2\Hess V(v,v)\le0$,
yields the result. No first
gradient-dependent barrier is needed in this dimension.
Proposition~\ref{prop:commonbochner} gives the gap.
\end{proof}

\begin{theorem}[A weaker dimension-independent curvature condition]
\label{thm:freshuniform}
Under the standing domain hypotheses in every complex dimension $n\ge1$,
assume
\[
 \II\ge\kappa_*g,\qquad \kappa_*:=\sqrt{44/5},\qquad
 \Hess V\ge2|\nabla V|g,\qquad \operatorname{osc}_\Omega V\le4n.
\]
Then $\Hess(-\log\phi_1)>g$; when $n=1$ one has the stronger bound
$\Hess(-\log\phi_1)>4g$. In particular
\[
 \lambda_2-\lambda_1>
 \max\left\{n+2+\sqrt{n^2+5n+11},\ 2n+\frac{14}{3}\right\}.
\]
\end{theorem}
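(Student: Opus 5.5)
The proof assembles earlier results; the case $n=1$ and the case $n\ge2$ are handled separately.

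\textbf{Case $n=1$.} Here $m=1$, and we use Lemma~\ref{lem:cp1constant}. Since $\kappa_*>2$, we have $\II\ge2g$. The hypothesis $\Hess V\ge2|\nabla V|g$ gives $\Hess V\ge0$. For the spectral input, Lemma~\ref{lem:sinesquare}(a) gives
\[
\lambda_1^0\ge4(1+\kappa_*^2)=4\cdot\tfrac{49}{5}.
\]
Alternatively, Lemma~\ref{lem:smooth-radial-bound} gives $\lambda_1^0>\kappa_*^2$, but the sine-square bound is stronger. Combined with $\ell(V)\ge\lambda_1^0-\operatorname{osc}V\ge\tfrac{196}{5}-4>16$, this yields $\Hess(-\log\phi_1)>4g$, and in particular $\Hess(-\log\phi_1)>g$.

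\textbf{Case $n\ge2$.} We apply Theorem~\ref{thm:freshspectral}. Its curvature hypothesis $\II\ge2g$ holds because $\kappa_*>2$, and the differential condition on $V$ is assumed. The remaining input is $\ell(V)\ge L_m$ with $L_m=\tfrac{11}{5}m^2+12m$. Corollary~\ref{cor:radialconversion} applies since $\kappa_*\ge2$, and gives
\[
\ell(V)>\bigl((n+1)\kappa_*+\kappa_*^{-1}\bigr)^2-4n.
\]
So it suffices to check
\[
\bigl((n+1)\kappa_*+\kappa_*^{-1}\bigr)^2-4n\ge L_m .
\]
Expanding the left side gives $\tfrac{44}{5}(n+1)^2+2(n+1)+\tfrac{5}{44}-4n$. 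With $m=2n-1$, the right side is $L_m=\tfrac{11}{5}(2n-1)^2+12(2n-1)=\tfrac{44}{5}n^2+\tfrac{76}{5}n-\tfrac{49}{5}$. The left side equals $\tfrac{44}{5}n^2+\tfrac{78}{5}n+\tfrac{54}{5}+\tfrac{5}{44}$. The difference is $\tfrac{2}{5}n+\tfrac{103}{5}+\tfrac{5}{44}>0$, so the inequality holds with room to spare. Equivalently, $\kappa_*\ge\kappa_n$ in \eqref{eq:intro-newkappa}. Theorem~\ref{thm:freshspectral} then gives $\Hess(-\log\phi_1)>\max\{b(s),1\}g\ge g$.

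\textbf{Gap bound.} Once $H\ge g$ is known, the gap follows in every dimension. Corollary~\ref{cor:oscillator-gap} applies because $\kappa_*>1$; it uses the diameter bound of Proposition~\ref{prop:sharpdiameter} and gives $\lambda_2-\lambda_1>n+2+\sqrt{n^2+5n+11}$. Corollary~\ref{cor:quartic-linear-gap} gives $\lambda_2-\lambda_1>2n+\tfrac{14}{3}$. Taking the larger of the two bounds gives the stated maximum. For $n=1$ the same corollaries apply, since $H>4g$ implies $H\ge g$.

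\textbf{Main obstacle.} The genuine difficulty lies inside Theorem~\ref{thm:freshspectral}, namely the sign lemmas of the appendix, which we take as given. At this level, the only step needing care is the numerical comparison of $L_m$ with the radial bound. That comparison is elementary, as shown above. One should also confirm that the standing assumptions (convexity, connected boundary, Drach's containing ball) are exactly those required by Corollary~\ref{cor:radialconversion} and Lemma~\ref{lem:sinesquare}.
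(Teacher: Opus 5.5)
Your proposal is correct and follows the paper's proof essentially step for step. For $n\ge2$ the paper also applies Corollary~\ref{cor:radialconversion} at $\kappa=\kappa_*$ to get $\ell(V)>L_m+\frac m5+\frac{104}5+\frac5{44}$, which is exactly your margin $\frac25n+\frac{103}5+\frac5{44}$, and then invokes Theorem~\ref{thm:freshspectral}. For $n=1$ it uses \eqref{an:eq:geomspectral} to get $\ell(V)\ge176/5>16$ and Lemma~\ref{lem:cp1constant}, and the gap comes, as in your argument, from Corollaries~\ref{cor:oscillator-gap} and~\ref{cor:quartic-linear-gap} together with Proposition~\ref{prop:sharpdiameter}.
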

\begin{proof}
For $n\ge2$, let $m=2n-1$ and $A=11/5$.
Corollary~\ref{cor:radialconversion} with $\kappa=\kappa_*\ge2$ gives
\[
 \ell(V)>R_m:=Am^2+(6A-1)m+9A+1+\frac1{4A}
 =L_m+\frac m5+\frac{104}{5}+\frac5{44}>L_m.
\]
Since $\kappa_*>2$, Theorem~\ref{thm:freshspectral} applies directly
in every dimension $n\ge2$.
For $n=1$, \eqref{an:eq:geomspectral} gives
$\lambda_1^0\ge4(1+\kappa_*^2)$, hence $\ell(V)\ge4\kappa_*^2=176/5>16$.
Lemma~\ref{lem:cp1constant} gives $H>4g$.
The gap follows from Corollaries~\ref{cor:oscillator-gap}
and~\ref{cor:quartic-linear-gap}, using
Proposition~\ref{prop:sharpdiameter}:
$D\le\arctan(2/\kappa_*)<\pi/2$.

\end{proof}

\begin{corollary}[Dimension-dependent sufficient levels]
\label{cor:improved-finitekappa}
For $n\ge2$, put $m=2n-1$ and $L_m=11m^2/5+12m$.
Under the potential hypotheses of Theorem~\ref{thm:freshuniform},
the boundary condition $\II\ge K_n(L_m)g$, with $K_n$ from
Corollary~\ref{cor:radialconversion}, suffices for $H>g$ and its stated
gap; $K_n(L_m)$ is the level $\kappa_n$ of \eqref{eq:intro-newkappa}.
\end{corollary}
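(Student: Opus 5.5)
The plan is to rerun the proof of Theorem~\ref{thm:freshuniform} for $n\ge2$ with $\kappa_*$ replaced by $\kappa_n:=K_n(L_m)$; the only input that changes is the spectral conversion. First we check that $K_n(L_m)$ is literally the expression $\kappa_n$ in \eqref{eq:intro-newkappa}. This is immediate: substituting $L=L_m$ into \eqref{eq:newradialthreshold} gives $\max\{2,(\sqrt{L_m+4n}+\sqrt{L_m-4})/(2(n+1))\}$. The admissibility condition $L\ge4$ of Corollary~\ref{cor:radialconversion} holds, since $L_m\ge 11\cdot9/5+36>4$ for $m\ge3$.

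Next we apply Corollary~\ref{cor:radialconversion} with $\kappa=K_n(L_m)$. By construction $\kappa\ge2>1$. The standing domain hypotheses, together with $\II\ge\kappa g$ and $\operatorname{osc}_\Omega V\le4n$, give $\ell(V)>L_m$. Indeed, $\kappa\ge2$ makes $(n+1)\kappa+\kappa^{-1}$ increasing, and the second entry of the maximum is exactly the upper root at which this quantity equals $\sqrt{L_m+4n}$. So any $\kappa$ at least that root yields $((n+1)\kappa+\kappa^{-1})^2-4n\ge L_m$, with strictness inherited from Lemma~\ref{lem:rationalradial}.

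Then, since $\II\ge\kappa g\ge2g$, $\ell(V)\ge L_m$, and $\Hess V\ge2|\nabla V|g$, Theorem~\ref{thm:freshspectral} gives $\Hess(-\log\phi_1)>\max\{b(s),1\}g\ge g$.

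For the gap, $\kappa>1$ lets Proposition~\ref{prop:sharpdiameter} bound the diameter by $\arctan(2/\kappa)<\pi/2$. Corollaries~\ref{cor:oscillator-gap} and~\ref{cor:quartic-linear-gap} then give $\lambda_2-\lambda_1>\max\{n+2+\sqrt{n^2+5n+11},2n+14/3\}$, which is the gap stated in Theorem~\ref{thm:freshuniform}.

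There is no real obstacle. The only point needing care is that Corollary~\ref{cor:radialconversion} requires $\kappa\ge2$ and $L\ge4$, and the $\max\{2,\cdot\}$ in $K_n$ is there to guarantee the former. Optionally, one can note that $\kappa_n<\kappa_*$, which is what makes the corollary an improvement. This follows from the computation in the proof of Theorem~\ref{thm:freshuniform}, which shows $\kappa_*$ already gives $\ell(V)>L_m$, combined with the monotonicity of $(n+1)\kappa+\kappa^{-1}$. It is not needed for the stated claim.
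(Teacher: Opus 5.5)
Your proposal is correct and matches the paper's proof: Corollary~\ref{cor:radialconversion} with $\kappa=K_n(L_m)\ge2$ gives $\ell(V)>L_m$. Theorem~\ref{thm:freshspectral} then gives $H>g$, and Proposition~\ref{prop:sharpdiameter} with Corollaries~\ref{cor:oscillator-gap} and~\ref{cor:quartic-linear-gap} gives the gap; your optional check that $\kappa_n<\kappa_*$ is the same monotonicity argument the paper appends at the end.
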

\begin{proof}
Corollary~\ref{cor:radialconversion} supplies $\ell(V)>L_m$
independently of a Hessian estimate, and $K_n(L_m)\ge2$.
Apply Theorem~\ref{thm:freshspectral}, then Corollaries~\ref{cor:oscillator-gap}
and~\ref{cor:quartic-linear-gap} with Proposition~\ref{prop:sharpdiameter}
(note $K_n(L_m)\ge2>1$).
Finally, $\kappa_n<\kappa_*$. The function $g(\kappa)=(n+1)\kappa+\kappa^{-1}$
increases for $\kappa>(n+1)^{-1/2}$, and the second entry of
\eqref{eq:newradialthreshold} is the root of $g=\sqrt{L_m+4n}$ in this
range. By the proof of Theorem~\ref{thm:freshuniform},
$g(\kappa_*)^2-4n=R_m>L_m$, so that root is less than $\kappa_*$; also
$2<\kappa_*$.
\end{proof}

\begin{corollary}[A quartic diameter-sensitive uniform gap]
\label{cor:freshuniformgap}
Under Theorem~\ref{thm:freshuniform}, put
\[
 P_* =\frac{\pi^2}{\arctan^2(2/\kappa_*)},\quad
 a=n+2,\quad C=\frac{n^2+5n+11}{4},\quad
 E=\frac{n^2+10n+45}{24}.
\]
Let $y_0>0$ solve $2Ey_0^3+Cy_0^2=1$ and set
$y_*=\min\{y_0,P_*^{-1}\}$. Then
\[
 \lambda_2-\lambda_1>a+y_*^{-1}+Cy_*+Ey_*^2
 >a+2\sqrt C+\frac{(P_*-\sqrt C)_+^2}{P_*}.
\]
A larger independently established Hessian level $\eta$ may be used
by replacing $a$ with $n+1+\eta$,
$C$ with $(a^2+n+7)/4$, and $E$ with $[a(n+7)+n+31]/24$.
\end{corollary}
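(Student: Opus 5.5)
The plan is to apply Proposition~\ref{prop:quartic-moment-gap} with $\eta=1$, since Theorem~\ref{thm:freshuniform} supplies the Hessian bound. Under the hypotheses of Theorem~\ref{thm:freshuniform} we have $\Hess(-\log\phi_1)>g$, and for $n=1$ even $>4g$; in either case $H\ge g$. Proposition~\ref{prop:sharpdiameter} with $\kappa=\kappa_*>1$ gives $\operatorname{diam}\Omega\le D:=\arctan(2/\kappa_*)<\pi/2$, so $P=\pi^2/D^2=P_*$.

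With $\eta=1$ we get $a=n+2$ and $C=((n+2)^2+n+7)/4=(n^2+5n+11)/4$. We also get $E=((n+2)(n+7)+n+31)/24=(n^2+10n+45)/24$. These match the constants in the statement, so Proposition~\ref{prop:quartic-moment-gap} gives the first strict inequality directly. The same proposition asserts that its bound strictly improves the right side of \eqref{eq:diametergap} for the same $n,\eta,D$, and that right side is exactly $a+2\sqrt C+(P_*-\sqrt C)_+^2/P_*$. This gives the second inequality.

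The one point to check is that the strict improvement is a pointwise comparison of the two minima. I would check it by recalling the proof of Proposition~\ref{prop:quartic-moment-gap}. Dropping $Ey^2$ in $\min_{y>0}\{\max(P,1/y)+Cy+Ey^2\}$ gives $\min_y\{\max(P,1/y)+Cy\}$. If $P\ge\sqrt C$, this minimum is attained at $y=1/P$ with value $P+C/P=2\sqrt C+(P-\sqrt C)^2/P$. Otherwise it is attained at $y=1/\sqrt C$ with value $2\sqrt C$. Hence it equals the bracket in \eqref{eq:diametergap}, and adding the positive term $Ey^2$ at a positive minimizer makes the minimum strictly larger.

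For the final sentence, with a larger established Hessian level $\eta\ge1$, we apply the same Proposition~\ref{prop:quartic-moment-gap} with that $\eta$, which yields the replaced $a,C,E$. I expect no real obstacle; the only care needed is the arithmetic matching of $C$ and $E$ and the identification of the oscillator minimum.
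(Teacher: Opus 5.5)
Your proposal is correct and follows the paper's proof: Theorem~\ref{thm:freshuniform} supplies $\Hess(-\log\phi_1)\ge g$, Proposition~\ref{prop:sharpdiameter} gives $D=\arctan(2/\kappa_*)$ and hence $P=P_*$, and Proposition~\ref{prop:quartic-moment-gap} with $\eta=1$ then yields both inequalities (your matching of $a$, $C$, $E$ and your identification of the oscillator minimum are right). The only detail the paper adds is that $y_0$ is well defined: $2Ey^3+Cy^2$ increases strictly from $0$ to $\infty$ on $(0,\infty)$, so the cubic has exactly one positive root.
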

\begin{proof}
Use Proposition~\ref{prop:sharpdiameter} and Proposition~\ref{prop:quartic-moment-gap}.
The cubic has exactly one positive root, because its left side is
strictly increasing from zero to infinity.
\end{proof}

\begin{corollary}[Nonconvex potentials for $n\ge2$]
\label{cor:uniformnonconvex}
Let $n\ge2$, $m=2n-1$, $L_m=\frac{11}{5}m^2+12m$, $\II\ge2g$, and
$M\ge0$ (the case $M=0$ is Theorem~\ref{thm:freshspectral}). If
\begin{equation}\label{eq:uniformnonconvex}
 \ell(V)\ge L_m+M,\qquad
 \Hess V\ge\max\left\{2|\nabla V|-\tfrac{42}{25}M,\,
                       -\tfrac{91}{50}M\right\}g,
\end{equation}
then $\Hess(-\log\phi_1)>\max\{b(s),1\}g$ for the barrier
\eqref{geo:barrier}, and the gap bounds of
Theorem~\ref{thm:freshuniform} hold. In particular, if
$\II\ge\kappa g$ with $\kappa\ge2$ and
$\operatorname{osc}_\Omega V\le((n+1)\kappa+\kappa^{-1})^2-L_m-M$,
the spectral condition holds. For $\kappa=\kappa_*$ and
$\operatorname{osc}_\Omega V\le4n$ one may take $M=(m+104)/5$, which
permits $\Hess V\ge-\frac{42}{125}(m+104)\,g$ at critical points of
$V$; for
larger $\kappa$, $M$ grows like $(\kappa^2-\kappa_*^2)n^2$.
\end{corollary}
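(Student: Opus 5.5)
The plan is to rerun the two-stage continuation behind Theorem~\ref{thm:freshspectral}, spending the extra spectral room $M$ through Proposition~\ref{prop:potentialmargin} in place of Proposition~\ref{prop:fixedpotentialcontinuation}. For the Laplacian nested family the argument is unchanged: $\lambda_1^0\ge\ell(V)\ge L_m+M\ge L_m$ by Lemma~\ref{lem:endpointspectral}, and the costs computed there are negative for every $\mu\ge L_m$. So both barriers $H>b(s)g$ and $H>g$ hold on the Laplacian ground state of $\Omega$. What remains is the fixed-domain path $tV$ with margins $M_b=\tfrac{21}{25}M$ and $M_\eta=\tfrac{91}{100}M$. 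These make the hypothesis of Proposition~\ref{prop:potentialmargin} read $\Hess V\ge\max\{2|\nabla V|-\tfrac{42}{25}M,-\tfrac{91}{50}M\}g$, which is exactly \eqref{eq:uniformnonconvex} with $c_b=2$.

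To get the margins I use that every cost is differentiable and decreasing in $\mu$, with explicit $\mu$-derivatives. Along the path, $\mu_t\ge\ell(V)\ge L_m+M$. For the first stage, the three scalar costs $E_0,R_\perp,R_H$ in the high range and the low-range envelope of Lemma~\ref{lem:analytic-low} are negative at $\mu=L_m$. Their $\mu$-derivatives are displayed in the proof of Theorem~\ref{thm:freshspectral}. Each is at most
\[
 -1-2\alpha b+\tfrac4{25}s\alpha^2\le-1+\tfrac4{25}=-\tfrac{21}{25},
\]
using $b\ge1$ for $r\ge d$, $s\alpha^2<1$, and discarding the negative trace terms. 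Integrating from $L_m$ to $\mu\ge L_m+M$ gives cost $<-\tfrac{21}{25}M$.

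The main obstacle is the low range $1\le r\le d$, where $b$ may be negative and $-2\alpha b$ is not obviously $\le0$. Here I would go back to the envelope of Lemma~\ref{lem:analytic-low}. Its $\mu$-dependence enters through $-\mu-2\alpha bT-(\text{trace-squared term})$, so its $\mu$-derivative is $-1-2\alpha b-2W'T$ for the relevant weight $W'$. The aim is to show $2\alpha|b|\le 2W'T$ or a comparable absorption using $T>\mu-N^2-1>0$, so that the derivative is still $\le-21/25$. If that fails, I would instead show the envelope's derivative is $\le-1+\tfrac4{25}$ directly from its explicit form.

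For the constant stage, Proposition~\ref{prop:balancedcriterion} with $c=3/10$ gives a cost whose $\mu$-dependence is $-(1-c^2)\mu$ through $A$. The derivative is therefore $-\tfrac{91}{100}$, giving margin $M_\eta=\tfrac{91}{100}M$.

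Having both margins, Proposition~\ref{prop:potentialmargin} yields $H>\max\{b,1\}g$. The gaps then follow as in Theorem~\ref{thm:freshuniform} from Corollaries~\ref{cor:oscillator-gap} and~\ref{cor:quartic-linear-gap}, with the diameter bound of Proposition~\ref{prop:sharpdiameter}; here $\kappa\ge2>1$.

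For the \emph{in particular} clauses I argue as follows. Corollary~\ref{cor:radialconversion} gives $\ell(V)>((n+1)\kappa+\kappa^{-1})^2-\operatorname{osc}V\ge L_m+M$. At $\kappa=\kappa_*$ with $\operatorname{osc}V\le4n$ one has $\ell(V)>R_m=L_m+\tfrac{m}{5}+\tfrac{104}{5}+\tfrac5{44}$, so $M=(m+104)/5$ is admissible. At critical points of $V$ the condition reads $\Hess V\ge-\tfrac{42}{25}M=-\tfrac{42}{125}(m+104)$, since $\tfrac{42}{25}<\tfrac{91}{50}$. The growth of $M$ like $(\kappa^2-\kappa_*^2)n^2$ follows from the leading coefficient $(n+1)^2\kappa^2$ against $L_m\sim\kappa_*^2n^2$.
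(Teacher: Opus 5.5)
Your proposal is correct and follows the paper's proof essentially step for step. Both use Proposition~\ref{prop:potentialmargin} with $c_b=2$ and $L_0=L_m+M$, initialize from Theorem~\ref{thm:freshspectral} at $V=0$, bound the first-stage $\mu$-derivatives by $-\tfrac{21}{25}$ and use the constant-stage coefficient $-(1-c^2)=-\tfrac{91}{100}$, and derive the spectral clauses from Drach's containment, Lemma~\ref{lem:rationalradial} and $\ell(V)\ge\lambda_1^0-\operatorname{osc}V$. The low-range step you flag as the obstacle is already done in the proof of Lemma~\ref{lem:analytic-low}, which obtains $Q_\mu\le-\tfrac{21}{25}+2(d-\rho WT)/\rho<-\tfrac{21}{25}$ by exactly your absorption: it uses $-b\le d$, uses $s<\rho^2$ for the $\tfrac{4s}{25\rho^2}$ term your derivative omitted, and uses $T\ge\mu-N$ with $W\ge 5/(8m\rho)$ to get $\rho WT>d$.
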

\begin{proof}
The case $M=0$ is Theorem~\ref{thm:freshspectral}. For $M>0$ we use
Proposition~\ref{prop:potentialmargin} with $c_b=2$ and $L_0=L_m+M$.
Its initialization $H_0>\max\{b,1\}g$ for the Laplacian on $\Omega$ is
Theorem~\ref{thm:freshspectral} with $V=0$, since
$\lambda_1^0\ge\ell(V)\ge L_m+M$ by Lemma~\ref{lem:endpointspectral}. In the proof of Theorem~\ref{thm:freshspectral} every
first-stage cost bound is negative at $\mu=L_m$ and has
$\mu$-derivative at most $-21/25$ on the whole halfline: for
$1\le r\le d$ this is the bound $Q_\mu<-21/25$ in the proof of
Lemma~\ref{lem:analytic-low}; for $r\ge d$ the displayed derivatives
of $E_0$, $R_\perp$ and $R_H$ are at most $-1$, $-1$ and
$-1+\frac4{25}s\alpha^2<-\frac{21}{25}$, since $s\alpha^2<1$. Hence at spectral
values $\mu\ge L_m+M$ the first-stage cost is below $-\frac{21}{25}M$.
The two balanced constant-stage endpoints are negative at $\mu=L_m$
and have $\mu$-coefficient $-(1-c^2)=-\frac{91}{100}$, so they are below
$-\frac{91}{100}M$. Thus $M_b=\frac{21}{25}M$ and
$M_\eta=\frac{91}{100}M$ in \eqref{eq:potentialmargins}, which is
\eqref{eq:uniformnonconvex}. The spectral condition follows from Drach's containment
$\Omega\subset B_{\operatorname{arccot}\kappa}(o)$ \cite[Theorem~A]{Drach},
Lemma~\ref{lem:rationalradial}, and
$\ell(V)\ge\lambda_1^0-\operatorname{osc}V$; for $\kappa=\kappa_*$ the
proof of Theorem~\ref{thm:freshuniform} gives
$((n+1)\kappa_*+\kappa_*^{-1})^2-4n-L_m=\frac m5+\frac{104}5+\frac5{44}$.
The gap transfer uses only $\Hess(-\log\phi_1)\ge g$, via
Corollaries~\ref{cor:oscillator-gap} and~\ref{cor:quartic-linear-gap}
and Proposition~\ref{prop:sharpdiameter} (here $\kappa\ge2>1$).
\end{proof}

\appendix
\section{Scalar estimates for the uniform contact inequality}
\label{app:freshsigns}
This appendix proves the scalar signs used in
Theorem~\ref{thm:freshspectral}. We use the notation of
Section~\ref{sec:freshuniform}. Thus $m=2n-1\ge3$, $N=m+1$, and
\[
 d=\frac{9m}{10},\qquad \omega=\frac m2,\qquad
 \Lambda=L_m=\frac{11m^2}{5}+12m,\qquad
 r=\sqrt{1+s},\qquad \rho=r+\omega.
\]
Write
\[
 \widehat b(r)=2r-2d+1-2\omega\log\frac{r+\omega}{d+\omega},
 \qquad b=\widehat b(r),\qquad
 \alpha=\rho^{-1},\qquad \beta=-\frac1{2r\rho^2},
 \qquad W=\frac{\omega}{\rho(r+m\omega)}.
\]
The quantities $D_*$, $C_\tau$, $C_x$, and $T=\mu+s-Nb$ are those
in \eqref{geo:contact-cost}; the scalar costs $E_0$, $R_\perp$, and
$R_H$ are defined in \eqref{geo:scalar-costs}.
When $b$ is varied algebraically, $r$, $\alpha$, and $\beta$ are held
fixed. The elementary bounds established in that section are
\[
 -d<\widehat b(r)<2r,
 \qquad \widehat b(d)=1,
 \qquad T>0\quad(\mu\ge\Lambda),
 \qquad 1\le\widehat b(r)\le2r-d\quad(r\ge d).
\]

\subsection{The low-gradient interval}
\begin{lemma}\label{lem:analytic-low}
For $1\le r\le d$ and $\mu\ge\Lambda$, the contact cost in
\eqref{geo:contact-cost} is strictly negative for every admissible
positive-semidefinite matrix $B$. More precisely, the upper envelope
\[
 Q(b,\mu)=D_*+4s+\frac{8ds}{\rho^2}
                +\frac{4sT}{25\rho^2}-WT^2
\]
is negative for every $b\in[-d,1]$.
\end{lemma}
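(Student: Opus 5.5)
The argument has two parts. First I show that the contact cost \eqref{geo:contact-cost} is dominated by $Q(b,\mu)$ on $1\le r\le d$. Then I show $Q<0$ by reducing to the endpoints in $b$ and the spectral endpoint $\mu=\Lambda$.

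\textbf{Step 1: the envelope bound.} On this range $\widehat b$ is increasing with $\widehat b(d)=1$, so $-d<b\le1$. The matrix terms in \eqref{geo:contact-cost} are handled as follows.
\begin{itemize}
\item The term $-4\varkappa B(w,w)$ is nonpositive, since $\varkappa>0$ and $B\ge0$, and is discarded.
\item The Hopf term $-3B(h,h)$ is likewise discarded.
\item For the remaining quadratic part, Lemma~\ref{an:lem:weightedtrace}, applied with $A\ge\omega/\rho^2$ and diagonal weights $A,\alpha,\ldots,\alpha$, gives $\alpha\tr B^2+2\beta|Bw|^2\ge WT^2$ with $W=\omega/(\rho(r+m\omega))$. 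This is the same completion of squares as in Lemma~\ref{geo:domination}, but with zero linear term.
\end{itemize}
For the angular coefficients I use $0\le x\le\tau\le s$.
\begin{itemize}
\item $C_\tau=1-4\alpha-8b\alpha^2\le1+8d\alpha^2$, using $b\ge-d$.
\item In $C_x$, the negative terms $-12\alpha$ and $-\frac45(m-1)\alpha$ are dropped, leaving $C_x\le3+\frac4{25}\alpha^2T$.
\end{itemize}
Taking positive parts of both coefficients gives $C_\tau\tau+C_xx\le 4s+8ds/\rho^2+4sT/(25\rho^2)$. This yields $\mathcal C\le Q(b,\mu)$ with $T=\mu+s-Nb$.

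\textbf{Step 2: reduction in $\mu$ and $b$.} I would first compute $Q_\mu=-1-2\alpha b+\frac{4s}{25\rho^2}-2WT$. Since $|b|\le d<2\rho$ when $b<0$, the quantity $-2\alpha b$ is at most $2d/\rho$. That is not small, so I would need $2WT$ to absorb it. Because $T\ge\Lambda-N-1>m^2$, one has $2WT\gtrsim\omega m^2/(\rho(r+m\omega))$, which is of order $m$ and much larger than $2d/\rho$. With this I expect $Q_\mu<-21/25$ uniformly, the figure later quoted in Corollary~\ref{cor:uniformnonconvex}. So it suffices to treat $\mu=\Lambda$.

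For fixed $r$, $Q$ is a quadratic in $b$ whose leading coefficient is $1-\alpha N-2\beta s-NW^{-1}$-type combinations. To avoid computing that coefficient, I would bound $Q$ by a quantity that is convex in $b$:
\begin{itemize}
\item keep $b^2(1-\alpha N-2\beta s)$;
\item keep the cross terms $Nb-2\alpha bT$;
\item keep $-WT^2$, which is concave in $b$ since $T$ is affine in $b$.
\end{itemize}
Here $-WT^2$ is concave, so convexity can fail. Instead I would maximize directly, using the parametrization $y=r/m\in[1/m,9/10]$ and $\beta_b=b/m\in[-9/10,1/m]$. After normalizing by $m^2$, $Q/m^2$ becomes an explicit rational function of $(y,\beta_b)$ plus $O(1/m)$ corrections.

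\textbf{Step 3: the main obstacle.} The hard step is showing that this two-variable limiting function is uniformly negative, with enough margin to absorb the $12m$-order corrections for all $m\ge3$. I would first handle the limit $m\to\infty$ (the term $\Lambda/m^2\to11/5$) by checking the boundary edges $\beta_b=-9/10$ and $\beta_b=0$ and any interior critical point in $\beta_b$, then bound the corrections linearly in $1/m$, as in the appendix's high-range comparison. If the margin is too thin near $y\to0$, where $W\approx2/m^2$, I would fall back on the cruder $W\ge A/m$ there and use the dominant $-\Lambda$ in $D_*$ directly.
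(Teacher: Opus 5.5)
\textbf{Step 1 is fine.} It is exactly the paper's envelope argument. You discard $-3B(h,h)$ and $-4\varkappa B(w,w)$, use weighted Cauchy--Schwarz with distinguished weight $\omega/\rho^2$ to get $WT^2$, and bound $C_\tau\le1+8d/\rho^2$ and $C_x\le3+4T/(25\rho^2)$.

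\textbf{The gap is that you never prove $Q<0$,} which is the whole content of the lemma. Step 3 is only a plan: check edges and critical points of a limiting two-variable function, then ``bound the corrections linearly in $1/m$''. None of the required inequalities is stated or verified.

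\textbf{The obstacle that pushed you there is not real.} You treated $-2\alpha bT$ as a cross term. But $T=\mu+s-Nb$, so it contributes $+2\alpha Nb^2$, and your coefficient $1-\alpha N-2\beta s$ has the wrong sign on $\alpha N$. The true coefficient of $b^2$ in $Q$ is
\[
 1+\frac N\rho+\frac{s}{r\rho^2}-N^2W .
\]
Since $W\le1/(m\rho)$, one has $N^2W\le N/\rho+N/(m\rho)$. Hence the coefficient is at least $1-N/(m\rho)>0$, using $\rho>m/2$ and $m\ge3$. So $Q$ is convex in $b$ despite the concave term $-WT^2$, and it suffices to check $b=-d$ and $b=1$ at $\mu=\Lambda$.

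\textbf{How the paper closes the endpoints.} On $1\le r\le d$ it uses $m/2<\rho\le7m/5$, $W\ge1/((m+9/5)\rho)\ge5/(8m\rho)$, $s\le d^2$ and $s\le\rho^2$. It collects the terms linear in $T$ as $a_\pm T$ and shows that $\rho(WT-a_-)>\rho/12$ and $\rho(WT-a_+)>\rho/2$. This gives explicit negative quadratics in $m$:
\[
 Q(-d,\Lambda)\le-\tfrac{223}{1200}m^2-\tfrac{1031}{200}m,
 \qquad
 Q(1,\Lambda)\le-\tfrac{51}{40}m^2-\tfrac{93}{10}m+\tfrac52 .
\]
No limiting profile and no $1/m$ expansion are needed in this range.

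\textbf{Your $\mu$-reduction has the right conclusion but the wrong reason.} The terms $2WT$ and $2d/\rho$ are both of order one, not of orders $m$ and $1$. What is needed is $\rho WT>d$, which is a constant-factor comparison:
\[
 \rho WT\ge\frac{5(\Lambda-N)}{8m}=\frac{11m}{8}+\frac{55}{8}-\frac5{8m}>\frac{9m}{10}.
\]
It therefore depends on the specific constants $11/5$ in $\Lambda$ and $5/8$ in the bound for $W$. Combined with $-1+4s/(25\rho^2)\le-21/25$, it gives $Q_\mu<-21/25$.
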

\begin{proof}
For the actual barrier, the linear Hessian terms discarded from
\eqref{geo:contact-cost} are nonpositive because $B\ge0$ and
$\alpha^2+b\beta>0$. The quadratic Hessian terms satisfy
\[
 \alpha\operatorname{tr}B^2+2\beta|Bw|^2\ge WT^2.
\]
Indeed, their distinguished weight is at least
$\alpha+2\beta s\ge\omega/\rho^2$; weighted Cauchy--Schwarz on the
diagonal entries gives the reciprocal trace weight
$\{\rho^2/\omega+(m-1)\rho\}^{-1}=W$, and the off-diagonal terms
are nonnegative. In the angular terms, discard the negative multiples
of $x,\tau$ and use $-b\le d$, $T>0$, and $0\le x\le\tau\le s$.
This bounds the actual contact cost above by $Q$ before any
unrestricted matrix minimization.

Since $W\le1/(m\rho)$, the coefficient of $b^2$ in $Q$ is at least
$1-N/(m\rho)>0$. Thus $Q$ is convex in $b$.
On the present interval,
\[
 \frac m2<\rho\le\frac{7m}{5},\qquad
 W\ge\frac1{(m+9/5)\rho}\ge\frac5{8m\rho}.
\]
For $b\in[-d,1]$, $T\ge\mu-N$, and hence
\[
 Q_\mu\le-\frac{21}{25}+\frac{2(d-\rho WT)}{\rho}<-\frac{21}{25},
 \qquad
 \rho WT\ge\frac{5(\Lambda-N)}{8m}
 =\frac{11m}{8}+\frac{55}{8}-\frac5{8m}>d.
\]
It remains to set $\mu=\Lambda$ and to consider the two endpoints
of the algebraic $b$ interval.
At $b=-d$, put $a_-=2d/\rho+4s/(25\rho^2)$. Then
\[
 T=\frac{31m^2}{10}+\frac{129m}{10}+s,\qquad
 \rho(WT-a_-)\ge\frac{11m}{80}+\frac{129}{16}
                   +\frac{61s}{200m}>\frac\rho{12}.
\]
Regrouping $Q$, discarding $-(N+2)s/\rho$, and using
$s/(r\rho^2)\le1/\rho$, $s/\rho^2\le1$, and $s\le d^2$ gives
\begin{align*}
 Q(-d,\Lambda)
 &\le\frac{47}{12}d^2-\frac{13}{12}(\Lambda+Nd)
                  +\frac{d^2}{\rho}+8d\\
 &\le-\frac{223m^2}{1200}-\frac{1031m}{200}<0.
\end{align*}
At $b=1$, put $a_+=4s/(25\rho^2)$. Here
\[
 \rho(WT-a_+)\ge\frac{11m}{8}+\frac{55}{8}-\frac5{8m}
                   +\frac{61s}{200m}>\frac\rho2.
\]
Since $-N/\rho+s/(r\rho^2)\le0$, it follows that
\[
 Q(1,\Lambda)\le1+\frac32N-\frac32\Lambda+\frac52d^2+8d
       =-\frac{51m^2}{40}-\frac{93m}{10}+\frac52<0.
\]
Convexity in $b$ and monotonicity in $\mu$ prove the claim.
\end{proof}

\subsection{The zero angular endpoint}
\begin{lemma}\label{lem:analytic-zero}
For every $r\ge d$ and $\mu\ge\Lambda$, the actual barrier satisfies
$E_0(\widehat b(r),\mu)<0$.
\end{lemma}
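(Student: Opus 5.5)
We need $E_0=D_*-\frac{\alpha}{m}T^2<0$ for $r\ge d$, $\mu\ge\Lambda$. First reduce to $\mu=\Lambda$. The derivative $(E_0)_\mu=-1-2\alpha b-2\alpha T/m$ is negative, because $b\ge1$ and $T>0$ when $r\ge d$. So it suffices to treat $\mu=\Lambda$.

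Next, decouple $b$ as in Lemma~\ref{an:lem:sign}. Hold $r,\alpha,\beta$ fixed and regard $E_0$ as a quadratic in $b$ through $T=\Lambda+s-Nb$. Its $b^2$ coefficient is
\[
 1-\alpha N-2\beta s-\alpha N^2/m+2\alpha N\cdot(\text{from }-2\alpha bT)
\]
and, more carefully, $D_*$ contributes $1-\alpha N+2\alpha N+s/(r\rho^2)$. The trace term then contributes $-\alpha N^2/m$. I would not rely on convexity here. Instead I would use the crude bounds $1\le b\le 2r-d$ only where each term is monotone. Concretely, I would write
\[
 E_0\le b^2+Nb-\Lambda-s-\alpha\{Nb^2+2bT+(N+2)s\}+\frac{b^2 s}{r\rho^2}-\frac{\alpha}{m}T^2 .
\]
I would drop the nonpositive terms $-\alpha Nb^2$, $-2\alpha bT$ and $-\alpha(N+2)s$ wherever that is harmless. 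The key negative contribution is $-\alpha T^2/m$.

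Now bound $T$ from below. As in the proof of Lemma~\ref{geo:domination}, $b\le 2r-2d+1$ gives
\[
 T\ge \Lambda+r^2-1-N(2r-2d+1)=(r-N)^2+\Lambda+2Nd-N-1-N^2,
\]
which is of order $m^2+r^2$.

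Normalize by $y=r/m\ge 9/10$, so that $\rho=m(y+1/2)$, and take the leading order in $m$. In the limit the positive part is at most $b^2+Nb-s\le(2r)^2+2m\cdot 2r-r^2$. The negative part is
\[
 \frac{\alpha T^2}{m}\approx m^2\frac{\bigl(y^2-4y+\tfrac{11}{5}+\tfrac{9}{5}\cdot 2+\dots\bigr)^2}{y+1/2}.
\]
Rather than this crude estimate, I would use the actual profile. Set $\zeta(y)=\lim b/m=2y-\tfrac95-\log\frac{y+1/2}{7/5}$. Retain $-2\alpha bT$, since it is of order $m^2$ and helps. Then verify that
\[
 \Phi(y)=\zeta^2+2\zeta-\tfrac{11}{5}-y^2-\frac{2\zeta^2+2\zeta\tau+2y^2}{y+1/2}+\frac{\zeta^2y}{(y+1/2)^2}-\frac{\tau^2}{y+1/2},\qquad \tau=\tfrac{11}{5}+y^2-2\zeta,
\]
is negative on $[9/10,\infty)$. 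At large $y$ it behaves like $-y^3$. At $y=9/10$, where $\zeta=0$, a direct evaluation gives a clear negative number.

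Finally, handle the finite-$m$ corrections. These are the $12m$ in $\Lambda$, the $+1$ in $b$, and the $+2$ and $-1$ shifts; each is $O(m)$ relative to $m^2$. Following the strategy announced in Appendix~\ref{app:freshsigns}, I would differentiate $m^{-2}E_0$ in $\epsilon=1/m$ at fixed $y$ and show that the derivative has the sign that makes the finite-$m$ value lie below the limit for $m\ge3$. The main term to check is the $12m$ in $\Lambda$, which enters with a negative sign through both $-\Lambda$ and $-\alpha T^2/m$.

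The main obstacle is the uniform negativity of the limiting profile $\Phi$ near $y=9/10$, where $\zeta$ is small and the margin is thinnest, together with controlling the logarithm in $\zeta$. For the logarithm I would use the two-sided bounds $1-1/u\le\log u\le u-1$ applied on subintervals of $y$, so that each piece becomes a rational-function sign check.
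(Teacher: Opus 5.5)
Your reduction to $\mu=\Lambda$ via $(E_0)_\mu<0$ is the same as the paper's. The rest of the plan has a genuine gap.

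First, the profile $\Phi$ you propose to check is not the limit of $m^{-2}E_0$. Since $N=2n=m+1$, one has $N/m\to1$. Also, $-2\beta b^2s=b^2s/(r\rho^2)$ is only $O(m)$. With $\chi=y+\tfrac12$ and $\zeta=z_0(y)$, the correct limit is
\[
 \zeta^2+\zeta-\tfrac{11}{5}-y^2-\frac{\zeta^2+2\zeta\tau+\tau^2+y^2}{\chi},
 \qquad \tau=\tfrac{11}{5}+y^2-\zeta .
\]
Your $\Phi$ effectively uses $N\approx 2m$. That is where the $2\zeta$, the $-2\zeta$ in $\tau$, and the doubled $\zeta^2$ and $y^2$ in the bracket come from. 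It also keeps a term that vanishes in the limit.

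Second, the finite-$m$ step is only announced, yet it is where all the work of your route would lie. You do not state which inequality in $\varepsilon=1/m$ you would prove. The first-order corrections include not only the $12m$ in $\Lambda$ but also $N=m+1$, $s=r^2-1$, and the shift $b=m\zeta+1$. The margin near $y=9/10$ is also not thin: the correct limit is about $-10$ there.

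The missing idea is already visible in the limit, where $\zeta^2+2\zeta\tau+\tau^2=(\tfrac{11}{5}+y^2)^2$ no longer depends on $\zeta$. The exact version is as follows. Because $N-m=1$, one has $T+mb=\mu+s-b$, so
\[
 -\alpha\{Nb^2+2bT\}-\frac{\alpha}{m}T^2
 =-\frac{b^2}{\rho}-\frac{(\mu+s-b)^2}{m\rho}.
\]
Since $s<r\rho$, the terms $-b^2/\rho+b^2s/(r\rho^2)$ are negative. Discarding them and $-(N+2)s/\rho$ gives, at $\mu=\Lambda$,
\[
 E_0\le\mathscr A(b)=b^2+Nb-\Lambda-s-\frac{(\Lambda+s-b)^2}{m\rho}.
\]
This function is increasing on $[0,2r-d]$, because $\Lambda+s-b\ge(r-1)^2+\Lambda+d-2>0$ there. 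Hence the crude bound $b\le 2r-d$ suffices. You need no logarithm, no limiting profile, no lower bound for $T$, and no expansion in $1/m$.

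What remains is the single polynomial inequality $-m\rho\,\mathscr A(2r-d)>0$ in $r$. The paper proves it by one exact square completion, valid for all $m\ge3$.
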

\begin{proof}
First set $\mu=\Lambda$. Completing the trace terms gives
\[
 E_0=b^2\left(1-\frac1\rho+\frac{s}{r\rho^2}\right)
 +Nb-\Lambda-s-\frac{(\Lambda+s-b)^2}{m\rho}
 -\frac{(N+2)s}{\rho}.
\]
Since $s<r\rho$, this implies
\[
 E_0\le\mathscr A(b):=b^2+Nb-\Lambda-s
             -\frac{(\Lambda+s-b)^2}{m\rho}.
\]
The actual barrier obeys $1\le b\le b_*:=2r-d$.
The function $\mathscr A$ increases on $[0,b_*]$, because
$\Lambda+s-b\ge(r-1)^2+\Lambda+d-2>0$.
Set
\[
 C=\Lambda+d-1,\qquad D_0=\Lambda+Nd-d^2-1,
 \qquad a=\frac{309m^3}{100}+\frac{31m^2}{10}-\frac{263m}{5}+4.
\]
The following square completion is exact:
\begin{align*}
 -m\rho\mathscr A(b_*)={}&
 \frac12r^2(r-3m-4)^2+\frac12r^4
 +\left(\frac{59m}{5}-6\right)r^2+ar+C^2+\frac{m^2D_0}{2}.
\end{align*}
For $m\ge3$, $m^3\ge9m$ and $m^2\ge3m$ give $a>-16m$, while
$59m/5-6>9m$. The two potentially mixed terms therefore have sum
greater than $mr(9r-16)>0$, since $r\ge d\ge27/10$. Also $D_0>0$.
This proves the sign at $\mu=\Lambda$.
Finally,
\[
 \partial_\mu E_0=-1-2\alpha b-2\alpha T/m<0
\]
throughout the high-gradient range, which proves the stated half-line.
\end{proof}

\subsection{The limiting high-gradient cost}\label{geo:limiting-cost}
In this section $y\ge9/10$ and $t=y+1/2$. Define
\[
 z_0(y)=2y-\frac95-\log\frac{y+1/2}{7/5}.
\]
The normalized limiting Hopf cost will be
\[
 F_0(y,z)=z^2+a(t)z+b_0(t).
\]
Here the two scalar functions $a,b_0$ are displayed below. Since
$z_0\ge0$ and $a>0$, this cost increases with $z\ge0$.
The elementary global inequality
\begin{equation}\label{geo:log-lower}
 \log u\ge\frac{15(u-1)}{2(3u+5)},\qquad u\ge1,
\end{equation}
follows because the derivative of the difference is
$(3u-5)^2/[u(3u+5)^2]\ge0$ and the difference vanishes at $1$.

Put $t=y+1/2\ge7/5$ and define
\begin{align*}
 z(t)&=2t-\frac{53}{10}+\frac{28/3}{t+7/3},\\
 a(t)&=\frac{21}{25}+\frac4{25t}-\frac1{25t^2},\\
 b_0(t)&=-t^3+\frac{129}{25}t^2-\frac{551}{50}t
       +\frac{2921}{500}-\frac{13769}{2000t}+\frac{49}{500t^2},\\
 F(t)&=z(t)^2+a(t)z(t)+b_0(t).
\end{align*}
The function $z$ satisfies $z_0(y)\le z(y+1/2)$ and is the upper bracket for the normalized barrier
obtained from \eqref{geo:log-lower}. Direct grouping of the limiting contact
formula, with $\mathsf E_0=y^2+11/5$, gives the equivalent expression
\begin{equation}\label{geo:raw-limit}
 F=z^2+z\left(1-\frac{4y^2}{25t^2}\right)
     -\frac{\mathsf E_0^2}{t}+\frac{4y^2\mathsf E_0}{25t^2}
     +3y^2-\frac{11}{5}-\frac{9y^2}{5t}.
\end{equation}

\begin{lemma}\label{geo:limit}
For every $y\ge9/10$, one has
$F_0(y,z_0(y))\le F(y+1/2)<-1/10$.
\end{lemma}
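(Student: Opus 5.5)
The inequality splits into two parts. The first, $F_0(y,z_0(y))\le F(y+1/2)$, is a comparison of barrier profiles. The second, $F(t)<-1/10$ for $t\ge7/5$, is a one-variable polynomial sign check after clearing denominators.

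\emph{Step 1: the comparison.} Since $F_0(y,z)=z^2+a(t)z+b_0(t)$ and $F=z(t)^2+a(t)z(t)+b_0(t)$ have the same coefficients, it suffices to show $0\le z_0(y)\le z(t)$ and $a(t)>0$. Then $z\mapsto z^2+az$ is increasing on $[0,\infty)$.

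\begin{itemize}
\item Positivity of $a$: for $t\ge7/5$ one has $4/(25t)>1/(25t^2)$, so $a>21/25$.
\item Nonnegativity of $z_0$: $z_0(9/10)=0$, and $z_0'(y)=2-1/(y+1/2)>0$ for $y\ge9/10$, since then $t\ge7/5>1/2$.
\item The bracket $z_0\le z$: put $u=t/(7/5)\ge1$ and apply \eqref{geo:log-lower}. This gives
\[
 -\log u\le-\frac{15(u-1)}{2(3u+5)}=-\frac52+\frac{10}{3u+5},
\]
after checking the partial fraction. Substituting $u=5t/7$ and $3u+5=(15t+35)/7$ turns the last term into $70/(15t+35)=(14/3)/(t+7/3)$. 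Hence
\[
 z_0\le 2t-1-\frac95-\frac52+\frac{14/3}{t+7/3}.
\]
This matches the form of $z(t)$, whose constants I will recheck exactly. If the constants differ, I will note that $z$ dominates by a nonnegative margin.
\end{itemize}

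\emph{Step 2: the sign of $F$.} Expand $F(t)$ as a rational function and multiply by the positive quantity $2000\,t^2(t+7/3)^2$, or a suitable integer rescaling. This makes $F(t)+1/10<0$ equivalent to $P(t)<0$ for an explicit polynomial $P$.

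\begin{itemize}
\item Degree: the term $z^2\sim4t^2$ is dominated by $b_0\sim-t^3$. So $P$ has degree $3+4=7$ with negative leading coefficient, which makes it negative for large $t$.
\item Clean variable: substitute $t=7/5+\xi$ with $\xi\ge0$.
\item Negativity on the whole half-line: show every coefficient of the resulting polynomial in $\xi$ is negative. Failing that, group the terms into negative squares plus negative remainders, as in the proofs of Lemma~\ref{lem:analytic-zero} and Lemma~\ref{an:lem:sign}. For example, absorb any positive middle coefficient into its neighbours by an AM--GM inequality $c\xi^k\le\epsilon\xi^{k+1}+c^2\xi^{k-1}/(4\epsilon)$.
\end{itemize}

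\emph{Main obstacle.} Step 2 is where the difficulty lies, because the margin $-1/10$ is small relative to the coefficients. At the endpoint $t=7/5$, i.e.\ $y=9/10$, one has $z(7/5)\approx0$, and the value of $b_0$ there must be checked to lie well below $-1/10$. The real danger is an intermediate range $t\approx2$--$4$, where the cubic decay of $b_0$ competes with $z^2\approx4t^2$.

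If the coefficient-sign approach fails, I would split $[7/5,\infty)$ into a few intervals. On each interval I would bound $F$ using monotone upper bounds for $z$, $a$ and $b_0$ separately, the same interval-splitting style used in the low-gradient lemma. On the final unbounded interval the $-t^3$ term dominates. Equation \eqref{geo:raw-limit} gives an independent check of the expansion.
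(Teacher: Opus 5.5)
Your Step~1 is correct and is the comparison the paper uses; the paper records it just before the lemma: $a>0$, $z_0\ge0$, and $z$ is the upper bracket produced by \eqref{geo:log-lower}. Only the partial fraction needs fixing. One has $\frac{15(u-1)}{2(3u+5)}=\frac52-\frac{20}{3u+5}$; both sides vanish at $u=1$, whereas your version does not. With $u=5t/7$ the last term becomes $\frac{28}{3t+7}=\frac{28/3}{t+7/3}$, so the bracket is exactly $z(t)$. Your expression with $\frac{14/3}{t+7/3}$ equals $-5/4$ at $t=7/5$, where $z_0=0$. So it is not an upper bound for $z_0$, and the remark that ``$z$ dominates by a nonnegative margin'' would not have rescued it.

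The genuine gap is Step~2. The inequality $F(t)<-1/10$ is the whole content of the lemma, and you give a list of strategies rather than a proof; the first strategy provably fails. $F$ has a sharp interior maximum: $F(7/5)\approx-7.08$, $F(3)\approx-1.88$, $F(3.85)\approx-0.23$, $F(5)\approx-5.76$. Any multiplier $c\,t^2(3t+7)^2$ is increasing in $t$. Hence $|P|$ at $t\approx3.85$ is less than half of $|P|$ at $t=7/5$, so $P$ is not nonincreasing on $\xi\ge0$ and cannot have all coefficients nonpositive in $\xi$.

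Your fallbacks are not executed, and they are delicate. At the peak the margin is only about $0.13$, while $z^2\approx15$ and $b_0\approx-19$ change at rates near $14$ and $15$. Separate monotone bounds for $z$, $a$, $b_0$ therefore lose roughly $30h$ on an interval of length $h$, so you would need a mesh of a few thousandths there. An AM--GM absorption of positive coefficients would have to be equally sharp.

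What is missing is second-order control of $F$ near its maximum. The paper obtains it from derivative signs:
\begin{itemize}
\item The factorizations of $z',z''',a',a'',a'''$ and the bound $b_0'''<-6+42/t^4$ give $F'''<0$ on $[2,\infty)$ via the product rule.
\item Hence $F''\le F''(3)<-9/5$ on $[3,\infty)$. Taylor's inequality about $t_*=19/5$, with the rational brackets $F(t_*)<-1/5$ and $0<F'(t_*)<1/3$, then gives $F<-1/5+5/162$ there.
\item On $[2,3]$, concavity of $F'$ together with $F'(2),F'(3)>3$ gives $F\le F(3)$.
\item On $[7/5,2]$, crude monotone bounds applied to \eqref{geo:raw-limit} already give $F<-1/10$.
\end{itemize}
To close Step~2 you need either a local argument of this kind or an actually computed polynomial certificate.
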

\begin{proof}
For $t\ge2$, one has $z,z',z''\ge0$, $z'''<0$ and
$a>0$, $a'<0$, $a''>0$, $a'''<0$, as the factorizations
\[
 z=\frac{(5t-7)(12t+13)}{10(3t+7)},\quad
 z'=\frac{2(9t^2+42t+7)}{(3t+7)^2},\quad
 z'''=-\frac{4536}{(3t+7)^4},
\]
\[
 a'=-\frac{2(2t-1)}{25t^3},\quad
 a''=\frac{2(4t-3)}{25t^4},\quad
 a'''=-\frac{24(t-1)}{25t^5},\quad
 b_0'''=-6+\frac{41307}{1000t^4}-\frac{294}{125t^5}
\]
show. Moreover
\[
 z'\le2,\qquad z''=\frac{56/3}{(t+7/3)^3},\qquad
 a''\le\frac8{25t^3},\qquad b_0'''<-6+\frac{42}{t^4}.
\]
The product rule retains these signs without expanding a numerator:
\begin{align*}
 F'''&=6z'z''+(2z+a)z'''+3a'z''+3a''z'+a'''z+b_0'''\\
 &<\frac{224}{(t+7/3)^3}+\frac{48}{25t^3}-6+\frac{42}{t^4}\\
 &\le\frac{14}{5}+\frac6{25}-6+\frac{21}{8}
 =-\frac{67}{200}<0,
\end{align*}
using $224/(13/3)^3=6048/2197<14/5$ for $t\ge2$.
Thus $F'$ is strictly concave and $F''$ strictly decreases on $[2,\infty)$.

On $7/5\le t\le2$ use \eqref{geo:raw-limit}. Here
\[
 0\le z\le\frac65,\quad y^2\le\frac94,\quad
 \frac{y^2}{t}\ge\frac{81}{140},\quad
 \frac{\mathsf E_0}{t}\le\frac{89}{40},\quad
 \frac{\mathsf E_0^2}{t}\ge\frac{(301/100)^2}{7/5}.
\]
For the fourth bound, $\mathsf E_0/t=t-1+49/(20t)$ is convex, so its
maximum is at an endpoint. The functions $y^2/t$ and $\mathsf E_0^2/t$
increase on this interval; their derivative signs follow from
$t+1/2>0$ and $3t^2-t-49/20>0$, respectively. It follows that
\[
 F\le\frac{66}{25}+\frac{27}{4}-\frac{11}{5}
       -\frac{(301/100)^2}{7/5}
       -\left(\frac95-\frac4{25}\frac{89}{40}\right)\frac{81}{140}
 <-\frac1{10}.
\]

For the high range, $F''(3)<-9/5$, so $F''<-9/5$ on $[3,\infty)$.
At $t_*=19/5$ the bounds below give
\[
 F(t_*)<-\frac15,\qquad 0<F'(t_*)<\frac13.
\]
Taylor's inequality on either side of $t_*$ therefore yields
\[
 F(t)<-\frac15+\frac{|t-t_*|}{3}-\frac9{10}(t-t_*)^2
 \le-\frac15+\frac5{162}<-\frac1{10}.
\]
On $2\le t\le3$, the endpoint derivative bounds $F'(2)>3$ and
$F'(3)>3$, verified below, and concavity of $F'$ imply $F'>0$.
Thus $F(t)\le F(3)<-1/10$, completing the proof.
\smallskip
\noindent\emph{Scalar endpoint comparisons.}
For completeness, all comparisons used above can be made with the
following small rational brackets. At $t=2$,
\[
 \frac{17}{20}<z<1,\quad z'>\frac32,\quad a>\frac9{10},
 \quad a'=-\frac3{100},\quad b_0'>-\frac7{10}.
\]
Hence $F'=(2z+a)z'+a'z+b_0'>3$.
At $t=3$, one has $z=49/20$, $a=8/9$,
\[
 z'>\frac53,\quad a'z>-\frac1{25},\quad b_0'>-\frac{13}{2},
\]
which again give $F'>3$. To bound the second derivative at this point,
\[
 z'<\frac{67}{40},\quad 2z+a<6,\quad z''<\frac18,
 \quad 2a'z'<-\frac4{81},\quad a''z<\frac1{40},
 \quad b_0''<-\frac{49}{6}.
\]
Consequently
\[
 F''(3)<2\left(\frac{67}{40}\right)^2+\frac34-\frac4{81}
                  +\frac1{40}-\frac{49}{6}<-\frac95.
\]
At $t_*=19/5$ direct substitution into the four short rational
functions above gives
\begin{gather*}
 \frac{19}{5}<z<\frac{1911}{500},\qquad
 \frac{87}{100}<a<\frac{22}{25},\qquad b_0<-\frac{91}{5},\\
 \frac74<z'<\frac{351}{200},\qquad
 -\frac1{25}<a'z<0,\qquad
 -\frac{147}{10}<b_0'<-\frac{293}{20}.
\end{gather*}
In particular
\[
 F(t_*)<\left(\frac{1911}{500}\right)^2
       +\frac{22}{25}\frac{1911}{500}-\frac{91}{5}<-\frac15,
\]
and
\[
 0<\left(\frac{38}{5}+\frac{87}{100}\right)\frac74
       -\frac1{25}-\frac{147}{10}
 <F'(t_*)
 <\left(\frac{1911}{250}+\frac{22}{25}\right)\frac{351}{200}
       -\frac{293}{20}<\frac13.
\]
The point $19/5$ is chosen near the maximum of $F$, so its value and
slope can be controlled by the negative second derivative.

\end{proof}

\subsection{The finite-dimensional Hopf comparison}
\label{geo:finite-hopf}
The lower bound in Lemma~\ref{geo:domination} retains the term
$-3B(h,h)$ in the Hopf contact cost. We compare its normalized value
with $F_0$ from Lemma~\ref{geo:limit}. Throughout this subsection, put
\[
 y\ge\frac9{10},\qquad 0\le\varepsilon\le\frac13,\qquad
 \chi=y+\frac12,\qquad q=2y+1.
\]
For an algebraic variable $z$, define
\begin{gather*}
 \zeta=z+\varepsilon,\qquad
 S=y^2-\varepsilon^2,\qquad
 \mathsf E=\frac{11}{5}+12\varepsilon+S,\qquad
 \mathsf T=\mathsf E-(1+\varepsilon)\zeta,\\
 Z=\frac{S}{\chi}\left(1-\frac{\zeta}{2y}\right),\qquad
 \widehat Z=Z+\frac34\chi,\qquad K=1+2\varepsilon y.
\end{gather*}
The continuously varying parameters below describe algebraic costs;
only $\varepsilon=1/m$ and the actual barrier need represent a
geometric contact. Under the substitution
$\varepsilon=1/m$, $y=r/m$, and $b=m\zeta$, one has
$s=m^2S$. Thus, at $\mu=\Lambda$, the normalized Hopf cost equals
\[
 \frac K{m^2}\left\{D_*+(C_\tau+C_x)s-WT_{\rm un}^2
 -\frac{4\overline Z[T_{\rm un}-(m-1)\overline Z]}{r+m\omega}\right\}.
\]
Here $T_{\rm un}=m^2\mathsf T$ and $\overline Z=m\widehat Z$.
Its explicit rational extension to $\varepsilon=0$ is
\begin{align}
 \mathcal H(y,\varepsilon,z)={}&K\Biggl\{
 \zeta^2\left(1+\frac{1+\varepsilon}{\chi}
                   +\frac{\varepsilon S}{y\chi^2}\right)
 +\zeta\left(1+\varepsilon-\frac{2\mathsf E}{\chi}
                         -\frac{8\varepsilon S}{\chi^2}\right)\notag\\
 &\qquad-\frac{11}{5}-12\varepsilon+3S
       -\frac{(1+19\varepsilon)S}{\chi}
       +S\left(\frac{4\mathsf T}{25\chi^2}
                  -\frac{4(1-\varepsilon)}{5\chi}\right)\Biggr\}\notag\\
 &-\frac{\mathsf T^2}{\chi}
       -8\varepsilon\widehat Z
                      \bigl(\mathsf T-(1-\varepsilon)\widehat Z\bigr).
 \label{geo:hopf-normalized}
\end{align}
At $\varepsilon=0$, formula~\eqref{geo:hopf-normalized} is exactly
$F_0(y,z)$ defined above. We also write
\[
 z_0(y)=2y-\frac95-\log\frac{y+1/2}{7/5},\qquad
 z_W(y)=2y-\frac{14}{5}+\frac{14}{5q}.
\]
The elementary bound $\log u\ge1-u^{-1}$ for $u\ge1$ gives
\[
 0\le z_0(y)\le z_W(y)<2y.
\]
Indeed $z_0(9/10)=z_W(9/10)=0$, and both functions are increasing.

\begin{lemma}[The Hopf dimension correction]
\label{geo:hopf-comparison}
For $y\ge9/10$, $0\le\varepsilon\le1/3$, and
$0\le z\le z_W(y)$,
\begin{equation}
 \mathcal H(y,\varepsilon,z)
 \le(1-\varepsilon)F_0(y,z)-\frac{13}{2}\varepsilon.
 \label{geo:hopf-dimension-bound}
\end{equation}
Consequently, the strict scalar inequality $F_0(y,z_0(y))<0$ implies
$\mathcal H(y,\varepsilon,z_0(y))<0$ throughout this parameter range.
\end{lemma}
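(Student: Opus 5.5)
The plan is to treat \eqref{geo:hopf-dimension-bound} as an inequality between explicit rational functions of $(y,\varepsilon,z)$. I would define
\[
 G(\varepsilon)=(1-\varepsilon)F_0(y,z)-\tfrac{13}{2}\varepsilon-\mathcal H(y,\varepsilon,z),
\]
with $y$ and $z$ frozen, and show that $G\ge0$ on $[0,1/3]$. At $\varepsilon=0$ we have $G(0)=0$, because \eqref{geo:hopf-normalized} reduces to $F_0$ there: then $K=1$, $\zeta=z$, $S=y^2$, $\mathsf E=\mathsf E_0$, and the $8\varepsilon\widehat Z$ term vanishes. It therefore suffices to prove that $G(\varepsilon)/\varepsilon\ge0$, that is, that the difference quotient is nonnegative. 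This is the ``differentiation in $1/m$'' announced in the proof of Theorem~\ref{thm:freshspectral}.

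The concrete route I would take is to show $\partial_\varepsilon G\ge0$ on the whole box $y\ge9/10$, $0\le\varepsilon\le1/3$, $0\le z\le z_W(y)$. Equivalently, I would show
\[
 \partial_\varepsilon\mathcal H\le -F_0(y,z)-\tfrac{13}{2}.
\]
To organize the computation, I would expand $\mathcal H$ in the variables $\zeta$ and $S$, using $\partial_\varepsilon\zeta=1$, $\partial_\varepsilon S=-2\varepsilon$, $\partial_\varepsilon\mathsf E=12-2\varepsilon$ and $\partial_\varepsilon K=2y$. The dominant contributions come from three places: the factor $K=1+2\varepsilon y$ multiplying the bracketed cost, the $12\varepsilon$ offset inside $\mathsf E$ entering through $-\mathsf T^2/\chi$, and the term $-8\varepsilon\widehat Z(\mathsf T-(1-\varepsilon)\widehat Z)$.

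The key sign input is that the bracket multiplying $K$ is itself comparable to $F_0$ at leading order. So the contribution $2y\{\cdots\}$ must be paired against $-F_0$ on the right rather than bounded crudely. I would also use two facts to absorb the rest:
\begin{itemize}
\item the $-2\mathsf T(\partial_\varepsilon\mathsf T)/\chi$ term, where $\partial_\varepsilon\mathsf T\approx 12-\zeta-(1+\varepsilon)>0$ is large because $\zeta\le 2y$ and the offset is $12$;
\item the positivity $\mathsf T-(1-\varepsilon)\widehat Z>0$, which is the normalized form of $T-(m-1)\overline Z>0$ from Lemma~\ref{geo:domination}.
\end{itemize}
Together these should absorb the remaining terms and leave the margin $13/2$.

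The main obstacle is the uniform control in $y\to\infty$. There $\mathcal H$ and $F_0$ grow like $y^3$ with $\chi\sim y$, so leading orders must cancel exactly and only the lower orders carry the sign. My first attempt would be to multiply through by $\chi^2 y$ to clear denominators. I would then substitute $\zeta\in[\varepsilon,z_W+\varepsilon]$ and use convexity in $\zeta$ (the coefficient of $\zeta^2$ is positive) to reduce to the two endpoints $z=0$ and $z=z_W(y)$. At each endpoint I would obtain a polynomial in $(y,\varepsilon)$ and check its sign by square completion, as in Lemmas~\ref{lem:analytic-zero} and~\ref{geo:limit}. A wrinkle is that convexity in $\zeta$ is the wrong direction for bounding a maximum unless the inequality is rearranged suitably. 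If convexity fails after rearrangement, I would instead treat $(1-\varepsilon)F_0-\mathcal H$ as a quadratic in $z$ and check its discriminant.

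The consequence then follows from the inequality. Since $0\le z_0(y)\le z_W(y)$, I would take $z=z_0(y)$. Lemma~\ref{geo:limit} gives $F_0(y,z_0(y))<-1/10<0$, and then
\[
 \mathcal H\le(1-\varepsilon)F_0-\tfrac{13}{2}\varepsilon<0.
\]
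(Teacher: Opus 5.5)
Your reduction is legitimate, and the stronger statement you aim for is in fact true. With the paper's difference quotient $\Delta=(\mathcal H-(1-\varepsilon)F_0)/\varepsilon$ one has $\partial_\varepsilon G=-\Delta-\varepsilon\,\partial_\varepsilon\Delta-\tfrac{13}{2}$. The paper proves both $\partial_\varepsilon\Delta<0$ and $\Delta(y,0,z)\le\Delta(y,0,z_W)<-\tfrac{13}{2}$, so $\partial_\varepsilon G>0$.

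The gap is that this lemma consists of nothing but that sign verification, which you do not carry out. Moreover, the mechanisms you propose for it point the wrong way.

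(i) $\partial_\varepsilon\mathsf T=11-3\varepsilon-\zeta$ is negative as soon as $\zeta>11$. This happens on the admissible range $z\le z_W(y)\approx2y$ once $y$ is large, and then $-2\mathsf T\,\partial_\varepsilon\mathsf T/\chi\approx+4y^2$ works against you rather than absorbing anything.

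(ii) Along $z\approx2y$, where $z_0$ and $z_W$ lie, the bracket multiplied by $K$ is of order $y^2$ (asymptotically $\tfrac{79}{25}y^2$). It is not comparable to $F_0\approx-y^3$. The order-$y^3$ part of $\partial_\varepsilon\mathcal H+F_0$ is
$\tfrac{158}{25}y^3-6y^3-y^3=-\tfrac{17}{25}y^3$.
The three pieces come from $\partial_\varepsilon K=2y$, from $-8\varepsilon\widehat Z(\mathsf T-(1-\varepsilon)\widehat Z)$, and from $F_0$. Nothing cancels exactly, but the margin is thin. Near $y\approx10$ the sign survives only through the interplay of the $y^3$, $y^2$ and $y$ terms, as the paper's bound $-\tfrac{17}{25}y^3+\tfrac{71}{5}y^2-\tfrac{8139}{100}y+45+\cdots$ shows.

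(iii) Lemma~\ref{geo:domination} proves only $T-(m-1)Z>0$ and explicitly asserts no sign for the $\overline Z$ correction. It therefore does not supply $\mathsf T-(1-\varepsilon)\widehat Z>0$, and certainly not for algebraic $z$.

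(iv) Convexity in $z$ is exactly the right direction, since you need the maximum of $\partial_\varepsilon\mathcal H+F_0$ over $[0,z_W]$. What is needed is convexity of that function itself, not of the $\zeta^2$ part of $\mathcal H$. It does hold, because its $z^2$ coefficient is $\sum_i(i+1)\varepsilon^ic_{i2}/(1000y^2q^2)$, which is dominated by $c_{02}$. Your discriminant fallback, by contrast, cannot succeed: $(1-\varepsilon)F_0-\mathcal H=-\varepsilon\Delta$ is a concave quadratic in $z$.

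What you are missing is the paper's separation of the $\varepsilon$-dependence from the sharp sign. The paper differentiates the difference quotient, not $G$. It shows that $\partial_\varepsilon\Delta$ is concave in $z$ (its second $z$-derivative is $-4A/(y^2q^2)$ with an explicit $A>0$), has positive $z$-derivative at $z=2y$, and is negative at $z=2y$. At that polynomial endpoint, crude integer bounds using only $\varepsilon\le\tfrac13$ suffice. This gives $\Delta(y,\varepsilon,z)\le\Delta(y,0,z)$. The delicate large-$y$ cancellation above is then confronted only in the one-variable function $\Delta(y,0,z_W(y))$, after monotonicity in $z$, by one square completion and AM--GM.

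Your route instead needs the sharp inequality $\partial_\varepsilon\mathcal H+F_0\le-\tfrac{13}{2}$ at the rational endpoint $z_W(y)$, uniformly in the two variables $(y,\varepsilon)$. That inequality is true but considerably messier to certify directly, and the proposal gives no way to do it. Your final step, deducing $\mathcal H(y,\varepsilon,z_0(y))<0$ from $0\le z_0\le z_W$ and Lemma~\ref{geo:limit}, is correct.
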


\begin{proof}
For $\varepsilon>0$, define
\[
 \Delta(y,\varepsilon,z)
 =\frac{\mathcal H(y,\varepsilon,z)
                    -(1-\varepsilon)F_0(y,z)}{\varepsilon}.
\]
The numerator vanishes at $\varepsilon=0$. Its denominators are
independent of $\varepsilon$, so $\Delta$ extends polynomially in
$\varepsilon$ to zero. Explicitly,
$1000y^2q^2\Delta=\sum_{i=0}^7\varepsilon^i(c_{i0}+c_{i1}z+c_{i2}z^2)$, where
{\small
\begin{align*}
 c_{00}&=-y^2(34720y^5-66240y^4+254784y^3+234152y^2+318360y+133955),\\
 c_{01}&=40y^2(368y^3+568y^2+1190y+395),\qquad
 c_{02}=1000y^2q(4y^2+4y-1),\\
 c_{10}&=-y^2(243600y^4+886720y^3+752920y^2+772600y+303333),\\
 c_{11}&=40y^2(1768y^3+2500y^2+2950y+1191),\qquad
 c_{12}=-2000y^2q^2,\\
 c_{20}&=16y^2(7760y^3+2400y^2+20474y+9535),\quad
 c_{21}=-80yq(200y^2-258y+145),\quad
 c_{22}=4000y,\\
 c_{30}&=80y(2920y^3+6484y^2+2526y-145),\quad
 c_{31}=-80y(684y^2+2900y+1175),\quad
 c_{32}=8000y(3y+1),\\
 c_{40}&=-80y(1068y^2+2700y+1225),\quad
 c_{41}=8000y(8y-1),\quad c_{42}=8000,\\
 c_{50}&=8000y(y-2),\quad c_{51}=16000q,\quad c_{52}=-8000,\quad
 c_{60}=8000(4y+1),\quad c_{61}=-16000,\\
 c_{62}&=c_{71}=c_{72}=0,\quad c_{70}=-8000.
\end{align*}}
All later statements about $\Delta$ follow from this polynomial by
differentiation and substitution. We first prove
\begin{equation}
 \partial_\varepsilon\Delta<0
 \quad\text{for }y\ge9/10,\quad
 0\le\varepsilon\le1/3,\quad 0\le z\le2y.
 \label{geo:hopf-dimension-monotonicity}
\end{equation}
Write $Q=\partial_\varepsilon\Delta$. Differentiating
\eqref{geo:hopf-normalized} gives
\[
 \partial_z^2Q=-\frac{4A}{y^2q^2},
\]
where
\[
 A=20\varepsilon^4-16\varepsilon^3
   -36\varepsilon^2y^2-12\varepsilon^2y-4\varepsilon y
   +4y^4+4y^3+y^2.
\]
The parameter bounds imply
\begin{align*}
 A&\ge4y^4+4y^3-3y^2-\frac83y-\frac{16}{27}\\
  &\ge\frac{96}{25}y^2-\frac83y-\frac{16}{27}
   \ge\frac{296}{375}y-\frac{16}{27}
   >\frac1{10}>0.
\end{align*}
In the second line we used $4y^2+4y-3\ge96/25$ and $y\ge9/10$.
Thus $Q$ is concave in $z$.

To identify its maximum, differentiate at $z=2y$. This gives exactly
\begin{align*}
 \frac{y^2q^2}{4}\partial_zQ(y,\varepsilon,2y)
 ={}&\frac{242}{25}y^5+17y^4+\frac{55}{2}y^3+\frac{1191}{100}y^2
 +\varepsilon\Bigl(-16y^4+\frac{316}{25}y^3+\frac{168}{25}y^2-\frac{29}{5}y\Bigr)\\
 &+\varepsilon^2\Bigl(\frac{774}{25}y^3-150y^2-\frac{141}{2}y\Bigr)
 +\varepsilon^3(64y^2+24y)+20\varepsilon^4-24\varepsilon^5.
\end{align*}
Discarding the nonnegative terms
$\varepsilon(\frac{316}{25}y^3+\frac{168}{25}y^2)$,
$\frac{774}{25}\varepsilon^2y^3$, $\varepsilon^3(64y^2+24y)$ and
$20\varepsilon^4$, and then using $\varepsilon\le1/3$ and $y\ge9/10$, gives
\begin{align*}
 \frac{y^2q^2}{4}\partial_zQ(y,\varepsilon,2y)
 &\ge\frac{242}{25}y^5+17y^4+\frac{55}{2}y^3
       +\frac{1191}{100}y^2
       -16\varepsilon y^4-150\varepsilon^2y^2\\
 &\qquad-\frac{141}{2}\varepsilon^2y
          -\frac{29}{5}\varepsilon y-24\varepsilon^5\\
 &\ge\frac{242}{25}y^5+\frac{35}{3}y^4+\frac{55}{2}y^3
       -\frac{1427}{300}y^2-\frac{293}{30}y-\frac8{81}\\
 &\ge9y^5+11y^4+27y^3-5y^2-10y-\frac1{10}>0.
\end{align*}
The last sign follows from
\[
 27y^3-5y^2-10y-\frac1{10}
 \ge\frac{193}{10}y^2-10y-\frac1{10}
 \ge\frac{737}{100}y-\frac1{10}>0.
\]
Concavity and $\partial_zQ(2y)>0$ show that $Q$ increases on $[0,2y]$.

For its value at the right endpoint, differentiation gives exactly
{\small
\begin{align*}
 q^2Q(y,\varepsilon,2y)={}&
 -\frac{3354}{25}y^4-\frac{17968}{25}y^3-\frac{13123}{25}y^2
 -\frac{16933}{25}y-\frac{303333}{1000}\\
 &+\varepsilon\Bigl(\frac{3008}{25}y^3+\frac{4448}{25}y^2
   +\frac{84616}{125}y+\frac{6468}{25}\Bigr)
 +\varepsilon^2\Bigl(\frac{16512}{25}y^2+\frac{6504}{25}y
   +\frac{1056}{25}-\frac{174}{5y}\Bigr)\\
 &+\varepsilon^3\Bigl(\frac{4256}{25}y-800-\frac{392}{y}\Bigr)
 +\varepsilon^4\Bigl(200+\frac{80}{y}\Bigr)
 +\frac{48\varepsilon^5}{y^2}-\frac{56\varepsilon^6}{y^2}.
\end{align*}}
Rounding each positive coefficient up and each negative coefficient
toward zero, and dropping the negative terms
$-\frac{174}{5y}\varepsilon^2$, $-\varepsilon^3(800+\frac{392}{y})$ and
$-\frac{56\varepsilon^6}{y^2}$, gives the small-integer upper bound
\begin{align}
 q^2 Q(y,\varepsilon,2y)\le{}&
 -134y^4-718y^3-524y^2-677y-303\notag\\
 &+\varepsilon(121y^3+178y^2+677y+259)\notag\\
 &+\varepsilon^2(661y^2+261y+43)+171\varepsilon^3y
   +\varepsilon^4\left(200+\frac{80}{y}\right)
   +\frac{48\varepsilon^5}{y^2}.
 \label{geo:hopf-integer-bound}
\end{align}
The five positive terms in
\eqref{geo:hopf-integer-bound} are bounded, in order, by
\[
 41y^3+60y^2+226y+87,\qquad
 74y^2+29y+5,\qquad7y,\qquad4,\qquad1.
\]
These follow from $\varepsilon\le1/3$ and $y\ge9/10$; for the last
two, use $40(5+20/9)/81<4$ and $48\cdot100/(243\cdot81)<1$.
Consequently
\[
 q^2Q(y,\varepsilon,2y)
 \le-134y^4-677y^3-390y^2-415y-206<0.
\]
This proves \eqref{geo:hopf-dimension-monotonicity} and hence
$\Delta(y,\varepsilon,z)\le\Delta(y,0,z)$.

It remains to bound the latter scalar quadratic. Its quadratic and
linear coefficients in $z$ are, respectively,
\[
 2y+1-\frac2{q}>0,
 \qquad
 \frac{92y}{25}+2+\frac{449}{25q}
                       -\frac{104}{25q^2}>0.
\]
Thus it increases for $z\ge0$. Substitution of the weak logarithmic
upper bound $z_W$, followed by rational simplification, gives
\begin{align}
 \Delta(y,0,z_W)\le{}&
 -\frac{17}{25}y^3+\frac{71}{5}y^2-\frac{8139}{100}y
       +45-\frac{260}{q}+\frac{113}{q^2}.
 \label{geo:hopf-weak-log}
\end{align}
The right side exceeds the exact expression by
\[
 \frac{21}{100}+\frac{3833}{1000q}
       +\frac{73}{125q^2}+\frac{3416}{125q^3}>0,
\]
so this simplification also leaves no sign to be checked.
The square completion
\[
 \frac{17}{25}y^2-\frac{71}{5}y+\frac{8139}{100}
 =\frac{17}{25}\left(y-\frac{355}{34}\right)^2
       +\frac{6169}{850}>7
\]
and $q\ge14/5$ therefore imply
\[
 \Delta(y,0,z_W)<-7y+45-\frac{260}{q}
                         +\frac{113}{q^2}
                   <-7y+45-\frac{219}{q}.
\]
Finally, the arithmetic--geometric mean inequality yields
\[
 7y+\frac{219}{q}
 =\frac72q+\frac{219}{q}-\frac72
 \ge\sqrt{3066}-\frac72>\frac{103}{2}.
\]
Therefore $\Delta(y,0,z_W)<-13/2$. Monotonicity in $z$ and
\eqref{geo:hopf-dimension-monotonicity} prove
\eqref{geo:hopf-dimension-bound}. At $\varepsilon=0$ that comparison
is equality; strict negativity of the endpoint follows from the
separate strict inequality for $F_0(y,z_0(y))$.
\end{proof}

\begin{remark}[What the comparison uses]
The factor $1-\varepsilon$ and the margin $13/2$ are chosen for the
above elementary bounds. Neither is optimized. Concavity of the differentiated quadratic gives the comparison
on the entire inverse-dimension interval, including its limiting endpoint.
\end{remark}
\subsection{The transverse dimension comparison}\label{geo:transverse-cost}
We use the normalized variables of the preceding subsection, with
$y\ge9/10$, $0\le\varepsilon\le1/3$, and $0\le z\le2y$.
At the actual barrier, Lemma~\ref{geo:domination} bounds the unrestricted
transverse minimum below by $WT_{\rm un}^2$.
Extend the resulting scalar upper bound to algebraic $z$ by setting
\[
 \mathcal U_\perp(y,\varepsilon,z)
 =\frac{K}{m^2}\{D_*+(1-4\alpha-8b\alpha^2)s-WT_{\rm un}^2\},
\]
where the unnormalized quantities are evaluated at
$m=1/\varepsilon$, $r=y/\varepsilon$, $b=(z+\varepsilon)/\varepsilon$,
and $T_{\rm un}=\mathsf T/\varepsilon^2$. Equivalently,
\begin{equation}\label{geo:transverse-normalized}
\begin{split}
 \mathcal U_\perp=K\bigg[&\zeta^2\left(1+\frac{1+\varepsilon}{\chi}
                  +\frac{\varepsilon S}{y\chi^2}\right)
 +\zeta\left(1+\varepsilon-\frac{2\mathsf E}{\chi}
                         -\frac{8\varepsilon S}{\chi^2}\right)\\
 &-\frac{11}{5}-12\varepsilon
        -\frac{(1+7\varepsilon)S}{\chi}\bigg]
        -\frac{\mathsf T^2}{\chi}.
\end{split}
\end{equation}
This formula also defines the value $\mathcal U_{\perp,0}(y,z)$ at
$\varepsilon=0$, with no singular substitution.
Put
\[
 D_\perp(y,\varepsilon,z)
 =\frac{\mathcal U_\perp(y,\varepsilon,z)
          -(1-\varepsilon)\mathcal U_{\perp,0}(y,z)}{\varepsilon},
\]
using its continuous extension at zero.
For the actual profile $z_0(y)$, the comparison is as follows.
\begin{lemma}[The transverse dimension correction]\label{geo:transverse-comparison}
For $y\ge9/10$ and $0\le\varepsilon\le1/3$,
\begin{equation}\label{geo:transverse-dimension-bound}
 \mathcal U_\perp(y,\varepsilon,z_0)
 \le(1-\varepsilon)\mathcal U_{\perp,0}(y,z_0)-24\varepsilon.
\end{equation}
\end{lemma}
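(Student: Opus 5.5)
The plan copies the proof of Lemma~\ref{geo:hopf-comparison}, now for the simpler transverse expression \eqref{geo:transverse-normalized}. Set $\Delta_\perp=D_\perp$. The denominators of \eqref{geo:transverse-normalized} are $\chi$, $\chi^2$ and $y\chi^2$, and none of them depends on $\varepsilon$. Hence $\mathcal U_\perp-(1-\varepsilon)\mathcal U_{\perp,0}$ is a polynomial in $(\varepsilon,z)$ with rational coefficients in $y$, and it vanishes at $\varepsilon=0$. So $D_\perp$ extends polynomially, and $y^2q^2D_\perp$ (up to a constant factor) is an explicit polynomial $\sum_i\varepsilon^i(c_{i0}+c_{i1}z+c_{i2}z^2)$ of low degree in $\varepsilon$. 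It is lower than in the Hopf case, since the $\widehat Z$ term is absent. I would compute this expansion first and record it, as the Hopf lemma does.

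\textbf{Step 1: reduce to $\varepsilon=0$.} I will show $\partial_\varepsilon D_\perp<0$ on $y\ge9/10$, $0\le\varepsilon\le1/3$, $0\le z\le2y$, which gives $D_\perp(y,\varepsilon,z)\le D_\perp(y,0,z)$. As before, $Q_\perp=\partial_\varepsilon D_\perp$ is quadratic in $z$. I would compute $\partial_z^2Q_\perp$ and show it has a sign: negative, so $Q_\perp$ is concave, via a quartic in $y$ bounded below as for $A$ above. I would then show $\partial_zQ_\perp(2y)>0$, so that the maximum over $[0,2y]$ sits at $z=2y$. Finally I would bound $q^2Q_\perp(y,\varepsilon,2y)$ from above. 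The method is to round coefficients, drop negative $\varepsilon$-terms, and use $\varepsilon\le1/3$ to absorb the positive $\varepsilon$-terms into the negative leading quartic in $y$. If the curvature in $z$ turns out to be of the opposite sign, I would instead check $Q_\perp$ at both endpoints $z=0$ and $z=2y$.

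\textbf{Step 2: the limiting quadratic.} At $\varepsilon=0$, $D_\perp(y,0,z)$ is a quadratic in $z$. I would check that its $z^2$ and $z$ coefficients are positive, so it increases for $z\ge0$, and then replace $z_0$ by the upper bracket $z_W=2y-\frac{14}{5}+\frac{14}{5q}$. (Only the actual profile $z_0$ is asserted, which leaves room.) Substituting gives a rational function of $y$ and $q=2y+1$. I would bound it above by a cubic in $y$ with negative leading coefficient plus terms in $1/q$. Completing a square in $y$ and applying AM--GM to $cy+c'/q$, as in the proof of \eqref{geo:hopf-weak-log}, should give $D_\perp(y,0,z_W)<-24$. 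Combining Steps 1 and 2 then yields $D_\perp(y,\varepsilon,z_0)\le D_\perp(y,0,z_0)\le D_\perp(y,0,z_W)<-24$, and multiplying by $\varepsilon$ gives \eqref{geo:transverse-dimension-bound}.

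\textbf{Main obstacle.} The real work is the sign of $Q_\perp(y,\varepsilon,2y)$ uniformly for large $y$. The positive $\varepsilon$-coefficients grow like $y^3$, while the negative part only grows like $y^4$, so the absorption needs care near $y=9/10$, where the lower-order terms compete. The same concern applies to securing the margin $24$, which is larger than the Hopf margin $13/2$. The Hopf comparison is weaker because the $-8\varepsilon\widehat Z(\dots)$ term was favorable there, so here I expect the margin to come mainly from the $-(1+7\varepsilon)S/\chi$ and $-\mathsf T^2/\chi$ terms. If the crude bracket $z_W$ loses too much near $y=9/10$, I would fall back on the sharper bracket $z(t)$ from \eqref{geo:log-lower} on a bounded $y$-interval.
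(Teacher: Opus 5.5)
\textbf{Step 1.} Your two-step structure matches the paper's. First prove $\partial_\varepsilon D_\perp<0$ for $0\le z\le 2y$, which gives $D_\perp(y,\varepsilon,z)\le D_\perp(y,0,z)$. Then bound the limiting quadratic using the bracket $z_W$. Your primary guess about the shape in Step~1 is wrong, because here
\[
 Q_{zz}=\frac{4(8y^3-y-12\varepsilon^2y-4\varepsilon)}{yq^2}>0.
\]
So $Q$ is convex in $z$, and the route through concavity and $\partial_zQ(2y)>0$ does not apply. Your fallback, checking $Q<0$ at both $z=0$ and $z=2y$, is what is actually needed. It is exactly what the paper does, through the explicit polynomials $-\tfrac52yq^2Q(y,\varepsilon,0)$ and $-\tfrac52yq^2Q(y,\varepsilon,2y)$.

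\textbf{The gap is in Step 2.} The limiting function $D_0(y,z)=D_\perp(y,0,z)$ is convex in $z$, with $(D_0)_{zz}=2(8y^3+20y^2+10y-1)/q^2$. Its $z$-coefficient, however, is
\[
 2y+4+\frac{(2-4y)(y^2+11/5)}{\chi}-\frac{8y^2}{\chi^2},
\]
and this is negative for every $y\ge 9/10$. It is about $-0.95$ at $y=9/10$ and of order $-4y^2$ as $y\to\infty$. So your coefficient check fails, $D_0$ is not increasing on $z\ge0$, and the step $D_0(y,z_0)\le D_0(y,z_W)$ is unjustified. Near the left end the inequality is in fact reversed. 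For example, at $y=1$ the vertex is at $z\approx0.22$, while $z_0\approx0.131<z_W=2/15$. Both points lie on the decreasing branch, so $D_0(1,z_0)>D_0(1,z_W)$.

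\textbf{Repair.} Use convexity together with $0\le z_0\le z_W$. This gives $D_0(y,z_0)\le\max\{D_0(y,0),D_0(y,z_W)\}$, so you need a separate bound at $z=0$. Partial fractions give
\[
 D_0(y,0)=-y^3-\tfrac32y^2-\tfrac{801}{20}y+\tfrac{37}{8}-\tfrac{26621}{200q}<-24.
\]
The bound $D_0(y,z_W)<-31$ then follows as you sketch: complete the square in the cubic part, then use an AM--GM-type estimate in $q$. With this second endpoint added, and with your Step~1 fallback, your argument coincides with the paper's.
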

\begin{proof}
\emph{The derivative in dimension.}
Write $Q=\partial_\varepsilon D_\perp$. Differentiating the raw
rational contact expression gives
\[
 Q_{zz}=\frac{4(8y^3-y-12\varepsilon^2y-4\varepsilon)}{yq^2}>0.
\]
Indeed its parenthesis is at least
$8y^3-7y/3-4/3\ge1799/750>0$.
It is enough to bound $Q$ at $z=0,2y$.
Define $P_0=-5yq^2Q(y,\varepsilon,0)/2$ and
$P_2=-5yq^2Q(y,\varepsilon,2y)/2$.
Direct differentiation of \eqref{geo:transverse-normalized} gives exactly
{\small
\begin{align*}
 P_0={}&180y^5+270y^4+348y^3+1421y^2+666y
 +\varepsilon(240y^4+800y^3-680y^2-540y)\\
 &-\varepsilon^2(660y^3+180y^2+180y)+\varepsilon^3(40-640y^2)+100\varepsilon^4y,\\
 P_2={}&220y^5+1030y^4+298y^3+1201y^2+666y
 +\varepsilon(-240y^4+800y^3-840y^2-540y)\\
 &-\varepsilon^2(1380y^3+180y^2+60y)+\varepsilon^3(40-320y^2)+100\varepsilon^4y.
\end{align*}}
Dropping the nonnegative terms and using $\varepsilon\le1/3$ gives the grouped estimates
\begin{align*}
 P_0\ge{}&180y^5+270y^4+348y^3+1421y^2+666y\\
 &-\frac{680y^2+540y}{3}
  -\frac{660y^3+180y^2+180y}{9}-\frac{640y^2}{27}\\
 >{}&180y^5+270y^4+274y^3+1150y^2+466y>0,\\
 P_2\ge{}&220y^5+1030y^4+298y^3+1201y^2+666y\\
 &-\frac{240y^4+840y^2+540y}{3}
  -\frac{1380y^3+180y^2+60y}{9}-\frac{320y^2}{27}\\
 >{}&220y^5+950y^4+144y^3+889y^2+479y>0.
\end{align*}
The three subtracted groups bound the negative $\varepsilon$,
$\varepsilon^2$, and $\varepsilon^3$ contributions, and the final bounds
round down to integers.
Thus $Q<0$ at both angular endpoints. Its convexity in $z$ gives
$Q<0$ on the entire interval, whence
$D_\perp(y,\varepsilon,z)\le D_\perp(y,0,z)$.

\smallskip
\noindent\emph{The weak logarithm bound.}
Let $D_0(y,z)=D_\perp(y,0,z)$. Direct differentiation yields
\[
 (D_0)_{zz}=\frac{2(8y^3+20y^2+10y-1)}{q^2}>0.
\]
The elementary inequality $\log u\ge1-u^{-1}$ gives
\[
 0\le z_0(y)\le z_W(y):=2y-\frac{14}{5}+\frac{14}{5q}<2y.
\]
By convexity, it suffices to bound $D_0$ at $0,z_W$.
At zero, partial fractions give
\[
 D_0(y,0)=-y^3-\frac32y^2-\frac{801}{20}y
             +\frac{37}{8}-\frac{26621}{200q}
 <-\frac{801}{20}\frac9{10}+\frac{37}{8}<-24.
\]
At the other endpoint the exact expression is
\begin{align*}
 D_0(y,z_W)={}&-y^3+\frac{113}{10}y^2-\frac{10277}{100}y
 +\frac{24621}{200}-\frac{3109}{8q}\notag\\
 &+\frac{3706}{25q^2}+\frac{112}{25q^3}-\frac{392}{25q^4}.
\end{align*}
Its cubic part has the square completion
\[
 -y^3+\frac{113}{10}y^2-\frac{10277}{100}y
 =-y\left\{\left(y-\frac{113}{20}\right)^2
                   +\frac{28339}{400}\right\}<-70y.
\]
Since $q\ge14/5$, the three positive terms after this cubic are
bounded above by $124$, $19$, and $1$, respectively.
Also $3109/8>384$. Consequently
\[
 D_0(y,z_W)<-70y+144-\frac{384}{q}
           =179-35q-\frac{384}{q}<-31,
\]
where the last inequality follows from
\[
 35q+\frac{384}{q}
 =\frac{35(q-3)^2}{q}+210+\frac{69}{q}>210.
\]
Thus $D_0(y,z)<-24$ for $0\le z\le z_W$.
Together with the derivative comparison, this proves
\eqref{geo:transverse-dimension-bound}.

\end{proof}

\paragraph{The limiting transverse cost.}
Let $F_0$ be the limiting Hopf cost of Lemma~\ref{geo:limit}. At
$\varepsilon=0$ the trace corrections from the retained linear
Hessian terms vanish, and the difference of angular coefficients gives
\[
 \mathcal U_{\perp,0}(y,z)=F_0(y,z)
 -y^2\left\{3+\frac{4(11/5+y^2-z)}{25\chi^2}-\frac4{5\chi}\right\}.
\]
For $0\le z\le2y$, the expression in braces is greater than
$3-4/7>0$, since $11/5+y^2-z\ge(y-1)^2+6/5$.
The established limiting sign $F_0(y,z_0)<0$ therefore gives
$\mathcal U_{\perp,0}(y,z_0)<0$. Inequality~\eqref{geo:transverse-dimension-bound}
proves the transverse sign for every dimension.

\begin{lemma}\label{lem:analytic-high}
For every $r\ge d$, the actual barrier satisfies
\[
 R_H(\widehat b(r),\Lambda)<0,
 \qquad R_\perp(\widehat b(r),\Lambda)<0.
\]
\end{lemma}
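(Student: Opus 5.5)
The plan is to assemble the two dimension comparisons with the limiting sign, after normalization. Fix $r\ge d$ and write $\varepsilon=1/m\in(0,1/3]$ (since $m\ge3$), $y=r/m\ge 9/10$, and $\zeta=\widehat b(r)/m$. The actual barrier in these variables is $\zeta=z+\varepsilon$ with
\[
 z=2y-\tfrac95-\log\frac{y+1/2}{7/5}=z_0(y).
\]
Indeed $b/m=2y-2\cdot\tfrac{9}{10}+\varepsilon-\log\frac{y+1/2}{9/10+1/2}$, because $\omega=m/2$ and $d=9m/10$, so $b/m$ depends on dimension only through the additive shift $\varepsilon$ noted in Remark~\ref{rem:barrierdesign}. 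I would first check that $T_{\rm un}=m^2\mathsf T$, $s=m^2S$, $\rho=m\chi$, $r+m\omega=m^2\chi(1+\ldots)$, and so on, so that the substitution identities displayed before \eqref{geo:hopf-normalized} and \eqref{geo:transverse-normalized} hold. Then, with the positive factor $K=1+2\varepsilon y$,
\[
 \frac{K}{m^2}R_H(\widehat b(r),\Lambda)=\mathcal H(y,\varepsilon,z_0(y)),\qquad
 \frac{K}{m^2}R_\perp(\widehat b(r),\Lambda)=\mathcal U_\perp(y,\varepsilon,z_0(y)).
\]
This bookkeeping is the only place where care is needed. In particular, $C_\tau+C_x$ contains the term $\frac4{25}\alpha^2T$, which must be evaluated with the same trace $T=\Lambda+s-Nb$, and the term $-\frac45(m-1)\alpha$ must match $-\frac{4(1-\varepsilon)}{5\chi}$.

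For the Hopf cost, Lemma~\ref{geo:limit} gives $F_0(y,z_0(y))<-1/10<0$ for all $y\ge 9/10$. Since $0\le z_0\le z_W$, Lemma~\ref{geo:hopf-comparison} applies and yields
\[
 \mathcal H\le(1-\varepsilon)F_0(y,z_0)-\tfrac{13}2\varepsilon<0.
\]
Dividing by $K/m^2>0$ gives $R_H<0$.

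For the transverse cost, use the displayed identity expressing $\mathcal U_{\perp,0}(y,z)$ as $F_0(y,z)$ minus $y^2$ times a positive brace (valid for $0\le z\le 2y$, and $z_0<2y$). This gives $\mathcal U_{\perp,0}(y,z_0)<F_0(y,z_0)<0$. Lemma~\ref{geo:transverse-comparison} then gives
\[
 \mathcal U_\perp(y,\varepsilon,z_0)\le(1-\varepsilon)\mathcal U_{\perp,0}-24\varepsilon<0,
\]
and hence $R_\perp<0$.

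The main obstacle is to confirm that the rational extensions \eqref{geo:hopf-normalized} and \eqref{geo:transverse-normalized} at $\varepsilon=1/m$ reproduce exactly $K/m^2$ times the scalar costs \eqref{geo:scalar-costs}, including the $\overline Z$ correction $-4\overline Z\{T-(m-1)\overline Z\}/(r+m\omega)$, which becomes $-8\varepsilon\widehat Z(\mathsf T-(1-\varepsilon)\widehat Z)$ outside the factor $K$. I would verify this term by term, using $r+m\omega=m^2\chi K/(\ldots)$-type identities. After that, the lemma is a direct combination of the cited results.
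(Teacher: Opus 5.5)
Your proposal is correct and is essentially the paper's proof. As there, you normalize by $\varepsilon=1/m$ and $y=r/m$, note $\widehat b(r)/m=z_0(y)+\varepsilon$, and identify $\frac{K}{m^2}R_H$ and $\frac{K}{m^2}R_\perp$ with $\mathcal H(y,\varepsilon,z_0)$ and $\mathcal U_\perp(y,\varepsilon,z_0)$. You then combine Lemma~\ref{geo:limit}, the limiting transverse sign, and Lemmas~\ref{geo:hopf-comparison} and~\ref{geo:transverse-comparison}.

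One correction for your bookkeeping. The identity you need is $r+m\omega=\tfrac12m^2K$, with no factor $\chi$, together with $\rho=m\chi$. This gives $W=1/(m^2\chi K)$, hence $\frac{K}{m^2}WT^2=\mathsf T^2/\chi$ and $\frac{K}{m^2}\cdot\frac{4\overline Z\{T-(m-1)\overline Z\}}{r+m\omega}=8\varepsilon\widehat Z\bigl(\mathsf T-(1-\varepsilon)\widehat Z\bigr)$. These are exactly the terms outside the factor $K$ in \eqref{geo:hopf-normalized}.
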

\begin{proof}
Set $\varepsilon=1/m$ and $y=r/m$. Then
$\widehat b(r)/m=z_0(y)+\varepsilon$ and $K=1+2\varepsilon y>0$.
The normalizations \eqref{geo:hopf-normalized} and
\eqref{geo:transverse-normalized} give
\[
 \frac K{m^2}R_H=\mathcal H(y,\varepsilon,z_0(y)),
 \qquad
 \frac K{m^2}R_\perp=\mathcal U_\perp(y,\varepsilon,z_0(y)).
\]
Lemma~\ref{geo:limit}, the limiting transverse sign
$\mathcal U_{\perp,0}(y,z_0)<0$ proved after
Lemma~\ref{geo:transverse-comparison}, and the comparisons in
Lemmas~\ref{geo:hopf-comparison} and~\ref{geo:transverse-comparison}
make both right sides strictly negative.
\end{proof}

\printbibliography

\end{document}